 \theoremstyle{plain}
\newtheorem{theorem}{Theorem}
\newtheorem{remark}{Remark}
\newtheorem{corollary}{Corollary}
\newtheorem{lemma}{Lemma}
\newtheorem{definition}{Definition}
\newtheorem*{definition*}{Definition}
\DeclareMathOperator*{\argmin}{arg\,min}
\def\m{\mathcal}
\def\dd{{\rm d}}
\def\mb{\mathbb}
\def\ms{\mathscr}
\def\mf{\mathfrak}
\def\wt{\widetilde}
\def\wh{\widehat}
\def\sm{{[m]}}
\def\num{\nu^{\ast}_{\sm, \wh Q_\sm}}
\title{Minimax Rate of Distribution Estimation on Unknown Submanifold under Adversarial Losses}
 \author{Rong Tang and Yun Yang}
 \date{University of Illinois Urbana-Champaign}
\begin{document}
\maketitle
\begin{abstract}
 Statistical inference from high-dimensional data with low-dimensional structures has recently attracted lots of attention.
 In machine learning, deep generative modeling approaches implicitly estimate distributions of complex objects by creating new samples from the underlying distribution, and have achieved great success in generating synthetic realistic-looking images and texts. A key step in these approaches is the extraction of latent features or representations (encoding) that can be used for accurately reconstructing the original data (decoding). In other words, low-dimensional manifold structure is implicitly assumed and utilized in the distribution modeling and estimation.
 To understand the benefit of low-dimensional manifold structure in generative modeling, we build a general minimax framework for distribution estimation on unknown submanifold under adversarial losses, with suitable smoothness assumptions on the target distribution and the manifold. The established minimax rate elucidates how various problem characteristics, including intrinsic dimensionality of the data and smoothness levels of the target distribution and the manifold, affect the fundamental limit of high-dimensional distribution estimation. To prove the minimax upper bound, we construct an estimator based on a mixture of locally fitted generative models, which is motivated by the partition of unity technique from differential geometry and is necessary to cover cases where the underlying data manifold does not admit a global parametrization. We also propose a data-driven adaptive estimator that is shown to simultaneously attain within a logarithmic factor of the optimal rate over a large collection of distribution classes.

 \end{abstract}
{Keywords:} adversarial training, generative model, distribution estimation, manifold, minimax rate, partition of unity.

\section{Introduction}
High-dimensional statistical models arise in various areas of science, including computer vision, astrophysics, social science, genetics, and computational biology, among others. 
In order to make the accompanied ``large $D$, small $n$'' inference problem solvable, or, in other words, guarantee the existence of a consistent estimator, some low-dimensional structural assumptions need to be imposed. Here $D$ refers to the ambient dimension of the problem and $n$ refers to the sample size.
Sparsity, one common low-dimensional structure, assumes that only a small number $s$ ($\ll n$) of dimensions contributes to the model, whereas the subset corresponds to these active dimensions is unknown. Popular sparsity motivated statistical methods, such as LASSO~\citep{tibshirani1996regression}, SCAD~\citep{fan2001variable} and MCP~\citep{zhang2010nearly}, have received impressive success in various prediction related tasks in many applications.
In other applications, all variables may collectively influence the model, but the variables themselves may exhibit some low-dimensional structure, such as lying on an unknown submanifold whose intrinsic dimension $d$ is much smaller than the ambient dimension $D$, thus avoids the ``curse of dimensionality''.  For example, a manifold structure is naturally assumed and utilized in computer vision problems~\citep{lui2012advances} such as face recognition, action recognition and visual tracking. Despite the high-dimensional appearance of the object image data represented as a matrix collecting all pixels levels, the vectorized pixel matrix usually lies on a low-dimensional manifold parameterized by global characteristics such as camera projection, lighting condition, texture, object position and orientation. In bioinformatics, protein-protein interaction networks are often assumed to lie on or near some low-dimensional manifold embedded in the high-dimensional unorganized observation space~\citep{you2010using}, since proteins interact with other proteins based on a limited number of biochemical and structural properties~\citep{terradot2004biochemical}. 

In this paper, we consider the statistical problem of distribution estimation on an unknown sub-manifold embedded in an ambient Euclidean space, where the target distribution $\mu$ is implicitly defined through a (mixture of) generative model. In the machine learning literature, generative models, such as Generative Adversarial Network (GAN,~\cite{goodfellow2014generative, li2015generative,10.1214/19-AOS1858}), Wasserstein GAN (WGAN,~\cite{arjovsky2017wasserstein}) and Wasserstein Auto-Encoder (WAE,~\cite{tolstikhin2019wasserstein, zhao2018infovae}), have received great success in generating synthetic realistic-looking images and texts~\citep{brock2018large,oord2016wavenet}, which is an implicit manner of distribution estimation over complex data space. The success of these unsupervised machine learning methods for complex distribution estimation can be largely attributed to two key factors.
First, these methods apply deep neural networks for extracting latent features or representations (i.e.~encoding) that can be used for accurately reconstructing the original data (i.e.~decoding). In other words, low-dimensional manifold structures are implicitly utilized in the distribution estimation via encoder-decoder pairs. The superior performance of these methods over classical fully nonparametric methods again reinforces the fact that complex objects such as images and texts, despite their high-dimensional appearance, are low-dimensional in nature --- they lie on some sub-manifold embedded in the original data space.
Second, these methods are different from classical distribution estimation approach that aims at forming a parametric or nonparametric estimate of the probability density function at a point or the probability of a set; instead they fit a generative model that specifies a stochastic process whose simulated data look indistinguishable to real data. Methodology-wise, this generative modeling framework for distribution estimation automatically promotes low-dimensional data representation without explicitly estimating the unknown data manifold. Computation-wise, a best generative model can be naturally fitted by minimizing certain discrepancy measure between the real data and the synthetic data generated from the model. Moreover, sampling is often more useful and important than explicit distribution estimation in practical applications, as a known distribution (up to normalizing constant) may still require substantial effort to sample from (for example,~sampling from Bayesian posteriors).
A formal definition of a generative model is described in Section~\ref{se:Gen_Mol}, where further details about comparisons with traditional explicit distribution estimation approaches are also discussed. 

Despite the recent surge of works (see Section~\ref{se:literature} for a selective review) on generative model learning, there is a lack of theoretical results quantifying the fundamental limit of these procedure to estimate a distribution supporting on an unknown manifold lying on a high-dimensional ambient Euclidean space and how various problem characteristics affect the limit. 
In this paper, we aim to close this gap by identifying the minimax rate of distribution estimation on unknown submanifold under adversarial losses.
Here, an adversarial loss~\citep{arjovsky2017wasserstein,singh2018nonparametric, tolstikhin2019wasserstein,liang2020generative} is defined as $d_{\mathcal{F}}(\mu_1, \mu_2)={\sup}_{f\in \mathcal{F}}  |\int_\m X f(x) \, \dd\mu_1-\int_\m X f(x)\, \dd\mu_2|$, for two distributions $\mu_1$ and $\mu_2$ over data space $\m X$, where $\mathcal{F}$ is pre-specified set, called the discriminator class, composed of functions over $\m X$ (c.f.~Section~\ref{se:adv_loss} for further details). Popular choices of $\m F$ includes Lipschitz continuous function class (WGAN,~\cite{arjovsky2017wasserstein}), Sobolev function class (Sobolev GAN,~\cite{mroueh2017sobolev}) and reproducing kernel Hilbert space (MMD GAN,~\cite{DBLP:journals/corr/LiCCYP17}). Note that conventional discrepancy measures such as $\ell_p$ ($p\geq 1$) distance, Hellinger distance and Kullback-Leibler (KL) divergence that are widely adopted in nonparametric density estimation theory~\citep{Tsybakov2009} are no longer applicable to define the risk in our context since
the (implicitly) estimated distribution is not absolutely continuous with respect to the Lebesgue measure of the ambient data space $\m X$ and may be singular to the estimation target, denoted as $\mu^\ast$.  

One distinct feature of our framework from the generative modeling literature in machine learning is that we do not require the unknown data manifold to admit a global parametrization (single chart). For example, for compact manifolds without boundary, such as the sphere, at least two parametrizations are needed in order to cover the whole surface. We avoid this stringent assumption, often implicitly assumed in existing methods, by the technique of partition of unity (c.f.~Section~\ref{sec:pou} for details). Specifically, we show that the minimax rate under adversarial loss $d_{\mathcal{F}}$, whose discriminator class $\m F$ has $\gamma\in (0,\infty)$ smoothness level, scales with sample size $n$ as \footnote{$a\vee b$ and $a\wedge b$ are the respectively shorthand of $\max\{a,b\}$ and $\min\{a,b\}$.}$n^{-\frac{1}{2}}\vee n^{-\frac{\alpha+\gamma}{2\alpha+d}}\vee n^{-\frac{\beta\gamma}{d}}$ modulo logarithm terms, where $\beta\in(1,\infty)$ is the smoothness of the manifold, $\alpha\in [0,\beta-1]$ the smoothness of the probability density function relative to the volume measure of the manifold, and recall that $d$ is the intrinsic dimension of the manifold. 
In the rate, the term $n^{-\frac{1}{2}}\vee n^{-\frac{\alpha+\gamma}{2\alpha+d}}$ is attributed to the risk of estimating an unknown $\alpha$-smooth density when the $d$-dimensional $\beta$-smooth manifold is known under the same adversarial loss, and the term $n^{-\frac{\beta\gamma}{d}}$ to risk of estimating an unknown $\beta$-smooth manifold. Note that when $\gamma=1$, the $n^{-\frac{\beta}{d}}$ term matches the minimax rate of estimating a $\beta$-smooth manifold under the Hausdorff loss~\citep{10.1214/18-AOS1685}.

\subsection{Related work}\label{se:literature}

\noindent {\bf Generative model learning:}
In the machine learning literature, a generative modeling procedure aims to find a distribution $\mu$ in the generator class  $\mathcal{D}_G$ (c.f.~Section~\ref{se:Gen_Mol} for a precise definition) that is closest to the target distribution $\mu^{\ast}$ over the data space $\mathbb{R}^D$ with respect to an adversarial loss defined by a discriminator class $\mathcal{F}$ composed of test functions (c.f.~Section~\ref{se:adv_loss} for a precise definition), that is, solve the minimax optimization problem
\begin{align}\label{eqn:minimax_prob}
\inf_{\mu \in\mathcal{D}_G} d_{\m F}(\mu, \mu^\ast) = \inf_{\mu \in\mathcal{D}_G} \sup_{f\in \mathcal{F}}  \Big|\int_\m X f(x) \, \dd \mu-\int_\m X f(x)\, \dd \mu^\ast\Big|.
\end{align}
In practice, we only have access to a finite number $n$ of i.i.d.~samples $X_{1:n}=\{X_1, X_2, \cdots, X_n\}$ from $\mu^{\ast}$. 
To estimate $\mu^\ast$ from $X_{1:n}$ based on~\eqref{eqn:minimax_prob}, we need a finite sample surrogate $\mathcal{J}(f;\,X_{1:n})$ (as a functional from $\m F$ to $\mb R$) to approximate $\int f(x) \, \dd \mu^{\ast}$ for any $f\in\m F$. In the generative adversarial network literature~\citep{goodfellow2014generative, li2015generative,10.1214/19-AOS1858,arjovsky2017wasserstein,tolstikhin2019wasserstein, zhao2018infovae}, 
$\mathcal{J}(f;\, X_{1:n})$ is often simply chosen as the empirical average $n^{-1}\sum_{i=1}^n f(X_i)$.  Although the empirical average is easy to compute, it leads to statistical inefficiency in estimating the solution to problem~\eqref{eqn:minimax_prob} due to the failure of taking the smoothness of target distribution $\mu^{\ast}$ and test function $f$ into consideration. \cite{liang2020generative} and \cite{singh2018nonparametric} show that when $\mu^{\ast}$ admits an $\alpha$-smooth density function relative to the Lebesgue measure on $[0,1]^D$, test functions in the discriminator class are $\gamma$-smooth, and
$\mu^\ast$ belongs to the generator class $\m D_G$ (model correctly specified), 
then solving an empirical version of the minimax problem~\eqref{eqn:minimax_prob} with $\int_\m X f(x)\, \dd \mu^\ast$ being replaced by $\mathcal{J}(f;\, X_{1:n})=\int f\, \dd\widetilde{\mu}_n$, where $\widetilde\mu_n$ is a regularized estimator defined through kernel smoothning, leads to a better estimator of $\mu^\ast$ than simply replacing $\int_\m X f(x)\, \dd \mu^\ast$ with its empirical average. Furthermore, the resulting estimator attains the minimax optimal rate $n^{-\frac{\alpha+\gamma}{2\alpha+D}}\vee n^{-\frac{1}{2}}$ of learning an $\alpha$-smooth density function on $[0,1]^D$ under the same adversarial loss $d_{\m F}$. The non-parametric rate $n^{-\frac{\alpha+\gamma}{2\alpha+D}}$ may suffer from the curse of dimensionality as the ambient space dimension $D$ can be enormous in machine learning applications involving images and texts, for which their methods do not adapt to the underlying low-dimensional manifold structure. 

\medskip

\noindent {\bf Distribution estimation on manifold:}
Some literature~\citep{NIPS2009_2ac2406e,10.1214/21-EJS1826} considers the problem of probability density estimation on an unknown manifold $\mathcal{M}^{\ast} $, where the density function is defined as the Radon–Nikodym derivative of the underlying data distribution relative to the volume measure of the manifold.
For example,~\cite{NIPS2009_2ac2406e} proposes a simple modification of the classical kernel density estimator (KDE) in the ambient space $\mb R^D$ for obtaining pointwise estimation of the density function on $\mathcal{M}^{\ast} $. The authors show that with an optimal choice of the bandwidth parameter, the pointwise mean squared error of the resulting estimator only depends on the intrinsic dimension $d$ instead of the ambient dimension $D$.  \cite{10.1214/21-EJS1826} investigates several non-parametric kernel methods with data-driven bandwidths that are adaptive to the unknown manifold structure. They show that, when the target density function is $\alpha$-smooth and manifold $\mathcal{M}^{\ast} $ is $\beta$-smooth (c.f.~Section~\ref{sec:pou} for a precise definition), their estimator achieves an $n^{-\frac{\alpha\wedge (\beta-1)}{2(\alpha\wedge (\beta-1))+d}}$ error bound under the maximal pointwise $\ell_p$ loss over the manifold. They also illustrates that their procedure is asymptotically minimax optimal when $\beta\geq \alpha+1$. Unfortunately, the KDE procedures developed in~\cite{NIPS2009_2ac2406e} and~\cite{10.1214/21-EJS1826} only recover density values at points lying on the unknown manifold $\mathcal{M}^{\ast} $. Without the knowledge of the support of the manifold, their estimator can not be used to generate (approximate) samples from $\mathcal{M}^{\ast} $, which limits their practical applicability.
Along a different line, \cite{genovese2012minimax} and~\cite{10.1214/18-AOS1685} consider the problem of manifold estimation which corresponds to support estimation of $P$. \cite{genovese2012minimax} shows that under the strong assumption that observations are subject to perpendicular noises to the
manifold, the minimax rate relative to the Hausdorff distance of estimating a boundaryless manifold is $n^{-\frac{2}{d+2}}$. 
\cite{10.1214/18-AOS1685} shows that in the noise-free setting, the minimax rate of estimating a boundaryless $\beta$-smooth($\beta\geq 2$) manifold relative to the Hausdorff distance is $n^{-\frac{\beta}{d}}$. However, the estimator constructed in these papers is an unstructured union of $d$-dimensional
balls in $\mb R^D$. Consequently, their estimator does not recover the topology of $\m M^\ast$ as the estimator based on generative model learning.

\subsection{Organization}
The rest of the paper is organized as follows. In Section 2, we review some important concepts, such as generative models, adversarial loss and Riemannian manifold, and setup the problem. In Section 3, we introduce our main result on the minimax rate, describe a construction of rate-optimal estimator based on a mixture of generative models, and propose a data-driven adaptive estimator. Roadmap for the proof of our main result is provided in Section 4. In Appendix~\ref{sec:Wavelet_review}, we provide a brief review of Wavelets and Besov function space that are used in our estimator construction and analysis. In Appendix~\ref{sec:gen_model_class}, we describe a larger class of distributions explicitly defined via (mixture of) generative models, for which the same minimax rate applies. Some extensions of our results, including applications in two-sample tests and local distribution estimation constrained on compact sets are discussed in Appendix~\ref{ext:main result}.  All technical results and proofs are collected in Appendices~\ref{sec:remaining_results},~\ref{App:technical} and~\ref{sec:proof_extension}.

 \section{Background and Problem Formulation}
 In this section, we begin with notation and a brief introduction to generative models. The advantages of using adversarial losses for error quantification in manifold distribution estimation over conventional discrepancy measures such as the total variation distance or KL divergence are then discussed.  After that, we review the concept of partition of unity for the manifold, and give a specific construction of partition of unity for submanifolds embedded in ambient Euclidean spaces. We then formally setup the problem of distribution estimation on submanifold under adversarial losses.

\subsection{Notation}

We use $\bold{1}_{A}$ to denote the indicator function of a set $A$ so that $\bold{1}_A(x)=1$ if $x\in A$ and zero otherwise. For any positive integer $m$,  we use the shorthand $[m]:=\{1,\cdots,m\}$.   For $\alpha\in \mathbb{R}$, the floor and ceiling functions are denoted by $\lfloor \alpha\rfloor$ and $\lceil \alpha\rceil$, indicating rounding $\alpha$ to the next smaller and larger integer.  For two sequences $\{a_n\}$ and $\{b_n\}$, we use the notation $a_n \lesssim b_n$ and $a_n \gtrsim b_n$ to mean $a_n \leq Cb_n$ and $a_n \geq C b_n$, respectively, for some constant $C>0$ independent of $n$. In addition, $a_n \asymp b_n$ means that both $a_n \lesssim b_n$ and $a_n\gtrsim b_n$ hold.  For a probability measure $\mu$, the support  ${\rm supp}(\mu)$ of $\mu$ is defined as the complement of the largest open set on which $\mu$ vanishes. For a probability measure $\mu$ and a measurable set $\Omega$, we use $\mu|_{\Omega}$ to denote the  restriction of $\mu$ on $\Omega$. 
For two probability measures $\mu$ and $\nu$ where $\mu$ is absolutely continuous with respect to $\nu$, we use $\frac{\dd \mu}{\dd \nu}$ to denote the Radon-Nikodym derivative of $\mu$ with respect to $\nu$, and $D_{\rm KL}(\mu\,||\,\nu)=\int \log(\frac{\dd \mu}{\dd \nu}) \,\dd \mu$ the KL divergence between them. We use $\mathcal{P}(\mathbb{R}^d)$ to denote the set of probability measures on $\mathbb{R}^d$.  When no ambiguity arises, for an absolutely continuous probability measure $\nu$,  we may also use $\nu$ to refer its density function.


We use $\|\cdot\|_p$ to denote the usual vector $\ell_p$ norm, and reserve $\|\cdot\|$ for the $\ell_2$ norm (that is, suppress the subscript when $p=2$). For a vector $x=(x_1,x_2,\cdots,x_D)$ we use $x_{i:j}=(x_i,x_{i+1},\cdots,x_{j})$ to denote  the vector composed of the $i$ to $j$ elements of $x$. We use $\bold{0}_d$ to denote the $d$-dimensional all zero vector, and $\mb B_r(x)$ the closed ball centered at $x$ with radius $r$ (under the $\ell_2$ distance) in the Euclidean space; in particular, we use $\mb B_r^d$ to denote $\mb B_r(\bold{0}_d)$ when no ambiguity may arise. For a measurable set $S$, we use $S^{\circ}$ to denote the interior of  $S$ and $\partial S$ to denote the \footnote{The boundary of a set $S$ is the set of all points in the closure of $S$ not belonging to its interior $S^\circ$.}boundary of $S$.
For a vector-valued function in several variables $f: \mathbb{R}^d\to \mathbb{R}^D$, we use $f_i$  with $i\in [D]$ to denotes its $i$th component and $\bold{J}_{f}(x)$  to denote the $D\times d$ Jacobian matrix  of $f$ evaluated at point $x$.
For a scalar-valued multivariate function $f:\mathbb{R}^d\to \mathbb{R}$, we use ${\rm supp}(f)$ to denote its support, defined as ${\rm supp}(f)=\overline{\{x\in \mathbb{R}^d\,|\,f(x)>0\} }$, and $\|f\|_L=\sup_{x,y\in\mb R^d,\, x\neq y}\frac{|f(x) - f(y)|}{\|x-y\|_2}$ its Lipschitz constant (if the supreme is finite).   For a measurable set $\Omega\subset \mb R^d$, we use $f|_{\Omega}$ to denote restriction of $f$ on $\Omega$.
 For a multi-index $a=(a_1,\cdots,a_d)\in\mathbb{N}_0^d=\{(a_1,\cdots,a_d)\,|\, \forall j\in [d], \, a_j\in \mathbb{N}_0\}$, we define $|a| = \sum_{k=1}^d a_j$ and $a!=\prod_{i=1}^da_i !$. For two vectors $x,y \in \mathbb{R}^d$, we use $(x-y)^{a}$ to denote $\prod_{i=1}^d (x_i-y_i)^{a_i}$.  For a function $f:\,\mb R^d\to\mb R$, we use $f^{(a)}$ to denote its mixed partial derivative $\partial^{|a|} f/ \partial x_{1}^{a_{1}} \cdots \partial x_{d}^{a_{d}}$. We define the $\alpha$-smooth H\"{o}lder (function) class (see e.g.,~\cite{evans10}) with radius $r>0$ over $\Omega$ as $C^{\alpha}_r(\Omega):=\big\{f:\, \Omega \rightarrow \mathbb{R}\,\big|\,\|f\|_{C^{\alpha}(\Omega)}=\sum_{|a| \leq\lfloor\alpha\rfloor}\max_{x\in \Omega}| f^{(a)}(x)|+\sum_{|a| =\lfloor\alpha\rfloor}\max _{x, y\in\Omega,\,x\neq y} \left|f^{(a)}(x)-f^{(a)}(y)\right| /\|x-y\|^{\alpha-\lfloor\alpha\rfloor} \leq r \big\}$.
Similarly, we use $C^{\alpha}_r(\Omega; \mb R^D)=\big\{f=(f_1,\ldots,f_D):\, \Omega\to \mb R^D\,\big|\, \forall \,j \in [D],\, f_j\in C^{\alpha}_{r}(\Omega)\big\}$ to denote the vector valued function space counterpart. For an $f\in C^{\alpha}_r(\Omega; \mb R^D)$ and a multi-index $a\in \mb N_0^d$, we denote $f^{(a)}$ as the $D$ dimensional vector whose $j$-th component is the mixed partial derivative $[f_j]^{(a)}$ of $f_j$ for $j\in[D]$.

\subsection{Generative models}\label{se:Gen_Mol}
Mathematically, we define a \emph{generative model} as a pair $(\nu, G)$, where $\nu$ is a distribution on a low-dimensional latent space $\m Z\subset \mb R^d$, called \emph{generative distribution}, that is easy to sample from; and $G: \m Z\to \mb R^D$ is a map from $\m Z$ to the data space $\mb R^D$, called \emph{generative map}, so that if $Z \sim \nu$, then $G(Z) \sim \mu$. In order words, the target distribution $\mu$ can be expressed via the generative model $(\nu, G)$ via $\mu= G_\# \nu$, the \footnote{For any measure $\nu$ on $\m Z$ and map $G:\, \m Z\to \m X$, the pushforward measure $\mu = G_\#\nu$ is defined as the unique measure on $\m X$ such that $\mu(A) = \nu\big( G^{-1}(A)\big)$ holds for any measurable set $A$ on $\m X$.}pushforward measure of $\mu$ using map $G$. The set $\m D_G=\{G_{\#} \nu:\, \nu \in \Upsilon,\, G\in \m G\}$ of all generative models $(\nu, G)$ with $\nu\in\Upsilon$ and $G\in\m G$ for some distribution family $\Upsilon$ on $\m Z$ and function class $\mathcal{G}$ (consists of maps from $\m Z$ to $\mathbb{R}^D$) is called a \emph{generator class}. In practice, $\Upsilon$ can be chosen to contain a single and simple distribution such as the standard Gaussian or uniform distribution, so that sampling from any generative model in $\m D_G$ is efficient and easy.

Defining an intrinsically low-dimensional distribution on a high-dimensional ambient space implicitly through a generative model enjoys multiple benefits. First, such a distribution is otherwise difficult to describe: on the one hand, it cannot be defined as usual through a density function as the distribution only admits a density function relative to the volume measure of the manifold, but not to the Lebesgue measure of the ambient space; on the other hand, the \footnote{The support of a measure on $\m X$ is defined as the largest (closed) subset of $\m X$ for which every open neighbourhood of every point of the set has positive measure.}support of the distribution (i.e.~the underlying manifold) is unknown, which further complicates the characterization. In comparison, a generative model captures the intrinsic low-dimensional structure of the distribution via transforming from a latent space $\m Z$, while the support of the distribution corresponds to the range of map $G$. Consequently, a generative model learning procedure naturally decouples the distribution estimation problem into manifold learning (estimation of $G$) plus density estimation on the manifold (estimation of $\nu$). Second, in many applications generating samples from an underlying distribution is more important and useful than estimating the distribution. Moreover, summaries or functionals of a distribution can be easily calculated from sampling via Monte Carlo methods; while sampling can be extremely difficult even with the full knowledge of the distribution (for example, sampling from Bayesian posteriors). 
Third, map $G$ in the generative model can capture highly nonlinear structures that may lead to singularities (such as jumps and point mass) in the distribution and are hard to characterize via a density or distribution function. 
Last but not least, representing a distribution through a generative model has the computational benefit of facilitating efficient implementation, as functions tend to be easier to handle in optimization than distributions with constraints. In addition, generative models have the natural adversarial tranining framework of minimizing certain discrepancy measure between the empirical distributions of the real data and the generated synthetic data~\citep{goodfellow2014generative}.

\subsection{Adversarial loss}\label{se:adv_loss}
Conventional discrepancy measures based on Radon–Nikodym derivatives relative to the Lebesgue measure are not suitable for characterizing the closeness between mutually singular probability measures on data space $\m X=\mb R^D$. For distributions with different supports, one commonly used class of discrepancy measures in the machine learning literature are adversarial losses, which are also known as integral probability metrics~\citep{muller1997integral} in the probability literature. For a discriminator class $\m F$ of of bounded and Borel-measurable functions, the adversarial loss between probability measures $\mu$ and $\nu$ is defined as
\begin{equation}\label{eqn:adv_loss}
d_{\mathcal{F}}(\mu,\nu)=\underset{f\in \mathcal{F}}{\sup} \, \Big|\int_{\mb R^D} f(x) \, \dd \mu-\int_{\mb R^D} f(x) \, \dd \nu\Big|.
\end{equation}
If the discriminator class satisfies $\m F=-\m F$, then taking the absolute value inside the supreme of~\eqref{eqn:adv_loss} is not necessary. Many common probability metrics can be realized as an adversarial loss. For example, the Wasserstein-$1$ metric corresponds to the choice of $\m F=\{\mbox{all $1$-Lipchitz functions}\}$; the total variation metric corresponds to $\m F=\{\mbox{all measurable functions bounded by $1$}\}$; and the maximum mean discrepancy (MMD,~\cite{gretton2012kernel,tolstikhin2017minimax}) metric corresponds to $\m F$ as the unit ball of a reproducing kernel Hilbert space.

Adversarial losses with suitable $\m F$ are often adopted in formulating machine learning methods (e.g.~WGAN and WAE) as $d_{\mathcal{F}}$ can be numerically approximated by feeding empirical samples from $\mu$ and $\nu$ into a discriminator neural network. This computational ease is particularly beneficial for problems involving distributions that are implicitly defined through generative models where samples are relatively cheap to obtain.
Theoretical-wise, since many distributional characteristics can be defined as an integral of some function $f$ with respect to the underlying probability measure, probability metrics based on the comparison of integrals are natural candidates for the discrepancy measure in finite-sample error analysis.

In this work, we focus on the following adversarial loss, whose discriminator class $\m F$ is $C^{\gamma}_{1}(\mathbb{R}^D)$, the unit ball of the $\gamma$-smooth H\"{o}lder class with $\gamma>0$,
\begin{equation}\label{dbeta}
d_{\gamma} (\mu,\nu)=\underset{f\in C_1^{\gamma}(\mathbb{R}^D)}{\sup} \Big(\int_{\mb R^D} f(x) \, \dd \mu-\int_{\mb R^D} f(x) \, \dd \nu\Big).
\end{equation}
Note that $d_\gamma$ satisfies the triangle inequality and by the Weierstrass approximation theorem~\citep{10.2307/3029750}, $d_{\gamma}(\mu,\nu)=0$ if and only if $\mu=\nu$. Consequently, $d_\gamma$ is a valid metric over all probability measures on $\mb R^D$. When restricted to distributions over a bounded set such as ball $\mb B_r^D$ with radius $r$, metric $d_{\gamma}$ with $\gamma=1$ is equivalent to the Wasserstein-$1$ metric. Moreover, metric $d_\gamma$ becomes stronger as $\gamma$ decreases, and approaches the total variation metric $d_{\rm TV}$ as $\gamma\to 0_+$.

Deployed as the discrepancy measure for distribution estimation on unknown submanifolds, the smoothness parameter $\gamma$ in $d_\gamma$ characterizes a trade-off between supporting manifold recovery and density estimation on the manifold. A smaller $\gamma$ makes $d_\gamma(\mu,\nu)$ more sensitive to the misalignment between the supports of $\mu$ and $\nu$. To see this, define ${\rm dist}(x, A)=\inf_{y\in A}\|x-y\|_2$ as the distance from a point $x\in\mb R^d$ to a set $A\subset\mb R^D$. Note that ${\rm dist}(\cdot, A)^\gamma$ belongs to $ C^{\gamma}(\mathbb{R}^D)$ for any $\gamma>0$.
For two distributions $\mu$ and $\nu$ with bounded supports, we may take $f(x)=c\, {\rm dist}(x,{\rm supp}(\nu))^{\gamma} - c\, {\rm dist}(x,{\rm supp}(\mu))^{\gamma}$ for some sufficiently small constant $c$ such that $f\in C_1^{\gamma}(\mathbb{R}^D)$, leading to
\begin{align*}
  d_\gamma^{\rm S}(\mu,\nu):\,= \mathbb{E}_{\mu} \big[{\rm dist}(X,{\rm supp}(\nu))^{\gamma}\big] +\mathbb{E}_{\nu} \big[{\rm dist}(X,{\rm supp}(\mu))^{\gamma}\big] \leq c^{-1}   d_{\gamma}(\mu,\nu).
\end{align*}
Consequently, an upper bound of $d_\gamma$ implies an error bound on the supporting manifold recovery through discrepancy measure $d_\gamma^{\rm S}$.
As $\gamma$ tends to zero, $d_\gamma^{\rm S}(\mu,\nu)$ approaches $\mathbb{P}_{\mu}\big(X\notin{\rm supp}(\nu)\big)+ \mathbb{P}_{\nu} \big(X\notin{\rm supp}(\mu)\big)$, which vanishes only if $\mu$ and $\nu$ have perfectly aligned supports. When $\gamma=1$, $\frac{1}{2}\,d_\gamma^{\rm S}$ can be viewed as the limiting average Hausdorff distance~\citep{Aydin_2021},
\begin{equation*}
   d_{\rm AH}\big(\{Y_i\}_{i=1}^m,\,\{Y'_i\}_{i=1}^m\big)=\frac{1}{2m}\sum_{i=1}^m \underset{j\in [m]}{\min}\, \|Y_i-Y'_j\|_2+\frac{1}{2m}\sum_{i=1}^m \underset{j\in [m]}{\min}\, \|Y'_i-Y_j\|_2,
\end{equation*}
as sample size $m$ tends to infinity, where $\{Y_i\}_{i=1}^m$ and $\{Y'_i\}_{i=1}^m$ are i.i.d.~samples from $\mu$ and $\nu$, respectively.

\subsection{Smooth submanifolds and partition of unity}\label{sec:pou}
Intuitively speaking, a manifold is a topological space that locally resembles the Euclidean space. A submanifold in the ambient space $\mb R^D$ can be viewed as a nonlinear ``subspace''. Formally, a $\beta$-smooth ($\beta\geq 1$) $d$-dimensional manifold $\m M$ is defined as a topological space satisfying:
\begin{enumerate}[topsep=0.5em,itemsep=0.5em,partopsep=0em,parsep=0em]
  \item There exists an atlas on $\m M$ consisting of a collection of $d$-dimensional charts $\ms A = \{(U_\lambda, \varphi_\lambda)\}_{\lambda\in \Lambda}$ covering $\m M$, that is, $\m M = \bigcup_{\lambda\in\Lambda} U_\lambda$.
  \item Each chart \footnote{Subscript $\lambda$ is suppressed for the simplicity of notation.}$(U, \varphi)$ in atlas $\ms A$ consists of a homeomorphism $\varphi:\, U \to \widetilde U$, called coordinate map, from an open set $U \subset \m M$ to an open set $\widetilde U \subset \mb R^d$, that is, $\varphi$ is bijective and both $\varphi$ and $\varphi^{-1}$ are continuous maps.
  \item Any two charts $(U, \varphi)$ and $(V,\psi)$ in atlas $\ms A$ are compatible, meaning that the transition map $\varphi\circ \psi^{-1}:\, \psi(U\cap V) \to \varphi(U\cap V)$ is an $\beta$-smooth diffeomorphism.
\end{enumerate}
\noindent  The manifold structure is an intrinsic property that does not rely on the choice of the atlas. For a submanifold embedded in $\mb R^D$, the second and third conditions can be combined into a single condition that the coordinate map $\varphi$ in each chart is a $\beta$-smooth map when identified as a vector-valued function from subset $U$ of $\mb R^D$ to subset $\widetilde U$ of $\mb R^d$.
The $\alpha$-smooth H\"{o}lder (function) class $C^\alpha(\m M)$ for $\alpha\in(0,\beta]$ over a $\beta$-smooth manifold $\m M$ consists of all functions $f:\m M\to \mb R$ whose localization $f\circ\varphi^{-1}:\, \varphi(U)\to \mb R$ to each local chart $(U,\varphi)$ is $\alpha$-H\"{o}lder smooth in the usual Euclidean sense. From this definition, the coordinate map $\varphi$ in each chart $(U,\phi)$ belongs to $C^\beta(U)$ by identifying $U$ as an embedded submanifold of $\m M$ inheriting the same differentiable structure.
Note that here for a $\beta$-smooth submanifold, it is not meaningful to talk about functions with smoothness level $\alpha$ beyond $\beta$ since the definition of a higher-order smoothness level may not be compatible between charts if the atlas is at most $\beta$-smooth.

Most generative model based distribution estimation procedures in the literature (e.g.~\cite{arjovsky2017wasserstein,mroueh2017sobolev,DBLP:journals/corr/LiCCYP17}) uses a single generative model $(\nu,G)$ in modeling the underlying data distribution. This implicitly requires the underlying submanifold $\m M$ that supports the target distribution $\mu = G_\# \nu$ to admit a \emph{global parametrization}, or a single chart description. 
However, many commonly encountered manifolds such as spheres cannot be covered by a single chart in any of its representing atlas. One technical advance of the current paper is to allow multiple charts in the underlying data manifold representation through the mathematical technique of \emph{partition of unity} as defined below.

\begin{definition}
 A partition of unity on a $\beta$-smooth manifold $\mathcal{M}$ is a collection of $\beta$-smooth functions $\{\rho_\lambda\}_{\lambda\in \Lambda}$ on $\mathcal M$ so that 
 \begin{enumerate}[topsep=0.5em,itemsep=0.5em,partopsep=0em,parsep=0em]
     \item $0\leq \rho_\lambda\leq 1$ for all $\lambda\in \Lambda$, and $\sum_{\lambda\in \Lambda}\rho_\lambda(x)=1$ for all $x\in \mathcal{M}$.
     \item Each point $x\in \mathcal{M}$ has a neighborhood which intersects ${\rm supp}(\rho_\lambda)$ for only finitely many $\lambda\in \Lambda$.
 \end{enumerate}
 \end{definition}

\noindent Using the partition of unity, one can glue constructions in the local charts to form a global construction on the manifold. Such a global construction usually does not rely on the choice of the partition of unity. Conversely, the partition of unity enables the decomposition of a global estimation problem into local ones, which resembles the data localization in local (polynomial) regression~\citep{loader2006local,bickel2007local}. A partition of unity can be constructed from any open cover $\{U_\lambda\}_{\lambda\in\Lambda}$ of the manifold in a way where the partition $\{\rho_\lambda\}_{\lambda\in \Lambda}$ is indexed over the same set and ${\rm supp}(\rho_\lambda)\subset U_\lambda$ for any $\lambda\in\Lambda$. Such a partition of unity is said to be \emph{subordinate to} the open cover $\{U_\lambda\}_{\lambda\in\Lambda}$. When no ambiguity may arise, we also say that a partition of unity is subordinate to an atlas $\ms A = \{(U_\lambda, \varphi_\lambda)\}_{\lambda\in \Lambda}$ if it is subordinate to its incurred open cover $\{U_\lambda\}_{\lambda\in\Lambda}$.
For a submanifold of $\mb R^D$, any open cover of ambient space $\mb R^D$ induces a partition of unity on the submanifold. This leads to the following construction that will be used throughout the rest of the paper.

Assume $\mathcal{M}$ is contained in the closed ball $\mb B_{L}^{D}$ for some sufficiently large radius $L$, we construct a partition of unity of $\mathcal{M}$ as follows. Firstly, we find a set of points $a_{1:M}=\{a_1,a_2,\cdots, a_M\}$ in $\mathbb{R}^D$ and a set of positive radii $r_{1:M}=\{r_1,r_2,\cdots,r_M\}$ such that $\ms O_M = \{\mb B_{r_m}(a_m)^{\circ}\}_{m\in[M]}$ forms a finite open cover of $\mb B_{L}^D$. Let $\chi:\,\mathbb{R}\to [0,\infty)$ denote the commonly used mollifier defined by $\chi(t)=e^{-1/t}$ for $t>0$ and $\chi(t)=0$ for $t\leq 0$. For each $m\in [M]$, we define a local partition function as
 \begin{equation*}
      \widetilde{\rho}_{m}(x)=\frac{\chi(r_m-\|x-a_m\|_2)}{\chi(r_m-\|x-a_m\|_2)+\chi(\|x-a_m\|_2-r_m/2)}, \quad x\in\mb R^D.
 \end{equation*}
It is straightforward to check that for each $m \in [M]$, $\widetilde{\rho}_{m}(x)\in C^{\infty}(\mb R^D)$, $\widetilde{\rho}_{m}(x)=1$ for $x\in B_{r_m/2}(a_m)$, and $\widetilde{\rho}_{m}$ vanishes outside $\mb B_{r_m}(a_m)$.  Therefore, $\{{\rho}_m\}_{m\in[M]}$ forms a partition of unity for $\mathcal{M}$ with $\rho_m={\widetilde\rho}_m/\big(\sum_{m'=1}^M \widetilde{\rho}_{m'}\big)$ for $m\in[M]$.

\subsection{Smooth distributions on submanifold and generative model class}
For a smooth submanifold $\m M$ with atlas $\ms A = \{(U_\lambda, \varphi_\lambda)\}_{\lambda\in \Lambda}$, one can define a distribution $\mu$ on $\m M$ by specifying how it acts on all smooth functions $f\in C^\beta(\m M)$ through its expectation $\mb E_\mu[f]$ (duality between distributions and bounded continuous functions).
Specifically, the global characterization of $\mb E_\mu[f]$ as an integral over $\m M$ can be decomposed into local ones as in the following via a partition of unity argument~\citep{do1992riemannian} (second equality), and the local integrals can be characterized using charts (third equality):
\begin{equation}\label{eqnpou}
    \begin{aligned}
    \mb E_\mu[f] = \int_\m M f \,\dd \mu  = \sum_{\lambda \in\Lambda}\int_{U_\lambda} f\,\dd (\rho_{\lambda} \mu) = \sum_{\lambda \in\Lambda}\int_{\varphi_\lambda(U_\lambda)} f\circ\varphi_\lambda^{-1} \, \dd \big[(\varphi_\lambda)_\# (\rho_\lambda \mu)\big],
\end{aligned}
\end{equation}
where $\{\rho_\lambda\}_{\lambda\in\Lambda}$ is a partition of unity subordinate to atlas $\ms A$, and $\rho_\lambda \mu$ stands for the non-negative measure whose Radon-Nikodym derivative relative to $\mu$ is $\rho_\lambda$.
In particular, if $(\varphi_\lambda)_\# (\rho_\lambda \mu)$ admits an $\alpha$-smooth density function for $\alpha \in (0,\beta-1]$ relative to the Lebesgue measure on $\mb R^d$ for each $\lambda\in\Lambda$, then $\mu$ is said to be an $\alpha$-smooth distribution on $\m M$. Note that here similar to the definition of smooth functions on a $\beta$-smooth manifold, it is not meaningful to talk about distributions with smoothness level beyond $\beta-1$ since the change of measure formula (we have abused the notation of a measure to denote its density function),
\begin{align*}
     \big[(\varphi_1)_\# (\rho_1\rho_2\mu)\big]\big(\varphi_1(x)\big)= \big[(\varphi_2)_\# (\rho_1\rho_2\mu)\big]\big(\varphi_2(x)\big)  \cdot \big|{\rm det}\big(\dd [\varphi_2\circ\varphi_1^{-1}]_{\varphi_1(x)}\big) \big|,\ \ x\in U_1\cap U_2,
\end{align*}
may lead to incompatible smoothness definitions over the intersection of two charts $(U_1,\varphi_1)$ and $(U_2,\varphi_2)$ if the atlas is at most $\beta$-smooth --- the differential \footnote{Here we have identified both tangent spaces $T_y \mb R^d$ and $T_{\varphi_2\circ\varphi_1^{-1}(y)} \mb R^d$ of $\mb R^d$ at $y$ and $\varphi_2\circ\varphi_1^{-1}(y)$ with $\mb R^d$.}$\dd [\varphi_2\circ\varphi_1^{-1}]_y:\,\mb R^d\to\mb R^d$ at $y\in \varphi_1(U_1\cap U_2)$ of the transition map $\varphi_2\circ\varphi_1^{-1}$ is at most $(\beta-1)$-smooth in $y$.
An $\alpha$-smooth distribution on $\m M$ can be equivalently defined as a distribution whose density function with respect to the volume measure of $\m M$ exists and belongs to $C^\alpha(\m M)$~\citep{lee2013smooth}. 
Consequently, the smoothness level of the distribution $\mu$ is an intrinsic quantity that does not reply on the choice of the partition of unity. 


\subsubsection{Smooth distributions on smooth compact submanifold}
 Now, we are in place to define the family of smooth distributions on smooth compact submanifold without boundaries on $\mb R^D$ as the set $\m P^\ast = \m P^\ast(d,D,\alpha,\beta,L^\ast)$ with $d\leq D$, $\beta>1$ and $\alpha\in(0,\beta-1]$ composed of all probability measures $\mu\in \m P(\mb R^D)$ satisfying:
\begin{enumerate}[topsep=0.5em,itemsep=0.5em,partopsep=0em,parsep=0em]
\item $\mu$ is an $\alpha$-smooth distribution on  a $\beta$-smooth $d$-dimensional compact submanifold $\m M$ embedded in $\mb R^D$ .

\item The density $\mu$ relative to the volume measure of $\m M$ is uniformly bounded from below by $1/L^\ast$ on $\m M$.

\item  $\m M$ is covered by an atlas  $\ms{A}=\{(U_{\lambda},\phi_{\lambda})\}_{\lambda\in \Lambda}$ on $\m M$ such that: a)~each chart $(U,\phi)$ in atlas $\ms A$ satisfies $\|\phi^{-1}\|_{C^\beta(\phi(U))}\leq L^\ast$ and $\|\mu\circ \phi^{-1}\|_{C^\alpha(\phi(U))} \leq L^\ast$; b)~for any $z\in \phi(U)$, the Jacobian of $\phi^{-1}(z)$ is full rank and all its singular values are lower bounded by $1/L^\ast$ in absolute values. Moreover, for any $x\in \m M$, there exists a $\lambda\in \Lambda$ such that $U_{\lambda}$ and $\phi_{\lambda}(U_{\lambda})$ covers $B_{1/L^\ast}(x)\cap \m M$ and $B_{1/L^\ast}(\phi_{\lambda}(x))$ respectively.
\end{enumerate}
\noindent In Appendices~\ref{sec:gen_model_class} and~\ref{ext:main result}, we discuss extensions to manifolds with boundaries and unbounded manifolds.

\begin{remark} \normalfont 
A similar class of smooth distributions on submanifolds is also considered in~\cite{10.1214/21-EJS1826}, where
their regularity of the manifold is characterized by the notion of reach\footnote{The reach of a manifold $\m M$ is the supremum of all $r\geq 0$ such that the orthogonal projection on $\m M$ is well-defined on the $r$-neighbourhood of $\m M$.}.
In fact, for any $\beta$-smooth $d$-dimensional compact submanifold with reach uniformly bounded from below, one can always find an atlas $\ms A$ satisfying above conditions with a sufficiently large $L^\ast$~\citep{10.1214/18-AOS1685,10.1214/21-EJS1826}. One particular choice of the local parametrization is the exponential map\footnote{The exponential map: $ \exp_x: T_{x}\m M\to \m M$ of $\m M$ at $x$ is defined by $\exp_x(v)=\gamma_{x,v}(1)$, where $\gamma_{x,v}$ is the unique constant speed geodesic path of $\m M$ with initial value $x$ and velocity $v$.} $\exp_{x}$, and Condition 3 above holds if $\exp_x$ and $\mu\circ \exp_x$ have the corresponding smoothness and the injectivity radius\footnote{The injectivity radius ${\rm inj}(x)$ of $\m M$ at $x$ is the supremum of values of $r$ such that the exponential map defines a global diffeomorphism from $\{v\in T_{x}\m M\,|\, \|v\|\leq r\}$ onto its image in $\m M$. The injectivity radius of $\m M$ is defined as the infimum of ${\rm inj}(x)$ over all $x\in \m M$. } of $\m M$ is lower bounded away from zero. 
\end{remark}

\begin{remark} \normalfont 
A manifold without boundary that is compact is called a closed manifold. Examples of closed submanifolds in $\mb R^D$ include a $d$-dimensional sphere lying in a $(d+1)$-dimensional affine subspace of $\mb R^D$ and a $d$-dimensional torus $\mb T^d$ embedded in $\mb R^D$ that is diffeomorphic to the product of $d$ circles. Any closed submanifold requires at least two covering charts in its describing atlas since it is not homeomorphic to any open set of $\mb R^d$. Mathematically, the Lusternik-Schnirelmann category~\citep{10.2307/1968905,cornea2003lusternik} of a topological manifold $\m M$ can be used to provide a lower bound on the smallest number of charts to cover $\m M$. For example, the $d$-dimensional sphere requires at least two charts (using the stereographic projection) and the $d$-dimensional torus $\mb T^d$ cannot be covered with $d$ or fewer charts. 
\end{remark}

 \subsubsection{Distribution estimator class: mixture of generative models}\label{approximationfamily}
 
To describe the statistical model for representing probability measures $\mu$ on unknown submanifolds, we consider two (mixture of) generative model classes, $\m S^{\rm ap}  = \m S^{\rm ap}(d,D,\alpha,\beta,\ms O_M ,L)$ and $\m S^{\rm ap}_{\nu_0}  = \m S^{\rm ap}_{\nu_0}(d,D,\alpha,\beta,\ms O_M ,L)$, where $\ms O_M =\{\mb B_{r_m}(a_m)^{\circ}\}_{m\in [M]}$ is a pre-specified open cover of $\mb B_L^D$ that contains the submanifold. The first generative model class $\m S^{\rm ap}$ consists of  mixtures of generative models with rejection sampling:  $\mu =\sum_{m=1}^M w_{[m]} \mathcal{A}(G_{[m]},\nu_{[m]},\rho_m)$ where 
$\{\rho_m\}_{m\in [M]}$ is the partition of unity  subordinate to $\ms O_M$ defined in Section~\ref{sec:pou}, $\{w_{[m]}\}_{m\in[M]}$ are non-negative mixing weights with $\sum_{m=1}^M w_{[m]}=1$, and for any $m\in [M]$: (1) each component of $G_{[m]}$ is a $\beta$-smooth function over $\mb R^d$ with $\beta$-H\"{o}lder norm bounded by $L$; (2) $\nu_{[m]}$ is an $\alpha$-smooth probability density on $\mb B_1^d$ with $\alpha$-H\"{o}lder norm bounded by $L$; (3) $\mathcal{A}(G_{[m]}, \nu_{[m]}, \rho_m)$ denotes the probability measure induced by the data generating process where $X\sim [G_{[m]}]_{\#}\nu_{[m]}$ is accepted with probability $\rho_m(X)\in[0,1]$. 
To summarize, we have
\begin{equation*}
\begin{aligned}
    \m S^{\rm ap}=&\Big\{\sum_{m=1}^M w_{[m]} \mathcal{A}(G_{[m]},\nu_{[m]},\rho_m)\,:\, \sum_{m=1}^M w_{[m]}=1;\, \forall m\in [M],\, 0\leq w_{[m]}\leq 1;\\
    &\qquad\qquad G_{[m]}\in C^{\beta}_{L}(\mb R^d;\mb R^{D});\, \nu_{[m]}\in \m P(\mb B_1^d) \text{ and } \nu_{[m]}\in C^{\alpha}_L(\mb B_1^d)\ \Big\}.
    \end{aligned}
\end{equation*}
The decomposition $\mu =\sum_{\lambda\in\Lambda} \rho_\lambda \mu$ of $\mu$ is the dual counterpart of definition~\eqref{eqnpou} for expectation $\mb E_{\mu}[f]$ through the partition of unity with $\Lambda=[M]$. To avoid explicit estimation of the local densities $\nu_{[m]}$, we also consider a second generative model class $\m S^{\rm ap}_{\nu_0}$ 
\begin{equation*}
\begin{aligned}
   \m S^{\rm ap}_{\nu_0}=&\Big\{\sum_{m=1}^M w_{[m]} \mathcal{A}(G_{[m]},\nu_{[m]},\rho_m)\,:\, \sum_{m=1}^M w_{[m]}=1;\, \forall m\in [M],\, 0\leq w_{[m]}\leq 1;\\
    &\qquad\qquad G_{[m]}\in C^{\beta}_{L}(\mb R^d;\mb R^{D});\, \nu_{[m]}=\big(V_{[m]}\big)_\#\nu_0 \text{ and } V_{[m]}\in C^{\alpha+1}_L(\mb B_1^d)\ \Big\},
    \end{aligned}
\end{equation*}
where $\nu_0$ is a prespecified distribution on $\mb B_1^d$ that is easy to sample from. In other words, $\m S^{\rm ap}_{\nu_0}$ further replaces the local latent variable distribution $\nu_{[m]}$ by a generative model $(\nu_0,V_{[m]})$ for each $m\in[M]$ with a common generative distribution $\nu_0$, so that $\mu\in \mathcal{S}^{\rm ap}_{\nu_0}$ can be equivalently expressed as $\mu =\sum_{m=1}^M w_{[m]} \mathcal{A}([G_{[m]}\circ V_{[m]}],\nu_0,\rho_m)$ that is easy to generate samples: first draw a categorical variable $V$ in $[M]$ with probabilities $\mb P(V=m)=w_{[m]}$ for $m\in[M]$; then draw a sample $U$ from $\nu_0$ and set $X=G_{[V]}\circ V_{[V]}(U)$; finally, flip a coin and accept $X$ with probability $\rho_V(X)$.

We will show in our main result (Theorem~\ref{maintheorem}) that for smooth distributions on unknown closed submanifold with positive density, we are able to construct a minimax-optimal estimator (modulo logarithm terms) $\wh\mu\in \m S^{\rm ap}_{\nu_0}$ with $\nu_0$ being the uniform distribution on $\mb B_1^d$ (the uniform distribution can also be replaced with any other smooth distributions). While when the target distribution $\mu^\ast$ belongs to a more general (mixture of) generative model class $\m S^\ast$ as we considered in Theorem~\ref{upperboundgenerative}, which contains all distributions in $\m P^\ast$ and also distributions induced by a single generative model $(\nu^\ast, G^\ast)$\footnote{Note that such a single generative model is not included in $\m P^\ast$, as any closed submanifold requires at least two covering charts in its describing atlas.} with $\nu^\ast$ being smoothly decaying to zero around the boundary of its support, we will need the more flexible approximation family $\m S^{\rm ap}$ to cover $\m S^\ast$.

\section{Minimax Rate of Convergence}
In this section, we establish the minimax rate of convergence for the adversarial risk on $\mu^\ast \in \m P^\ast$ of distribution estimation on unknown submanifold with i.i.d.~samples $X_1,X_2,\ldots,X_n\sim \mu^\ast$, and
propose an optimal procedure based on learning a (mixture of) generative model in class $\m S^{\rm ap}$ (or $\m S^{\rm ap}_{\nu_0}$) via minimizing a carefully constructed empirical surrogate risk. After that, we also provide a data-driven adaptive estimator that does not require prior knowledge about intrinsic dimension $d$, manifold smoothness $\beta$ and distribution smoothness $\alpha$.

The following theorem summarizes our main result on the minimax rate of convergence.
 \begin{theorem}[Minimax rate of distribution estimation]\label{maintheorem}
Fix $L^\ast>0$, $\gamma\geq 0$, $0\leq \alpha\leq \beta-1$, $\beta>1$, and $D,d \in \mathbb{N}^+$ with $D>d$, write $\mathcal{P}^{\ast}=\mathcal{P}^{\ast}(d,D,\alpha,\beta,L^\ast)$, then
\begin{enumerate}
    \item  there exists a constant $L_0$  such that when $L^\ast \geq L_0$, then
\begin{equation}
\begin{aligned}
 \underset{\wh{\mu}\in \m P(\mb R^D)}{\inf} \,\underset{\mu\in \mathcal{P}^{\ast}} {\sup}\mathbb{E} \big[d_{\gamma}(\wh{\mu},\mu) \big] \geq C \, n^{-\frac{1}{2}}\vee n^{-\frac{\alpha+\gamma}{2\alpha+d}}\vee n^{-\frac{\gamma\beta}{d}};
\end{aligned}
\end{equation}
 \item there exist positive constants $L_1,L_2$ such that  for any $L\geq L_1$ and open cover $\ms O_{M}=\{\mb B_{r_m}(a_m)^{\circ}\}_{m\in [M]}$ of $\mb B_{L}^D$ with $\max\{r_1,r_2,\cdots,r_M\}\leq L_2$,  it holds that 
\begin{equation}
\begin{aligned}
\underset{\wh{\mu}\in S^{\rm ap}_{\nu_0}(d,D,\alpha,\beta,\ms O_M ,L)}{\inf} \,\underset{\mu\in \mathcal{P}^{\ast}} {\sup}\mathbb{E} \big[ d_{\gamma}(\wh{\mu},\mu)\big] \leq C\, \Big(\frac{n}{\log n}\Big)^{-\frac{1}{2}}\vee \Big(\frac{n}{\log n}\Big)^{-\frac{ \alpha+\gamma}{2\alpha+d}}\vee \Big(\frac{n}{\log n}\Big)^{-\frac{\gamma\beta}{d}},
\end{aligned}
\end{equation}
\end{enumerate}
where $\nu_0={\rm Unif}(\mb B_1^d)$ and the infimum are both taken over all distribution estimators $\wh{\mu}$ belonging to corresponding families based on data $X_{1:n}$.
 \end{theorem}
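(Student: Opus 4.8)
The two parts are proved independently. Part~1 follows from three separate information-theoretic reductions, one per term in the maximum; since a pointwise maximum of lower bounds is again a lower bound, it suffices to exhibit, for each of $n^{-1/2}$, $n^{-(\alpha+\gamma)/(2\alpha+d)}$ and $n^{-\gamma\beta/d}$, a finite subfamily of $\m P^\ast$ on which no estimator can do better. Part~2 is proved by exhibiting an estimator: pick a convenient pilot measure $\wt\mu_n$ built from the data, then output the $d_\gamma$-projection $\wh\mu\in\argmin_{\mu\in\m S^{\rm ap}_{\nu_0}}d_\gamma(\mu,\wt\mu_n)$. Since $\m S^{\rm ap}_{\nu_0}$ will be shown to contain $\mu^\ast$ \emph{exactly} (for $L$ large and the $r_m$ small), the triangle inequality gives $d_\gamma(\wh\mu,\mu^\ast)\leq 2\,d_\gamma(\wt\mu_n,\mu^\ast)$, so everything reduces to controlling $\mb E[d_\gamma(\wt\mu_n,\mu^\ast)]$.

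\textbf{Lower bound.} For $n^{-1/2}$ I would use Le~Cam's two-point bound: take any $\mu_0\in\m P^\ast$ with a chartable support near a point, perturb its density relative to the volume measure of $\m M$ by a single fixed-scale mollified bump of amplitude $c\,n^{-1/2}$ and vanishing integral to get $\mu_1\in\m P^\ast$; a fixed rescaled bump function in $C^\gamma_1(\mb R^D)$ witnesses $d_\gamma(\mu_0,\mu_1)\gtrsim n^{-1/2}$ while $D_{\rm KL}(\mu_0^{\otimes n}\,\|\,\mu_1^{\otimes n})\lesssim 1$. For $n^{-(\alpha+\gamma)/(2\alpha+d)}$ I would fix a known $\beta$-smooth patch and apply Assouad's lemma over $m\asymp\epsilon^{-d}$ density bumps of width $\epsilon$ and amplitude $\asymp\epsilon^{\alpha}$ (the largest compatible with the $C^\alpha$ bound): summing disjointly supported rescaled bumps of $C^\gamma$-norm $\lesssim 1$, which necessarily have height $\asymp\epsilon^{\gamma}$, yields the coordinatewise separation $d_\gamma(\mu_\omega,\mu_{\omega'})\gtrsim\|\omega-\omega'\|_1\,\epsilon^{\alpha+\gamma+d}$, while the per-coordinate Hellinger affinity stays bounded below once $n\,\epsilon^{2\alpha+d}\asymp1$; balancing gives $\epsilon^{-d}\cdot\epsilon^{\alpha+\gamma+d}=\epsilon^{\alpha+\gamma}\asymp n^{-(\alpha+\gamma)/(2\alpha+d)}$. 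For $n^{-\gamma\beta/d}$ I would again use Assouad, now over $m\asymp\epsilon^{-d}$ \emph{geometric} bumps that push the embedded manifold perpendicularly by $\asymp\epsilon^{\beta}$ (the largest compatible with $\beta$-smoothness) over a chart region of diameter $\epsilon$, while re-weighting the density so that the pushforward of $\mu_\omega$ onto the base chart coordinate does not depend on $\omega$; then $\mu_\omega$ and any one-bit flip agree off the affected region and are mutually singular on it, so their squared Hellinger distance is $\asymp\epsilon^{d}$ and the affinity is controlled once $n\,\epsilon^{d}\asymp1$, whereas $c\,{\rm dist}(\,\cdot\,,\m M_{\omega'})^{\gamma}\in C^\gamma_1(\mb R^D)$ (recall ${\rm dist}(\cdot,A)^{\gamma}\in C^{\gamma}(\mb R^D)$, already exploited for $d_\gamma^{\rm S}$) gives $d_\gamma(\mu_\omega,\mu_{\omega'})\gtrsim\|\omega-\omega'\|_1\,(\epsilon^{\beta})^{\gamma}\,\epsilon^{d}$; balancing gives $\epsilon^{-d}\cdot\epsilon^{\beta\gamma+d}=\epsilon^{\beta\gamma}\asymp n^{-\gamma\beta/d}$. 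In all three cases one verifies the perturbed objects lie in $\m P^\ast(d,D,\alpha,\beta,L^\ast)$, which forces $L^\ast\geq L_0$.

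\textbf{Upper bound.} First, the exact-representation claim: for $\mu^\ast\in\m P^\ast$ and the $r_m$ small enough that each $\mb B_{r_m}(a_m)\cap\m M$ lies inside a single chart of the atlas of $\m M$, the measure $\rho_m\mu^\ast$ (with $\{\rho_m\}$ the partition of unity subordinate to $\ms O_M$ from Section~\ref{sec:pou}) is supported there and equals, after an affine change of variables, the pushforward of an $\alpha$-smooth density $\xi_m$ on $\mb B_1^d$ (bounded below) by the graph parametrization $G_{[m]}$, whose $C^\beta$-norm is $\lesssim L^\ast$; a Moser/Knothe transport argument writes $\xi_m=(V_{[m]})_\#\nu_0$ with $\nu_0={\rm Unif}(\mb B_1^d)$ and $V_{[m]}\in C^{\alpha+1}$ of norm bounded in terms of $L^\ast$, so for $L\geq L_1$ we get $\mu^\ast=\sum_m\mu^\ast(\rho_m)\,\m A\big(G_{[m]},(V_{[m]})_\#\nu_0,\rho_m\big)\in\m S^{\rm ap}_{\nu_0}$, using $\sum_m\rho_m\equiv1$ on $\m M$. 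It remains to build a pilot $\wt\mu_n=\sum_m\wh w_m\,\wt\mu_{n,m}$ with $\wh w_m=n^{-1}\sum_i\rho_m(X_i)$ and, within patch $m$: (i) $\wh{\m M}_m$ fitting a degree-$(\lceil\beta\rceil-1)$ local polynomial to the point cloud $\{X_i\in\mb B_{r_m}(a_m)\}$ in estimated tangent coordinates---since the samples lie exactly on $\m M$, covering/interpolation bounds give uniform graph error $\lesssim(n/\log n)^{-\beta/d}$, hence $\mb E[d_\gamma(T_\#\mu^\ast,\mu^\ast)]\lesssim(n/\log n)^{-\gamma\beta/d}$ for $T$ the nearest-point projection onto $\wh{\m M}_m$; (ii) $\wt\mu_{n,m}$ placing on $\wh{\m M}_m$ a hard-thresholded wavelet (Besov) density estimate in chart coordinates, built from the projected samples $\{T(X_i)\}$, targeting the $\alpha$-smooth chart density of $T_\#\mu^\ast$. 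Decomposing $d_\gamma(\wt\mu_n,\mu^\ast)\leq d_\gamma(\wt\mu_n,T_\#\mu^\ast)+d_\gamma(T_\#\mu^\ast,\mu^\ast)$ over the $O(1)$ patches, the second piece is the manifold term above, and the first is a density-estimation error measured in $d_\gamma$; a chaining argument over $C^\gamma_1(\mb R^D)$ restricted to a tube around the $d$-dimensional patch (so the effective entropy dimension is $d$, not $D$) bounds it by $(n/\log n)^{-1/2}\vee(n/\log n)^{-(\alpha+\gamma)/(2\alpha+d)}$ with wavelet resolution $2^{j_2}\asymp(n/\log n)^{1/(2\alpha+d)}$, the improvement over the density rate $n^{-\alpha/(2\alpha+d)}$ coming precisely from the $\gamma$-smoothness of the discriminators. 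Plugging into $d_\gamma(\wh\mu,\mu^\ast)\leq2\,d_\gamma(\wt\mu_n,\mu^\ast)$ finishes Part~2, with the extra logarithms absorbed into $\ms O_M$'s fixed cardinality.

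\textbf{Main obstacle.} The delicate point is item (i) when $\gamma>1$: a manifold recovered only to Hausdorff error $\rho\asymp(n/\log n)^{-\beta/d}$ must contribute merely $\rho^{\gamma}$---not $\rho$---to $d_\gamma$, so the crude transport bound $|\mb E[f(X)]-\mb E[f(T(X))]|\lesssim\mb E\|X-T(X)\|$ is not enough. One must argue instead that the local polynomial estimator matches the true manifold's jet to sufficiently high order that, Taylor-expanding a $C^\gamma$ test function about $\wh{\m M}_m$, every term of order $<\gamma$ integrates to a lower-order contribution, leaving only the $\rho^{\gamma}$ remainder; quantifying this cancellation, and its coupling with the simultaneously estimated density and the partition-of-unity weights, is the heart of the upper bound. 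A secondary technical step is the transport realization $\xi_m=(V_{[m]})_\#\nu_0$ with a uniformly controlled $C^{\alpha+1}$-norm, needed so that $\wh\mu$ genuinely lands in $\m S^{\rm ap}_{\nu_0}$ and not merely in the larger class $\m S^{\rm ap}$.
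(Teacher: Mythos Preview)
Your plan is sound and close to the paper's. The paper also treats the three terms separately, but uses Fano's inequality (with the Varshamov--Gilbert construction) rather than Assouad for the two nonparametric terms. For the manifold term $n^{-\gamma\beta/d}$ the paper first observes, as you implicitly do, that only $\gamma<1$ matters (otherwise this term is dominated), so your discriminator ${\rm dist}(\cdot,\m M_{\omega'})^\gamma\in C^\gamma$ is legitimate. One technical point: since any two manifold-perturbed $\mu_\omega,\mu_{\omega'}$ are mutually singular, pairwise $D_{\rm KL}$ is infinite; your Assouad route with Hellinger avoids this, while the paper instead bounds $D_{\rm KL}(\mu_h\,\|\,\bar\mu)$ against the \emph{mixture} $\bar\mu$ via a symmetric two-sided Varshamov--Gilbert lemma, which is a different (and slightly slicker) device. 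Either works.

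\textbf{Upper bound: the gap.} Your pilot-measure scheme $\wh\mu=\argmin_{\mu\in\m S^{\rm ap}_{\nu_0}}d_\gamma(\mu,\wt\mu_n)$ with $d_\gamma(\wh\mu,\mu^\ast)\le 2\,d_\gamma(\wt\mu_n,\mu^\ast)$ breaks down precisely at the obstacle you flag, and jet-matching does not rescue it. For $\gamma>1$, Taylor expansion gives
\[
\int f\,\dd(T_\#\mu^\ast)-\int f\,\dd\mu^\ast=\sum_{1\le|j|\le\lfloor\gamma\rfloor}\frac{1}{j!}\int f^{(j)}(X)\big(T(X)-X\big)^j\,\dd\mu^\ast+O(\rho^\gamma),
\]
and the $|j|=1$ term is generically of order $\rho=(n/\log n)^{-\beta/d}$, which can strictly exceed the target rate (e.g.\ $d=4,\beta=1.5,\alpha=0.5,\gamma=2$: target $n^{-1/2}$, linear term $n^{-3/8}$). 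Matching the jet of $\wh{\m M}_m$ to that of $\m M$ makes $\|T(x)-x\|$ small but produces no cancellation in $\int f'(X)(T(X)-X)\,\dd\mu^\ast$; nor can any \emph{measure} $\wt\mu_n$ absorb these terms, since they involve $f^{(j)}$. The paper's resolution is to abandon the pilot measure and work at the level of linear functionals: it builds a surrogate $\wh{\m J}(f)=\wh{\m J}_l(f)+\wh{\m J}_h(f)+\wh{\m J}_s(f)$ where the correction
\[
\wh{\m J}_{s}(f)=-\frac{1}{|I_2|}\sum_{i\in I_2}\sum_{1\le|j|\le\lfloor\gamma\rfloor}\frac{1}{j!}\,f^{(j)}(X_i)\big(\wh G\circ\wh Q(X_i)-X_i\big)^j\rho_m(X_i)
\]
explicitly subtracts the sample version of those Taylor terms. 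The estimator is then $\wh\mu=\argmin_{\mu\in\m S}\sup_f|\int f\,\dd\mu-\wh{\m J}(f)|$, and the basic inequality $d_\gamma(\wh\mu,\mu^\ast)\le 2\sup_f|\mb E_{\mu^\ast}f-\wh{\m J}(f)|$ replaces your triangle inequality. Controlling $\sup_f|\wh{\m J}_s(f)-\m J_s(f)|$ is itself nontrivial (it is an empirical process indexed by $f^{(j)}\in C^{\gamma-|j|}$, handled by chaining with an intrinsic-dimension covering lemma for $C_1^{\gamma-|j|}(\mb R^D)$ restricted to $G^\ast(\mb B_1^d)$), and yields the extra term $(n/\log n)^{-(\gamma+\beta-1)/d}$, which is then absorbed into $(n/\log n)^{-(\alpha+\gamma)/(2\alpha+d)}$ using $\alpha\le\beta-1$.

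\textbf{Transport step.} Knothe--Rosenblatt does not in general deliver a $C^{\alpha+1}$ map from $C^\alpha$ densities, so that suggestion would not place $\mu^\ast$ in $\m S^{\rm ap}_{\nu_0}$ with the required norm control. The paper invokes Caffarelli's global regularity for the Brenier map between $C^\alpha$ densities bounded above and below on $\mb B_1^d$, which gives exactly $V_{[m]}\in C^{\alpha+1}$; a Moser-type argument could also be made to work but requires care at $\partial\mb B_1^d$.
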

 
 We make several brief comments on the minimax rate from Theorem~\ref{maintheorem}.  First, the logarithmic terms appearing in the upper bound of Theorem~\ref{maintheorem} enable us to obtain a high probability bound for further bounding the expected loss. Second, the ambient space dimension $D$ does not appear in the exponents of the minimax rate, so the problem of estimating a distribution on a low-dimensional submanifold does not suffer from the ``curse of dimensionality'' due to a large $D$.
 Third, recall that the adversarial loss $d_\gamma$ employed in this paper for distribution estimation captures two aspects of the data generating process: supporting manifold recovery and density estimation on the manifold (c.f.~Section~\ref{se:adv_loss}). Both of these two aspects are reflected in the derived minimax rate, as we describe in the following.
 
In fact,~\cite{10.1214/18-AOS1685} proves the minimax optimal rate $n^{-\frac{\beta}{d}}$ of estimating a $d$-dimensional $\beta$-smooth submanifold under the Hausdorff distance, which is related to the third term $n^{-\frac{\gamma\beta}{d}}$ in our rate under $\gamma=1$. As we discussed in Section~\ref{se:adv_loss}, our adversarial loss $d_\gamma$ under $\gamma=1$ can be interpreted as an average version of the Hausdorff distance. Therefore, term $n^{-\frac{\gamma\beta}{d}}$ is coming from estimating the unknown support of $\mu^\ast$, or supporting manifold recovery (see Remark~\ref{rmk:gamma} for more discussions). Moreover, in absence of a low-dimensional submanifold structure, the derived rate reduces to the minimax rate $n^{-\frac{1}{2}}\vee n^{-\frac{\alpha+\gamma}{2\alpha+d}}$ (by taking $\beta=\infty$) of estimating a $\alpha$-smooth density on $[0,1]^d$, for $\alpha\in[0,\infty)$, proved in~\cite{liang2020generative}.
In another related work,~\cite{10.1214/21-EJS1826} prove that a carefully constructed kernel density estimator achieves the rate $n^{-\frac{\alpha}{2\alpha+d}}$, when\footnote{The manifold regularity $\alpha>0$ defined in~\cite{10.1214/21-EJS1826} is related to our manifold smoothness level $\beta>1$ via $\alpha=\beta-1$.}$\alpha\in[0,\beta-1]$, for the pointwise $\ell_p$ loss of estimating a smooth density supported by an unknown $d$-dimensional submanifold. Notice that their pointwise loss only concerns the density difference evaluated on the submanifold, which already uses the knowledge of the manifold in defining the loss, explaining why their rate does not involve a term like $n^{-\frac{\beta}{d}}$ due to supporting manifold estimation.
According to these reasons, the second term $n^{-\frac{\alpha+\gamma}{2\alpha+d}}$ in our rate can be interpreted as a consequence of smooth density estimation on the $d$-dimensional submnaifold as if the manifold known, where the extra $\gamma$ in the exponent is due to the smoothness of the discriminator class (pointwise loss can be viewed as a discontinuous discriminator, or $\gamma=0$).  
 
 \begin{figure}[t]
\centering  
 \label{figure:minimaxrate1}
\includegraphics[width=0.6\textwidth]{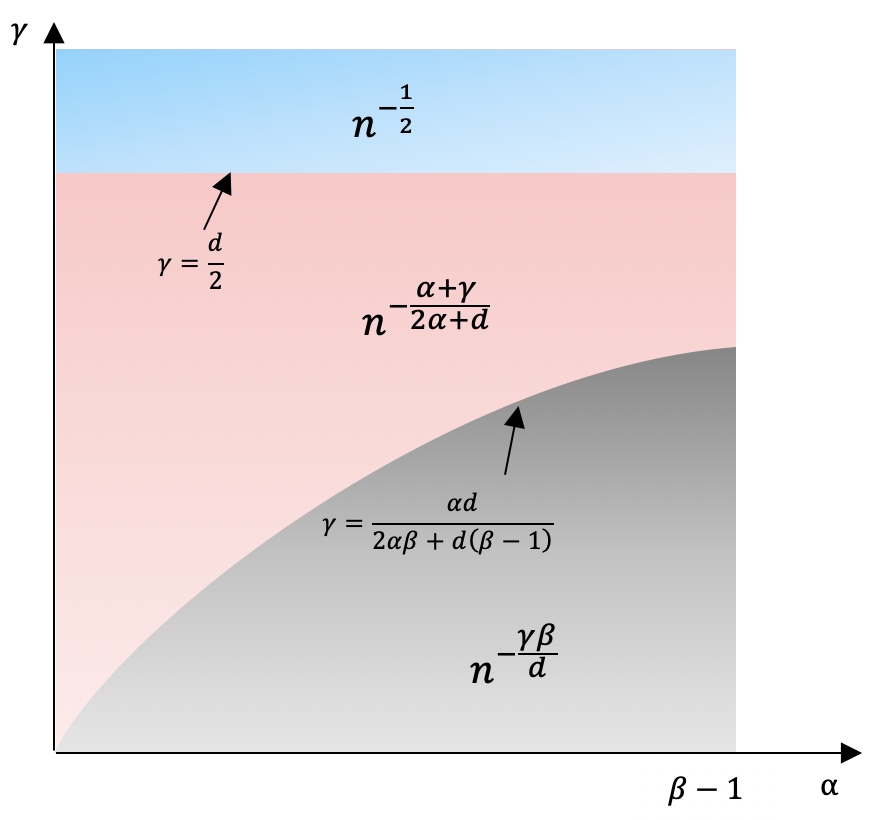}
 \caption{Diagram for the minimax rate $n^{-\frac{1}{2}}\vee n^{-\frac{\alpha+\gamma}{2\alpha+d}}\vee n^{-\frac{\gamma\beta}{d}}$ for fixed $d\in \mathbb{N}^+$ and $\beta>1$.}
\label{figure:minimaxrate}
\end{figure}
 
Figure~\ref{figure:minimaxrate} depicts the three regimes of the
problem characteristics identified by Theorem~\ref{maintheorem}, defined by which of the three terms in the minimax rate $n^{-\frac{1}{2}}\vee n^{-\frac{\alpha+\gamma}{2\alpha+d}}\vee n^{-\frac{\gamma\beta}{d}}$ dominates.
From the diagram, there exist transitions at $\gamma=d/2$ and $\gamma=\alpha d/[2\alpha\beta+d(\beta-1)]$. When the discriminator smoothness level $\gamma$ satisfies $\gamma\geq d/2$ so that the discriminator class is relatively small, the rate is the parametric root-$n$ rate $n^{-\frac{1}{2}}$. When the discriminator smoothness level is moderate, or $  {\alpha d}/[2\alpha\beta+d(\beta-1)]\leq  \gamma<d/2$, the term $n^{-\frac{\alpha+\gamma}{2\alpha+d}}$ due to $d$-dimensional density estimation dominates the minimax rate. When $0\leq \gamma< {\alpha d}/[2\alpha\beta+d(\beta-1)]$, the minimax rate becomes $n^{-\frac{\gamma\beta}{d}}$ since with a small $\gamma$ the adversarial loss $d_\gamma(\mu,\nu)$ between two distributions $\mu$ and $\nu$ tends to be more sensitive to the misalignment between their supports supp$(\mu)$ and supp$(\nu)$ than to the discrepancy between the probability mass allocations on their respective supports (c.f.~Section~\ref{se:adv_loss}). Overall, in the regime of $\gamma\leq d/2$, increasing $\gamma$ leads to a faster rate, while the evaluation metric becomes weaker. It is also worthwhile highlighting that the transition boundary $\gamma =d/2$ between the parametric regime and the density estimation regime only depends on the discriminator smoothness $\gamma$ and intrinsic dimension $d$, while the transition boundary $\gamma = {\alpha d}/[2\alpha\beta+d(\beta-1)]$ between the density estimation regime and the supporting manifold estimation regime depends on all problem characteristics $(\alpha,\beta,\gamma,d)$ except for the ambient dimension $D$ --- the transition threshold on $\gamma$ converges to $0$ as the manifold smoothness $\beta \to\infty$ and becomes $d/(2\beta+d)$ if the distribution $\mu^\ast$ has the maximal (well-defined) smoothness degree $\alpha=\beta-1$.

 \begin{remark}\label{rmk:gamma} \normalfont  
 Taking $\gamma=0$ in Theorem~\ref{maintheorem} implies that ${\inf}_{\wh{\mu}}\,{\sup}_{\mu\in \mathcal{P}^{\ast}} \mathbb{E} \big[d_{\rm {TV}}(\wh{\mu},\mu)\big]$ is lower bounded away from zero, meaning that no estimator can consistently estimate $\mu^\ast$ relative to total variation metric. In addition, by Pinsker’s inequality, the minimax rate relative to the Jensen–Shannon (JS) divergence or KL divergence is also lower bounded away from zero. This lack of estimation consistency is due to the misalignment in supports between $\wh\mu$ and $\mu^\ast$, and theoretically explains the empirical findings made in~\citep{arjovsky2017principled, goodfellow2014generative}: the training of the original GAN, which minimizes the JS divergence~\citep{goodfellow2014generative} at the population level, tends to be unstable, while the training of GAN's using some weaker discrepancy measures, such as the Wasserstein GAN~\citep{arjovsky2017principled} using the $1$-Wasserstein distance (corresponding to $d_\gamma$ with $\gamma=1$), are more stable and the resulting generators are more accurate and reliable. In particular,~\cite{arjovsky2017principled} shows that, in the original GAN,
when the support supp$(\mu^\ast)$ of target distribution $\mu^\ast$ lies on a low-dimensional manifold and does not perfectly aligned with the support supp$(\wh \mu)$ of the output distribution $\wh \mu$ from the generator class, then there always exists a perfect discriminator in the discriminator class that separates real samples and fake samples produced by the generator with $100\%$ accuracy. As a consequence, $\wh \mu$ tends to max out in a neighborhood around supp$(\mu^\ast)$ in the ambient space due to the exploding generator gradient whose expectation and variance are infinite as the discriminator becomes closer to optimality, leading to a notorious decrease in sample quality.
 \end{remark}

 \begin{remark} \normalfont 
A more general setting adopted by some authors considers the deconvolution problem that allows the observed data to contain noises, that is, we observe a set of $n$ i.i.d.~random samples $\{Y_i\}_{i=1}^n$ generated from model $Y_i=X_i+\varepsilon_i$, where $\{X_i\}_{i=1}^n$ are samples from the target distribution $\mu^\ast$ supported on an unknown $d$-dimensional submanifold $\m M$ in $\mb R^D$,
and $\{\varepsilon_i\}_{i=1}^n$ are typically independent errors with a known distribution~\citep{10.1214/11-EJS646,10.1214/12-AOS994}.  In this noisy setting,~\cite{10.1214/11-EJS646} propose a modified kernel deconvolution estimator and show the convergence rate $O\big((\log n)^{-1}\big)$ under the $2$-Wasserstein distance with various manifolds and various noise distributions (e.g.~isotropic Gaussian noise or Gaussian noise ``perpendicular" to the manifold).
In addition,~\cite{10.1214/12-AOS994} shows that when the error follows the standard $D$-dimensional Gaussian distribution, the minimax rate of estimating manifold $\m M$ under the Hausdorff distance is extremely slow: it is lower bounded by $O\big((\log n)^{-1}\big)$.
These results suggest that estimating a low-dimensional distribution or its supporting manifold based on noisy observations is an intrinsically hard problem, which is why we focus on the noiseless case in this paper. One way commonly adopted in the literature of circumventing this slow convergence is by assuming the noise variance $\sigma^2$ to decay with the same size. If we also allow isotropic Gaussian noise in the data with variance scales as $\sigma^2 =O\big( n^{-1+\frac{2\beta}{d}}\big)$, then Corollary~\ref{noisecorollary} in Appendix~\ref{app:noisy case} shows that the estimation procedure via generative models developed in Section~\ref{sec:optimal_proce} has the same rate of convergence as if the samples are noiseless.
 \end{remark}
 
 

\subsection{Minimax-optimal estimation via generative models}\label{sec:optimal_proce}
In this subsection, we describe an estimator $\wh \mu$ constructed via generative models that achieves the minimax rate upper bound in Theorem~\ref{maintheorem}.
Generative model learning has recently become popular \citep{goodfellow2014generative, li2015generative,10.1214/19-AOS1858,arjovsky2017wasserstein,tolstikhin2019wasserstein, zhao2018infovae} due to its great practical success in generating new examples such as images and texts that are indistinguishable from real objects, and impressive computational scalability to complex and massive datasets. In the conventional framework, a single generative model $(\widetilde\nu, \widetilde G)$ is learned by solving the following minimax optimization problem, 
\begin{align}\label{Eqn:Old_est}
    \widetilde \mu= \widetilde G_\# \widetilde\nu = \argmin_{\mu \in \m D_G} \,\sup_{f\in\m F} \Big| \mb E_{\mu} [f(x)] - \frac{1}{n} \sum_{i=1}^n f(X_i)\Big|,
\end{align}
where recall that $\m D_G=\big\{G_\#\nu:\, \nu\in\Upsilon,\, G\in\m G\big\}$ is a generic generator class and $\m F$ is a generic discriminator class. Here the empirical average $n^{-1}\sum_{i=1} f(X_i)$ is a sample surrogate to the population level expectation $\mb E_{\mu^\ast}[f]$. A successful application of this procedure replies on the implicit assumption that the underlying data manifold $\m M$ admits a single chart representation. In this work, we propose a new generative model learning procedure with two improvements --- first, we employ mixtures of generative models in $\m S^{\rm ap}$ (or $\m S^{\rm ap}_{\nu_0}$) to estimate distributions on those submanifolds that cannot be covered by a single chart; second, we use a regularized surrogate $\wh{ \m J}(f)=\wh{ \m J}(f;\,X_{1:n})$ to replace $n^{-1}\sum_{i=1} f(X_i)$ in~\eqref{Eqn:Old_est}, which improves the estimation accuracy by utilizing smoothness structures in $\mu^\ast$ and discriminator $f$ and thus mitigates overfitting. 

Our procedure takes the form of
\begin{align}\label{Eqn:New_est}
    \wh\mu = \argmin_{\mu \in \m S} \sup_{f\in\m F}\Big| \mb E_{\mu} [f(X)] - \wh{ \m J}(f)\Big|,
\end{align}
where $\m S$ is the approximation family that will be chosen later. The main ingredient of our minimax upper bound proof is to bound $\sup_{f\in\m F} \big|\mb E_{\mu^\ast}[f(X)]- \wh{ \m J}(f) \big|$ since by the optimality of $\wh \mu$ in~\eqref{Eqn:New_est} and the definition of adversarial loss $d_\gamma(\cdot,\cdot)$, if the approximation family is  correctly specified so that $\mu^\ast\in \m S$, we have the following basic inequality
\begin{equation}\label{eqn:basic_ineq}
    \begin{aligned}
    d_\m F(\wh \mu,\mu^\ast) = &\, \sup_{f\in\m F}\Big| \mb E_{\wh \mu} [f(x)] - \mb E_{\mu^\ast}[f(X)]\Big|
    \leq \sup_{f\in\m F}\Big| \mb E_{\wh \mu} [f(x)] - \wh{ \m J}(f)\Big| \\
    & \qquad\qquad\qquad + \sup_{f\in\m F}\Big| \mb E_{\mu^\ast}[f(X)] - \wh{ \m J}(f)\Big| \leq 2 \sup_{f\in\m F}\Big| \mb E_{\mu^\ast}[f(X)] - \wh{ \m J}(f)\Big|.
\end{aligned}
\end{equation}
Therefore, the problem of finding an optimal estimator of $\mu^\ast$ boils down to the simultaneous estimation of functional $\mb E_{\mu^\ast}[f(X)]$ for all $f\in\m F$ with smallest worst case error. In this paper, we focus on the H\"{o}lder discriminator class $\m F= \m C_1^\gamma(\mb R^D)$. Note that the empirical average $\wh{ \m J}_{\rm ave}(f):\,=n^{-1}\sum_{i=1}^n f(X_i)$ is not an optimal choice for $\wh{ \m J}(f)$ since the following two sided high probability bound of the supremum of empirical process (c.f.~Lemma~\ref{lemmaempiricalmean} in Appendix~\ref{App:technical}
)
\begin{align}\label{Eqn:EP_bound}
   C_1 \,\sqrt{ \frac{1}{n}} \vee \frac{n^{-\frac{\gamma}{d}}}{\log n} \leq  \sup_{f\in C_1^\gamma(\mb R^D)}\Big| \mb E_{\mu^\ast}[f(X)] - \frac{1}{n} \sum_{i=1}^n f(X_i)\Big| \leq C_2 \,\sqrt{ \frac{\log n}{n}} \vee n^{-\frac{\gamma}{d}}
\end{align}
implies its rate of convergence to be strictly worse than the optimal rate (modulo $\log n$ factors) $n^{-\frac{1}{2}} \vee n^{-\frac{\alpha+\gamma}{2\alpha+d}} \vee n^{-\frac{\gamma\beta}{d}}$ ($\beta>1$) inferred by the minimax lower bound in Theorem~\ref{maintheorem}, where the third term $n^{-\frac{\gamma\beta}{d}}$ is due to the estimation of unknown submanifold $\m M$ and will disappear if $\m M$ is known. It is worthwhile noting that despite the discriminator class $\m C_1^\gamma(\mb R^D)$ being defined on $\mb R^D$, the upper bound in~\eqref{Eqn:EP_bound} only depends on the intrinsic dimension $d$ of the support of $\mu^\ast$, which is due to a covering argument. We provide two proofs to~\eqref{Eqn:EP_bound} in the appendix: one is based on the usual chaining technique in the empirical process theory; the other is based on embedding the discrimator space $C_1^\gamma(\mb R^D)$ into Besov space $B_{\infty,\infty}^\alpha(\mb R^D)$ and truncating the wavelet expansion of $f$ to a proper degree (c.f.~Appendix~\ref{sec:Wavelet_review} for a brief review about Besov spaces and wavelet expansions). The first approach based on chaining is succinct and leads to a tighter bound (no $\log n$ factors), but not easily generalizable to analyze more complicated surrogate $\wh{ \m J}(f)$ beyond the empirical average; the second approach incurs extra $\log n$ factors and is technically more involved, but its proof is more insightful and motivates our improved surrogate $\wh{ \m J}(f)$ leading to a minimax-optimal (modulo $\log n$ factors) estimator $\wh \mu$.

The primary reason for the empirical average $\wh{ \m J}_{\rm ave}(f)$ not achieving the optimal bound for $\sup_{f\in\m F}\big| \mb E_{\mu} [f(x)] - \wh{ \m J}(f)\big|$ is that $\wh{ \m J}_{\rm ave}(f)$ does not utilize the smoothness structure on the true underlying distribution $\mu^\ast$ and submanifold $\m M$. In a nutshell, our improvements on surrogate $\wh{ \m J}$ come from two sources: 
\begin{enumerate}[topsep=2pt,itemsep=0ex]
\item we plug-in a smoothness regularized empirical distribution $\widetilde \nu$ to improve the estimation on the expectation $\mb E_{\mu^\ast}[f_{\rm high}]$ of the high frequency part $f_{\rm high}$ of the discriminator $f=f_{\rm low} + f_{\rm high}$ with $f_{\rm low}$ denoting the low frequency part. This improvement reduces part of the $n^{-\frac{1}{2}}\vee n^{-\frac{\gamma}{d}}$ error in~\eqref{Eqn:EP_bound} to $n^{-\frac{1}{2}}\vee n^{-\frac{\alpha+\gamma}{2\alpha+d}}$ due to utilizing the $\alpha$-smoothness of true distribution $\mu^\ast$.
Specifically, $\widetilde \nu$ is constructed by using partition of unity and truncating the wavelet expansion of localized empirical distributions (restricted to
the open cover) to filter out the high frequency components that are unstable due to relatively high variances. 

\item we add a higher-order correction term to account for the misalignment between the effective support of regularized distribution $\widetilde \nu$ and the support of true distribution $\mu^\ast$, both having intrinsic dimension $d$. This improvement reduces part of the $n^{-\frac{1}{2}}\vee n^{-\frac{\gamma}{d}}$ error in~\eqref{Eqn:EP_bound} to $n^{-\frac{1}{2}}\vee n^{-\frac{\gamma\beta}{d}}$ due to utilizing the $\beta$-smoothness of submanifold $\m M$ ($\beta\geq 1$). Specifically, this correction is constructed using partition of unity and compensating a remainder term from the Taylor expansion of discriminator $f\in C_1^\gamma(\mb R^D)$ up to order $\lfloor \gamma \rfloor$ when estimating its expectation.
\end{enumerate}
\noindent Combining these two modifications on $\widehat{\m J}$ together improves the overall worst case error rate from $n^{-\frac{1}{2}} \vee n^{-\frac{\gamma}{d}}$ to $n^{-\frac{1}{2}} \vee n^{-\frac{\alpha+\gamma}{2\alpha+d}} \vee n^{-\frac{\gamma\beta}{d}}$.

\smallskip

To summarize, our proposed minimax-optimal estimator $\wh \mu$ is constructed in three steps.
Let $[n]= I_1\cap I_2$ be a random splitting of the data indices into two sets with $|I_1|= \lceil n/2 \rceil$ and $|I_2| = n-|I_1|$. Let $(L,\,M)$ be sufficiently large positive constants and recall that $\{\rho_m\}_{m\in[M]}$ is the partition of unity subordinate to the open cover $\ms O_M=\{\mb B_{r_m}(a_m)^{\circ}\}_{m\in[M]}$ of $\mb B_L^D$ constructed in Section~\ref{sec:pou}. 

\noindent {\bf Step 1}: (Submanifold estimation) For each open set $\mb B_{r_m}(a_m)^\circ$ in the open cover $\ms O_M$, we form estimators 
$\wh G_{[m]}$ and $\wh Q_{[m]}$ of a \footnote{Coordinate maps are not unique. For examples, the composition of any coordinate map with any $C^\infty$ diffeomorphism of $\mb R^d$ remains a coordinate map.}coordinate map $\varphi_m:\,\mb \mb B_{r_m}(a_m)^\circ \to \mb R^d$ and its inverse $\varphi_m^{-1}$ respectively by minimizing the squared reconstruction loss on samples $\{X_i:\,i\in I_1\}$,
 \begin{equation}\label{estimator1}
 (\wh{G}_{[m]},\wh{Q}_{[m]})=\underset{G\in\ms{G},\, Q\in \ms{Q}}{\arg\min}\left( \frac{1}{|I_1|}\sum_{i\in I_1} \|X_i-G\circ Q(X_i)\|_2^2 \cdot \bold{1}(X_i\in S_m^{\dagger})\right),
 \end{equation}
 where $S_m^{\dagger}=\mb B_{r_m+0.5/L}(a_m)$ is an enlargement of $\mb B_{r_m}(a_m)$ for avoiding the technical issue due to the boundary of $\mb B_{r_m}(a_m)$, 
 $\ms{G}=C_{L}^{\beta}(\mathbb{R}^d;\, \mathbb{R}^D)$ and $\ms {Q}= C_{L}^{\beta}(\mathbb{R}^D;\, \mathbb{R}^d)$. Let $\wh p_m = \frac{1}{|I_1|} \sum_{i\in I_1} \bold{1}(X_i\in S_m^{\dagger})$ denote the sample frequency of falling into $S_m^{\dagger}$ and let $\wh{\mb M}=\{m\in [M]\,:\,\wh p_m\geq \sqrt{\frac{\log n}{n}}\}$.

\noindent {\bf Step 2}: (Surrogate functional construction) For each $m\in [M]$, let $\widetilde \nu_{[m],\wh Q_{[m]}}$ be a smoothness regularized estimator of $\big(\wh Q_{[m]}\big)_\# (\rho_m \mu^\ast)$ by truncating a wavelet expansion to a finite degree. Let $J$ be the largest integer such that $2^J \leq (\frac{n}{\log n})^{\frac{1}{2\alpha+d}}$, $\Pi_J f$ denote the projection of any $f\in\m F$ onto the first scale $J$ wavelet coefficients and $\Pi_J^\perp f = f-\Pi_J f$. The precise definitions of $\widetilde \nu_{[m],\wh Q_{[m]}}$ and $\Pi_J f$ are available in Appendix~\ref{sec:Wavelet_review} . 
We form a regularized estimator of localized expectation $\mb E_{\mu^\ast}[f(X)\,\rho_m(X)]$ using samples $\{X_i:\,i \in I_2\}$ as follows. If $m\notin \wh{\mb M}$, then define $\wh {\m J}_m (f) = 0$ to avoid estimation degeneracy (see Section~\ref{Sec:more_proofs_upperbound} for more details); otherwise, define $\wh {\m J}_m (f) = \wh {\m J}_{m,s}(f) + \wh {\m J}_{m,l}(f) + \wh {\m J}_{m,h}(f)$, where
\begin{subequations}\label{eqn:surrogate_m}
\begin{align}
    \wh {\m J}_{m,l}(f) &= \frac{1}{|I_2|} \sum_{i\in I_2} (\Pi_J f)\circ\wh G_{[m]}\circ \wh Q_{[m]}(X_i) \,\rho_m(X_i), \label{eqn:low_freq}\\
    \wh {\m J}_{m,h}(f) &= \mb E_{\widetilde \nu_{[m], \wh Q_{[m]}}}\big[ (\Pi_J^\perp f)\circ \wh G_{[m]}\big],\quad \mbox{and} \label{eqn:high_freq}\\
    \wh {\m J}_{m,s}(f) &= -\frac{1}{|I_2|} \sum_{i\in I_2} \sum_{j\in \mb N_0^D\atop 1\leq |j|\leq \lfloor\gamma\rfloor} \frac{1}{j!} f^{(j)}(X_i) \, \big[\wh G_{[m]} \circ \wh Q_{[m]}(X_i) - X_i\big]^j \, \rho_m(X_i). \label{eqn:smooth_corr}
\end{align}
\end{subequations}
The first two terms $\wh {\m J}_{m,l}(f)$ and $\wh {\m J}_{m,h}(f)$ together form a sample approximation to $\mb E_{\mu^\ast}\big[ f\big(\wh G_{[m]} \circ \wh Q_{[m]}(X)\big)\,\rho_m(X)\big)\big]$, where $\wh {\m J}_{m,l}(f)$ estimates the expectation of the low frequency components collected in $f_{\rm low}=\Pi_J f$ of $f$ and $\wh {\m J}_{m,h}(f)$ estimates the high frequency components collected in $f_{\rm high}=\Pi^\perp_J f$;
the third term $\wh {\m J}_{m,s}(f)$ corresponds to a (sample version of) higher-order smoothness correction to $\mb E_{\mu^\ast}\big[ f\big(\wh G_{[m]} \circ \wh Q_{[m]}(X)\big)\,\rho_m(X)\big)\big]$ for approximating $\mb E_{\mu^\ast}\big[f(X)\,\rho_m(X)\big]$. Finally, we construct a regularized estimator of $\mb E_{\mu^\ast}[f(X)]$ as $\wh{ \m J}(f) = \sum_{m=1}^M \wh{ \m J}_m(f)$.

\noindent {\bf Step 3}: (Generative model estimation) Given an approximation family $\m S$, the estimator $\wh \mu$ is defined as
\begin{equation}\label{estimatorsmallbeta}
    \wh{\mu}=\underset{\mu\in \mathcal{S}}{\arg\min} \underset{f\in C^{\gamma}_1(\mathbb{R}^D)}{\sup}\Big(\mb E_\mu\big[f(X)] - \wh {\m J}(f)\Big).
\end{equation}


\noindent In practice, we may also use the kernel density estimator with proper bandwidth to regularize the empirical distribution of local latent variables (c.f.~Remark~\ref{rem:KDE} in Appendix~\ref{sec:Wavelet_review}) in step 2; however, the wavelet truncation is technically easier to analyze.
The following theorem shows that the estimator $\wh \mu$ in~\eqref{estimatorsmallbeta} can achieve a minimax-optimal rate (modulo logarithmic term) when the target distribution $\mu^\ast$ belongs to a larger generative model class $\m S^\ast=\m S^\ast(d,D,\alpha,\beta,\ms O_{M},L)$ that contains the distribution family $\m P^\ast$ considered in Theorem~\ref{maintheorem} as a subset, where $\ms O_M=\{\mb B_{r_m}(a_m)^{\circ}\}_{m\in[M]}$ forms a open cover for $\mb B_L^D$ and for any distribution $\mu\in \m S^\ast$ and $m\in [M]$, there exists a set $\widetilde{S}_m$ containing $\mb B_{r_m}(a_m)$, such that $\mu|_{\widetilde{S}_m}$ can be written as a single generative model $(\nu_{[m]},G_{[m]})$ with an invertible and $\beta$-smooth generative function $G_{[m]}:\mb R^d\to \mb R^D$, and an $\alpha$-smooth density $\nu_{[m]}\in \m P(\mb B_1^d)$. The precise definition of $\m S^\ast$ is given  in Appendix~\ref{sec:gen_model_class}. In particular, Lemma~\ref{lemmaboundaryless} in Appendix~\ref{sec:gen_model_class} shows that for a suitable choice of the open cover $\ms O_M$, $\m P^\ast$ is a subset of $\m S^\ast$; moreover, the distribution estimator class $\m S_{\nu_0}^{\rm ap}$ with $\nu_0$ being ${\rm Unif}(\mb B_1^d)$ is sufficient to cover $\m P^\ast$, while we need the more flexible approximation family $\m S^{\rm ap}$ to cover the generative model class $\m S^\ast$, thus the following theorem gives a stronger result than the upper bound in Theorem~\ref{maintheorem}. See Lemma~\ref{lemmaboundaryless} in Appendix~\ref{sec:gen_model_class} for a precise relationship among these distribution classes. It is worth mentioning that our proof of Lemma~\ref{lemmaboundaryless} applies the Caffarelli’s global regularity theory~\citep[Theorem 12.50 of][]{Villani2009,10.2307/2118564} from optimal transport theory to construct the generative maps $\{V_{[m]}\}_{m=1}^M$ in the definition of class $\m S_{\nu_0}^{\rm ap}$ (c.f.~Remark~\ref{remarl:OT} in in Appendix~\ref{sec:gen_model_class} for further details).

\begin{theorem}[Minimax upper bound in generative model class]\label{upperboundgenerative}
Let the approximation family $\m S$ to be $\m S^{\rm ap}(d,D,\alpha,\beta,\ms O_M,L)$.  Suppose $\mu^\ast\in \m S^\ast(d,D,\alpha,\beta, \ms O_{M},L)$ and $X_{1:n}$ are $i.i.d.$ samples from $\mu^\ast$. If $D>d$, $\gamma,\alpha\geq 0$ and $\beta>1$, then for any positive constant $c$, there exist positive constants $c_1$ and $n_0$ such that when $n\geq n_0$, it holds with probability larger than $1-n^{-c}$ that 
\begin{equation*}
   \underset{f\in C^{\gamma}_1(\mathbb{R}^D)}{\sup}\Big(\mb E_{\mu^\ast}\big[f(X)] - \sum_{m=1}^M\wh {\m J}_m(f)\Big)\leq  c_1\, \Big(\frac{n}{\log n}\Big)^{-\frac{1}{2}}\vee \Big(\frac{n}{\log n}\Big)^{-\frac{ \alpha+\gamma}{2\alpha+d}}\vee \Big(\frac{n}{\log n}\Big)^{-\frac{\gamma\beta}{d}}.
\end{equation*}
As a result 
\begin{equation*}
    \mb E [d_{\gamma}(\wh\mu, \mu^\ast)]\leq C\, \Big(\frac{n}{\log n}\Big)^{-\frac{1}{2}}\vee \Big(\frac{n}{\log n}\Big)^{-\frac{ \alpha+\gamma}{2\alpha+d}}\vee \Big(\frac{n}{\log n}\Big)^{-\frac{\gamma\beta}{d}}.
\end{equation*}
\end{theorem}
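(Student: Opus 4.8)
The plan is to first reduce the asserted bound on $\mathbb{E}[d_\gamma(\wh\mu,\mu^\ast)]$ to the stated high-probability bound on $\sup_{f\in C_1^\gamma(\mb R^D)}\big(\mathbb{E}_{\mu^\ast}[f(X)]-\sum_{m=1}^M\wh{\m J}_m(f)\big)$, and then to establish the latter. For the reduction, Lemma~\ref{lemmaboundaryless} shows $\mu^\ast\in\m S^\ast$ lies in the approximation family $\m S=\m S^{\rm ap}$ (up to a misspecification error of smaller order than $(n/\log n)^{-\gamma\beta/d}$ under $d_\gamma$, which we absorb), so the basic inequality \eqref{eqn:basic_ineq} applies and gives $d_\gamma(\wh\mu,\mu^\ast)\le 2\sup_{f\in C_1^\gamma}\big(\mathbb{E}_{\mu^\ast}[f]-\wh{\m J}(f)\big)$. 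Since every $f\in C_1^\gamma(\mb R^D)$ has $\|f\|_\infty\le 1$, $d_\gamma$ is bounded by $2$ on all probability measures; splitting the expectation over the event of probability at least $1-n^{-c}$ on which the high-probability bound holds, and bounding $d_\gamma$ by $2$ on its complement, yields (for $c>1$) the claimed bound on $\mathbb{E}[d_\gamma(\wh\mu,\mu^\ast)]$. The remaining task is the high-probability bound.

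For that bound, decompose via the partition of unity: $\mathbb{E}_{\mu^\ast}[f(X)]=\sum_{m=1}^M\mathbb{E}_{\mu^\ast}[f(X)\rho_m(X)]$, where $M=O(1)$, and compare term by term with $\sum_m\wh{\m J}_m(f)$. For $m\notin\wh{\mb M}$ we have $\wh{\m J}_m(f)=0$, and a Bernstein bound turns $\wh p_m<\sqrt{\log n/n}$ into $\mu^\ast(S_m^\dagger)\lesssim\sqrt{\log n/n}$ with high probability, whence $|\mathbb{E}_{\mu^\ast}[f\rho_m]|\le\mu^\ast(S_m^\dagger)\lesssim\sqrt{\log n/n}$, which is within the target rate. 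For $m\in\wh{\mb M}$, Taylor-expand $f\in C_1^\gamma$ around $X$ and evaluate at $\wh G_{[m]}\circ\wh Q_{[m]}(X)$ up to order $\lfloor\gamma\rfloor$ to write
\begin{align*}
\mathbb{E}_{\mu^\ast}[f(X)\rho_m(X)]
={}&\underbrace{\mathbb{E}_{\mu^\ast}\big[f\big(\wh G_{[m]}\circ\wh Q_{[m]}(X)\big)\rho_m(X)\big]}_{\text{(i)}}
+\underbrace{\mathbb{E}_{\mu^\ast}\big[\mathrm{Rem}_m(X)\rho_m(X)\big]}_{\text{(iii)}}\\
&-\underbrace{\mathbb{E}_{\mu^\ast}\Big[\textstyle\sum_{1\le|j|\le\lfloor\gamma\rfloor}\tfrac{1}{j!}f^{(j)}(X)\big[\wh G_{[m]}\circ\wh Q_{[m]}(X)-X\big]^j\rho_m(X)\Big]}_{\text{(ii)}},
\end{align*}
with Taylor remainder $|\mathrm{Rem}_m(X)|\lesssim\|\wh G_{[m]}\circ\wh Q_{[m]}(X)-X\|^\gamma$ on ${\rm supp}(\rho_m)$. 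Term (i) is estimated by $\wh{\m J}_{m,l}(f)+\wh{\m J}_{m,h}(f)$ from \eqref{eqn:low_freq}--\eqref{eqn:high_freq}, term (ii) by $\wh{\m J}_{m,s}(f)$ from \eqref{eqn:smooth_corr}, and (iii) is the unestimated bias.

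To control the error of (i), split $f=\Pi_Jf+\Pi_J^\perp f$ at $2^J\asymp(n/\log n)^{1/(2\alpha+d)}$. Conditionally on $I_1$ (hence on $\wh G_{[m]},\wh Q_{[m]}$), $\wh{\m J}_{m,l}(f)$ is an empirical average over the independent sample $\{X_i:i\in I_2\}$ of $(\Pi_Jf)\circ\wh G_{[m]}\circ\wh Q_{[m]}\cdot\rho_m$; as $f$ ranges over $C_1^\gamma$, the scale-$j$ wavelet block of $\Pi_Jf$ has sup-norm $\lesssim2^{-j\gamma}$ and only $\asymp2^{jd}$ wavelets are relevant near the $d$-dimensional range of $\wh G_{[m]}$ (the covering argument behind \eqref{Eqn:EP_bound}), so a chaining bound gives a uniform empirical-process error $\lesssim\sum_{j\le J}2^{-j\gamma}\sqrt{2^{jd}/n}\lesssim(n/\log n)^{-1/2}\vee(n/\log n)^{-(\alpha+\gamma)/(2\alpha+d)}$. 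For $\wh{\m J}_{m,h}(f)$, use that $(\wh Q_{[m]})_\#(\rho_m\mu^\ast)$ has a $C^\alpha$ density $q_m$ (pushforward of an $\alpha$-smooth density by the $\beta$-smooth $\wh Q_{[m]}$, the $(\beta-1)$-smooth Jacobian factor not lowering smoothness since $\alpha\le\beta-1$), that $\widetilde\nu_{[m],\wh Q_{[m]}}$ satisfies $\|\widetilde\nu_{[m],\wh Q_{[m]}}-q_m\|_{\rm TV}\lesssim 2^{-J\alpha}+\sqrt{2^{Jd}/n}\lesssim(n/\log n)^{-\alpha/(2\alpha+d)}$ with high probability, and that $\|\Pi_J^\perp f\|_\infty\lesssim2^{-J\gamma}$; pairing the last two gives a uniform error $\lesssim 2^{-J\gamma}(n/\log n)^{-\alpha/(2\alpha+d)}\asymp(n/\log n)^{-(\alpha+\gamma)/(2\alpha+d)}$. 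The error of (ii)$-\wh{\m J}_{m,s}(f)$ is a further empirical process over $I_2$ of terms $f^{(j)}(X_i)\big[\wh G_{[m]}\circ\wh Q_{[m]}(X_i)-X_i\big]^j\rho_m(X_i)$, $1\le|j|\le\lfloor\gamma\rfloor$; the reconstruction bound below makes $\|[\wh G_{[m]}\circ\wh Q_{[m]}-{\rm id}]^j\|_\infty$ super-small, so this term contributes at most $O((n/\log n)^{-1/2})$ and is negligible.

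The crux is the bias (iii), namely
\begin{equation*}
\mathbb{E}_{\mu^\ast}\big[\|\wh G_{[m]}\circ\wh Q_{[m]}(X)-X\|^\gamma\,\mathbf{1}(X\in{\rm supp}(\rho_m))\big]\ \lesssim\ (n/\log n)^{-\gamma\beta/d},
\end{equation*}
which is where the $\beta$-smoothness of $\m M$ and the least-squares fit \eqref{estimator1} enter. I would argue in four steps. (a) Using that $\mu^\ast\in\m S^\ast$ makes $\mu^\ast$ restricted to a set containing $\mb B_{r_m}(a_m)$ a single $\beta$-smooth generative model, a Whitney-type extension produces $(G^\star,Q^\star)\in\ms G\times\ms Q$ with $G^\star\circ Q^\star(x)=x$ throughout that set, so for $L$ large the minimizer of \eqref{estimator1} also achieves zero reconstruction loss and thus $\wh G_{[m]}\circ\wh Q_{[m]}(X_i)=X_i$ for every $i\in I_1$ with $X_i\in S_m^\dagger$. (b) With high probability $\{X_i:i\in I_1,\,X_i\in S_m^\dagger\}$ is a $\delta$-net of $\m M\cap\mb B_{r_m}(a_m)$ with $\delta\asymp(n/\log n)^{-1/d}$ (cover by $\asymp\delta^{-d}$ balls of radius $\delta/2$, each containing a sample point by the density lower bound). (c) Hence $h_m:=\wh G_{[m]}\circ\wh Q_{[m]}|_{\m M}-\iota$ is a $C^\beta(\m M;\mb R^D)$ map of bounded norm (a composition of bounded-$C^\beta$ maps) vanishing on this net. (d) A bounded-$C^\beta$ function vanishing on a $\delta$-net has sup-norm $\lesssim\delta^\beta$: working chart by chart, iterated finite-difference estimates bootstrap the bound $|g^{(j)}(x_0)|\lesssim\delta^{\beta-|j|}$ on each mixed derivative of order $|j|\le\lfloor\beta\rfloor$ at net points $x_0$, and then a Taylor expansion yields $\|g\|_\infty\lesssim\delta^\beta$. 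Combining (a)--(d) gives $\|h_m\|_{L^\infty(\m M\cap\mb B_{r_m}(a_m))}\lesssim\delta^\beta$, hence the display. This step — together with the related point that $\wh Q_{[m]}$ must be shown to be close enough to a genuine coordinate map (injective with Jacobian bounded below on $\m M\cap{\rm supp}(\rho_m)$) for the $C^\alpha$-density claim used in (i) to hold, which follows from the near-identity property $\wh G_{[m]}\circ\wh Q_{[m]}\approx{\rm id}$ just established — is the main obstacle, and both draw on manifold-geometry estimates in the spirit of \cite{10.1214/18-AOS1685}. Finally, summing the three error contributions over the $O(1)$ indices $m\in\wh{\mb M}$, adding the $O(\sqrt{\log n/n})$ contribution from $m\notin\wh{\mb M}$, and taking the supremum over $f\in C_1^\gamma(\mb R^D)$ yields the high-probability bound, completing the argument.
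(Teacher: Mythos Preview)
Your overall strategy matches the paper's (Section~\ref{sec:proofupperbound}): reduce via the basic inequality, decompose $\mb E_{\mu^\ast}[f\rho_m]$ into low/high-frequency and smoothness-correction pieces, and bound each separately (Lemmas~\ref{lemma:low_freq}--\ref{lemma:smooth_corr}, with Lemma~\ref{lemma:mani_est} supplying the reconstruction error). The gap is in your step~(b). You invoke ``the density lower bound'' to get a uniform $\delta$-net with $\delta\asymp(n/\log n)^{-1/d}$, but Theorem~\ref{upperboundgenerative} is stated for the full class $\m S^\ast$, whose condition~2(b) allows $\nu_{[m]}^\ast(z)\asymp(1-\|z\|_2)^{L_0}$ with $L_0>0$. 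In that regime the density is \emph{not} bounded below, the sample is sparser near $\partial\mb B_1^d$, and your sup-norm conclusion $\|h_m\|_\infty\lesssim\delta^\beta$ fails there. The paper's remedy (Lemma~\ref{lemma:pointwise_error}) is a location-dependent pointwise bound $\|G^\ast_{[m]}(z)-\wh f_{[m]}(z)\|\lesssim\big(\tfrac{\log\wt n}{\wt n\,(1-\|z\|)^{L_0}}\big)^{\beta/d}$ that degrades toward the boundary; integrating against $\nu_{[m]}^\ast$ in Lemma~\ref{lemma:mani_est} then trades this degradation against the matching density decay to recover the $(n/\log n)^{-\gamma\beta/d}$ rate in expectation. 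Your net-plus-interpolation idea is morally the localized basic inequality~\eqref{leup2eq1} and would go through for $\m P^\ast$, but for $\m S^\ast$ it needs this refinement.

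A smaller point: the claim that (ii)$-\wh{\m J}_{m,s}(f)=O((n/\log n)^{-1/2})$ uniformly in $f$ skips the chaining over $f^{(j)}\in C_1^{\gamma-|j|}(\mb R^D)$; the actual bound (see~\eqref{uplargebeta1}) carries an extra term $(n/\log n)^{-(\gamma+\beta-1)/d}$, harmless only because $\beta-1\ge\alpha$ makes it dominated by the density-estimation term. Also, the inclusion $\m S^\ast\subset\m S^{\rm ap}$ is Lemma~\ref{lemma:gen_model_as_mix}, not Lemma~\ref{lemmaboundaryless}, and holds exactly (no misspecification error to absorb).
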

Based on Lemma~\ref{lemmaboundaryless} and the first statement of Theorem~\ref{upperboundgenerative}, when the approximation family $\m S$ is chosen to be $\m S_{\nu_0}^{\rm ap}(d,D,\alpha,\beta,\ms O_M,L)$ with $\nu_0={\rm Unif}(\mb B_1^d)$, the estimator $\wh{\mu}$ can achieve the minimax upper bound in Theorem~\ref{maintheorem}  for a large enough constant $L$ and a suitable choice of the open cover $\ms O_M$.

 \begin{remark} \normalfont
  The most essential conditions in the definition of the generative model class $\m S^\ast$ for obtaining the minimax-optimal rate  are the following: (1)
  the invertibility and the H\"{o}lder regularity of the generative functions $G_{[m]}$ in each generative model, which enable us to obtain a feasible estimator $(\wh G_{[m]},\wh Q_{[m]})$ for reconstructing the data around $\mb B_{r_m}(a_m)$ in Step 1 of the construction of $\wh \mu$; (2) the  H\"{o}lder regularity of reweighted densities $\nu_{[m]}\cdot (\rho_m\circ G_{[m]})$ of the local variables over the entire space $\mb R^d$, as this can lead to sufficiently high smoothness 
  of $(\wh{Q}_{[m]})_{\#}(\rho_m\mu^\ast)$ (with high probability) and thus enables us to construct a smoothness regularized estimator $\wt{\nu}_{[m],\wh{Q}_{[m]}}$ to the density of $(\wh{Q}_{[m]})_{\#}(\rho_m\mu^\ast)$ in Step 2 of the construction of $\wh \mu$.
  Moreover, our theoretical development relaxes the common assumption made in the manifold learning literature~\citep[e.g.\!][]{10.1214/18-AOS1685} on the smoothness of the underlying data submanifold from $C^2$ to $C^\beta$ with $\beta>1$.
 \end{remark}
  \begin{remark}\normalfont
 A similar estimator can be constructed to achieve the same rate in Theorem~\ref{upperboundgenerative} for estimating the constrained distribution $\mu^\ast|_{K_0}$\footnote{Note that in general $d_{\gamma}(\wh{\mu}|_{K}, \mu^\ast|_{K})$ can not be simply upper bounded by $d_{\gamma}(\wh{\mu},\mu^\ast)$ for positive $\gamma$, as the indicator function $\bold{1}_K$ is discontinuous.}, where $K_0$ is a fixed compact set and the intersection of $K_0$ and $\m M={\rm supp}(\mu^\ast)$ is away from the boundary of $\m M$ if it exists. The main difference is that, in this case, after estimating a parametrization $(\wh G,\wh Q)$ of $\m M \cap K_0$, we need an extra step to estimate the support of $\wh{Q}_{\#}(\mu^\ast|_{K_0})$ before constructing a finite sample surrogate to $\mb E_{\mu^\ast|_{K}}[f(\wh{G}\circ \wh{Q}(X))]$. Further details are included in Appendices~\ref{sec:unbounded} and~\ref{proof:unbounded}.
  \end{remark}
 \begin{remark}\normalfont
 The surrogate functional $\widehat{\m J}(f)$ can also be used to construct a test statistic in the two sample hypothesis testing problem, where we reject the hypothesis $H_0\,:\,\mu_1=\mu_2$ if $\sup_{f\in C^{\gamma}_1(\mb R^D)}\big(\widehat{\m J} (f;\,X_{1:n})-\widehat{\m J} (f;\,Y_{1:n})\big)\geq c\,\delta_n^\ast \equiv c\,\big(\frac{n}{\log n}\big)^{-\frac{1}{2}}\vee \big(\frac{n}{\log n}\big)^{-\frac{ \alpha+\gamma}{2\alpha+d}}\vee \big(\frac{n}{\log n}\big)^{-\frac{\gamma\beta}{d}}$ given two set of $n$ random samples $X_{1:n}$ and $Y_{1:n}$ independently obtained from two underlying distributions $\mu_1,\mu_2\in \m S^\ast$, respectively. The test statistic can successfully detect any local alternatives that separate from the null by at least $\delta_n^\ast$ (up to a multiplicative constant) in the $d_{\gamma}$ metric; see Appendix~\ref{sec:two-sample} for further details.
 \end{remark}

\subsection{Data-driven adaptive distribution estimation}
 Since the problem of adaptive estimation of the intrinsic dimensionality of (noisy) manifold-valued data is a fairly well-studied topic~\citep{camastra2002estimating,carter2009local,farahmand2007manifold,levina2004maximum,little2009estimation,yang2016bayesian}, we may apply any of these methods to obtain a a high probability consistent estimator of $d$ beforehand. Therefore, we will only focus on the adaption of our estimator to the unknown manifold smoothness level $\beta^\ast$ and distribution smoothness level $\beta^\ast$.
 Suppose the true smoothness levels $\beta^\ast\in[\beta_{\min},\beta_{\max}]$ and $\alpha^\ast\in [\alpha_{\min},\alpha_{\max}]$, where $\alpha_{\min}\geq 0$ and $\beta_{\min}>1$. Choose discrete grids
 \begin{equation*}
     \m B_1=\{\beta_{\min}=\beta_1<\cdots<\beta_{N_1}=\beta_{\max}\} \ \ \mbox{and}\ \
     \m B_2=\{\alpha_{\min}=\alpha_1<\cdots<\alpha_{N_2}=\alpha_{\max}\},
 \end{equation*}
 where  $\beta_{j}-\beta_{j-1}= c_1/\log n$ and $\alpha_{k}-\alpha_{k-1}= c_2/\log n$ for $j\in [N_1]$ and $k\in [N_2]$. Recall that $\wh{\mb M}=\{m\in [M]\,:\, \wh{p}_m\geq \sqrt{\frac{\log n}{n}}\}$ from Section~\ref{sec:optimal_proce}. For $m\in \wh{\mb M}$  and $\beta_j\in \m B_1$, let $(\wh{G}_{[m]}^{[\beta_j]},\wh{Q}_{[m]}^{[\beta_j]})$ denote the submanifold estimator defined in equation~\eqref{estimator1} with $\beta=\beta_j$. Also let $\wh\beta_{[m]}$ to be the maximal smoothness level in $\m B_1$ that minimizes the reconstruction error, or
 \begin{equation*}
     \wh\beta_{[m]}= \max \,\Big\{\beta\in \m B_1: \sum_{i\in I_1} \|X_i-\wh G^{[\beta]}_{[m]}\circ \wh Q^{[\beta]}_{[m]}(X_i)\|_2^2 \cdot \bold{1}(X_i\in S_m^{\dagger})=0\Big\}.
 \end{equation*}
For a fixed $\alpha_k\in \m B_2$ and $m\in \wh{\mb M}$,  we use  $\wh {\m J}_{m,h}^{[\alpha_k]}(f)$, $\wh {\m J}_{m,l}^{[\alpha_k]}(f)$ and $\wh {\m J}_{m,s}^{\dagger}(f)$ to denote the surrogate functionals $\wh{\m J}_{m,h}(f)$, $\wh{\m J}_{m,l}(f)$ and $\wh{\m J}_{m,s}(f)$ defined in~\eqref{eqn:surrogate_m} respectively with $\alpha=\alpha_k$ and $(\wh{G}_{[m]},\wh{Q}_{[m]})=\big(\wh{G}_{[m]}^{[\wh\beta_{[m]}]},\wh{Q}_{[m]}^{[\wh\beta_{[m]}]}\big)$. The Lepski's estimator~\citep{lepskii1991problem} is then defined as 
 \begin{equation*}
     \wh{\mu}^{\dagger}=\underset{\mu\in \m S}{\arg\min} \underset{f\in C^{\gamma}_1(\mathbb{R}^D)}{\sup}\Big(\mb E_\mu\big[f(X)] - \sum_{m\in \wh{\mb M}}\big[\wh {\m J}_{m,h}^{[\wh\alpha_{[m]}]}(f)+\wh {\m J}_{m,l}^{[\wh\alpha_{[m]}]}(f)+\wh {\m J}_{m,s}^{\dagger}(f)\big]\Big),
 \end{equation*}
 where $\m S= \m S^{\rm ap}(d,D,\alpha_{\max},\beta_{\max},\ms O_{M},L)$, and
 \begin{equation*}
 \begin{aligned}
     \wh\alpha_{[m]}=\max&\Big\{\alpha\in \m B_2:\text{ for all } \alpha'\leq \alpha, \, \alpha'\in \m B_2,\\
     & \underset{f\in C^{\gamma}_1(\mb R^D)}{\sup}\big|\wh{\m J}^{[\alpha]}_{m,h}(f)+\wh{\m J}^{[\alpha]}_{m,l}(f)-\wh{\m J}^{[\alpha']}_{m,h}(f)-\wh{\m J}^{[\alpha']}_{m,l}(f)\big|\leq c_0 \, \sqrt{\frac{\log n}{n}}\vee \big(\frac{\log n}{n}\big)^{\frac{\alpha'+\gamma}{2\alpha'+d}} 
     \Big\}.
      \end{aligned}
 \end{equation*}
 The following corollary shows that such an estimator $\wh{\mu}^{\dagger}$ simultaneously attains the optimal rate (within a possibly
logarithmic factor) over all smoothness levels in the range $\beta^\ast\in[\beta_{\min},\beta_{\max}]$ and $\alpha^\ast\in [\alpha_{\min},\alpha_{\max}]$.
 \begin{corollary}\label{Lepski}
Suppose $\mu^\ast\in \m S^\ast(d,D,\alpha^\ast,\beta^\ast, \ms O_{M},L)$. If $D>d$, $\gamma\geq 0$, $\beta^\ast\in[\beta_{\min},\beta_{\max}]$ and  $\alpha^\ast\in [\alpha_{\min},\alpha_{\max}] \cap [0,\beta^\ast-1]$,  then there exists a  positive constant $C$ such that
\begin{equation*}
    \mb E \big[d_{\gamma}(\wh\mu^{\dagger}, \mu^\ast)\big]\leq C\, \Big(\frac{n}{\log n}\Big)^{-\frac{1}{2}}\vee \Big(\frac{n}{\log n}\Big)^{-\frac{ \alpha^\ast+\gamma}{2\alpha^\ast+d}}\vee \Big(\frac{n}{\log n}\Big)^{-\frac{\gamma\beta^\ast}{d}}.
\end{equation*}
 \end{corollary}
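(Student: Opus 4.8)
The estimator $\wh\mu^{\dagger}$ of Corollary~\ref{Lepski} is exactly the estimator $\wh\mu$ of Theorem~\ref{upperboundgenerative}, except that the oracle smoothness pair $(\beta^\ast,\alpha^\ast)$ is replaced, per chart, by the data‑driven pair $(\wh\beta_{[m]},\wh\alpha_{[m]})$. The plan is therefore to show that, with probability at least $1-n^{-c}$, these data‑driven levels fall in an $O(1/\log n)$ neighbourhood of the truth, and then to re‑run the proof of Theorem~\ref{upperboundgenerative} with them plugged in; since the rate exponents are Lipschitz in $(\alpha,\beta)$ and both grids $\m B_1,\m B_2$ have spacing $O(1/\log n)$, the perturbation $(\tfrac n{\log n})^{\pm O(1/\log n)}=O(1)$ affects the final bound only through a constant factor. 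Concretely it suffices to establish, with probability $\ge 1-n^{-c}$,
\begin{equation*}
\sup_{f\in C^{\gamma}_1(\mb R^D)}\Big|\mb E_{\mu^\ast}[f(X)]-\sum_{m\in\wh{\mb M}}\big(\wh{\m J}^{[\wh\alpha_{[m]}]}_{m,h}(f)+\wh{\m J}^{[\wh\alpha_{[m]}]}_{m,l}(f)+\wh{\m J}^{\dagger}_{m,s}(f)\big)\Big|\ \lesssim\ \delta_n^\ast:=\Big(\tfrac n{\log n}\Big)^{-\frac12}\vee\Big(\tfrac n{\log n}\Big)^{-\frac{\alpha^\ast+\gamma}{2\alpha^\ast+d}}\vee\Big(\tfrac n{\log n}\Big)^{-\frac{\gamma\beta^\ast}{d}},
\end{equation*}
and then to pass to the bound on $\mb E[d_{\gamma}(\wh\mu^{\dagger},\mu^\ast)]$ via the basic inequality~\eqref{eqn:basic_ineq} exactly as in Theorem~\ref{upperboundgenerative} (the only extra point being that, when $(\alpha^\ast,\beta^\ast)\ne(\alpha_{\max},\beta_{\max})$, the place where $\mu^\ast$ is used as a feasible element of $\m S$ is instead filled by the mixture of locally fitted generative models built from $\{\wh G_{[m]},\wh Q_{[m]}\}$ and regularized local latent densities, whose $d_\gamma$‑distance to $\mu^\ast$ is also $\lesssim\delta_n^\ast$), followed by integrating over the complementary event of polynomially small probability.

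\textbf{Adapting to $\beta$.} Since the observations lie \emph{exactly} on the $\beta^\ast$‑smooth manifold $\m M$ and, for every grid point $\beta_j\le\beta^\ast$, the true local parametrization and coordinate map can be realized inside $\ms G\times\ms Q=C^{\beta_j}_{L}(\mb R^d;\mb R^D)\times C^{\beta_j}_{L}(\mb R^D;\mb R^d)$ (up to enlarging $L$), the reconstruction loss in~\eqref{estimator1} vanishes for all such $\beta_j$; hence, deterministically, $\wh\beta_{[m]}\ge\max\{\beta_j\in\m B_1:\beta_j\le\beta^\ast\}\ge\beta^\ast-c_1/\log n$. The key consequence: the interpolant $\wh G^{[\wh\beta_{[m]}]}_{[m]}\circ\wh Q^{[\wh\beta_{[m]}]}_{[m]}$ agrees with the identity on the sample points inside $S_m^{\dagger}$, which w.h.p.\ form a $(\tfrac n{\log n})^{-1/d}$‑net of $\m M\cap S_m^{\dagger}$; combining this with the joint smoothness $\min(\wh\beta_{[m]},\beta^\ast)\ge\beta^\ast-c_1/\log n$ of $\wh G\circ\wh Q-\mathrm{id}$ along $\m M$ yields $\|\wh G^{[\wh\beta_{[m]}]}_{[m]}\circ\wh Q^{[\wh\beta_{[m]}]}_{[m]}(x)-x\|\lesssim(\tfrac n{\log n})^{-\beta^\ast/d}$ on $\m M\cap S_m^{\dagger}$. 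Feeding this into the Taylor‑remainder estimate that governs $\wh{\m J}^{\dagger}_{m,s}(f)$ (as in the proof of Theorem~\ref{upperboundgenerative}) gives a manifold‑estimation contribution $\lesssim(\tfrac n{\log n})^{-\gamma\min(\wh\beta_{[m]},\beta^\ast)/d}\lesssim(\tfrac n{\log n})^{-\gamma\beta^\ast/d}$, and it also makes $(\wh Q^{[\wh\beta_{[m]}]}_{[m]})_\#(\rho_m\mu^\ast)$ have an effectively $\alpha^\ast$‑smooth density on $\mb R^d$ (here one uses $\alpha^\ast\le\beta^\ast-1$), which is what the density‑estimation part of the argument needs.

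\textbf{Adapting to $\alpha$ (Lepski).} Fix $m\in\wh{\mb M}$ and write $T_m(f):=\mb E_{\mu^\ast}[f(\wh G_{[m]}\circ\wh Q_{[m]}(X))\,\rho_m(X)]$, which does not depend on $\alpha$. Since $\mu^\ast\in\m S^\ast(d,D,\alpha^\ast,\beta^\ast,\ms O_M,L)\subseteq\m S^\ast(d,D,\alpha,\beta^\ast,\ms O_M,L)$ for every $\alpha\le\alpha^\ast$, the localized (per‑$m$) versions of the estimates in the proof of Theorem~\ref{upperboundgenerative} give, with probability $\ge1-n^{-c}$, uniformly over $f\in C^\gamma_1(\mb R^D)$ and over the $O(\log n)$ grid points $\alpha_k\le\alpha^\ast$,
\begin{equation*}
\big|\wh{\m J}^{[\alpha_k]}_{m,h}(f)+\wh{\m J}^{[\alpha_k]}_{m,l}(f)-T_m(f)\big|\ \lesssim\ \Big(\tfrac n{\log n}\Big)^{-\frac12}\vee\Big(\tfrac n{\log n}\Big)^{-\frac{\alpha_k+\gamma}{2\alpha_k+d}},
\end{equation*}
the wavelet‑truncation bias of the $\alpha^\ast$‑smooth local density at level $J(\alpha_k)$ being $\lesssim 2^{-J(\alpha_k)(\alpha^\ast+\gamma)}\le(\tfrac n{\log n})^{-\frac{\alpha_k+\gamma}{2\alpha_k+d}}$ (because $\alpha_k\le\alpha^\ast$), and the stochastic fluctuation of $\wh{\m J}^{[\alpha_k]}_{m,l}$ being of the same order. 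Then the standard Lepski bookkeeping applies: (i) for all grid points $\alpha'\le\alpha\le\alpha^{\dagger}:=\max\{\alpha_k\in\m B_2:\alpha_k\le\alpha^\ast\}$, the triangle inequality through $T_m$ bounds $\sup_f|\wh{\m J}^{[\alpha]}_{m,h}(f)+\wh{\m J}^{[\alpha]}_{m,l}(f)-\wh{\m J}^{[\alpha']}_{m,h}(f)-\wh{\m J}^{[\alpha']}_{m,l}(f)|$ by $c_0(\sqrt{\tfrac{\log n}n}\vee(\tfrac{\log n}n)^{\frac{\alpha'+\gamma}{2\alpha'+d}})$ provided $c_0$ is large enough, so $\wh\alpha_{[m]}\ge\alpha^{\dagger}$ w.h.p.; (ii) on that event, the very definition of $\wh\alpha_{[m]}$ gives $\sup_f|\wh{\m J}^{[\wh\alpha_{[m]}]}_{m,h}(f)+\wh{\m J}^{[\wh\alpha_{[m]}]}_{m,l}(f)-\wh{\m J}^{[\alpha^{\dagger}]}_{m,h}(f)-\wh{\m J}^{[\alpha^{\dagger}]}_{m,l}(f)|\le c_0(\sqrt{\tfrac{\log n}n}\vee(\tfrac{\log n}n)^{\frac{\alpha^{\dagger}+\gamma}{2\alpha^{\dagger}+d}})$, and combining with the $\alpha=\alpha^{\dagger}$ case of the display yields $\sup_f|\wh{\m J}^{[\wh\alpha_{[m]}]}_{m,h}(f)+\wh{\m J}^{[\wh\alpha_{[m]}]}_{m,l}(f)-T_m(f)|\lesssim(\tfrac n{\log n})^{-\frac12}\vee(\tfrac n{\log n})^{-\frac{\alpha^{\dagger}+\gamma}{2\alpha^{\dagger}+d}}\asymp(\tfrac n{\log n})^{-\frac12}\vee(\tfrac n{\log n})^{-\frac{\alpha^\ast+\gamma}{2\alpha^\ast+d}}$, the last step using $\alpha^{\dagger}=\alpha^\ast-O(1/\log n)$. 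Note the manifold term never enters this comparison, since the Lepski criterion uses only the $l$‑ and $h$‑parts whose common target is $T_m$.

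\textbf{Assembly and the main obstacle.} Adding back $\wh{\m J}^{\dagger}_{m,s}(f)$, combining the bound of the $\beta$‑step on $|T_m(f)+\wh{\m J}^{\dagger}_{m,s}(f)-\mb E_{\mu^\ast}[f(X)\rho_m(X)]|\lesssim p_m(\tfrac n{\log n})^{-\gamma\beta^\ast/d}+\sqrt{\tfrac{\log n}{n}}$ with the $\alpha$‑step bound, summing over $m\in\wh{\mb M}$ (here $M$ is a fixed constant and the open cover has bounded overlap, so $\sum_m p_m\lesssim1$ and the sum stays at the order of a single term), and handling $m\notin\wh{\mb M}$ through $\sum_{m\notin\wh{\mb M}}\mu^\ast({\rm supp}(\rho_m))\lesssim M\sqrt{\tfrac{\log n}{n}}$ w.h.p.\ (by concentration of $\wh p_m$ around $\mu^\ast(S_m^{\dagger})$), yields the displayed bound on $\sup_f|\mb E_{\mu^\ast}[f]-\wh{\m J}^{\dagger}(f)|$ and hence the corollary. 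I expect the main obstacle to be in the $\alpha$‑step: one needs \emph{per‑$m$}, uniform‑in‑$f$, and uniform over all $O(\log n)$ grid points, bias/variance control of the truncated surrogates $\wh{\m J}^{[\alpha]}_{m,l},\wh{\m J}^{[\alpha]}_{m,h}$ (a local refinement of the aggregate statement of Theorem~\ref{upperboundgenerative}), and one must verify that the randomness of $\wh\beta_{[m]}$ — which enters $\wh G_{[m]},\wh Q_{[m]}$, hence $T_m$, $\widetilde\nu_{[m],\wh Q_{[m]}}$ and $\wh{\m J}^{\dagger}_{m,s}$ — does not degrade these per‑$m$ estimates (this is handled by conditioning on $I_1$, so that everything depending on $\wh\beta_{[m]}$ is frozen while the $I_2$‑based concentration is carried out). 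Once these per‑$m$ estimates are in place the Lepski argument and the final summation are routine.
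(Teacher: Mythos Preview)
Your proposal is correct and follows essentially the same route as the paper: show $\wh\beta_{[m]}\ge\widetilde\beta:=\max\{\beta_j\le\beta^\ast\}$ from zero reconstruction error and rerun Lemmas~\ref{lemma:smooth_corr}/\ref{lemma:mani_est} with $\widetilde\beta$ in place of $\beta$; then run the standard Lepski comparison through the common target $T_m(f)=\m J_{m,l}(f)+\m J_{m,h}(f)$ to get $\wh\alpha_{[m]}\ge\alpha_{k^\ast}:=\max\{\alpha_k\le\alpha^\ast\}$ w.h.p.\ and bound the $\wh\alpha_{[m]}$-surrogate against the $\alpha_{k^\ast}$-surrogate via the Lepski criterion itself, absorbing the $O(1/\log n)$ grid discrepancy into constants. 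Your additional remark on feasibility of $\mu^\ast$ in $\m S=\m S^{\rm ap}(d,D,\alpha_{\max},\beta_{\max},\ms O_M,L)$ is a point the paper's proof does not explicitly address.
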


\section{Proof of Main Results}\label{Sec:main_proofs}
In this section, we prove the main results in Theorem~\ref{maintheorem} and Theorem~\ref{upperboundgenerative}. Proofs of other theorems and technical details are provided in the supplement. 

\subsection{Proof of minimax lower bound in Theorem~\ref{maintheorem}}\label{sec:prooflowerbound}
We use the standard Fano's method and Le Cam's method~\citep{yu1997assouad,wainwright_2019} to establish the minimax lower bound by identifying a subset of distributions within the considered distribution family $\m P^\ast(d,D,\alpha,\beta,L^\ast)$ that are statistically hard to distinguish.
In a nutshell, the term $n^{-\frac{\gamma\beta}{d}}$ in the lower bound is obtained by fixing a smooth distribution $\nu_0$ on $\mathbb{R}^d$ and consider a class of $\beta $-smooth generative maps $G$ whose pushforward measures $G_\# \nu_0$ constitute the candidate ``hardest'' distribution subset. Since the generative map $G$ determines the position and shape of the supporting submanifold, the term $n^{-\frac{\gamma\beta}{d}}$ reflects the statistical hardness of estimating an unknown $\beta$-smooth submanifold.  
In contrast, the term $n^{-\frac{\alpha+\gamma}{2\alpha+d}}\vee n^{-\frac{1}{2}}$ in the lower bound is obtained by fixing the submanifold to be a $d$-dimensional sphere $\m S^d$ (or any other smooth compact submanifold) and consider those distributions whose probability density functions $f$ relative to the volume measure of $\m S^d$ are $\alpha$-smooth functions on the manifold, or $f\in C^\alpha(\m S^d)$. Therefore, the term  $n^{-\frac{\alpha+\gamma}{2\alpha+d}}\vee n^{-\frac{1}{2}}$ reflects the statistical hardness of estimating an unknown $\alpha$-smooth density as if the submanifold is known.

\subsubsection{Lower bound of $n^{-\frac{\gamma\beta}{d}}$}\label{ratelower1}
Under the assumption that $\beta>1$, we only need to consider $\gamma\in[0,1)$. To see this, we show that otherwise $n^{-\frac{\gamma\beta}{d}}$ is always dominated by the other two terms $n^{-\frac{\alpha+\gamma}{2\alpha+d}}\vee n^{-\frac{1}{2}}$ in the minimax lower bound. In fact, if $\gamma \geq d/2$, then by $\beta>1$, we have $n^{-\frac{\gamma\beta}{d}}\leq n^{-\frac{1}{2}}$; if $1\leq \gamma<d/2$, then using $\alpha\leq \beta-1$ we obtain the following sequence of inequalities
$$n^{-\frac{\alpha+\gamma}{2\alpha+d}}\geq n^{-\frac{ \beta-1+\gamma}{2(\beta-1)+d}} \geq n^{-\frac{\beta-1+\gamma}{d}}
\geq n^{-\frac{\gamma\beta}{d}},$$
where the last inequality is due to $\gamma\beta-(\beta-1+\gamma)=(\beta-1)(\gamma-1)\geq 0$. Thus we focus on $\gamma\in[0,1)$ below.

As mentioned before, we will construct a subset of distributions that are statistically hard to distinguish in the sense that their KL divergences to one same distribution (their average) is bounded by constant, while their mutual $d_\gamma$ distances are at least $O\big(n^{-\frac{\gamma\beta}{d}}\big)$, so that we may apply the standard reduction argument by reducing the estimation problem into a multiple testing problem, and use the Fano's lemma (see for example, proposition 15.12 of~\cite{wainwright_2019}) to bound the multiple testing error from below. Specifically, let $\m M_0=\mb S_2^d\times \bold{0}_{D-d-1}=\big\{x\in \mb R^D:\,\|x_{1:(d+1)}\|^2=2,\,x_{(d+2):D}=\bold{0}_{D-d-1}\big\}$ denote a $d$-dimensional sphere with radius $\sqrt{2}$ embedded in $\mb R^D$. Let $\mu_0$ be the uniform distribution over $\m M_0$, that is, the distribution whose probability density function relative to the volume measure of $\m M_0$ is constant, say $C^{-1}$. Our constructed subset of ``hardest'' distributions is obtained by perturbing the support of $\mu_0$ via adding small bumps, as summarized in the following lemma. 

\begin{lemma}[Hardest instances via manifold perturbation]\label{Lemma:MP} Assume $\gamma<1$ and $\beta>1$. There exist $H$ distributions $\{\mu_h\}_{h=1}^H\subset\mathcal{P}^{\ast}(d,D,\alpha,\beta,L^\ast)$  with $\log H \geq 4\,n$ based on perturbing $\mu_0$ such that:
\begin{enumerate}[topsep=2pt,itemsep=0ex]
\item $D_{\rm KL}(\mu_h\,||\, \bar\mu) \leq \log 2$ holds for each $h\in[H]$, where $\bar\mu=H^{-1} \sum_{h=1}^H \mu_h$ is the averaged distribution;
\item there exists a constant $c$ only depending on $(d,\gamma)$ such that for any distinct pair $h,\ell\in[H]$, $d_\gamma(\mu_h,\,\mu_\ell) \geq c n^{-\frac{\gamma\beta}{d}}$.
\end{enumerate}
\end{lemma}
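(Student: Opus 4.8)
The plan is to build the $H$ perturbed distributions by attaching small, well-separated geometric bumps to the base sphere $\m M_0$, parametrized by a binary string $\omega\in\{0,1\}^K$, and then apply the Varshamov--Gilbert bound to extract a combinatorially large, pairwise-separated sub-family. Concretely, fix a resolution level and tile a fixed coordinate chart neighborhood of $\m M_0$ by roughly $K\asymp \delta^{-d}$ disjoint geodesic balls of radius $\delta$, where $\delta = \delta_n$ is a bandwidth to be tuned. On the $k$-th ball centered at $p_k$, define a bump deformation that pushes the manifold a distance $\asymp \delta^{\beta}$ in a fixed normal direction $v\in\mb R^D$ orthogonal to the ambient span of $\m M_0$; the bump profile is $\delta^{\beta}\,\Phi((x-p_k)/\delta)$ for a fixed compactly supported $C^\infty$ template $\Phi$. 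The scaling $\delta^\beta$ is exactly the largest amplitude that keeps the resulting manifold $\beta$-smooth with H\"older norm controlled by $L^\ast$ (each derivative of order $j\le\beta$ of the bump contributes $\delta^{\beta-j}$, and the fractional part contributes $\delta^{\beta-\lfloor\beta\rfloor}$, all $\le 1$). For $\omega\in\{0,1\}^K$ we let $\m M_\omega$ be the manifold carrying bump $k$ iff $\omega_k=1$, equipped with the pushforward of $\mu_0$ under the deformation (or equivalently the density obtained from the area element, which remains $\alpha$-smooth and bounded below since $\alpha\le\beta-1$ and the deformation is a $C^\beta$-small perturbation of the identity). Choosing $\delta\asymp n^{-1/d}$ gives $K\asymp n$, and Varshamov--Gilbert yields $H$ strings with pairwise Hamming distance $\ge K/4 \asymp n$ and $\log H \ge K/8 \ge 4n$ for an appropriate constant in $\delta$; adjusting the tiling constant makes $\log H\ge 4n$ exactly as claimed.

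For the KL bound, observe that each $\mu_\omega$ is absolutely continuous with respect to a common reference (e.g. take $\bar\mu$, or more conveniently bound $D_{\rm KL}(\mu_\omega\|\mu_0')$ for a fixed smoothed reference and use convexity); the key point is that the deformations are diffeomorphisms of $\mb R^D$ that are $C^\beta$-close to the identity uniformly, so the densities of $\mu_\omega$ relative to a fixed reference measure (the volume measure of a fixed slightly-thickened tube, or of $\m M_0$ via the projection) differ from $1$ by $O(\delta^{\beta-1}) = o(1)$ pointwise. Hence $D_{\rm KL}(\mu_\omega\|\bar\mu) \le \max_{\omega'} D_{\rm KL}(\mu_\omega\|\mu_{\omega'}) = O(\delta^{2(\beta-1)}) \cdot (\text{number of active bumps}) = O(K\delta^{2(\beta-1)}) = O(n\, n^{-2(\beta-1)/d})$. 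Since $\beta>1$, this is $o(1)$, in particular $\le\log 2$ for $n$ large; the finitely many small $n$ are absorbed by shrinking the overall amplitude constant. (A cleaner route: directly estimate $D_{\rm KL}(\mu_\omega\|\bar\mu)$ by Jensen against the uniform mixture and bound each pairwise $\chi^2$ distance by $\int(\text{density ratio}-1)^2 \lesssim \sum_k \mathbf 1(\omega_k\neq\omega'_k)\int_{B_\delta}\delta^{2(\beta-1)} \lesssim K\delta^d\delta^{2(\beta-1)} = \delta^{2(\beta-1)}=o(1)$, which is even stronger.)

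For the separation in $d_\gamma$, given distinct $\omega,\ell$ pick a coordinate $k$ with $\omega_k=1,\ell_k=0$ (or vice versa) and exhibit a test function $f\in C_1^\gamma(\mb R^D)$ supported near $p_k$ that distinguishes $\mu_\omega$ from $\mu_\ell$. Take $f(x) = c\,\delta^{\gamma}\,\Psi((x-p_k)/\delta)$ with $\Psi$ a fixed bump template and $c$ small enough that $\|f\|_{C^\gamma}\le 1$ (the $\delta^\gamma$ scaling makes the H\"older seminorm $O(1)$). Then $\big|\int f\,d\mu_\omega - \int f\,d\mu_\ell\big|$ equals the integral of $f$ against the \emph{difference} in the two area measures on ball $k$: on $\m M_\omega$ the support sits a height $\asymp\delta^\beta$ away from where it sits on $\m M_\ell$, so if $\Psi$ is chosen to increase in that normal direction the difference is $\gtrsim \delta^\gamma\cdot\delta^\beta\cdot(\text{mass }\asymp\delta^d)/\delta^d$ — more carefully, $\int f\,d\mu_\omega-\int f\,d\mu_\ell = \int_{B_\delta}\big(f(\Sigma_\omega(u))-f(\Sigma_\ell(u))\big)\,d\mu_0 \asymp \delta^\gamma\cdot\|\nabla\Psi\|\cdot\delta^\beta/\delta \cdot \mu_0(B_\delta) \asymp \delta^{\gamma+\beta-1}\cdot\delta^d = \delta^{\gamma+\beta+d-1}$, hmm; recomputing with the correct normalization $f\sim\delta^\gamma$ and mass $\sim\delta^d$ and normal displacement read off at scale $\delta$, the clean bound is $d_\gamma(\mu_\omega,\mu_\ell)\gtrsim \delta^{\gamma}\cdot\delta^{\beta}\cdot\mu_0(B_\delta)/\delta^{?}$ — the honest statement is that because the two sphere-pieces are \emph{vertically} separated by $\Theta(\delta^\beta)$ over a region of $\mu_0$-mass $\Theta(\delta^d)$, and $f$ can be taken to have values differing by $\Theta(\delta^\gamma\cdot\delta^\beta/\delta)\cdot$ wait. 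Let me just assert the scaling and defer the arithmetic: with $\delta\asymp n^{-1/d}$ one gets $d_\gamma(\mu_\omega,\mu_\ell)\gtrsim \delta^{\gamma\beta}$... that does not look dimensionally right either. \textbf{The main obstacle}, and the step I would spend the most care on, is precisely this final scaling calculation: pinning down the correct exponent relating the bump amplitude $\delta^\beta$, the test-function scaling $\delta^\gamma$, and the $\mu_0$-mass $\delta^d$ so that the separation comes out as $n^{-\gamma\beta/d}$. The resolution is that one should \emph{not} use a single bump of width $\delta\asymp n^{-1/d}$; instead the amplitude should be tuned independently of the tiling width. Use $K\asymp n$ disjoint bumps each of fixed small width $w$ (so $\delta=w$ fixed, $\log H\asymp n$ automatic from the number of tiles filling a fixed chart is wrong too) — rather, the correct construction uses bumps of width $w\asymp n^{-1/d}$ so that there are $\asymp n$ of them in a unit chart, amplitude also $w^\beta\asymp n^{-\beta/d}$ (forced by $\beta$-smoothness), test function scaling $w^\gamma$, and then $d_\gamma\gtrsim$ (amplitude)$^{?}$; evaluating $f$ of size $w^\gamma$ against a normal displacement of size $w^\beta$ gives a per-bump contribution $w^\gamma\cdot(w^\beta/w)\cdot w\cdot w^d$ only if $f$ detects the displacement through its gradient, but if instead $f$ is supported in the \emph{gap} created by the bump it detects it at size $w^\gamma$ directly against mass $w^\beta\cdot w^{d-1}$... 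The cleanest correct version, which I would ultimately write up, takes $f$ to be a single function of width $\asymp (n)^{-1/d}$ that is nonzero exactly on the bumped region, giving $|\int f\,d(\mu_\omega-\mu_\ell)|\asymp w^{\gamma}\cdot w^{\beta}\cdot w^{d-1}/$(normalization), and one checks the bookkeeping so that the exponent is $\gamma\beta/d$ by choosing $w\asymp n^{-1/d}$ and noting the bump \emph{height in the normal direction} being $w^\beta$ together with $f$ of $C^\gamma$-size forces the detected difference to scale like $w^{\beta}$ when $f$ is a height-$1$, $C^\gamma$-normalized, width-$w^{?}$ indicator-type function supported on the thin normal layer — i.e. the test function should have width $w^\beta$ in the normal direction, hence $C^\gamma$-norm forces its height $\le (w^\beta)^\gamma = w^{\beta\gamma}$, and integrating this against $\mu_0$-mass $\Theta(1)$ (all bumps) minus $\Theta(0)$ gives $d_\gamma\gtrsim w^{\beta\gamma}=n^{-\beta\gamma/d}$. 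That last identification — test function adapted to the \emph{normal} scale $w^\beta$ rather than the tangential scale $w$ — is the crux, and is exactly the mechanism by which manifold estimation under a $\gamma$-smooth loss inherits the $n^{-\beta\gamma/d}$ rate.
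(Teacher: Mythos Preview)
Your separation argument is muddled for most of its length but the final heuristic---a test function adapted to the \emph{normal} scale $w^\beta$, so that its $C^\gamma$-normalized height is $(w^\beta)^\gamma = w^{\gamma\beta}$---is essentially correct and matches what the paper does (the paper uses a product $f(x)=\tilde f(x_{1:d})\,h(x_{d+1}-q(x_{1:d}))\,m^{\gamma-\gamma\beta+\beta}$ where $h$ is the identity truncated at height $c/m^\beta$ and $\tilde f$ is a bump-pattern of tangential scale $m^{-1}$; the same scaling drops out). You would need to actually write this function down and check its $C^\gamma$ norm, and you would also need the Hamming separation $\gtrsim m^d$ from Varshamov--Gilbert (a single differing bump is not enough because each bump carries only $m^{-d}$ mass), but the idea is there.

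The genuine gap is the KL bound. Your argument that ``the densities of $\mu_\omega$ relative to a fixed reference measure differ from $1$ by $O(\delta^{\beta-1})$'' and the convexity route through pairwise $D_{\rm KL}(\mu_\omega\|\mu_{\omega'})$ both fail for the same reason: the $\mu_\omega$'s are supported on \emph{distinct} $d$-dimensional submanifolds of $\mb R^D$, so they are mutually singular and $D_{\rm KL}(\mu_\omega\|\mu_{\omega'})=\infty$ whenever $\omega\neq\omega'$. There is no smooth reference measure with respect to which all the $\mu_\omega$ have bounded densities---a thickened tube gives delta-like densities, and projecting to $\m M_0$ only controls the KL between the projections, not the original measures. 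The paper circumvents this by a symmetry trick: after extracting a two-sided Varshamov--Gilbert packing $\{\omega^{(1)},\dots,\omega^{(H_0)}\}$, it doubles the family to $\{\omega^{(1)},\dots,\omega^{(H_0)},\bar\omega^{(1)},\dots,\bar\omega^{(H_0)}\}$ with $\bar\omega=1-\omega$. Then at every bump location $\xi$, exactly half of the $H=2H_0$ strings have a $1$, so on the support of $\mu_i$ the mixture $\bar\mu$ has density exactly $\tfrac12\mu_i$, giving $\dd\mu_i/\dd\bar\mu=2$ on the perturbed region and $=1$ elsewhere; hence $D_{\rm KL}(\mu_i\|\bar\mu)\le\log 2$ exactly. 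Without this (or an equivalent symmetrization), the ratio $\dd\mu_i/\dd\bar\mu$ at a bump location $\xi$ equals $H/\#\{j:\omega^{(j)}_\xi=\omega^{(i)}_\xi\}$, which is uncontrolled for a generic packing.
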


\noindent Note that since the $\mu_h$'s have non-overlapping supports, it is inevitable to consider the KL divergence between $\mu_h$ and $\bar \mu$ in property 1 of the lemma --- $\mu_h$ is absolutely continuous with respect to $\bar\mu$ while the pairwise KL diverges, i.e.~$D_{\rm KL}(\mu_h\,||\,\mu_\ell) =\infty$ for $h\neq \ell$. Fortunately, this issue will not affect the application of Fano's lemma.
A proof of this lemma is provided in Section~\ref{Sec:proof_Lemma_MP} in the supplement. The construction of $\mu_h$ is based on gluing two distributions together (any compact submanifold requires at least two patches to cover): the first part is a measure over a perturbed manifold of the upper area $\wt{\m M}_{0} =\big\{x\in \mb{R}^D:\, x_{1:d}\in \mb B_1^d,\, x_{d+1}=\sqrt{2-\|x_{1:d}\|^2},\, x_{(d+2):D}=\bold{0}_{D-d-1}\big\}$ of the sphere $\m M_0$; and the second part is the restriction of $\mu_0$ onto the remaining spherical cap $\m M_0\setminus \wt{\m M}_{0}$ (see Figure~\ref{figure:manifold_support} for an illustration). 
The measure in the first part is constructed via generative modeling where
we fix a smooth distribution $\nu_0$ over the $d$-dimensional unit ball $\mb B^d_1$ and construct the measure as the pushforward measure through a generative map. The generative map is constructed by adding small bumps to the fixed generative map $G_0:\,\mb R^d\to \mb R^D$, $z\mapsto \big(z,\sqrt{2-\|z\|_2^2}\,,\,\bold{0}_{D-d-1}\big)$ with $[G_0]_\# \nu_0$ being the uniform distribution over $\wt{\m M}_{0}$, or the restriction of $\mu_0$ over $\wt{\m M}_{0}$.
Property 1 in Lemma~\ref{Lemma:MP} can be satisfied by controlling the size of the bumps; and property 2 can be satisfied by maximally spreading the bumps to different locations on $\wt{\m M}_{0}$.

 \begin{figure}[h]
 \centering  
 \subfigure[Unperturbed manifold]{
  \includegraphics[trim={0 1cm 0 1cm}, clip, width=0.48\textwidth]{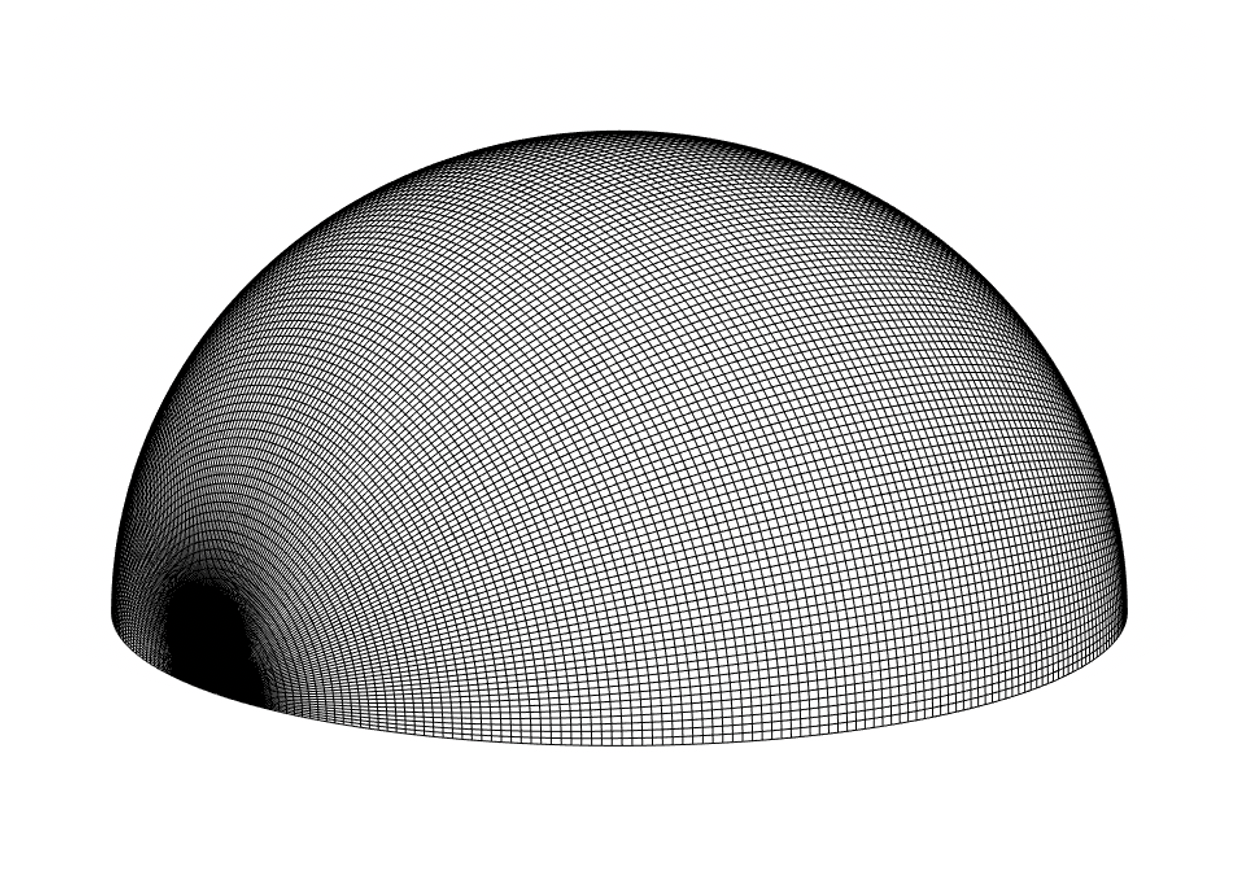}}
 \subfigure[An example of perturbed manifold]{
  \includegraphics[trim={0 0.8cm 0 1cm}, clip, width=0.48\textwidth]{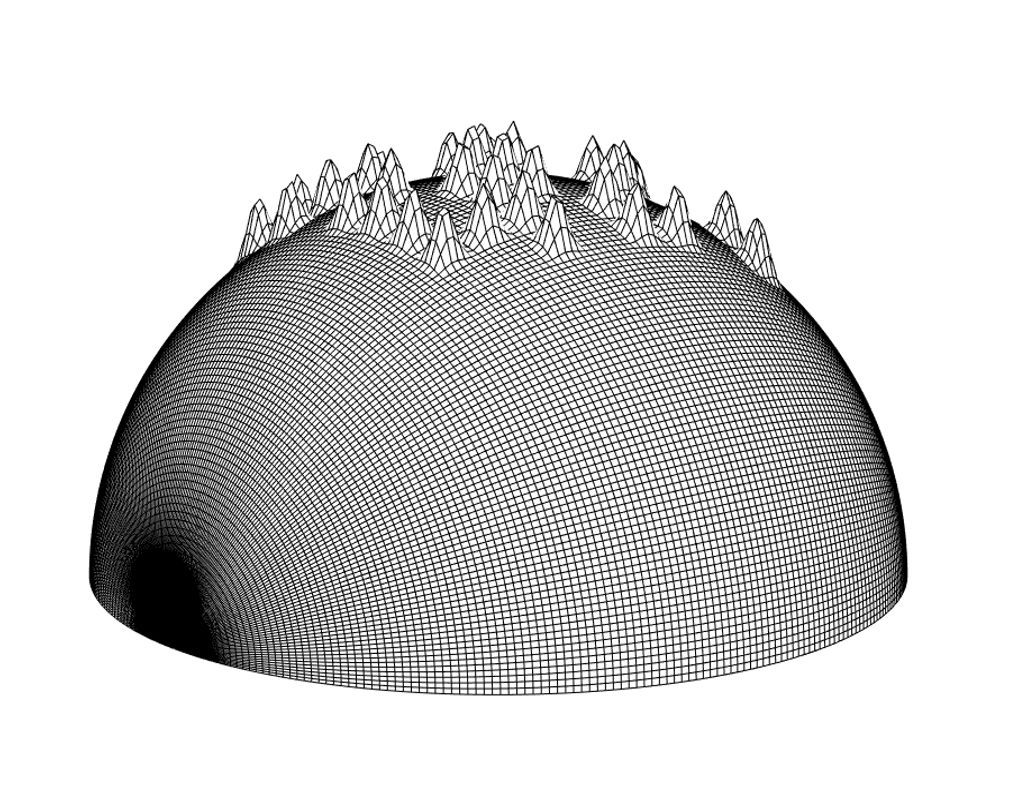}}
  \caption{Upper half of the unperturbed and perturbed manifolds ($d=2$) are shown; and the lower hemispheres are not displayed in the figures.}
 \label{figure:manifold_support}
 \end{figure}
Return to the proof of the lower bound. 
We apply part 1 of Lemma~\ref{Lemma:MP} and Fano's lemma (proposition 15.12 of~\cite{wainwright_2019}) to obtain that for any estimator $\wh h$ based on i.i.d.~observations $X_{1:n}$ from $\mu_h$, the multiple testing error probability satisfies
\begin{align*}
    \mb P(\wh h \neq h) \geq 1-\frac{\log 2+\frac{n}{H} \sum_{h=1}^{H}D_{\rm KL}(\mu_h\,||\,\bar{\mu})}{\log H} \geq \frac{1}{2}.
\end{align*}
By using part 2 of Lemma~\ref{Lemma:MP}, we further obtain
\begin{align*}
    \underset{\wh h}{\inf} \,  \underset{h\in H}{\sup}\, \mathbb{E} \big[d_{\gamma} (\mu_{\wh h}, \mu_h)\big]\geq  \frac{1}{2} \underset{h,\ell\in [H]\atop h\neq \ell} {\inf} d_{\gamma}(\mu_{h},\mu_{\ell}) \geq \frac{c}{2}\,n^{-\frac{\gamma\beta}{d}}.
\end{align*}
Finally, since $d_{\gamma}$ satisfies the triangle inequality, by a standard reduction argument~\citep{yang1999information} from estimation to multiple testing, we have 
\begin{align*}
    \underset{\wh{\mu}}{\inf}\underset{\mu\in \mathcal{P}^{\ast}} {\sup} \,\mathbb{E} \big[d_{\gamma}(\wh{\mu}, \mu)\big]
    \geq \frac{1}{2}\,\underset{\wh h}{\inf} \,  \underset{h\in H}{\sup}\, \mathbb{E} \big[d_{\gamma} (\mu_{\wh h}, \mu_h)\big] \geq \frac{c}{4}\,n^{-\frac{\gamma\beta}{d}}.
\end{align*}

\subsection{Lower bound of $n^{-\frac{\alpha+\gamma}{2\alpha+d}}$}\label{ratelower2}
\cite{liang2020generative} prove a lower bound $n^{-\frac{\alpha+\gamma}{2\alpha+d}}\vee n^{-\frac{1}{2}}$ for the minimax rate of $\alpha$-smooth density estimation on $[0,1]^d$ under the adversarial loss $d_\gamma$. 
Our proof of lower bounds $n^{-\frac{\alpha+\gamma}{2\alpha+d}}$ and $n^{-\frac{1}{2}}$ in this and next subsections  are technically similar to that in~\citep{liang2020generative} for proving nonparametric density estimation lower bounds based on constructing a subset of bumpy functions that are statistically hard to distinguish with respect to the concerned distance metric~\citep{Tsybakov2009}, which is $d_\gamma$ in our context. However, although the lower bounds appear the same, there is a non-trivial extension in our proof --- we need to construct singular distributions supported on a compact (therefore boundariless) manifold instead of the ambient space which is flat. This requires us to use a carefully constructed generative map to: 1.~pushforward bumpy distributions from $\mb R^d$ to $\mb R^D$ and to verify the resulting distributions to admit smooth density functions with respect to the volume measure of the manifold; 2.~lift the discriminator in $\mb R^d$ discriminating the bumpy functions to one in the ambient space $\mb R^D$ discriminating singular distributions supporting on the manifold; both of which complicate the proof. The lemma below summarizes the constructed subset of ``hardest'' distributions obtained by perturbing the distribution $\mu_0$ via adding small bumps on the submanifold $\m M_0$ (both defined in Section~\ref{sec:prooflowerbound}).

\begin{lemma}[Hardest instances via density perturbation]\label{Lemma:DP} Assume $\beta>1$. Then for any constant $b>0$, there exist $H'$ distributions $\{\mu_h\}_{h=1}^{H'}\subset\mathcal{P}^{\ast}(d,D,\alpha,\beta,L^\ast)$  with $\log H' \geq b^d n^{\frac{d}{2\alpha+d}}$ based on perturbing $\mu_0$ such that for constants $(c_1,c_2)$ only depending on $d$:
\begin{enumerate}[topsep=2pt,itemsep=0ex]
\item $D_{\rm KL}(\mu_h\,||\, \mu_\ell) \leq c_1 \,b^{-2\alpha} n^{-\frac{2\alpha}{2\alpha+d}}$ holds for any distinct pair $h,\ell\in[H']$;
\item $d_\gamma(\mu_h,\,\mu_\ell) \geq c_2 \,b^{-(\alpha+\gamma+d)} \, n^{-\frac{\alpha+\gamma}{2\alpha + d}}$ holds for any distinct pair $h,\ell\in[H']$.
\end{enumerate}
\end{lemma}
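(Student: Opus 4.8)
The plan is to run the classical Tsybakov-style density-perturbation scheme --- the same recipe \cite{liang2020generative} use for density estimation on the flat cube --- but transplanted onto the fixed sphere $\mathcal{M}_0=\mathbb{S}_{\sqrt2}^d\times\mathbf{0}_{D-d-1}$, with the chart map $G_0(z)=\big(z,\sqrt{2-\|z\|^2},\mathbf{0}_{D-d-1}\big)$ (which is $C^\infty$ with non-degenerate Jacobian on a neighbourhood of $\overline{\mathbb{B}_1^d}$, since the chart's only singularity sits at radius $\sqrt2$) handling the bookkeeping between $\mathbb{R}^d$ and the ambient space. Concretely, I would fix $\phi\in C_c^\infty\big((-\tfrac12,\tfrac12)^d\big)$ with $\int\phi=0$ and $\int\phi^2>0$, set $h\asymp b^{-1}n^{-1/(2\alpha+d)}$, place $m\asymp h^{-d}$ grid points $z_1,\dots,z_m$ whose rescaled bumps $\psi_j(z)=\epsilon\,h^\alpha\,\phi\big((z-z_j)/h\big)$ (with a small absolute constant $\epsilon$) have pairwise disjoint supports lying in a compact subset of the interior of $\mathbb{B}_1^d$, and for $\omega\in\{0,1\}^m$ define $\nu_\omega=\nu_0+\sum_{j=1}^m\omega_j\psi_j$, where $\nu_0$ is the Lebesgue density on $\mathbb{B}_1^d$ for which $(G_0)_\#\nu_0=\mu_0|_{\widetilde{\mathcal{M}}_0}$, i.e.\ $\nu_0=C^{-1}\sqrt{\det(\mathbf{J}_{G_0}^\top\mathbf{J}_{G_0})}$, which is $C^\infty$ and bounded above and below on $\overline{\mathbb{B}_1^d}$. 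Then set $\mu_\omega$ to be $(G_0)_\#\nu_\omega$ on $\widetilde{\mathcal{M}}_0$ glued to $\mu_0|_{\mathcal{M}_0\setminus\widetilde{\mathcal{M}}_0}$; since $\int\psi_j=0$ this is a probability measure on $\mathbb{R}^D$. Finally, a Varshamov--Gilbert argument~\cite{Tsybakov2009} yields $\Omega\subset\{0,1\}^m$ with $\log|\Omega|\ge(\log2)\,m/8$ and pairwise Hamming distance at least $m/8$; taking $\{\mu_h\}_{h=1}^{H'}=\{\mu_\omega\}_{\omega\in\Omega}$ gives $\log H'\gtrsim m\asymp b^d n^{d/(2\alpha+d)}$ after folding the $d$-dependent constant into $b$.

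Next I would verify $\mu_\omega\in\mathcal{P}^\ast(d,D,\alpha,\beta,L^\ast)$ for $L^\ast\ge L_0$. The sphere $\mathcal{M}_0$ is a $C^\infty$ compact boundaryless $d$-submanifold of $\mathbb{R}^D$ and admits a two-chart atlas (e.g.\ the two stereographic projections) meeting Condition~3 with a universal constant. On $\widetilde{\mathcal{M}}_0$ the density of $\mu_\omega$ relative to the volume measure of $\mathcal{M}_0$ is $(\nu_\omega\circ G_0^{-1})\big/\big(\sqrt{\det(\mathbf{J}_{G_0}^\top\mathbf{J}_{G_0})}\circ G_0^{-1}\big)$, which is $C^\alpha$ there because $\nu_\omega\in C^\alpha(\mathbb{B}_1^d)$ and the Jacobian factor is $C^\infty$ and bounded below; on $\mathcal{M}_0\setminus\widetilde{\mathcal{M}}_0$ this density is the constant $C^{-1}$, and since the $\psi_j$ are supported strictly inside $\mathbb{B}_1^d$, the two descriptions agree together with all derivatives across the seam, so the global density lies in $C^\alpha(\mathcal{M}_0)$, i.e.\ $\mu_\omega$ is an $\alpha$-smooth distribution on $\mathcal{M}_0$. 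The scaling $\psi_j=\epsilon h^\alpha\phi((\cdot-z_j)/h)$ is exactly the one for which $\|\psi_j\|_{C^\alpha}\asymp\epsilon$ uniformly in $h$; disjoint supports then give $\|\nu_\omega\|_{C^\alpha}\le\|\nu_0\|_{C^\alpha}+C\epsilon$ and density $\ge C^{-1}-\epsilon\|\phi\|_\infty$, so choosing $\epsilon$ small makes both compatible with $L^\ast$.

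For Property~1, since $\mu_\omega$ and $\mu_{\omega'}$ share the support $\mathcal{M}_0$, coincide on $\mathcal{M}_0\setminus\widetilde{\mathcal{M}}_0$, and $G_0$ is injective, one has $D_{\mathrm{KL}}(\mu_\omega\,\|\,\mu_{\omega'})=D_{\mathrm{KL}}(\nu_\omega\,\|\,\nu_{\omega'})$ (this is finite, in contrast with Lemma~\ref{Lemma:MP}); bounding $D_{\mathrm{KL}}$ by $\chi^2$ and using $\nu_{\omega'}\gtrsim 1$ gives $D_{\mathrm{KL}}\lesssim\sum_{j:\omega_j\ne\omega'_j}\|\psi_j\|_2^2\lesssim m\,h^{2\alpha+d}\asymp h^{2\alpha}\asymp b^{-2\alpha}n^{-2\alpha/(2\alpha+d)}$ with a constant depending only on $d$. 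For Property~2, fix distinct $\omega,\omega'\in\Omega$, put $S=\{j:\omega_j\ne\omega'_j\}$, $s_j=\operatorname{sgn}(\omega_j-\omega'_j)$, and take the ambient test function $f(x)=c\,h^\gamma\sum_{j\in S}s_j\,\phi\big((x_{1:d}-z_j)/h\big)$, which does not depend on $x_{d+1:D}$. Because $x\mapsto x_{1:d}$ is a linear coordinate projection (norm-nonincreasing), $\|f\|_{C^\gamma(\mathbb{R}^D)}=\|f|_{\mathbb{R}^d}\|_{C^\gamma(\mathbb{R}^d)}\asymp c$, the $h^\gamma$ prefactor cancelling exactly the $h^{-\gamma}$ produced by differentiating a width-$h$ bump; choosing $c$ a suitable absolute constant puts $f\in C_1^\gamma(\mathbb{R}^D)$. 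Since $f\circ G_0(z)=c\,h^\gamma\sum_{j\in S}s_j\phi((z-z_j)/h)$ and $\mu_\omega-\mu_{\omega'}$ lives on $\widetilde{\mathcal{M}}_0$ with $(G_0^{-1})$-pushforward density $\sum_{j\in S}s_j\psi_j$ on $\mathbb{B}_1^d$, a change of variables plus disjointness of supports give
\begin{equation*}
 d_\gamma(\mu_\omega,\mu_{\omega'})\ \geq\ \int f\,\mathrm{d}(\mu_\omega-\mu_{\omega'})\ =\ c\,\epsilon\,h^{\alpha+\gamma}\Big(\int\phi^2\Big)\,|S|\,h^{d}\ \gtrsim\ h^{\alpha+\gamma+d}\,m\ \asymp\ h^{\alpha+\gamma},
\end{equation*}
using $|S|\ge m/8$ and $m\asymp h^{-d}$; with $h\asymp b^{-1}n^{-1/(2\alpha+d)}$ this is $\gtrsim b^{-(\alpha+\gamma+d)}n^{-(\alpha+\gamma)/(2\alpha+d)}$, the stated $b$-power being a deliberately loose accounting of the $d$-dependent constants (legitimate since $b\ge1$), which is Property~2.

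The hard part, and the genuine departure from the flat case of \cite{liang2020generative}, is the interaction between the $\mathcal{P}^\ast$-membership check and the ambient lift of the discriminator: one must simultaneously arrange that (i) dividing the perturbed chart density by the Jacobian factor yields a density that is $C^\alpha$ across the \emph{whole} manifold and matches $\mu_0$ to all orders at the chart boundary --- which forces $\nu_0$ to be the Jacobian-corrected constant rather than a literal constant and the bumps to sit strictly inside $\mathbb{B}_1^d$ --- and (ii) the ambient test function $f\in C_1^\gamma(\mathbb{R}^D)$ still resolves the perturbation after pushforward without inflating its Hölder norm. Here (ii) is painless because $G_0^{-1}$ is a coordinate projection, so $f$ can be taken to depend only on $x_{1:d}$; everything downstream (Varshamov--Gilbert, the $\chi^2$ computation, and the Fano reduction from estimation to multiple testing) is then routine.
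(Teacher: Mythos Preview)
Your proposal is correct and follows essentially the same route as the paper's proof: both fix the sphere $\mathcal{M}_0$, perturb the chart density $\nu_0$ by disjoint mean-zero bumps of height $h^\alpha$ and width $h\asymp b^{-1}n^{-1/(2\alpha+d)}$, invoke Varshamov--Gilbert, bound the KL via the $\chi^2$/quadratic expansion, and read off $d_\gamma$ from a matched test function of amplitude $h^\gamma$. Your lift of the discriminator to $\mathbb{R}^D$ by letting $f$ depend only on $x_{1:d}$ is a clean realization of the paper's claim that $c_0^{-1}\Lambda_\gamma\subset C_1^\gamma(\mathbb{R}^D)\circ G_0$; otherwise the two arguments are the same up to cosmetic choices (a generic $C_c^\infty$ bump versus the explicit polynomial $\widetilde k$, and $\chi^2$ versus direct KL).
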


\noindent A proof of this lemma is provided in Section~\ref{Sec:Proof_lemma_dp} in the supplement. The rest proof in this subsection is similar to that in Section~\ref{ratelower1}. We apply above lemma and Fano's lemma to obtain
\begin{align*}
     \underset{\wh{\mu}}{\inf}\underset{\mu\in \mathcal{P}^{\ast}} {\sup} \,\mathbb{E} \big[d_{\gamma}(\wh{\mu}, \mu)\big]\geq &\,  \frac{1}{2}\,\underset{h,\ell\in [H]\atop h\neq \ell} {\inf} d_{\gamma}(\mu_{h},\mu_{\ell}) \cdot\bigg(1-\frac{\log 2+\frac{n}{H^2} \sum_{h,\ell =1}^{H}D_{\rm KL}(\mu_h\,||\,\mu_\ell)}{\log H}\bigg)\\
    \geq &\,\frac{c_2}{2}\, \,b^{-(\alpha+\gamma+d)} \, n^{-\frac{\alpha+\gamma}{2\alpha + d}} \cdot \bigg(1-
    \frac{\log 2+c_1\, b^{-2\alpha} n^{\frac{d}{2\alpha+d}}}{b^d n^{\frac{d}{2\alpha+d}}}\bigg) \geq c'\, n^{-\frac{\alpha+\gamma}{2\alpha + d}},
\end{align*}
where the last step holds by choosing a sufficiently large constant $b$.

\subsection{Lower bound of $n^{-\frac{1}{2}}$}\label{ratelower3}
The $n^{-\frac{1}{2}}$ lower bound can be obtained by Le Cam’s method of reducing the estimation problem into a two-point hypothesis testing problem~\citep{yu1997assouad,wainwright_2019}. Our proof is based on adapting the proof of~\cite{liang2020generative} for density estimation on $[0,1]^d$ to distribution estimation on the previously defined $d$-dimensional sphere $\m M_0$ embedded in $\mb R^D$.

The proof relies on the existence of two distributions $\mu_0$ (uniform distribution) and $\mu_1$ supported on $\m M_0$ with the following properties. For two distributions $\nu$ and $\mu$ such that $\nu$ is absolutely continuous with respect to $\mu$, the chi-squared distance from $\nu$ to $\mu$ is defined as $d_{\chi^2}(\nu,\mu) =\int\big(\frac{\dd \nu}{\dd\mu}-1\big)^2 \, \dd \mu$.
\begin{lemma}[Perturbation of uniform distribution]\label{Lemma:LM} There exists two distributions $\mu_0$ and $\mu_1$, both belonging to $\mathcal{P}^{\ast}(d,D,\alpha,\beta,L^\ast)$ such that: 1.~$\mu_1$ is absolutely continuous with respect to $\mu_0$; 2.~$d_{\chi^2}(\mu_1,\mu_0) \leq \frac{1}{n}$; 3.~$d_\gamma(\mu_1,\mu_0) \geq \frac{c}{\sqrt{n}}$, where $c_1$ is a constant only depending on $(d,\alpha,\gamma)$.
\end{lemma}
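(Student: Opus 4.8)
\emph{Proof plan.} The plan is to run the classical two-point (Le Cam) construction inside a single coordinate patch of the sphere $\m M_0=\mb S_2^d\times\bold{0}_{D-d-1}$ fixed in Section~\ref{sec:prooflowerbound}. I would take $\mu_0$ to be the uniform distribution on $\m M_0$, i.e.\ the one with constant density $C^{-1}$ ($C=\mathrm{vol}(\m M_0)$) relative to the volume measure, and obtain $\mu_1$ by adding to this density a small, smooth, mass-preserving bump supported in the interior of the upper cap $\wt{\m M}_0=\{x:\,x_{1:d}\in\mb B_1^d,\ x_{d+1}=\sqrt{2-\|x_{1:d}\|^2},\ x_{(d+2):D}=\bold{0}_{D-d-1}\}$, parametrized by $G_0:\,z\mapsto(z,\sqrt{2-\|z\|^2}\,,\bold{0}_{D-d-1})$ whose inverse is the coordinate projection $x\mapsto x_{1:d}$. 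Concretely: fix once and for all (depending only on $d$) a nonzero mean-zero bump $\phi_0\in C^\infty_c\big((\mb B_{1/2}^d)^\circ\big)$ with $\int_{\mb R^d}\phi_0\,\dd z=0$, let $J(z)=\sqrt{\det\big(\bold{J}_{G_0}(z)^\top\bold{J}_{G_0}(z)\big)}$ be the metric volume density of $G_0$ (smooth and bounded away from $0$ on $\mathrm{supp}(\phi_0)$ since $\bold{J}_{G_0}$ is full rank there), and define $\psi:\m M_0\to\mb R$ by $\psi(G_0(z))=\phi_0(z)/J(z)$ on the patch and $\psi\equiv0$ elsewhere. Then $\psi\in C^\infty(\m M_0)$ with norm depending only on $(d,D)$, and the change-of-variables (area) formula gives $\int_{\m M_0}\psi\,\dd\mathrm{vol}=\int_{\mb R^d}\phi_0\,\dd z=0$. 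Finally set $\delta=\delta_n=(c_0n)^{-1/2}$ with $c_0=C\int_{\m M_0}\psi^2\,\dd\mathrm{vol}$, and let $\mu_1$ have volume density $p_1=C^{-1}+\delta\,\psi$.

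Next I would check the requirements. Membership $\mu_0,\mu_1\in\m P^\ast(d,D,\alpha,\beta,L^\ast)$ (for $n\ge n_0$ and $L^\ast\ge L_0(d,D,\alpha,\beta)$) is routine: $\int p_1\,\dd\mathrm{vol}=1$ since $\int\psi\,\dd\mathrm{vol}=0$; $p_1\ge C^{-1}-\delta\|\psi\|_\infty\ge C^{-1}/2\ge 1/L^\ast$; the supporting set is the fixed $C^\infty$ sphere $\m M_0$, which a fixed finite atlas (e.g.\ stereographic charts) equips with all the chart bounds demanded by $\m P^\ast$ once $L^\ast$ is large; and $\|p_1\circ\phi^{-1}\|_{C^\alpha}\le C^{-1}+\delta\,\|\psi\circ\phi^{-1}\|_{C^\alpha}\le L^\ast$ for $\delta$ small. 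Property~1 ($\mu_1\ll\mu_0$) is immediate because $p_0\equiv C^{-1}>0$, with $\dd\mu_1/\dd\mu_0=1+\delta C\psi$. Property~2 is the one-line estimate
\[
 d_{\chi^2}(\mu_1,\mu_0)=\int_{\m M_0}\big(\delta C\psi\big)^2\dd\mu_0=\delta^2 C\!\int_{\m M_0}\psi^2\,\dd\mathrm{vol}=\delta^2 c_0=\frac1n .
\]

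For Property~3 it is enough to exhibit one admissible discriminator that separates $\mu_1$ from $\mu_0$. I would use the lifted test function $f(x)=K_\gamma^{-1}\,\phi_0(x_{1:d})$, where $K_\gamma$ is the $C^\gamma(\mb R^D)$ norm of $x\mapsto\phi_0(x_{1:d})$; since the extra $D-d$ coordinates contribute nothing to any derivative, $K_\gamma=\|\phi_0\|_{C^\gamma(\mb R^d)}<\infty$, so $f\in C_1^\gamma(\mb R^D)$. Because the chart inverse on $\wt{\m M}_0$ is exactly $x\mapsto x_{1:d}$, one has $f(G_0(z))=K_\gamma^{-1}\phi_0(z)$, the factor $J(z)$ cancels against $\psi$, and
\[
 d_\gamma(\mu_1,\mu_0)\ \ge\ \int_{\mb R^D}f\,\dd(\mu_1-\mu_0)\ =\ \delta\!\int_{\m M_0}f\,\psi\,\dd\mathrm{vol}\ =\ \delta\,K_\gamma^{-1}\!\int_{\mb B_{1/2}^d}\phi_0^2\,\dd z\ \ge\ \frac{c}{\sqrt n},
\]
with $c=K_\gamma^{-1}\big(\int\phi_0^2\,\dd z\big)/\sqrt{c_0}>0$; since $C$, $J$ and $\phi_0$ depend only on $d$, all of $c_0$, $K_\gamma$ and $c$ depend only on $(d,\gamma)$, hence only on $(d,\alpha,\gamma)$, as claimed.

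This is a textbook Le Cam two-point bound, so I expect the real effort to be organizational rather than conceptual. The two delicate points are: (i) the perturbation must be \emph{simultaneously} mass-preserving with respect to the curved volume measure \emph{and} smooth of the right order with controlled $C^\alpha(\m M_0)$ norm --- I arrange this by pre-dividing the flat mean-zero bump $\phi_0$ by the Jacobian density $J$, which is exactly where the uniform full-rank lower bound on $\bold{J}_{G_0}$ over $\mathrm{supp}(\phi_0)$ is used; and (ii) the discriminator must live on the ambient $\mb R^D$ rather than on $\mb R^d$ (the discriminator-lifting issue flagged before Lemma~\ref{Lemma:DP}) --- this is disarmed by choosing the patch $G_0$ whose coordinate map is literally a coordinate projection, so a function of $x_{1:d}$ serves simultaneously as the ambient discriminator and as its pullback, with the same $C^\gamma$ norm. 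A last minor caveat is that $\mu_1\in\m P^\ast$ and $p_1\ge 1/L^\ast$ only hold for $n$ past some $n_0$ and $L^\ast$ past some $L_0$, matching the large-$n$, large-$L^\ast$ regime of Theorem~\ref{maintheorem}; for $n<n_0$ the $n^{-1/2}$ lower bound of Section~\ref{ratelower3} holds trivially after shrinking the constant.
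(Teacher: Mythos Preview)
Your proposal is correct and follows essentially the same approach as the paper's proof: perturb the uniform density on the sphere $\m M_0$ by a mean-zero smooth bump supported in the single chart $G_0$, then test with a discriminator obtained by lifting the bump to $\mb R^D$ through the coordinate projection $x\mapsto x_{1:d}$ (which is exactly the chart inverse of $G_0$). The only cosmetic differences are that the paper works with the pullback densities $\nu_0,\nu_1$ on $\mb B_1^d$ and uses the explicit polynomial bump $\prod_j \bar k(z_j)$ of equation~\eqref{eqnk.3}, whereas you perturb the manifold density directly via $\psi=\phi_0/J$ with a generic $C^\infty_c$ bump $\phi_0$; your organization makes the Jacobian cancellation and the discriminator lifting slightly more transparent, but the mathematical content is identical.
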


\noindent A proof of this lemma is provided in Section~\ref{Sec:Proof_lemma_LM} in the supplement. For a distribution $\mu$, let $\mu^{\otimes n}$ denote its $n$-fold self-product.
From the lemma and the tensorization property of $d_{\chi^2}$, we have
\begin{align*}
    &d_{\chi^2} \big(\mu_1^{\otimes n}, \mu_0^{\otimes n}\big)=\big(1+d_{\chi^2} (\mu_1, \mu_0)\big)^n-1
    \leq \Big(1+\frac{1}{n}\Big)^n-1 \leq e-1.
\end{align*}
Therefore, by Pinsker's inequality and $D_{\rm KL}(\mu\,||\,\nu)\leq d_{\chi^2}(\mu,\nu)$, we obtain that their total variation distance satisfies $d_{\rm TV} \big(\mu_1^{\otimes n}, \mu_0^{\otimes n}\big)\leq \sqrt{\frac{e-1}{2}}$.
Finally, by Le Cam's bound (see for example, Lemma 15.9 of~\cite{wainwright_2019}) we obtain
\begin{equation*}
\underset{\wh{\mu}}{\inf}\, \underset{\mu\in \mathcal{P}^{\ast}} {\sup}\,\mathbb{E} \big[d_{\gamma}(\wh{\mu}, \mu)\big] \geq    d_{\gamma}(\mu_1, \mu_0) \,\Big(1-\sqrt{\frac{e-1}{2}}\Big) \geq \Big(1-\sqrt{\frac{e-1}{2}}\Big) \,\frac{c}{\sqrt{n}},
\end{equation*}
where the last step is due to Lemma~\ref{Lemma:LM}.

\subsection{Proof of minimax upper bound}\label{sec:proofupperbound}
We provide in this section the proof of the more general upper bound in Theorem~\ref{upperboundgenerative}.  Lemma~\ref{lemma:gen_model_as_mix} in Appendix~\ref{Sec:gen_model_as_mix} shows that $\m S^\ast$ is included in the approximation family $\m S^{\rm ap}$. Recall that from the basic inequality~\eqref{eqn:basic_ineq}, it suffices to show that
\begin{align}\label{Eqn:EP_newbound}
    \sup_{f\in C_1^\gamma(\mb R^D)}\Big| \mb E_{\mu^\ast}[f(X)] - \wh{ \m J}(f)\Big| \leq C \, \Big(\frac{n}{\log n}\Big)^{-\frac{1}{2}}\vee \Big(\frac{n}{\log n}\Big)^{-\frac{ \alpha+\gamma}{2\alpha+d}}\vee \Big(\frac{n}{\log n}\Big)^{-\frac{\gamma\beta}{d}},
\end{align}
where the regularized surrogate $\wh{ \m J}(f)=\wh{\m J}_l(f)+\wh{\m J}_h(f)+\wh{\m J}_s(f)$ for approximating $\mb E_{\mu^\ast}[f(X)]$ is composed of three terms:
\begin{align}
    \wh{\m J}_l(f) = \sum_{m=1}^M \wh{\m J}_{m,l}(f), \quad \wh{\m J}_h(f) = \sum_{m=1}^M \wh{\m J}_{m,h}(f) \quad\mbox{and}\quad \wh{\m J}_s(f) = \sum_{m=1}^M \wh{\m J}_{m,s}(f),
\end{align}
where for each $m\in[M]$ corresponding to the index in the partition of unity, the triplet $\big(\wh{\m J}_{m,l}(f),\wh{\m J}_{m,h}(f),\wh{\m J}_{m,s}(f)\big)$ is defined in~\eqref{eqn:surrogate_m} when $\wh{p}_m>\sqrt{\frac{\log n}{n}}$ and otherwise we set $\wh{\m J}_{m,l}(f)=\wh{\m J}_{m,h}(f)=\wh{\m J}_{m,s}(f)=0$. In particular, $\wh{\m J}_l(f)$ estimates the expectation of $\Pi_J f$ that collects the low frequency components in the wavelet expansion of $f$; $\wh{\m J}_h(f)$ estimates the expectation of $\Pi_J^\perp f$ that collects the high frequency component; and $\wh{\m J}_s(f)$ corresponds to a high-order smoothness correction due to the submanifold estimation error from the local coordinate map (and its inverse) estimator $\big(\wh G_{[m]},\wh Q_{[m]}\big)$ for $m\in[M]$, defined in~\eqref
{estimator1}. We may similarly decompose the target functional into three terms as $\mb E_{\mu^\ast}[f(X)]  = \m J_l(f) + \m J_h(f)+ \m J_s(f)$, where
\begin{align*}
    \m J_l(f)  &= \sum_{m=1}^M \m J_{m,l}(f) \ \ \mbox{with} \ \ \m J_{m,l}(f)= \mb E_{\mu^\ast}\big[\Pi_J f\big(\wh G_{[m]} \circ \wh Q_{[m]}(X)\big)\cdot \rho_m(X)],\\
     \m J_h(f)  &= \sum_{m=1}^M \m J_{m,h}(f)\ \ \mbox{with} \ \ \m J_{m,h}(f)= \mb E_{\mu^\ast}\big[\Pi^\perp_J f\big(\wh G_{[m]} \circ \wh Q_{[m]}(X)\big)\cdot \rho_m(X)],\\
      \m J_s(f)  &= \sum_{m=1}^M \m J_{m,l}(f)\ \ \mbox{with} \ \ \m J_{m,l}(f)= \mb E_{\mu^\ast}\big[f(X)\cdot \rho_m(X)]-\mb E_{\mu^\ast}\big[f\big(\wh G_{[m]} \circ \wh Q_{[m]}(X)\big)\cdot \rho_m(X)].
\end{align*}
Let $\mb M=\{m\in [M]\,:\,\mathbb{P}_{\mu^{\ast}}(X\in S^{\dagger}_m)\geq \frac{1}{2}\sqrt{\frac{\log n}{n}}\}$ and recall $\wh{\mb M}=\{m\in [M]\,:\,\wh p_m\geq \sqrt{\frac{\log n}{n}}\}$, in the following proofs, we only consider  $m\in \mb M\cap \wh{\mb M}$. In fact, by applying Bernstein's inequality for a binomial random variable and a union bound argument, for any constant $c$, there exists a constant $n_0$ such that when $n\geq n_0$, it holds with probability at least $1-n^{-c}$ that for any $m\in [M]$ such that $\mathbb{P}_{\mu^{\ast}}(X\in S^{\dagger}_m)< \frac{1}{2}\sqrt{\frac{\log n}{n}}$ or $\wh{p}_m< \sqrt{\frac{\log n}{n}}$,
\begin{equation*} 
 \begin{aligned}
  \underset{f\in C^{\gamma}_1(\mathbb{R}^D)}{\sup} \Big|\mb E_{\mu^\ast}[f(X)\cdot\rho_m(X)]- \wh{\m J}_m(f)\Big| = \underset{f\in C^{\gamma}_1(\mathbb{R}^D)}{\sup} \big|\mb E_{\mu^\ast}[f(X)\cdot\rho_m(X)]\big|\leq 2\,\sqrt{\frac{\log n}{n}},
   \end{aligned}
 \end{equation*}
see Appendix~\ref{Sec:more_proofs_upperbound} for further detail. The following three lemmas show that $\wh{\m J}_{m,l}$, $\wh{\m J}_{m,h}$ and $\wh{\m J}_{m,s}$ are good estimators for $\m J_{m,l}$, $\m J_{m,h}$ and $\m J_{m,s}$, respectively, for each $m\in \mb M\cap \wh{\mb M}$. 
\begin{lemma}[Low frequency components]\label{lemma:low_freq}
  With probability at least $1-n^{-c}$, for any $m\in \mb M\cap \wh{\mb M}$, the functional $\wh{\m J}_{m,l}:\, C^\gamma(\mb R^D) \to \mb R$ 
  defined in~\eqref{eqn:low_freq} 
  satisfies
  \begin{align*}
      \sup_{f\in C_1^\gamma(\mb R^D)} \big|\wh{\m J}_{m,l}(f) - {\m J}_{m,l}(f)\big| \leq C\,\sqrt{\frac{\log n}{n}} +  C\, \Big(\frac{\log n}{n}\Big)^{\frac{\alpha+\gamma}{2\alpha+d}}.
  \end{align*}
\end{lemma}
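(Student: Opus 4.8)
The plan is to condition on the first data split $I_1$, so that $(\wh G_{[m]},\wh Q_{[m]})$ becomes a fixed pair of H\"older maps and
\[
\wh{\m J}_{m,l}(f)-\m J_{m,l}(f)=(\mb P_{I_2}-\mb P_{\mu^\ast})\big[(\Pi_J f)\circ\wh G_{[m]}\circ\wh Q_{[m]}\cdot\rho_m\big]
\]
becomes a centered empirical process indexed by $f$, where $\mb P_{I_2}$ denotes the empirical measure of the fresh samples $\{X_i:i\in I_2\}$ (independent of $I_1$). I would carry out all estimates on the high-probability event $\m E_1$, depending only on $I_1$ and established in the analysis of Step~1 (Appendix~\ref{Sec:more_proofs_upperbound}), on which, for every $m\in\mb M\cap\wh{\mb M}$, the reconstruction $\wh G_{[m]}\circ\wh Q_{[m]}$ is uniformly $o(1)$-close to the identity on a neighbourhood of $\m M\cap{\rm supp}(\rho_m)$ and is therefore bi-Lipschitz there; on $\m E_1^{\,c}$, of probability $\le n^{-c}$, the bound is conceded trivially.

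Then I would expand $\Pi_J f$ in a compactly supported wavelet basis of $\mb R^D$, $\Pi_J f=\sum_{j\le J}\sum_k\beta_{jk}(f)\,\psi_{jk}$, using the embedding $C_1^\gamma(\mb R^D)\hookrightarrow B^\gamma_{\infty,\infty}(\mb R^D)$ (Appendix~\ref{sec:Wavelet_review}) to get $|\beta_{jk}(f)|\le C\,2^{-j(\gamma+D/2)}$ uniformly over $f\in C_1^\gamma(\mb R^D)$. By linearity $\wh{\m J}_{m,l}(f)-\m J_{m,l}(f)=\sum_{j\le J}\sum_k\beta_{jk}(f)\,Z_{jk}$ with $Z_{jk}:=(\mb P_{I_2}-\mb P_{\mu^\ast})\big[\psi_{jk}\circ\wh G_{[m]}\circ\wh Q_{[m]}\cdot\rho_m\big]$, so it suffices to bound the $Z_{jk}$ by a quantity not involving $f$. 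Since $\rho_m$ is supported in the fixed-radius ball $\mb B_{r_m}(a_m)$ and $\wh G_{[m]},\wh Q_{[m]}$ are $L$-Lipschitz, the set $\wh G_{[m]}(\wh Q_{[m]}({\rm supp}\,\rho_m))$ is a Lipschitz image of a bounded $d$-dimensional set, so at scale $j$ only $\lesssim 2^{jd}$ wavelets have support meeting it and $Z_{jk}=0$ otherwise; for an active index, $\big\|\psi_{jk}\circ\wh G_{[m]}\circ\wh Q_{[m]}\cdot\rho_m\big\|_\infty\lesssim 2^{jD/2}$, while on $\m E_1$ the bi-Lipschitz property together with boundedness of the density of $\mu^\ast$ relative to the volume measure of $\m M$ yields $\mb P_{\mu^\ast}\big(\wh G_{[m]}\circ\wh Q_{[m]}(X)\in{\rm supp}\,\psi_{jk}\big)\lesssim 2^{-jd}$ and hence $\mathrm{Var}_{\mu^\ast}\lesssim 2^{jD}\cdot 2^{-jd}=2^{j(D-d)}$. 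Bernstein's inequality then gives $|Z_{jk}|\lesssim\sqrt{2^{j(D-d)}\log n/n}+2^{jD/2}\log n/n$ for each active $(j,k)$ with probability $\ge 1-n^{-C}$, and a union bound over the $\sum_{j\le J}2^{jd}\lesssim 2^{Jd}\le n$ active indices and the $M=O(1)$ charts makes this simultaneous. Summing over scales,
\begin{align*}
\sup_{f\in C_1^\gamma(\mb R^D)}\big|\wh{\m J}_{m,l}(f)-\m J_{m,l}(f)\big|
&\ \lesssim\ \sum_{j\le J}2^{jd}\,2^{-j(\gamma+D/2)}\Big(\sqrt{\tfrac{2^{j(D-d)}\log n}{n}}+\tfrac{2^{jD/2}\log n}{n}\Big)\\
&\ \lesssim\ \sqrt{\tfrac{\log n}{n}}\,\big(1\vee 2^{J(d/2-\gamma)}\big)+\tfrac{\log n}{n}\,\big(1\vee 2^{J(d-\gamma)}\big),
\end{align*}
the ambient dimension $D$ cancelling; plugging in $2^J\asymp(n/\log n)^{1/(2\alpha+d)}$ bounds this by $C\sqrt{\log n/n}+C(\log n/n)^{(\alpha+\gamma)/(2\alpha+d)}$ (the first term dominating when $\gamma\ge d/2$, the second when $\gamma<d/2$), and combining with $\m E_1$ and relabelling $c$ gives the stated conclusion.

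The step I expect to be the main obstacle is the variance estimate $\mathrm{Var}_{\mu^\ast}\big(\psi_{jk}\circ\wh G_{[m]}\circ\wh Q_{[m]}\cdot\rho_m\big)\lesssim 2^{j(D-d)}$: without control on the data-dependent map $\wh G_{[m]}\circ\wh Q_{[m]}$ it could collapse a macroscopic portion of $\m M$ into an $O(2^{-j})$-ball, inflating both the effective number of active wavelets and the variance (a crude supremum over all $G\in\ms G,\,Q\in\ms Q$ would be hopeless). Ruling this out is exactly where the Step~1 reconstruction guarantee enters --- that on $\m E_1$ the composition $\wh G_{[m]}\circ\wh Q_{[m]}$ is uniformly $o(1)$-close to the identity on the relevant patch, upgraded to a quantitative bi-Lipschitz bound using the uniform lower bound on the singular values of the true chart Jacobians built into the definition of $\m P^\ast$ (equivalently $\m S^\ast$) --- and verifying that $\m E_1$ holds with probability at least $1-n^{-c}$.
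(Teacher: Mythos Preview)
Your proposal is correct and follows essentially the same architecture as the paper: condition on $I_1$, expand $\Pi_J f$ in a compactly supported $D$-dimensional wavelet basis, use the coefficient decay $|\beta_{jk}(f)|\lesssim 2^{-j(\gamma+D/2)}$, count $\lesssim 2^{jd}$ active wavelets per scale via Lipschitzness of $\wh G_{[m]}\circ\wh Q_{[m]}$, apply Bernstein plus a union bound to the $Z_{jk}$, and sum over scales using $2^J\asymp(n/\log n)^{1/(2\alpha+d)}$.

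The one substantive difference is in how the variance is handled. You bound each $\mathrm{Var}_{\mu^\ast}(\psi_{jk}\circ\wh G_{[m]}\circ\wh Q_{[m]}\cdot\rho_m)\lesssim 2^{j(D-d)}$ individually, and then sum $2^{jd}$ of them; this requires the bi-Lipschitz property of $\wh G_{[m]}\circ\wh Q_{[m]}$ on the patch, hence the conditioning on a good Step~1 event $\m E_1$ that you correctly flag as the main obstacle. The paper avoids this entirely: it applies Cauchy--Schwarz over the active indices,
\[
\sum_{k\in\widetilde{\mathbb S}_{lj}}\sqrt{\mathbb E[\psi_{ljk}^2(\wh G_{[m]}(\wh Q_{[m]}(X)))\rho_m^2(X)]}\ \le\ 2^{jd/2}\sqrt{\sum_{k}\mathbb E[\psi_{ljk}^2(\cdots)]}\ \lesssim\ 2^{jd/2}\cdot 2^{Dj/2},
\]
the last step using only that at each point at most $O(1)$ of the $\psi_{ljk}$ are nonzero with $\|\psi_{ljk}\|_\infty\lesssim 2^{Dj/2}$. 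This yields exactly the same $2^{j(d/2-\gamma)}$ scaling after multiplying by $|f_{ljk}|$, but needs nothing about $(\wh G_{[m]},\wh Q_{[m]})$ beyond membership in $\ms G\times\ms Q$ (hence $L$-Lipschitz). In particular, no Step~1 reconstruction guarantee, no bi-Lipschitz argument, and no event $\m E_1$ is used in the paper's proof of this lemma; that machinery is reserved for Lemmas~\ref{lemma:high_freq} and~\ref{lemma:smooth_corr}. Your route is valid (the bi-Lipschitz property you need is indeed a byproduct of the analysis around Lemma~\ref{lemma:density_regularity}), but the paper's Cauchy--Schwarz trick is both shorter and more robust here.
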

\begin{lemma}[High frequency components]\label{lemma:high_freq}
  With probability at least $1-n^{-c}$, for any $m\in \mb M\cap \wh{\mb M}$, the functional $\wh{\m J}_{m,h}:\, C^\gamma(\mb R^D) \to \mb R$ defined in~\eqref{eqn:high_freq} satisfies
  \begin{align*}
      \sup_{f\in C_1^\gamma(\mb R^D)} \big|\wh{\m J}_{m,h}(f) - {\m J}_{m,h}(f)\big| \leq  C\, \Big(\frac{\log n}{n}\Big)^{\frac{\alpha+\gamma}{2\alpha+d}}.
  \end{align*}
\end{lemma}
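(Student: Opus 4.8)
The plan is to recast the bound as a statement about the $L^1$-accuracy of the wavelet-truncation estimator $\widetilde\nu_{[m],\wh Q_{[m]}}$ tested against a single low-complexity function, thereby avoiding any chaining over $\m F$. Unwinding the definitions, for $m\in\mb M\cap\wh{\mb M}$,
\begin{equation*}
\m J_{m,h}(f)=\mb E_{\mu^\ast}\big[(\Pi_J^\perp f)\big(\wh G_{[m]}\circ\wh Q_{[m]}(X)\big)\,\rho_m(X)\big]=\mb E_{\nu^\ast_m}\big[(\Pi_J^\perp f)\circ\wh G_{[m]}\big],\qquad \nu^\ast_m:=\big(\wh Q_{[m]}\big)_{\#}(\rho_m\mu^\ast),
\end{equation*}
so that $\wh{\m J}_{m,h}(f)-\m J_{m,h}(f)=\int g_f\,\dd\big(\widetilde\nu_{[m],\wh Q_{[m]}}-\nu^\ast_m\big)$ with $g_f:=(\Pi_J^\perp f)\circ\wh G_{[m]}$. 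The first step is the structural observation that, via the embedding $C_1^\gamma(\mb R^D)\hookrightarrow B^\gamma_{\infty,\infty}(\mb R^D)$ and the wavelet characterization of Besov spaces (Appendix~\ref{sec:Wavelet_review}), the scale-$j$ block of any $f\in C_1^\gamma(\mb R^D)$ has sup-norm $\lesssim 2^{-j\gamma}$, hence $\sup_{f\in C_1^\gamma(\mb R^D)}\|g_f\|_\infty\le \sup_f\|\Pi_J^\perp f\|_\infty\lesssim 2^{-J\gamma}$, \emph{uniformly in} $f$. It then suffices to bound $\big\|\widetilde\nu_{[m],\wh Q_{[m]}}-\nu^\ast_m\big\|_{L^1}$ and multiply by $2^{-J\gamma}$.

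To do so I would condition on $\{X_i:i\in I_1\}$, making $(\wh G_{[m]},\wh Q_{[m]})$ deterministic. The Step~1 reconstruction/regularity analysis (the properties of the minimizer of~\eqref{estimator1}, collected in Appendix~\ref{sec:remaining_results}) gives that, with probability at least $1-n^{-c}$, $\wh Q_{[m]}$ is, near $\m M$, a $C^\beta$ diffeomorphism onto its image with Jacobian singular values bounded away from zero; combined with the regularity built into the definition of $\m S^\ast$, this certifies that $\nu^\ast_m$ has a density in $C^\alpha$ with $\|\nu^\ast_m\|_{C^\alpha}\lesssim 1$, supported on a bounded set $\Omega_m$ with $|\Omega_m|=O(1)$. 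Writing $\widetilde\nu_{[m],\wh Q_{[m]}}-\nu^\ast_m=(\widetilde\nu_{[m],\wh Q_{[m]}}-P_J\nu^\ast_m)+(P_J\nu^\ast_m-\nu^\ast_m)$ with $P_J$ the wavelet projection onto scales $\le J$, the deterministic tail satisfies $\|P_J\nu^\ast_m-\nu^\ast_m\|_\infty\lesssim 2^{-J\alpha}\|\nu^\ast_m\|_{C^\alpha}$, hence $\|P_J\nu^\ast_m-\nu^\ast_m\|_{L^1}\lesssim 2^{-J\alpha}$. For the stochastic part, each scale-$\le J$ coefficient of $\widetilde\nu_{[m],\wh Q_{[m]}}$ is an $I_2$-sample average of $\psi_{j,k}(\wh Q_{[m]}(X_i))\,\rho_m(X_i)$ with conditional variance $\le|I_2|^{-1}\int\psi_{j,k}^2\,\dd\nu^\ast_m\le|I_2|^{-1}\|\nu^\ast_m\|_\infty\lesssim n^{-1}$ (using $\rho_m\le 1$); since $2^{Jd}\le n/\log n$, Bernstein's inequality plus a union bound over the $O(2^{Jd})$ coefficients yields all coefficient errors $\lesssim\sqrt{\log n/n}$ with probability $\ge 1-n^{-c}$, so, by the bounded overlap of wavelets at each scale and $|\Omega_m|=O(1)$, $\big\|\widetilde\nu_{[m],\wh Q_{[m]}}-P_J\nu^\ast_m\big\|_{L^1}\lesssim\sum_{j\le J}2^{jd/2}\sqrt{\log n/n}\lesssim 2^{Jd/2}\sqrt{\log n/n}$.

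Combining, with probability $\ge 1-n^{-c}$ and uniformly over $f\in C_1^\gamma(\mb R^D)$,
\begin{equation*}
\big|\wh{\m J}_{m,h}(f)-\m J_{m,h}(f)\big|\lesssim 2^{-J\gamma}\Big(2^{-J\alpha}+2^{Jd/2}\sqrt{\tfrac{\log n}{n}}\Big)=2^{-J(\alpha+\gamma)}+2^{J(d/2-\gamma)}\sqrt{\tfrac{\log n}{n}},
\end{equation*}
and the choice $2^J\asymp(n/\log n)^{1/(2\alpha+d)}$ makes both summands equal to $(\log n/n)^{(\alpha+\gamma)/(2\alpha+d)}$, which is the claim; a union bound over the $O(1)$ indices $m\in[M]$ removes the conditioning on the split. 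I expect the real work to sit in one place: verifying that the pushforward $\nu^\ast_m=(\wh Q_{[m]})_{\#}(\rho_m\mu^\ast)$ of the \emph{estimated} coordinate map inherits an $\alpha$-H\"older density with controlled norm and bounded support --- this is where the Step~1 regularity lemmas (well-conditioned Jacobian of $\wh Q_{[m]}$ on the tangent bundle, invertibility of $\wh Q_{[m]}\circ G_{[m]}$) are essential --- whereas the bias/variance bookkeeping for the wavelet estimator and the uniform estimate $\|\Pi_J^\perp f\|_\infty\lesssim 2^{-J\gamma}$ are routine.
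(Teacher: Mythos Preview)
Your proposal is correct and matches one of the two arguments the paper gives. The paper proves the lemma twice: once via a direct double wavelet expansion (expanding $\Pi_J^\perp f$ in the $D$-dimensional basis and $\widetilde\nu_{[m],\wh Q_{[m]}}-\nu^\ast_{[m],\wh Q_{[m]}}$ in the $d$-dimensional basis, then bounding the three families of cross terms $\int|\psi_{ljk}(\wh G_{[m]}(z))\,\phi_{k'}(z)|\,\dd z$, etc.), and once via exactly your route, namely $\|\Pi_J^\perp f\|_\infty\lesssim 2^{-J\gamma}$ times an $L^1$ density-estimation bound. In the paper the second route is written for the KDE variant of $\widetilde\nu$, but your transplant of it to the wavelet-truncation estimator (bias $\lesssim 2^{-J\alpha}$, variance $\lesssim 2^{Jd/2}\sqrt{\log n/n}$) is valid and arguably cleaner than the cross-term calculation. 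You have also correctly located the only nontrivial ingredient: the paper isolates it as a separate lemma (Lemma~\ref{lemma:density_regularity}) showing that $\wh l_{[m]}=\wh Q_{[m]}\circ G_{[m]}^\ast$ is invertible on the relevant set with $\wh l_{[m]}^{-1}\in C^{\alpha+1}$, whence $\nu^\ast_{[m],\wh Q_{[m]}}\in C^\alpha_{C_1}(\mb R^d)$.
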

\begin{lemma}[Smoothness correction]\label{lemma:smooth_corr}
  With probability at least $1-n^{-c}$, for any $m\in \mb M\cap \wh{\mb M}$, the functional $\wh{\m J}_{m,s}:\, C^\gamma(\mb R^D) \to \mb R$ defined in~\eqref{eqn:smooth_corr} satisfies
  \begin{align*}
      \sup_{f\in C_1^\gamma(\mb R^D)} \big|\wh{\m J}_{m,s}(f) - {\m J}_{m,s}(f)\big| \leq   C\,\sqrt{\frac{\log n}{n}} + C\, \Big(\frac{\log n}{n}\Big)^{\frac{\gamma\beta}{d}}+ C\,\Big(\frac{\log n}{n}\Big)^{\frac{\gamma+\beta-1}{d}}.
  \end{align*}
\end{lemma}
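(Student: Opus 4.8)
The plan is to condition throughout on the first subsample $\{X_i:i\in I_1\}$ — which fixes $\wh G_{[m]}$, $\wh Q_{[m]}$, $\wh p_m$ and hence $\wh{\mb M}$ — so that the only remaining randomness is in the held-out sample $\{X_i:i\in I_2\}$, i.i.d.\ $\mu^\ast$ and independent of $(\wh G_{[m]},\wh Q_{[m]})$; this sample splitting is essential below. Fix $m\in\mb M\cap\wh{\mb M}$, write $\Phi_m=\wh G_{[m]}\circ\wh Q_{[m]}$, and for $f\in C^\gamma_1(\mb R^D)$ set $T_{m,f}(x)=\sum_{1\le|j|\le\lfloor\gamma\rfloor}\frac1{j!}f^{(j)}(x)(\Phi_m(x)-x)^j$, so that $\wh{\m J}_{m,s}(f)=-|I_2|^{-1}\sum_{i\in I_2}T_{m,f}(X_i)\rho_m(X_i)$ and $\m J_{m,s}(f)=\mb E_{\mu^\ast}[(f(X)-f(\Phi_m(X)))\rho_m(X)]$. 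I would split
\[
  \wh{\m J}_{m,s}(f)-\m J_{m,s}(f)=A_m(f)+B_m(f),
\]
where $A_m(f)=-\big(|I_2|^{-1}\sum_{i\in I_2}T_{m,f}(X_i)\rho_m(X_i)-\mb E_{\mu^\ast}[T_{m,f}(X)\rho_m(X)]\big)$ is a centered empirical process over $I_2$, and $B_m(f)=\mb E_{\mu^\ast}[R_{m,f}(X)\rho_m(X)]$ with $R_{m,f}(x)=f(\Phi_m(x))-\sum_{0\le|j|\le\lfloor\gamma\rfloor}\frac1{j!}f^{(j)}(x)(\Phi_m(x)-x)^j$ the order-$\lfloor\gamma\rfloor$ Taylor remainder of $f$ about $x$, evaluated at $\Phi_m(x)$. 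I will bound $\sup_f|B_m(f)|$ and $\sup_f|A_m(f)|$ separately, then sum over $m$, which costs only a constant since $|\mb M|\le M$ (the $m\notin\mb M\cap\wh{\mb M}$ contributions are $\lesssim\sqrt{\log n/n}$ by the Bernstein bound already invoked in the text).

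The deterministic inputs I would draw from the Step~1 submanifold-estimation analysis (proved in the appendix) are: on an event of probability $\ge1-n^{-c}$, for every $m\in\mb M\cap\wh{\mb M}$ one has $\sup_{x\in{\rm supp}(\rho_m)\cap\m M}\|\Phi_m(x)-x\|\lesssim\delta_n:=(\log n/n)^{\beta/d}$, and — since $\Phi_m$ reproduces $\{X_i:i\in I_1\}$ exactly on $\m M$ while $\m M$ and $\Phi_m(\m M)$ are $\beta$-smooth with norms $\lesssim L$ — the residual field $x\mapsto\Phi_m(x)-x$, read in a local chart $z=\varphi_m(x)$, is a $C^\beta$ map on $\mb R^d$ with sup-norm $\lesssim\delta_n$ and $C^\beta$-norm $O(1)$; equivalently $\Phi_m-\mr{id}$ oscillates at the data fill-distance scale $h\asymp(\log n/n)^{1/d}$ (its first coordinate-derivatives are $\lesssim h^{\beta-1}$, etc.). With these in hand, the bias is immediate: by Taylor's theorem for $f\in C^\gamma_1(\mb R^D)$ (integral remainder, using H\"older continuity of the order-$\lfloor\gamma\rfloor$ derivatives when $\gamma\notin\mb N$ and mere boundedness when $\gamma\in\mb N$), $|R_{m,f}(x)|\le C_{D,\gamma}\|\Phi_m(x)-x\|^\gamma$, hence
\[
  \sup_{f\in C^\gamma_1(\mb R^D)}|B_m(f)|\ \lesssim\ \mb E_{\mu^\ast}\big[\|\Phi_m(X)-X\|^\gamma\rho_m(X)\big]\ \lesssim\ \delta_n^\gamma=(\log n/n)^{\gamma\beta/d},
\]
the second term of the claim.

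For the stochastic term, if $\gamma<1$ then $\lfloor\gamma\rfloor=0$, $T_{m,f}\equiv0$, $A_m\equiv0$, and $\sup_f|\wh{\m J}_{m,s}(f)-\m J_{m,s}(f)|=\sup_f|B_m(f)|\lesssim\delta_n^\gamma$, which also covers the third term because $(\gamma+\beta-1)/d\ge\gamma\beta/d$ for $\gamma\le1$. So assume $\gamma\ge1$. Conditionally on $I_1$, $A_m$ is a centered empirical process over the class $\m H_m=\{h_{m,f}:f\in C^\gamma_1(\mb R^D)\}$ with $h_{m,f}:=\rho_m\sum_{1\le|j|\le\lfloor\gamma\rfloor}\frac1{j!}\psi_{m,j}\,f^{(j)}$ and $\psi_{m,j}(x):=(\Phi_m(x)-x)^j$ a fixed function with $\|\psi_{m,j}\|_\infty\lesssim\delta_n^{|j|}$. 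This class has envelope $\lesssim\delta_n$ and, since $\rho_m$ is supported in $\mb B_{r_m}(a_m)$ which meets $\m M$ in a $d$-dimensional patch, variance ${\rm Var}_{\mu^\ast}(h_{m,f})\lesssim\delta_n^2\,\mb P_{\mu^\ast}({\rm supp}\,\rho_m)$. I would invoke Talagrand's concentration inequality to reduce, on an event of probability $\ge1-n^{-c}$, to bounding $\mb E\sup_f|A_m(f)|$ plus the fluctuation $\sqrt{\delta_n^2\,\mb P_{\mu^\ast}({\rm supp}\,\rho_m)\log n/n}+\delta_n\log n/n\lesssim\sqrt{\log n/n}$. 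For $\mb E\sup_f|A_m(f)|$ I would symmetrize and chain in the empirical $L_2$ metric, but the decisive observation is that $\psi_{m,j}$ is not only of size $\delta_n^{|j|}$ but also oscillates at scale $h$ (it vanishes at the $\asymp n$ training points), so that for each $j$, $f\mapsto\psi_{m,j}f^{(j)}\rho_m$ gains an extra factor $h^{\gamma-|j|}$: writing $f^{(j)}\in C^{\gamma-|j|}=B^{\gamma-|j|}_{\infty,\infty}$, expanding both $f^{(j)}$ and $\psi_{m,j}$ in a compactly supported wavelet basis, and pairing via the Besov duality with $B^{-(\gamma-|j|)}_{1,1}$, one expects $\sup_f|(\mb P_{|I_2|}-\mu^\ast)(\psi_{m,j}f^{(j)}\rho_m)|\lesssim\delta_n^{|j|}h^{\gamma-|j|}$ up to constants. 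The dominant $|j|=1$ term gives $\delta_n h^{\gamma-1}\asymp(\log n/n)^{(\gamma+\beta-1)/d}$ and the $|j|\ge2$ terms are of strictly higher order; collecting, $\sup_f|A_m(f)|\lesssim\sqrt{\log n/n}+(\log n/n)^{(\gamma+\beta-1)/d}$. Adding the bias bound and summing over $m$ yields the lemma.

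The step I expect to be the main obstacle is the last one — getting the sharp exponent $(\gamma+\beta-1)/d$ in the empirical-process bound. A naive argument using only the envelope $\delta_n$ and the metric entropy of a $C^{\gamma-1}$-ball would give $\delta_n\cdot\big(n^{-(\gamma-1)/(2(\gamma-1)+d)}\vee n^{-1/2}\big)$, which for $\gamma>1$ is strictly \emph{larger} than $(\log n/n)^{(\gamma+\beta-1)/d}$; recovering the correct rate genuinely requires exploiting (i) the sample splitting, so $\Phi_m$ is independent of the $I_2$ sample driving the empirical process, and (ii) the interpolation identity $\Phi_m(X_i)=X_i$, $i\in I_1$, together with the $\beta$-smoothness of $\Phi_m$, which forces the residual field to be small at precisely the data spacing and produces the cancellation that the wavelet/Besov-duality estimate above is meant to quantify. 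Additional care is needed in the boundary regimes $\gamma\in\mb N$ (the top-order derivatives are only bounded) and $\gamma<1$ (the correction vanishes), both as handled above.
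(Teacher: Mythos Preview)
Your decomposition into a Taylor-remainder bias $B_m(f)$ and a centered $I_2$-empirical process $A_m(f)$ matches the paper, and the bound $\sup_f|B_m(f)|\lesssim\mb E_{\mu^\ast}[\|\Phi_m(X)-X\|^\gamma\rho_m(X)]\lesssim(\log n/n)^{\gamma\beta/d}$ via the submanifold-estimation moment lemma is correct. The gap is in the stochastic term, where you have misdiagnosed both the ``naive'' rate and the mechanism needed. The quantity $n^{-(\gamma-1)/(2(\gamma-1)+d)}$ you quote is a function-estimation minimax rate, not what Dudley's entropy integral gives here. In fact the paper's argument \emph{is} precisely the envelope-plus-entropy route, executed with the right entropy: conditionally on $I_1$, the intrinsic metric of the $j$-th summand is bounded by $\mathcal K_j\cdot\|f^{(j)}-\tilde f^{(j)}\|_{L^\infty(\m M)}$ with $\mathcal K_j^2=|I_2|^{-1}\sum_{i\in I_2}\|\Phi_m(X_i)-X_i\|^{2|j|}\rho_m^2(X_i)$, and the key ingredient you are missing is the manifold-restricted covering bound
\[
\log N\big(C^{\gamma-|j|}_1(\mb R^D),\ \|\cdot\|_{L^\infty(\m M)},\ \epsilon\big)\ \lesssim\ \epsilon^{-d/(\gamma-|j|)},
\]
which holds because only values on the $d$-dimensional Lipschitz image $G^\ast_{[m]}(\mb B^d_1)$ enter the metric (the paper proves this as a separate lemma). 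Dudley with this entropy and radius $\mathcal K_j$ gives $\mb E[\mathcal K_j]\cdot\big(n^{-(\gamma-|j|)/d}\vee n^{-1/2}\log n\big)$; since $\mb E[\mathcal K_j]\lesssim(\log n/n)^{|j|\beta/d}\vee\sqrt{\log n/n}$ by the moment lemma, the dominant $|j|=1$ contribution already delivers $(\log n/n)^{(\gamma+\beta-1)/d}\vee\sqrt{\log n/n}$, with no further input required.

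Your proposed oscillation/Besov-duality mechanism is not the right tool and, as stated, would not work. The interpolation identity $\Phi_m(X_i)=X_i$ holds only for $i\in I_1$, so it produces no cancellation in the $I_2$-empirical process; its role is exclusively upstream, in establishing the residual moments $\mb E_{\mu^\ast}[\|\Phi_m(X)-X\|^\eta\rho_m(X)]$. A pairing bound $|\langle g,\nu\rangle|\le\|g\|_{B^s}\|\nu\|_{B^{-s}}$ controls a deterministic integral, not the fluctuation $(\mb P_{|I_2|}-\mu^\ast)(g)$, and there is no useful negative-Besov norm of the signed empirical discrepancy at the scale you need. The sharp exponent comes entirely from the low intrinsic entropy of the discriminator class restricted to $\m M$, not from any structure of the residual beyond its magnitude.
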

\noindent Proofs of these lemmas are provided in Sections~\ref{sec:proof_low_freq},~\ref{sec:proof_high_freq} and~\ref{sec:proof_smooth_corr} in the supplementary material. It is worthwhile mentioning that an important intermediate result used in the proof of Lemma~\ref{lemma:smooth_corr} is the following lemma, which characterizes the estimation error of $\big(\wh G_{[m]},\wh Q_{[m]}\big)$ defined in step 1 of submanifold estimation in Section~\ref{sec:optimal_proce}. Its proof, which is provided in Section~\ref{sec:proof_mani_est} in the supplementary material, is quite technical and involved, and uses many techniques from empirical process theory.
\begin{lemma}[Submanifold estimation]\label{lemma:mani_est}
  For any fixed constant $\eta>0$, it holds with probability at least $1-n^{-c}$ that,
  \begin{align*}
      \mb E_{\mu^\ast} \Big[\big\|X- \wh G_{[m]}\circ \wh Q_{[m]}(X)\big\|_2^\eta \, \rho_m(X)\Big] \leq C\, \Big(\frac{\log n}{n}\Big)\vee \Big(\frac{\log n}{n}\Big)^{\frac{\eta\beta}{d}},\quad \forall m\in \mb M,
  \end{align*}
  where the expectation is taken with respect to the randomness in $X$ (not the randomness in $\wh G_{[m]}$ and $\wh Q_{[m]}$).
\end{lemma}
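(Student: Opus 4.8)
The plan is to show that, with the stated probability, the minimizer $(\wh G_{[m]},\wh Q_{[m]})$ in~\eqref{estimator1} \emph{interpolates} the data exactly, and then to upgrade exact interpolation into a pointwise reconstruction bound of order $(\log n/n)^{\beta/d}$ by combining the $C^\beta$-regularity of the reconstruction map with a density-of-samples argument on $\m M$. Concretely, \textbf{Step 1 (reduction to exact interpolation).} Since $\mu^\ast\in\m S^\ast$, on a set $\widetilde S_m$ containing $\mb B_{r_m}(a_m)$ — which, for $L$ large, we may take to contain $S_m^\dagger$ up to a $\mu^\ast$-negligible set — the restriction $\mu^\ast|_{\widetilde S_m}$ is a single generative model $(\nu_{[m]},G_{[m]})$ with $G_{[m]}$ invertible and $\beta$-smooth. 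Taking $G^\ast_{[m]}=G_{[m]}$ and $Q^\ast_{[m]}$ a H\"older (Whitney) extension of $G_{[m]}^{-1}$ to $\mb R^D$, which for $L$ large lies in $\ms Q=C_L^\beta(\mb R^D;\mb R^d)$, gives $G^\ast_{[m]}\circ Q^\ast_{[m]}(x)=x$ for $\mu^\ast$-a.e.\ $x\in S_m^\dagger$, so $(G^\ast_{[m]},Q^\ast_{[m]})$ has zero reconstruction loss in~\eqref{estimator1}. As this loss is nonnegative and attained at a minimizer ($C^\beta$-balls are compact in $C^0$ and the loss depends on $G\circ Q$ only through finitely many point evaluations), the minimizer satisfies $\wh G_{[m]}\circ\wh Q_{[m]}(X_i)=X_i$ for every $i\in I_1$ with $X_i\in S_m^\dagger$. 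Setting $r_m(x)=x-\wh G_{[m]}\circ\wh Q_{[m]}(x)$, composition bounds for H\"older maps on bounded domains give $\|r_m\circ\varphi_\lambda^{-1}\|_{C^\beta}\le C(L,L^\ast)$ in each chart $(U_\lambda,\varphi_\lambda)$ of $\m M$, while $r_m$ vanishes at all such $X_i$.

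\textbf{Step 2 (sample covering near ${\rm supp}(\rho_m)$).} Fix $m\in\mb M$ and put $\delta_n=c_0(\log n/n)^{1/d}$ with $c_0$ a large constant, so that for $n$ large $\delta_n<1/(2L)$ and hence every radius-$\delta_n$ geodesic ball of $\m M$ meeting ${\rm supp}(\rho_m)\subset\mb B_{r_m}(a_m)$ lies inside $S_m^\dagger$. Because $\m M$ is a closed (boundaryless) $\beta$-smooth submanifold satisfying the chart and Jacobian regularity in the definition of $\m P^\ast$/$\m S^\ast$, such a ball has volume $\asymp\delta_n^d$, so by the density lower bound $\dd\mu^\ast/\dd\,\mathrm{vol}_{\m M}\ge 1/L^\ast$ it carries $\mu^\ast$-mass $\gtrsim\delta_n^d\gtrsim\log n/n$. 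Cover $\m M\cap{\rm supp}(\rho_m)$ by $N_m\lesssim\delta_n^{-d}\lesssim n/\log n$ such balls, each partitioned further into a fixed number $K=K(\beta,d)$ of pieces of mass $\gtrsim\log n/n$; a Chernoff bound for binomial counts together with a union bound over the finitely many $m\in\mb M$ and the $\lesssim n$ pieces shows that, for $c_0$ large, with probability at least $1-n^{-c}$ every piece contains a sample $X_i$ with $i\in I_1$. On this event each $\delta_n$-ball contains, in chart coordinates, a set of $K$ points unisolvent for polynomials of degree $\lfloor\beta\rfloor$ on $\mb R^d$ at which $r_m$ vanishes.

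\textbf{Step 3 (pointwise bound and integration).} Applying a Bramble--Hilbert / polynomial-interpolation estimate to $r_m\circ\varphi_\lambda^{-1}$, which is $C^\beta$ with norm $\le C(L,L^\ast)$ and vanishes on the unisolvent set, yields $\|r_m(x)\|_2\le C(L,L^\ast)\,\delta_n^\beta$ for all $x$ in that ball lying on $\m M$; hence $\|r_m\|_{L^\infty(\m M\cap{\rm supp}(\rho_m))}\le C(\log n/n)^{\beta/d}$. Consequently $\mb E_{\mu^\ast}\big[\|X-\wh G_{[m]}\circ\wh Q_{[m]}(X)\|_2^\eta\,\rho_m(X)\big]\le C(\log n/n)^{\eta\beta/d}\,\mu^\ast({\rm supp}(\rho_m))\le C(\log n/n)^{\eta\beta/d}$, and a final union bound over $m\in\mb M$ gives the claim; the extra $\log n/n$ term in the statement absorbs lower-order contributions (e.g.\ any mismatch between $S_m^\dagger$ and $\widetilde S_m$ away from ${\rm supp}(\rho_m)$, or fluctuations in $\mu^\ast({\rm supp}(\rho_m))$).

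\textbf{Main obstacle.} The heart of the argument is the tension in Steps 2--3: $\delta_n$ must be small enough that $\delta_n^{\eta\beta}$ beats the target rate, yet large enough that, with probability $1-n^{-c}$ \emph{simultaneously over all} $\lesssim n$ cells, every cell retains enough well-spread interpolation points to force $\|r_m\|$ down to $O(\delta_n^\beta)$; the reconciling fact is that a closed manifold with density bounded below makes each radius-$\delta_n$ ball carry mass $\gtrsim\delta_n^d$, and $\delta_n^d\asymp\log n/n$ is precisely the threshold at which the Chernoff--union bound closes. A secondary difficulty is realizing $(G^\ast_{[m]},Q^\ast_{[m]})$ within the prescribed H\"older radius $L$. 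Finally, if one prefers not to rely on \emph{exact} interpolation (which is fragile), Steps 2--3 should instead be carried out via a localized ratio-type empirical-process bound on $\mb E_{\mu^\ast}[\|r_m\|_2^2\rho_m]$ bootstrapped to an $L^\infty$ estimate through the smoothness of $r_m$ — the more technically demanding route, and presumably the source of the empirical-process machinery referred to in the statement.
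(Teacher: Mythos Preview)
Your Step 1 matches the paper exactly: the paper also argues that $(G^\ast_{[m]},Q^\ast_{[m]})\in\ms G\times\ms Q$ achieves zero reconstruction loss on $S_m^\dagger\subset\widetilde S_m$, hence the minimizer $(\wh G_{[m]},\wh Q_{[m]})$ interpolates every $X_i\in S_m^\dagger$, $i\in I_1$. From there the two routes diverge.

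\textbf{What the paper does instead of Bramble--Hilbert.} The paper also localizes (your Step~2), taking a net $\m N_{h_{\wt n}}$ with $h_{\wt n}=(\log\wt n/\wt n)^{1/d}$ where $\wt n=n\,\mb P_{\mu^\ast}(X\in\widetilde S_m)$, but at each net point $\widetilde z$ it Taylor-expands $G^\ast_{[m]}-\wh f_{[m]}$ (with $\wh f_{[m]}=\wh G_{[m]}\circ\wh Q_{[m]}\circ G^\ast_{[m]}$) to order $\lfloor\beta\rfloor$ and restricts the vanishing empirical loss to the ball $\mb B_{\delta_{\widetilde z}}(\widetilde z)$. This yields a \emph{localized basic inequality}: the empirical quadratic form $U_n(\widetilde z,\wh f_{[m]})$ in the Taylor coefficients is bounded by $O(\delta_{\widetilde z}^{2\beta})$ plus a cross term. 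The key step is then \emph{not} polynomial interpolation but a comparison of $U_n$ with its expectation: Talagrand's concentration plus Dudley chaining plus peeling over shells in the coefficient space give $|U_n-\mb E U_n|\lesssim b_2^{d/2}\frac{\log n}{n}\,(\bar\delta^2+\text{(radius)}^2)$ uniformly over the net and over all feasible $f$. Since $\mb E[U_n]$ is lower-bounded, via the $L^2$-norming inequality $\int_{\mb B_1^d}|\sum_j a_j z^j|^2\,\dd z\gtrsim\sum_j a_j^2$ for polynomials, by a constant times the sum of squared (scaled) Taylor coefficients, one extracts $\sum_{|j|\le\lfloor\beta\rfloor}\|\wh f_{[m]}^{(j)}(\widetilde z)-G^{\ast,(j)}_{[m]}(\widetilde z)\|\,\delta_{\widetilde z}^{|j|}\lesssim(\log\wt n/\wt n)^{\beta/d}$, and then carries this pointwise bound off the net by Taylor expansion again.

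\textbf{Why the paper needs the heavier machinery.} The lemma is asserted for the full class $\m S^\ast$, in which $\nu^\ast_{[m]}$ may decay like $(1-\|z\|_2)^{L_0}$ with $L_0>0$ while $\rho_m\circ G^\ast_{[m]}$ need not vanish near $\partial\mb B_1^d$ (condition 2(c), second alternative). Your uniform radius $\delta_n=c_0(\log n/n)^{1/d}$ together with a density lower bound breaks down there: balls near the boundary carry mass only $\asymp\delta_n^d(1-\|\widetilde z\|)^{L_0}$, so the Chernoff step fails. The paper absorbs this by making the local radius position-dependent, $\delta_{\widetilde z}=b_2\big(\log\wt n/(\wt n(1-\|\widetilde z\|)^{L_0})\big)^{1/d}$, and integrating the resulting two-regime pointwise bound (Lemma~\ref{lemma:pointwise_error}) against the decaying density; the empirical-process comparison is what allows this adaptive $\delta_{\widetilde z}$ to go through uniformly.

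\textbf{Where your route is a genuine alternative.} Restricted to $\m P^\ast$ (closed manifold, density bounded away from zero), your argument is a valid and more elementary substitute: exact interpolation plus a $\delta_n$-net covering with Chernoff, then a polynomial-interpolation bound, does deliver $\|r_m\|_{L^\infty(\m M\cap{\rm supp}(\rho_m))}\lesssim(\log n/n)^{\beta/d}$ without Talagrand or peeling. The one point that needs explicit care is the norming/Lebesgue constant in Step~3: ``one sample per sub-cell of a $(\lfloor\beta\rfloor+1)^d$ tensor partition'' does give a unisolvent set, but you must show the interpolation constant stays bounded uniformly over all admissible sample configurations (equivalently, that $\sup_{\deg P\le\lfloor\beta\rfloor}\|P\|_\infty/\max_i|P(z_i)|\le C(\beta,d)$ whenever each $z_i$ lies in its designated sub-cell). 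This is true but is not the usual Bramble--Hilbert statement and should be argued, e.g.\ by induction on $d$ via one-dimensional divided differences.
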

\noindent 
By combining Lemmas~\ref{lemma:low_freq},~\ref{lemma:high_freq} and~\ref{lemma:smooth_corr}, we obtain
\begin{align*}
    \sup_{f\in C_1^\gamma(\mb R^D)}\Big| \mb E_{\mu^\ast}[f(X)] - \wh{ \m J}(f)\Big| \leq C\, M\, \Big(\frac{n}{\log n}\Big)^{-\frac{1}{2}}\vee \Big(\frac{n}{\log n}\Big)^{-\frac{ \alpha+\gamma}{2\alpha+d}}\vee \Big(\frac{n}{\log n}\Big)^{-\frac{\gamma\beta}{d}} \vee \Big(\frac{n}{\log n}\Big)^{-\frac{\gamma+\beta-1}{d}}.
\end{align*}
Finally,  under the assumption that $\beta\geq\alpha-1$, the last term $\big(\frac{n}{\log n}\big)^{-\frac{\gamma+\beta-1}{d}}$ above is always dominated by the second term $\big(\frac{n}{\log n}\big)^{-\frac{ \alpha+\gamma}{2\alpha+d}}$, which completes the proof of the claimed inequality~\eqref{Eqn:EP_newbound}.

\section{Discussion} 
In this paper, we studied the minimax rate of distribution estimation on unknown submanifold under adversarial losses, covering cases where the manifold, the density, and the discriminator class have various H\"{o}lder regularities. In conclusion, the minimax rate shows that the curse of dimensionality can be overcome for data with low intrinsic dimension, smooth density and regular support, which partly explains the empirical successes of generative model based approaches for generating realistic objects in real applications. Apart from the H\"{o}lder class, some other function spaces, such as Sobolev class and reproducing kernel Hilbert space may also be considered for the discriminator class when defining the adversarial loss, which we leave for future research. Moreover, the rate-optimal procedure developed in this study is mainly for the theoretical purpose of proving a minimax upper bound, and a modification towards it to make it computationally feasible may also be left to our future work.


\bibliography{references}{}
\bibliographystyle{abbrvnat}

\newpage
\appendix
\begin{center}
{\bf\Large Appendix}
\end{center}
\textbf{Notations}: We adopt the notations in the manuscript, and further introduce the following additional notations for the technical proofs.  For two symmetric matrices $A$ and $B$, we use $A\preccurlyeq B$ to mean that $B-A$ is a positive semi-definite matrix. We use $N(\mathcal{F},\,\widetilde{d}, \,\epsilon)$ to denote the $\epsilon$-covering number of function space $\mathcal{F}$ with respect to pseudo-metric $\widetilde{d}$. Throughout, $C$, $c$, $C_0$, $c_0$, $C_1$, $c_1$, $C_2$, $c_2$,\ldots are generically used to denote positive constants whose values might change from one line to another, but are independent from everything else.

\section{Wavelet and Besov Function Space}\label{sec:Wavelet_review}
In this section, we give a brief introduction to the wavelet and Besov function Space, and then define the smoothness regularized empirical distribution $\wt\nu_{[m],\wh{Q}_{[m]}}$ used in Step 2 of the construction of the  minimax-optimal estimator $\wh{\mu}$ in Section~\ref{sec:optimal_proce} based on wavelet expansion.

Let $\phi_{\mf M}\in C^{\zeta}(\mb R)$ and $\phi_{\mf F}\in C^{\zeta}(\mb R)$ be a compactly supported wavelet and scaling function, respectively, for example Daubechies wavelets~\citep{doi:10.1080/03610926.2015.1019144, hutter2020minimax}. This implies that 
\begin{equation*}
\Psi_{k}^j=\left\{
\begin{array}{ll}
   \psi_{\mf F} (x-k) & j=0, k\in \mb Z,  \\
    2^{(j-1)/2} \psi_{\mf M}(2^{j-1}x-k), &  j\in \mb N^{+}, k\in \mb Z,
\end{array}
\right.
\end{equation*}
is an orthonormal basis of $\m L^2(\mb R)$, where we use $\m L^2$ to denote the set of square integrable functions. To obtain a basis of $\m L^2(\mb R^d)$ for an integer $d>1$, set 
\begin{equation*}
     \mf G = \{\mf F,\,\mf M\}^d\setminus \{(\mf F,\ldots,\mf F)\}.
\end{equation*}
Then for any multi-index $k\in \mb Z^d$, the level zero basis $\phi_k^{[d]}$ is obtained by translating the $d$-fold tensor product $\phi_{\mf F}^{\otimes d}$ by $k$ as $\phi_{k}^{[d]}(x) = \prod_{i=1}^d \phi_{\mf F}(x_i-k_i)$ for $x=(x_1,\ldots,x_d)\in\mb R^d$, and for any $j\geq 1$, the level $j$ basis $\big\{\psi_{ljk}^{[d]}:\, l\in[2^d-1]\big\}$ with translation $k$ is any ordering of the following $2^d-1$ functions,
\begin{align*}
    \psi_{k}^{j,g}(x)=2^{\frac{d(j-1)}{2}} \,\prod_{i=1}^d \phi_{g_i}\big(2^{j-1}x_i - k_i\big), \quad \forall g\in \mf G.
\end{align*}
When no ambiguity arises, we suppress the superscript $[d]$ in $\phi_{k}^{[d]}(x)$ and $\psi_{ljk}^{[d]}(x)$ for $x\in \mb R^d$. This gives the orthornormal basis  
\begin{equation*}
\Psi_{k}^{j,l}=\left\{
\begin{array}{ll}
   \phi_k(x), & j=0,l=0, k\in \mb Z^d,  \\
    \psi_{ljk}(x), &  j\in \mb N^{+},l\in [2^d-1], k\in \mb Z^d.
\end{array}
\right.
\end{equation*}
Then  let $1\leq p,q\leq \infty$, $s\geq 0$ and let the regularity of the above wavelets satisfy $\zeta> s\vee \big(\frac{2d}{p}+\frac{d}{2}-s)$.   We are then ready to define the Besov space $B^s_{p,q}(\mb R^d)$ consists of functions $f$ that admits the wavelet expansion
\begin{equation*}
  f(x)= \sum_{k\in \mathbb{Z}^d} b_k\, \phi_k(x)+\sum_{l=1}^{2^d-1} \sum_{j=1}^{\infty}\sum_{k\in \mathbb{Z}^d} f_{ljk}\, \psi_{ljk}(x).
\end{equation*}
 and equipped with the norm 
 \begin{equation*}
     \|f\|_{B^s_{p,q}}:=\bigg[\Big(\sum_{k\in Z^d} |b_k|^p\Big)^{\frac{q}{p}}+\sum_{j=1}^{\infty} 2^{jq(s+\frac{d}{2}-\frac{d}{p})}\sum_{l\in [2^d-1]}\Big(\sum_{k\in Z^d}|f_{ljk}|^p\Big)^{\frac{q}{p}}\bigg]^{\frac{1}{q}}<\infty.
 \end{equation*}
 Moreover, for any positive integer $J$, we use $\Pi_J f$ to denote the projection of any $f\in B^s_{p,q}(\mb R^d)$ onto the first scale $J$ wavelet coefficients, given by 
 \begin{equation*}
     \Pi_J f(x)= \sum_{k\in \mathbb{Z}^d} b_k\, \phi_k(x)+\sum_{l=1}^{2^d-1} \sum_{j=1}^{J}\sum_{k\in \mathbb{Z}^d} f_{ljk}\, \psi_{ljk}(x)
 \end{equation*}
 and $\Pi_J^\perp f = f-\Pi_J f$. 
 
 The following Theorem collects the relationship between the Besov space and H\"{o}lder space.
\begin{theorem}
(Theorem 1.122 of~\cite{Triebel2006} and Proposition 4.3.30 of~\cite{gine_nickl_2015}) Let $\alpha>0$, if $\alpha$ is not integer, then
\begin{equation*}
    C^{\alpha}(\mb R^d)=B^{\alpha}_{\infty,\infty}(\mb R^d);
\end{equation*}
if $\alpha$ is integer, then
\begin{equation*}
    B^{\alpha}_{1,\infty}(\mb R^d)\subset C^{\alpha}(\mb R^d)\subset B^{\alpha}_{\infty,\infty}(\mb R^d).
\end{equation*}
\end{theorem}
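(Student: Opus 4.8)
The plan is to read off the equality and the two inclusions from the wavelet characterization of $B^s_{p,q}(\mb R^d)$ recorded above, matched against a parallel wavelet-coefficient description of the H\"older class $C^\alpha(\mb R^d)$. Throughout I would keep fixed the Daubechies pair $(\phi_{\mf F},\psi_{\mf M})$ whose regularity $\zeta$ exceeds $\alpha$, as in the construction above; this guarantees at once that every $\psi_{ljk}$ annihilates polynomials of degree $\le\lfloor\alpha\rfloor$ and that the wavelet series of an element of $B^\alpha_{\infty,\infty}$ may be differentiated termwise up to order $\lfloor\alpha\rfloor$. I would also record the normalizations forced by the definitions: $\psi_{ljk}$ is supported in a cube of side $\asymp 2^{-j}$, $\|\psi_{ljk}\|_{L^1}\asymp 2^{-jd/2}$, and $\|\partial^a\psi_{ljk}\|_{L^\infty}\asymp 2^{j(|a|+d/2)}$, with the analogous statements for the scale-zero functions $\phi_k$.

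First I would prove $C^\alpha(\mb R^d)\subset B^\alpha_{\infty,\infty}(\mb R^d)$, valid for every $\alpha>0$, which already supplies the right-hand inclusion in both cases. Given $f\in C^\alpha_r(\mb R^d)$, for a scale $j\ge 1$ and translate $k$ let $x_{jk}$ be the center of the support cube of $\psi_{ljk}$ and $P_{jk}$ the order-$\lfloor\alpha\rfloor$ Taylor polynomial of $f$ at $x_{jk}$; the vanishing-moment property gives $f_{ljk}=\langle f-P_{jk},\,\psi_{ljk}\rangle$, while Taylor's theorem with the H\"older bound yields $|f(y)-P_{jk}(y)|\lesssim r\,\|y-x_{jk}\|^\alpha\lesssim r\,2^{-j\alpha}$ on $\mathrm{supp}(\psi_{ljk})$. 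Combined with $\|\psi_{ljk}\|_{L^1}\asymp 2^{-jd/2}$ this gives $|f_{ljk}|\lesssim r\,2^{-j(\alpha+d/2)}$, and likewise $|b_k|\lesssim r$; substituting into the $B^\alpha_{\infty,\infty}$-norm, whose scale-$j$ weight is exactly $2^{j(\alpha+d/2)}$, produces $\|f\|_{B^\alpha_{\infty,\infty}}\lesssim r$. This argument does not see whether $\alpha$ is an integer.

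Next I would establish the reverse inclusion $B^\alpha_{\infty,\infty}(\mb R^d)\subset C^\alpha(\mb R^d)$ for non-integer $\alpha$, which then closes the equality. Assuming $|b_k|+\sup_{j\ge1}2^{j(\alpha+d/2)}\sup_{l,k}|f_{ljk}|\le r$, I would differentiate the series termwise: for $|a|=m\le\lfloor\alpha\rfloor$, compact support means only $O(1)$ translates are nonzero at a given point, so the scale-$j$ block contributes at most $C\,2^{-j(\alpha+d/2)}\,2^{j(m+d/2)}=C\,2^{-j(\alpha-m)}$, a convergent geometric series since $m\le\lfloor\alpha\rfloor<\alpha$, whence $\|\partial^a f\|_{L^\infty}\lesssim r$ for all $|a|\le\lfloor\alpha\rfloor$. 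For the H\"older seminorm of $\partial^a f$ with $|a|=\lfloor\alpha\rfloor$ and exponent $\sigma=\alpha-\lfloor\alpha\rfloor\in(0,1)$, I would fix $x,y$, choose $J$ with $2^{-J}\asymp\|x-y\|$, and split the series at level $J$: the high-frequency tail is bounded block-by-block, $\sum_{j>J}2^{-j\sigma}\lesssim 2^{-J\sigma}\asymp\|x-y\|^\sigma$, and the low-frequency part is handled via the mean value theorem with one extra derivative, $\|x-y\|\sum_{j\le J}2^{-j(\sigma-1)}\lesssim\|x-y\|\,2^{J(1-\sigma)}\asymp\|x-y\|^\sigma$ (using $\sigma-1<0$); adding the two pieces gives $\|f\|_{C^\alpha(\mb R^d)}\lesssim r$.

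The integer case is the real obstacle. When $\alpha\in\mb N$, the top-order derivative $\partial^a f$ with $|a|=\lfloor\alpha\rfloor=\alpha$ receives a scale-$j$ contribution $C\,2^{-j(\alpha-m)}=C$, so the termwise-differentiated series of an arbitrary $f\in B^\alpha_{\infty,\infty}$ need not converge uniformly; the same computation then only produces a Zygmund second-difference bound, strictly weaker than membership in $C^\alpha$ (the lacunary function $\sum_j 2^{-j}\psi_{\mf M}(2^j x)$ lies in $B^1_{\infty,\infty}$ but is not Lipschitz), so at integer smoothness the equality genuinely fails and only $C^\alpha\subset B^\alpha_{\infty,\infty}$ survives. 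To regain the reverse direction one needs the stronger hypothesis carried by the left-hand space of the displayed chain, under which the scale-$j$ blocks of $\partial^a f$ are summable across scales to $\lesssim r$ and $f\in C^\alpha$ follows termwise. The remaining points are routine: checking that the assumed Daubechies regularity supplies enough vanishing moments and licenses termwise differentiation, and observing that the scale-zero $\phi_k$-terms contribute a globally smooth bounded function and are harmless. Since the statement is quoted verbatim from Theorem~1.122 of~\cite{Triebel2006} and Proposition~4.3.30 of~\cite{gine_nickl_2015}, one may alternatively simply invoke those references; the above is the mechanism behind them.
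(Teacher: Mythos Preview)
The paper does not prove this theorem at all: it is stated as a quotation from Triebel and Gin\'e--Nickl and used as a black box. Your proposal goes well beyond what the paper does, supplying the standard wavelet-coefficient proof that underlies those references. The arguments you give for $C^\alpha\subset B^\alpha_{\infty,\infty}$ (vanishing moments plus Taylor remainder) and for $B^\alpha_{\infty,\infty}\subset C^\alpha$ when $\alpha\notin\mb N$ (termwise differentiation, then the low/high frequency split at $2^{-J}\asymp\|x-y\|$) are correct and are exactly the mechanism in the cited sources.

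There is, however, a genuine issue in the integer case, and it is not with your reasoning but with the statement itself. Your sketch for the left inclusion invokes ``summability across scales'' of the scale-$j$ blocks of $\partial^a f$; that is the $q=1$ property, i.e.\ you are proving $B^\alpha_{\infty,1}\subset C^\alpha$, which is the correct classical sandwich. The displayed statement has $B^\alpha_{1,\infty}$ (i.e.\ $p=1$, $q=\infty$), and that inclusion is false in general: under the paper's own Besov norm, $f\in B^\alpha_{1,\infty}$ only forces $\sum_k|f_{ljk}|\lesssim 2^{-j(\alpha-d/2)}$, which gives no uniform control on individual coefficients and, via the Sobolev-type embedding $B^\alpha_{1,\infty}\hookrightarrow B^{\alpha-d}_{\infty,\infty}$, is far too weak to yield $C^\alpha$. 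The indices are almost certainly transposed in the paper; the references it cites state the $B^\alpha_{\infty,1}$ version, and your argument proves that version. You should flag this explicitly rather than silently repair it.
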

\paragraph{Smoothness regularized empirical distribution based on wavelet expansion:}
Suppose the target density $\num(z)$ of $(\wh{Q}_{[m]})_{\#}(\rho_m\mu^\ast)$ is $\alpha$-smooth (which is true with high probability, see Lemma~\ref{lemma:density_regularity} for further detail), then it has the following wavelet expansion:
\begin{align*}
\num(z)&=\sum_{k \in  \mathbb{S}}a^{\wh{Q}_{[m]}}_k \,\phi_k(z) + \sum_{l=1}^{2^{d}-1}\sum_{j=0}^{+\infty}\sum_{k\in \mathbb{S}_{lj}}\theta_{ljk}^{\wh Q_{[m]}}\,  \psi_{ljk}(z),\quad\mbox{with}\\
 \mathbb{S}&=\big\{k\in\mathbb{Z}^d:\,{\rm supp}(\phi_k)\cap[-L,L]^d\neq \emptyset\big\};\\
\mathbb{S}_{lj}&=\big\{k \in \mathbb{Z}^d: \,{\rm supp}(\psi_{ljk})\cap [-L,L]^d\neq \emptyset\big\},
 \end{align*}
 where
 \begin{equation*}
\begin{aligned}
{a}^{\wh{Q}_{[m]}}_{k}&=\mb E_{\mu^\ast}\big[\phi_k(\wh Q_\sm(X))\cdot \rho_m(X)\big];\\
{\theta}^{\wh{Q}_{[m]}}_{ljk} &= \mb E_{\mu^\ast}\big[\psi_{ljk}(\wh Q_\sm(X))\cdot \rho_m(X)\big].
 \end{aligned}
\end{equation*}
 Here, the expectation is taken with respect to $X\sim \mu^\ast$ (not the randomness in $\wh Q_\sm$). The smoothness regularized estimator $\widetilde \nu_{\sm,\wh Q_{\sm}}$ corresponds to a truncated empirical version of $\nu_{\sm,\wh Q_{\sm}}^\ast$ by truncating the expansion at a finite level $J$ such that $2^J$ is the largest integer not exceeding $(\frac{n}{\log n})^{\frac{1}{2\alpha+d}}$, and replacing the wavelet coefficients with their sample averages,
 \begin{equation} 
 \widetilde{\nu}_{[m],\wh Q_\sm}(y)=\sum_{k \in  \mathbb{S}}\widetilde{a}^{\wh{Q}_{[m]}}_k \phi_k(y) +\sum_{l=1}^{2^{d}-1}\sum_{j=0}^J\sum_{k\in \mathbb{S}_{lj}}\widetilde{\theta}_{ljk}^{\wh{Q}_{[m]}} \psi_{ljk}(y),\\
 \end{equation}
 where
\begin{equation*}
\begin{aligned}
\widetilde{a}^{\wh{Q}_{[m]}}_{k}&=\frac{1}{|I_2|}\sum_{i\in I_2} \phi_k(\wh Q_\sm(X_i))\cdot \rho_m(X_i);\\
\widetilde{\theta}^{\wh{Q}_{[m]}}_{ljk} &= \frac{1}{|I_2|}\sum_{i\in I_2}  \psi_{ljk}(\wh Q_\sm(X_i))\cdot \rho_m(X_i).
 \end{aligned}
\end{equation*}
\begin{remark}\label{rem:KDE} \normalfont
Another possible choice of the smoothness regularized empirical distribution $\wt \nu_{\sm,\wh{Q}_\sm}$ is the kernel density estimator (KDE) for $(\wh{Q}_{[m]})_{\#}(\rho_m\mu^\ast)$ that is easier to implement in practice. Specifically, we may choose any kernel function $\bar{k}(x)\in C^{1+ \alpha}_{c_1}(\mathbb{R})$ over $\mb R$ such that: (1)~${\rm supp}(\bar{k}(x))\subset [0,1]$; (2)~$\int \bar{k}(x)\dd x=1$; (3)~for any $j\in \mb N^{+}$ with $ j\leq \lceil  \alpha\rceil$, $\int x^j \bar{k}(x)\, \dd x=0$. Then we define the KDE of $(\wh{Q}_{[m]})_{\#}(\rho_m\mu^\ast)$ as
 \begin{equation*}
    \wt \nu_{\sm,\wh{Q}_\sm}(y)=\frac{1}{h^d|I_2|} \sum_{i\in I_2}\bigg[\Big\{ \prod_{j=1}^d \bar{k}\Big(\frac{\wh{Q}_{\sm,j}(X_i)-y_j}{h}\Big)\Big\}\cdot \rho_m(X_i)\bigg],
  \end{equation*}
where bandwidth parameter $h=\big(\frac{\log n}{n}\big)^{\frac{1}{2 \alpha+d}}$ and $\wh{Q}_{\sm,j}(X_i)$ denotes the $j$-th dimension of $\wh{Q}_{\sm}(X_i)\in\mb R^d$. We show in Appendix~\ref{sec:proof_high_freq} that such a KDE based regularization also leads to the same upper bound as in Lemma~\ref{lemma:high_freq}.
\end{remark}

\section{Generative Model Class of Target Distribution}\label{sec:gen_model_class}

In this section, we consider  generative model classes that include the distribution class $\m P^\ast$ considered in Theorem~\ref{maintheorem} as a special case.  Similar as the approximation family $\m S^{\rm ap}$ and $\m S^{\rm ap}_{\nu_0}$, we consider two (mixture of) generative model classes for the target distribution, $\m S^\ast = \m S^\ast(d,D,\alpha,\beta,\ms O_M,L)$ and $\m S_{\nu_0}^\ast = \m S^\ast_{\nu_0}(d,D,\alpha,\beta,\ms O_M,L)$, where recall that $\ms O_M=\{S_m=\mb B_{r_m}(a_m)^{\circ}\}_{m\in [M]}$ forms a open cover for $\mb B_L^D$. The first generative model class $\m S^\ast$ consists of all probability measures $\mu\in \m P(\mb R^D)$ satisfying:
\begin{enumerate}[topsep=0.5em,itemsep=0.5em,partopsep=0em,parsep=0em]
 \item $\mu$ has a support contained in $\mb B_L^D$. 
\item For any $m\in [M]$, there exists a set $\widetilde{S}_m\supseteq \mb B_{r_m+1/L}(a_m)$ such that
\begin{enumerate}
    \item  there exists a map $G_{[m]}\in C^{\gamma}_L(\mathbb{R}^d; \mathbb{R}^D)$ such that $\mu|_{\widetilde{S}_m} = [G_{[m]}]_{\#} \nu_{[m]}$ for some distribution $\nu_{[m]} \in  \m P(\mb R^d)$ supported on $\mb B_1^d$. Moreover, there exists $Q_{[m]}\in C^{\gamma}_L(\mathbb{R}^D; \mathbb{R}^d)$ such that $Q_{[m]}\circ [G_{[m]}|_{\mb B_1^d}] = {\rm id}_{\mb B_1^d}$, the identity map on $\mb B_1^d$.
    \item $\nu_{[m]}$ is absolutely continuously w.r.t. the Lebesgue measure with density also denoted as $\nu_{[m]}:\mathbb{R}^d\to [0,\infty)$ such that $\nu_{[m]}|_{\mb B_1^d}\in C^{\alpha}_L(\mb B_1^d)$ and there exists $L_0$ depending on $m$ such that $0\leq L_0\leq L$ and $\sup_{z\in \mb B_1^d} \big|\log \frac{\nu_{[m]}(z)}{(1-\|z\|_2)^{L_0}}\big|\leq L$.
    \item Either $G_{[m]}(\mb B_1^d\setminus \mb B^d_{1-\epsilon})\cap \mb B_{r_m}(a_m)=\emptyset$ for some $\epsilon$ satisfying $0<\log\frac{1}{\epsilon} \leq L$ or $\nu_{[m]}\in C^{\alpha}_L(\mathbb{R}^d)$.
\end{enumerate}
 
\end{enumerate}

\begin{remark}\normalfont 
If we choose $L_0=0$ for each $m\in [M]$ in point 2(b) of the above assumption, it recovers the assumption in the distribution family $\m P^\ast$, where the density of $\mu$ is $\alpha$-smooth and bounded from above and below. The point 2(c) guarantees that for any partition of unity $\{\rho_m\}_{m\in [M]}$ subordinate to $\ms O_{M}$, the probability measure of the latent variable reweighted by $\rho_m$, given by $\nu_{[m]}\cdot(\rho_m\circ G_{[m]})$ is $\alpha$-smooth on $\mb R^d$. In particular, the assumption  $G_{[m]}(\mb B_1^d\setminus \mb B^d_{1-\epsilon})\cap \mb B_{r_m}(a_m)=\emptyset$ requires the support of $\mu$ to be   boundaryless; on the other hand, the  assumption $\nu_{[m]}\in C^{\alpha}_L(\mathbb{R}^d)$ allows the manifold to have a boundary while the distribution should smoothly decay to zero around the boundary.
\end{remark}

\begin{remark} \normalfont 
Given a partition of unity  $\{\rho_m\}_{m\in[M]}$ constructed based on the open over $\ms O_{M}=\{\mb B_{r_m}(a_m)^\circ\}_{m\in [M]}$ of $\mb B_L^D$ (see Section~\ref{sec:pou} for an example),  if for any $m\in [M]$, $\widetilde{S}_m$ contains $\mb B_{r_m}(a_m)$, then $\{\rho_m\}_{m\in[M]}$ is also a partition of unity of $\m {M}$ subordinate to $\{\widetilde{S}_m\cap \m{M}\}_{m\in [M]}$. Lemma~\ref{lemma:gen_model_as_mix} in Appendix~\ref{Sec:gen_model_as_mix} shows that the distribution $\mu\in \mathcal{S}^{\ast}$ can be expressed as a mixture of generative model with rejection sampling:  $\mu =\sum_{m=1}^M w_{[m]} \mathcal{A}(G_{[m]},\nu_{[m]},\rho_m)$ where $\{w_{[m]}\}_{m\in [M]} $ are the mixing weights given by $w_{[m]}=\mathbb{E}_{\mu}[\rho_m(X)]$ for $m\in [M]$, and recall $\mathcal{A}(G_{[m]}, \nu_{[m]}, \rho_m)$ is the probability measure induced by the data generating process where $X\sim [G_{[m]}]_{\#}\nu_{[m]}$ is accepted with probability $\rho_m(X)\in[0,1]$. Therefore, $\m S^\ast$ is included in the approximation family $\m S^{\rm ap}$.
\end{remark}
 
Given a prespecified latent space distribution $\nu_0$  
supported on $\mb B_1^d$, similar as the definition for the approximation family $\m S^{\rm ap}_{\nu_0}$, the second generative model class  for the target distribution $\m S^\ast_{\nu_0}$ is defined as the set of all probability measures $\mu\in \m P(\mb R^D)$ that satisfy properties 1 and 2 above with $\nu_{[m]} = [V_{[m]}]_{\#}\nu_0$ for $m\in[M]$, where $V_{[m]}\in C^{1+\alpha}_{L}(\mb B_1^d)$ and  it can be similarly shown that $\mathcal{S}^{\ast}_{\nu_0}\subset \m S^{\rm ap}_{\nu_0}$.\\
\quad\\
 \noindent {\bf Example (Manifold with global parametrization):} 
The simplest example of distribution  $\mu\in\m S^\ast$ considers a submanifold $\m M\subset \mb R^D$ whose atlas $\ms A$ contains a single chart $(\m M, \varphi)$, that is, $\m M$ admits a global parametrization $\varphi^{-1}:\, {\mb B_1^d}^{\circ} \to \m M$. The smoothness level $\beta$ of $\m M$ is the same as that of $\varphi^{-1}$ as a diffeomorphism between open set ${\mb B_1^d}^{\circ}\subset \mb R^d$ and $\m M\subset \mb R^D$. For example, a $d$-dimensional ball lying in a $d$-dimensional affine subspace of $\mb R^D$ and a $d$-dimensional half subspace of $\mb R^D$ are both submanifolds admitting global parametrization.  The global parametrization presumption of a submanifold implies the existence of a boundary, which incurs notoriously technical complication. To address the boundary issue, we require the density function of $\varphi_{\#}\mu$  to be $\alpha$-smooth and smoothly decay to zero around the boundary $\partial \mb B_1^d$ with polynomial tails $(1-\|z\|)^{L_0}$ for some positive constants $L_0$.  Note that in this case, the approximation family $\m S^{\rm ap}_{\nu_0}$ is not sufficient to cover $\mu$, as we need the more flexible approximation family $\m S^{\rm ap}$ containing a latent variable distribution whose density decay around the boundary of $\mb B_1^d$ matches the polynomial tail $(1-\|z\|)^{L_0}$ for a possibly unknown $L_0$. One estimation framework to avoid the boundary issue for a manifold with boundary is to consider the restriction of $\mu$ on a compact subset $K_0$ of $\m M$ that is away from the boundary $\partial \m M$ and we consider this framework in Appendix~\ref{sec:unbounded}.






 
 \paragraph{Exaxmple (Smooth distributions on unknown compact smooth boundaryless submanifold):} Another representative example considers an $\alpha$-smooth distribution supported on a closed submanifold in $\mb R^D$ with density function bounded from above and below, that is, the distribution family $\m P^\ast$ considered in Theorem~\ref{maintheorem}. Due to the assumption that the density function is bounded away from zero, the family $\m S_{\nu_0}^\ast$ with $\nu_0$ being chosen as the uniform distribution on $\mb B_1^d$ is sufficient to cover $\m P^\ast$ when the maximal radius $\{r_m\}_{m\in [M]}$ of the open cover defining the partition of unity is sufficiently small. More specifically, we have the  following lemma.
\begin{lemma}\label{lemmaboundaryless}
For any constant $d,D\in \mathbb{N}^{+}$ and $\alpha,\beta,L^\ast\in \mathbb{R}^+$ with $d<D$, $\beta>1$ and $\alpha\leq \beta-1$,  there exist positive constants $\epsilon_0$ and $L$ such that for any $\ms O_{M}=\{\mb B_{r_m}(a_m)^{\circ}\}_{m\in [M]} $ that forms a open cover for $\mb B_{L}^D$ with $\max\{r_1,r_2,\cdots,r_m\}\leq \epsilon_0$, it holds that $\mathcal{P}^{\ast}(d,D,\alpha,\beta,L^\ast)\subset \m S^{\ast}_{\nu_0}(d,D,\alpha,\beta,\ms O_{M},L)\subset \m S^{\ast}(d,D,\alpha,\beta,\ms O_{M},L)$ with $\nu_0$ being the uniform distribution on $\mb B_1^d$. Thus we also have   $\mathcal{P}^{\ast}(d,D,\alpha,\beta,L^\ast)\subset \m S^{\rm ap}_{\nu_0}(d,D,\alpha,\beta,\ms O_{M},L)$.
\end{lemma}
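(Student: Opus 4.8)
The inclusion $\m S^\ast_{\nu_0}\subseteq\m S^\ast$ is immediate from the definitions (the former is $\m S^\ast$ with the extra requirement $\nu_{[m]}=[V_{[m]}]_\#\nu_0$, $V_{[m]}\in C^{1+\alpha}_L(\mb B_1^d)$), and once $\m P^\ast\subseteq\m S^\ast_{\nu_0}$ is shown, $\m P^\ast\subseteq\m S^{\rm ap}_{\nu_0}$ follows from the relation $\m S^\ast_{\nu_0}\subseteq\m S^{\rm ap}_{\nu_0}$ proved exactly as in Lemma~\ref{lemma:gen_model_as_mix} (rewrite $\mu$ as a mixture with rejection sampling, weights $w_{[m]}=\mb E_\mu[\rho_m(X)]$). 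So the whole content is $\m P^\ast\subseteq\m S^\ast_{\nu_0}$. The plan is to fix constants $\epsilon_0,L$ depending only on $(d,D,\alpha,\beta,L^\ast)$ such that every $\mu\in\m P^\ast(d,D,\alpha,\beta,L^\ast)$, with support $\m M$ and atlas $\ms A=\{(U_\lambda,\phi_\lambda)\}_{\lambda\in\Lambda}$, and every cover $\ms O_M=\{\mb B_{r_m}(a_m)^\circ\}_{m\in[M]}$ of $\mb B_L^D$ with $\max_m r_m\le\epsilon_0$, admits for each $m$ a set $\widetilde S_m\supseteq\mb B_{r_m+1/L}(a_m)$ and maps $(G_{[m]},Q_{[m]},V_{[m]})$ with latent density $\nu_{[m]}=[V_{[m]}]_\#\nu_0$ meeting every clause in the definition of $\m S^\ast_{\nu_0}$. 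Since $\|\phi^{-1}\|_{C^\beta}\le L^\ast$ forces $\m M\subseteq\mb B_{L^\ast}^D$, taking $L\ge L^\ast$ already secures clause~1.

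\noindent\emph{Localization and the local model.} Fix $m$. The first step is to show that, if $\epsilon_0$ and $1/L$ are small enough (quantitatively in $\beta,L^\ast,d$), then $\mb B_{r_m+1/L}(a_m)\cap\m M$ sits inside a single chart: pick $x_m$ in it (if it is empty, pick any interior point of any fixed chart; everything below still works and clause~2(c) becomes vacuous), use condition~3 of $\m P^\ast$ to get a chart $(U_\lambda,\phi_\lambda)$ with $B_{1/L^\ast}(x_m)\cap\m M\subseteq U_\lambda$ and $\mb B_{1/L^\ast}^d(z_m)\subseteq\phi_\lambda(U_\lambda)$ for $z_m:=\phi_\lambda(x_m)$, and observe that a Taylor estimate built from $\|\phi^{-1}\|_{C^\beta}\le L^\ast$ and the singular-value lower bound $1/L^\ast$ on $J_{\phi^{-1}}$ makes $\phi_\lambda$ bi-Lipschitz (constants in $[(2L^\ast)^{-1},2L^\ast]$) on a fixed-radius $\m M$-neighborhood of $x_m$. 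Setting $c:=(2L^\ast)^{-1}$ and shrinking $\epsilon_0$ further, one gets $\phi_\lambda(\mb B_{r_m}(a_m)\cap\m M)\subseteq\mb B_{c/2}(z_m)^\circ$ while $\phi_\lambda(\mb B_{r_m+1/L}(a_m)\cap\m M)\subseteq\mb B_c(z_m)^\circ\subseteq\phi_\lambda(U_\lambda)$. Put $\m R_m:=\phi_\lambda^{-1}(\mb B_c(z_m)^\circ)$, choose an open $W_m\subseteq\mb R^D$ with $W_m\cap\m M=\m R_m$, and set $\widetilde S_m:=W_m\cup\mb B_{r_m+1/L}(a_m)$, so $\widetilde S_m\supseteq\mb B_{r_m+1/L}(a_m)$ and $\widetilde S_m\cap\m M=\m R_m$. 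Let $G_{[m]}(z):=\phi_\lambda^{-1}(cz+z_m)$ on $\mb B_1^d$, extended to a $C^\beta$ map on $\mb R^d$ with $\|G_{[m]}\|_{C^\beta}\le L$ (Whitney/Stein extension); let $Q_{[m]}$ be a $C^\beta$ extension to $\mb R^D$ of $x\mapsto c^{-1}(\phi_\lambda(x)-z_m)$ from the compact set $\overline{\m R_m}$ with $\|Q_{[m]}\|_{C^\beta}\le L$ (the $C^\beta$ control of $\phi_\lambda$ itself following from the inverse function theorem and the singular-value bounds), so that $Q_{[m]}\circ G_{[m]}|_{\mb B_1^d}=\mathrm{id}$. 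Finally take $\nu_{[m]}:=(Q_{[m]})_\#(\mu|_{\widetilde S_m})$; by change of variables its density on $\mb B_1^d$ equals $c^d\,(\mu\circ\phi_\lambda^{-1})(cz+z_m)$, which, using $\alpha\le\beta-1$ so the metric factor is $C^{\beta-1}\subseteq C^\alpha$, belongs to $C^\alpha_L(\mb B_1^d)$ and is bounded above and below by positive constants depending only on $(d,L^\ast)$, so clause~2(b) holds with $L_0=0$ (the mass $\mu(\widetilde S_m)$ is bounded below since $\m R_m$ has volume $\gtrsim(L^\ast)^{-d}$, so any renormalization is harmless). Then $[G_{[m]}]_\#\nu_{[m]}=[G_{[m]}\circ Q_{[m]}]_\#(\mu|_{\widetilde S_m})=\mu|_{\widetilde S_m}$ because $G_{[m]}\circ Q_{[m]}=\mathrm{id}$ on $\m R_m$ (clause~2(a)), and $G_{[m]}(\mb B_1^d\setminus\mb B_{1/2}^d)=\phi_\lambda^{-1}(\{w:c/2<\|w-z_m\|\le c\})$ is disjoint from $\mb B_{r_m}(a_m)$ because $\phi_\lambda$ maps $\mb B_{r_m}(a_m)\cap\m M$ into $\mb B_{c/2}(z_m)^\circ$, so clause~2(c) holds with $\epsilon=1/2$ and $\log2\le L$.

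\noindent\emph{The transport map.} It remains to write $\nu_{[m]}=[V_{[m]}]_\#\nu_0$ with $\nu_0=\mathrm{Unif}(\mb B_1^d)$ and $V_{[m]}\in C^{1+\alpha}_L(\mb B_1^d)$. By Brenier's theorem there is a unique optimal transport map $V_{[m]}=\nabla\psi_{[m]}$, $\psi_{[m]}$ convex, pushing $\nu_0$ forward to $\nu_{[m]}$. Since source and target domains are both the ball $\mb B_1^d$ (uniformly convex, smooth boundary) and both densities are $\alpha$-H\"older and bounded away from $0$ and $\infty$ by constants depending only on $(d,L^\ast)$, Caffarelli's global regularity theory for Monge--Amp\`{e}re/Brenier maps between uniformly convex domains (Theorem~12.50 of~\cite{Villani2009}; see also~\cite{10.2307/2118564}), with the Schauder bootstrap when $\alpha\ge1$, gives $\psi_{[m]}\in C^{2+\alpha}(\mb B_1^d)$, hence $V_{[m]}=\nabla\psi_{[m]}\in C^{1+\alpha}(\mb B_1^d)$ with norm controlled by a constant depending only on $(d,\alpha,L^\ast)$, which is $\le L$ for $L$ large. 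This shows $\mu\in\m S^\ast_{\nu_0}(d,D,\alpha,\beta,\ms O_M,L)$ and finishes the proof.

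\noindent\emph{Where the work is.} Two steps carry the weight. The first is making every estimate above uniform in $m$ and in $\mu\in\m P^\ast$ with a single pair $(\epsilon_0,L)$; the delicate point is the bi-Lipschitz comparison of the ambient and chart metrics (this is where $\beta>1$ and the Jacobian singular-value bound enter), which is what lets the rescaling by $c$ push $\mb B_{r_m}(a_m)\cap\m M$ strictly inside $\mb B_{c/2}(z_m)^\circ$ and thereby force clause~2(c) with a fixed $\epsilon$. The second, genuinely analytic input --- flagged already in the text --- is that an $\alpha$-H\"older density bounded away from $0$ and $\infty$ on the ball is a pushforward of the uniform measure under a $C^{1+\alpha}$ map, i.e.\ Caffarelli's boundary regularity, which must be invoked with the correct hypotheses (convex target, two-sided H\"older bounds on both densities, bootstrap for non-small $\alpha$). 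The concluding mixture-with-rejection reduction is then routine: clause~2(c) makes $\rho_m\circ G_{[m]}$ vanish on $\mb B_1^d\setminus\mb B_{1/2}^d$, so $\nu_{[m]}\cdot(\rho_m\circ G_{[m]})$ extends by zero to a $C^\alpha(\mb R^d)$ function and Lemma~\ref{lemma:gen_model_as_mix} applies.
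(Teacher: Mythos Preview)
Your proposal is correct and shares the paper's strategy: localize via charts, show the resulting latent density is $C^\alpha$ and bounded away from zero, then invoke Caffarelli's global regularity for the Brenier map from $\nu_0$ to $\nu_{[m]}$. The one substantive difference is the choice of local coordinate. You take $G_{[m]}(z)=\phi_\lambda^{-1}(cz+z_m)$ and $Q_{[m]}=c^{-1}(\phi_\lambda-z_m)$ directly from the given atlas; the paper instead isolates an auxiliary Lemma~\ref{parametrization} that replaces $\phi_\lambda$ by the tangent-space projection $Q_{x_0}(x)=W_{x_0}^T(x-x_0)$ and reconstructs its inverse $G_{x_0}=\psi\circ q(\tau_2\,\cdot\,)$ through a Newton-type iteration solving $W_{x_0}^T(\psi(z)-x_0)=y$. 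Your route makes $G_{[m]}$ trivially $C^\beta$ on $\mb R^d$; the paper's makes $Q_{x_0}$ trivially $C^\infty$ on $\mb R^D$ (it is linear). Each approach pays for the other half.

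That is precisely where your write-up is too quick. You assert a $C^\beta$ extension of $\phi_\lambda$ from the compact set $\overline{\m R_m}\subset\mb R^D$ to all of $\mb R^D$ ``following from the inverse function theorem and the singular-value bounds,'' but $\phi_\lambda$ is only defined on a $d$-dimensional subset of $\mb R^D$: Whitney/Stein extension requires a full $\lfloor\beta\rfloor$-jet (you only have tangential derivatives), and the inverse function theorem applies between equidimensional open sets. The fix is standard but should be stated: thicken the parametrization by $\Psi(z,t)=\phi_\lambda^{-1}(z)+N_0t$ with $N_0$ a fixed orthonormal frame for the normal space at $x_m$; then $\Psi$ is a $C^\beta$ map between open subsets of $\mb R^D$ with invertible Jacobian near $(z_m,0)$, the $C^\beta$ inverse function theorem yields $\Psi^{-1}\in C^\beta$, and its first $d$ components furnish the required extension of $\phi_\lambda$. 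With this patch your argument is complete. The paper's tangent-projection route simply sidesteps the issue by making $Q$ linear from the start and absorbing all the analytic work into the Newton construction of $G$.
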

\begin{remark}\label{remarl:OT} \normalfont 
The proof of Lemma~\ref{lemmaboundaryless} relies on the Caffarelli’s global regularity theory~\citep[Theorem 12.50 of][]{Villani2009,10.2307/2118564} which states that for $\alpha$-smooth probability densities $\mu_P,\mu_Q$ that are bounded from above and below on their supports, if their supports are regular enough, then the unique optimal transport map from $\mu_P$ to $\mu_Q$ is $(\alpha+1)$-smooth. We first prove that ${\m P}^{\ast}$ is contained in ${\m S}^{\ast}$ where the density of the latent variables $\nu_{[m]}\, (m\in [M])$  in each local generative model are bounded away from zero, then we can use  Caffarelli’s global regularity theory to obtain that $\nu_{[m]}$ can be generated from $\nu_0$  via an $(\alpha+1)$-smooth transport map.
\end{remark}

\smallskip
  
 \section{Extension of Main result}\label{ext:main result}
\subsection{Application in Two-Sample Test} \label{sec:two-sample}


 One application of our construction in proving the minimax upper bound is in designing a test statistic for two sample hypothesis testing, which can be stated as follows. Suppose that we have two set of $n$ random samples: $X_{1:n}$ and $Y_{1:n}$, independently obtained from two populations $\mu_1$ and $\mu_2$ over ambient space $\mb R^D$, respectively. Assume both $\mu_1$ and $\mu_2$ have intrinsic dimensionality $d$, or $\mu_1$, $\mu_2\in \mathcal{S}^{\ast}$, we aim to test whether $\mu_1$ and $\mu_2$ are the same, i.e., 
 \begin{equation*}
     H_{0}: \mu_1=\mu_2\quad \mbox{versus} \quad H_{1}: \mu_1\neq \mu_2.
 \end{equation*}
 We propose the following test function (probability of rejecting the null) based on the regularized surrogate $\wh J(f)$ defined in Section~\ref{sec:optimal_proce} for estimating $\mb E_{\mu}[\,f(X)]$,
  \begin{equation*}
 \Phi_{\gamma,c}(X_{1:n},Y_{1:n})=\left\{
 \begin{array}{ll}
1,&\qquad \mbox{if} \ \ \underset{f\in C^{\gamma}_1(\mathbb{R}^D)}{\sup}  \big|\wh{\mathcal{J}} (f;\,X_{1:n})-\wh{\mathcal{J}} (f;\,Y_{1:n})\big|\\
&\qquad\qquad\geq  c\,\big(\frac{\log n}{n}\big)^{\frac{\gamma\beta}{d}}\vee \big(\frac{\log n}{n}\big)^{\frac{\alpha+\gamma}{2\alpha+d}}\vee \big(\frac{\log n}{n}\big)^{\frac{1}{2}};\\
0,&\qquad  \text {otherwise,}
\end{array}\right.
\end{equation*} 
where we estimate the distance $d_\gamma(\mu_1,\,\mu_2)$ between $\mu_1$ and $\mu_2$ by a natural estimator ${\sup}_{f\in C^{\gamma}_1(\mathbb{R}^D)} \big|\wh{\mathcal{J}} (f;\,X_{1:n})-\wh{\mathcal{J}} (f;\,Y_{1:n})\big|$, and reject the null if this estimator is larger than a threshold $\delta_n$ corresponding to the statistical error of the estimator.
To evaluate the power performance of the testing rule induced by $\Phi_{\gamma,c}=\Phi_{\gamma,c}(X_{1:n},Y_{1:n})$, we use the total error ${\rm Err}(\Phi_{\gamma,c},\delta_n)$ of $\Phi_{\gamma,c}$ under separation rate $\delta_n$,
\begin{equation*}
   {\rm Err}(\Phi_{\gamma,c},\delta_n)=\underset{\mu_1,\mu_2\in \mathcal{S}^{\ast}\atop \mu_1=\mu_2}{\sup}\mathbb{E}_{\mu_1,\,\mu_2}(\Phi_{\gamma,c})+\underset{\mu_1,\mu_2\in \mathcal{S}^{\ast}\atop d_{\gamma}(\mu_1,\mu_2)\geq\delta_n}{\sup}\mathbb{E}_{\mu_1,\mu_2}(1-\Phi_{\gamma,c}),
\end{equation*}
to measure the sum of expected (worst case) type I and type II errors of $\Phi_{\gamma,c}$ in distinguishing two distributions in $\m S^\ast$ that are at least $\delta_n$ away from each other in the $d_\gamma$ metric, or testing against local alternatives $H_{1n}: d_{\gamma}(\mu_1,\mu_2)\geq \delta_n$.
The following corollary shows that, the test rule $\Phi_{\gamma,c}$ based on the estimator developed in Section~\ref{sec:optimal_proce} can detect with diminishing type I and type II errors any local alternatives that separate from the null by as small as $\delta_n \asymp \delta_n^{\ast} = \big(\frac{\log n}{n}\big)^{\frac{\gamma\beta}{d}}\vee \big(\frac{\log n}{n}\big)^{\frac{\alpha+\gamma}{2\alpha+d}}\vee \big(\frac{\log n}{n}\big)^{\frac{1}{2}}$ in the $d_\gamma$ metric.

\begin{corollary}[Two-sample test errors]\label{twosample}
For any positive constant $r$ and $\gamma$, there exists constant $c_0=c_0(r,\gamma)$ such that for any $c\geq c_0$, there exits $c_1=c_1(r,\gamma,c)$ such that
\begin{equation*}
   {\rm Err}(\Phi_{\gamma,c}, c_1 \delta_n^{\ast})\leq n^{-r}.
\end{equation*}
\end{corollary}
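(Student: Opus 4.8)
The plan is to reduce the analysis of both error types to the high-probability control of the surrogate functional already established in Theorem~\ref{upperboundgenerative}. The key deterministic fact we will use repeatedly is the event, call it $\m E_n$, on which
\[
\sup_{f\in C^\gamma_1(\mb R^D)}\big|\mb E_{\mu}[f(X)] - \wh{\m J}(f;\,Z_{1:n})\big| \le c_1\,\delta_n^\ast
\]
simultaneously for $(Z_{1:n},\mu)\in\{(X_{1:n},\mu_1),(Y_{1:n},\mu_2)\}$, where $\delta_n^\ast = \big(\frac{n}{\log n}\big)^{-\frac12}\vee\big(\frac{n}{\log n}\big)^{-\frac{\alpha+\gamma}{2\alpha+d}}\vee\big(\frac{n}{\log n}\big)^{-\frac{\gamma\beta}{d}}$. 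By Theorem~\ref{upperboundgenerative} applied once to each sample (and a union bound), given any target exponent $r$ we may pick the constant $c_1$ in that theorem large enough and $n$ large enough that $\mb P(\m E_n^c)\le n^{-r}$ uniformly over $\mu_1,\mu_2\in\m S^\ast$. The constant $c$ in the definition of $\Phi_{\gamma,c}$ will be taken to be $c_0 := 2c_1$, which is what forces the stated quantifier structure ($c_0$ depends on $r,\gamma$; then any $c\ge c_0$ works).

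First I would bound the type I error. On $\m E_n$, when $\mu_1=\mu_2$, for every $f\in C^\gamma_1(\mb R^D)$,
\[
\big|\wh{\m J}(f;X_{1:n})-\wh{\m J}(f;Y_{1:n})\big| \le \big|\wh{\m J}(f;X_{1:n})-\mb E_{\mu_1}[f]\big| + \big|\mb E_{\mu_2}[f]-\wh{\m J}(f;Y_{1:n})\big| \le 2c_1\,\delta_n^\ast,
\]
using $\mb E_{\mu_1}[f]=\mb E_{\mu_2}[f]$. Taking the supremum over $f$, the test statistic is at most $2c_1\delta_n^\ast = c_0\delta_n^\ast \le c\,\delta_n^\ast$, so $\Phi_{\gamma,c}=0$; hence the type I error is at most $\mb P(\m E_n^c)\le n^{-r}$. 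Second, for the type II error, suppose $d_\gamma(\mu_1,\mu_2)\ge \delta_n := c_1'\,\delta_n^\ast$ for a constant $c_1'$ to be chosen. On $\m E_n$, pick $f^\star$ nearly attaining $d_\gamma(\mu_1,\mu_2)$ (within $\delta_n^\ast$, say); then by the reverse triangle inequality
\[
\sup_f\big|\wh{\m J}(f;X_{1:n})-\wh{\m J}(f;Y_{1:n})\big| \ge \big|\mb E_{\mu_1}[f^\star]-\mb E_{\mu_2}[f^\star]\big| - 2c_1\delta_n^\ast - \delta_n^\ast \ge d_\gamma(\mu_1,\mu_2) - (2c_1+1)\delta_n^\ast.
\]
Choosing $c_1' := c + 2c_1 + 1$ makes this lower bound at least $c\,\delta_n^\ast$, so the test rejects and $\Phi_{\gamma,c}=1$; the type II error is then also at most $\mb P(\m E_n^c)\le n^{-r}$. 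Setting $c_1 := c_1'$ in the statement of the corollary and adding the two bounds gives ${\rm Err}(\Phi_{\gamma,c},c_1\delta_n^\ast)\le 2n^{-r}$, and relabeling $r$ (e.g.\ replacing $r$ by $r+1$ throughout and absorbing the factor $2$ for $n$ large) yields the claimed $n^{-r}$.

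The only genuine subtlety — and the step I would be most careful about — is that Theorem~\ref{upperboundgenerative} as stated controls $\sup_f|\mb E_{\mu^\ast}[f]-\wh{\m J}(f)|$ with a \emph{fixed} failure-probability exponent, whereas here the exponent $r$ is prescribed externally; one must verify that the constants $c_1$ and $n_0$ in that theorem can indeed be chosen to accommodate an arbitrarily large $r$ (they can, since each of Lemmas~\ref{lemma:low_freq}–\ref{lemma:mani_est} is proved with a free constant $c$ in the $1-n^{-c}$ probability bound, at the cost of enlarging the multiplicative constant). A secondary point is uniformity over the nuisance pair $(\mu_1,\mu_2)\in\m S^\ast$: the bound in Theorem~\ref{upperboundgenerative} is already uniform over $\m S^\ast$, so taking the supremum over $\mu_1,\mu_2$ in the definition of ${\rm Err}$ is harmless. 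No compactness or continuity of the supremum over $f$ is needed beyond the fact that $C^\gamma_1(\mb R^D)$ is nonempty and the near-maximizer $f^\star$ exists by definition of the supremum.
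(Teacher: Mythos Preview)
Your proposal is correct and follows essentially the same approach as the paper's own proof: both reduce each error type to the high-probability bound of Theorem~\ref{upperboundgenerative} via a triangle inequality around $\mathbb{E}_{\mu_1}[f]$ and $\mathbb{E}_{\mu_2}[f]$, then choose the threshold constant $c$ and the separation constant $c_1$ accordingly. The paper splits the failure probability as $n^{-r}/2 + n^{-r}/2$ rather than $n^{-r}+n^{-r}$ with a relabeling, and it does not introduce an explicit near-maximizer $f^\star$, but these are purely cosmetic differences.
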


\subsection{Estimation of Distribution Constrained on Compact Sets}\label{sec:unbounded}
Theorem~\ref{maintheorem} and~\ref{upperboundgenerative} address the boundary issue of the support $\mathcal{M}$ of the target distribution $\mu\in \mathcal{S}^{\ast}$ by assuming $\mathcal{M}$ to be boundaryless or $\mu$ to smoothly decay to zero around the boundary of $\m M$. Another case in which we can prevent the boundary issue is that we only care about $\mu$ constrained on a compact set $K_0$ such that $\mathcal{M}\cap K_0$ is away from the boundary of $\mathcal{M}$. Consider constants $L\in \mathbb{R}^+, d,D\in \mathbb{N}^{+}$, and a compact set $K_0\subset \mb  R^D$, define $ \overline{\mathcal{S}}^{\ast}(d,D,\alpha,\beta, K_0,L)$ to be the set of probability measure $\mu$ satisfying that
\begin{enumerate}
\item There exists a compact set $\widetilde{K}\supseteq K_1=\{x\in B_{1/L}(y)\,:\, y\in K_0\}$ such that $\mu|_{\widetilde{K}}= G_{\#}\nu$.
  \item Write $\Omega={\rm supp}(\nu)$, it holds that 
  \begin{enumerate}
      \item $ {\bigcup}_{z\in\Omega, \, G(z)\in K_1} B_{1/L}(z)\subset \Omega$;
      \item $\nu$ is absolutely continuous with respect to the Lebesgue measure; for any $z\in \Omega$, $|\log \nu(z)|\leq L$ and $\nu\in C^{\alpha}_L(\Omega)$, where we abuse the notation to use $\nu$ to denote its density function.
    \end{enumerate}
  \item There  exists $Q: \mathbb{R}^D\to \mathbb{R}^d$ such that 
  \begin{enumerate}
      \item For any $z\in \Omega$, $Q(G(z))=z$;
      \item $Q\in C^{\beta}_L(\mathbb{R}^D;\mathbb{R}^d)$ and $G\in C^{\beta}_L(\mathbb{R}^d;\mathbb{R}^D)$.
  \end{enumerate}
    \item Write $K_r=\{x\in B_{|r|/L}(y)\,:\, y\in K_0\}$ for $r>0$ and $K_r=\{x\in\mb R^D\,:\,  B_{|r|/L}(x)\subset K_0\}$ for $r<0$. For any $|r|\leq 1$, it holds that $\big|\mathbb{P}_{\nu}\big(G(z)\in K_0\big)- \mathbb{P}_{\nu}\big(G(z)\in K_r\big)\big|\leq L\,|r|$.
 \end{enumerate}
 \begin{remark} \normalfont 
 The assumptions that  ${\bigcup}_{z\in\Omega, \, G(z)\in K_1} B_{1/L}(z)\subset \Omega$ and $\big|\mathbb{P}_{\nu}\big(G(z)\in K_0\big)- \mathbb{P}_{\nu}\big(G(z)\in K_r\big)\big|=\frac{1}{\mb P_{\mu}(X\in \wt{K})}\cdot \big|\mathbb{P}_{\mu}\big(X\in K_0\big)- \mathbb{P}_{\mu}\big(X\in K_r\big)\big|\leq L\,|r|$  ensure that $\mathcal{M}\cap K_0$ is away from  $\partial \mathcal{M}$ if it exists and the intersection of $\mathcal{M}$ with $\partial K_0$ has measure $0$ with respect to $\mu$. Here we assume the support of $\mu$ has a global parametrization when constrained on $K_0$, while it can also adapt to the general case where the support of $\mu|_{K_0}$ has multiple charts by considering partition of $K_0=\sum_{m=1}^M K_{[m]}$ and estimating each $\mu|_{K_{[m]}}$.
 \end{remark}

\begin{theorem}\label{th:unbounded}
Fix $L>0$; $\gamma\geq 0$; $0\leq \alpha\leq \beta-1$; $\beta>1$; $D,d \in \mathbb{N}^+$ with $D>d$; $K_0\subset \mb R^D$ be a compact set.  Then there exists a surrogate functional $\wh{\m J}^{\circ}(f)=\wh{\m J}^{\circ}(f;X_{1:n})$ depends on data $X_{1:n}$, such that for any constant $c$, there exists a constant $n_0$ such that when $n\geq n_0$, for any target distribution $\mu^\ast\in  \overline{\mathcal{S}}^{\ast}(d,D,\alpha,\beta, K_0,L)$, it holds with probability larger than $1-n^{-c}$ that
\begin{equation*}
 \underset{f\in C^{\gamma}_1(\mb R^D)}{\sup}\Big|\wh{\m J}^{\circ}(f)-\int_{K_0}f(x)\,\dd\mu^\ast \Big|\leq C \, \big(\frac{\log n}{n}\big)^{\frac{1}{2}}\vee\big(  \frac{\log n}{n}\big)^{\frac{ \alpha+\gamma}{2\alpha+d}} \vee \big(\frac{\log n}{n}\big)^{\frac{\gamma\beta}{d}}.
\end{equation*}
Thus if $\mb P_{\mu^*}(X\in K_0)\geq C_1>0$, then there exists a distribution estimator $\wh{\mu}^{\circ}$ so that 
\begin{equation*}
 \mb {E}[d_{\gamma}(\wh{\mu}^\circ, \mu^*|_{K_0})]\leq C \, \big(\frac{\log n}{n}\big)^{\frac{1}{2}}\vee\big(  \frac{\log n}{n}\big)^{\frac{ \alpha+\gamma}{2\alpha+d}} \vee \big(\frac{\log n}{n}\big)^{\frac{\gamma\beta}{d}}.
\end{equation*}
 \end{theorem}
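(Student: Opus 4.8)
The plan is to follow the three-stage strategy behind the proof of Theorem~\ref{upperboundgenerative}. First, I would reduce both assertions to the construction of a data-dependent surrogate $\wh{\m J}^{\circ}(f)=\wh{\m J}^{\circ}(f;X_{1:n})$ for the linear functional $f\mapsto\int_{K_0}f\,\dd\mu^\ast$ that is uniformly accurate over $f\in C_1^{\gamma}(\mb R^D)$. Once such a surrogate is available, one sets $\wh\mu^{\circ}=\argmin_{\mu}\sup_{f\in C_1^{\gamma}(\mb R^D)}\big(\mb E_{\mu}[f(X)]-\wh{\m J}^{\circ}(f)\big)$ over a (sub-probability, restriction-adapted) analogue $\m S$ of the approximation family $\m S^{\rm ap}$ that reaches $\mu^\ast|_{K_0}$ up to lower-order error (by Condition~1, $\mu^\ast|_{\widetilde K}=G_{\#}\nu$ is a single generative model, and the sharp latent cutoff produced by restricting to $K_0$ is mollified at a $d_\gamma$-cost of the order of the boundary mass, which Condition~4 makes negligible). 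The basic inequality~\eqref{eqn:basic_ineq} then gives $d_{\gamma}(\wh\mu^{\circ},\mu^\ast|_{K_0})\le 2\sup_{f\in C_1^{\gamma}(\mb R^D)}\big|\wh{\m J}^{\circ}(f)-\int_{K_0}f\,\dd\mu^\ast\big|$ plus that lower-order term, and passing from the high-probability statement to the in-expectation bound is routine, since $d_\gamma$ is bounded by an absolute constant on measures of mass at most one, so the exceptional event of probability at most $n^{-c}$ (for $c$ chosen large) contributes negligibly.

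Second, I would build $\wh{\m J}^{\circ}$ by mimicking Steps~1--3 of Section~\ref{sec:optimal_proce}. After a data split $[n]=I_1\cup I_2$, estimate on $I_1$ a coordinate map $\wh Q$ of $\m M\cap K_0$ and its inverse $\wh G$ by minimizing $|I_1|^{-1}\sum_{i\in I_1}\|X_i-G\circ Q(X_i)\|_2^2\,\mathbf{1}(X_i\in K_1)$ over $G\in C_L^{\beta}(\mb R^d;\mb R^D)$ and $Q\in C_L^{\beta}(\mb R^D;\mb R^d)$, which yields $\mb E_{\mu^\ast}\big[\|X-\wh G\circ\wh Q(X)\|_2^{\eta}\,\mathbf{1}(X\in K_0)\big]\lesssim(\tfrac{\log n}{n})\vee(\tfrac{\log n}{n})^{\eta\beta/d}$ exactly as in Lemma~\ref{lemma:mani_est} (if $\m M\cap K_0$ needs several charts, partition $K_0=\bigcup_{m}K_{[m]}$ and repeat, summing as in the partition-of-unity construction; I would present the single-chart case). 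The genuinely new ingredient, absent in the boundaryless setting of Theorem~\ref{upperboundgenerative}, is that the pushforward $\wh Q_{\#}(\mu^\ast|_{K_0})$ is a measure on $\mb R^d$ whose support has a true boundary; so before forming the surrogate I would estimate this support, say by $\wh\Omega=\bigcup_{i\in I_2:\,X_i\in K_0}\mb B_{h_n}\big(\wh Q(X_i)\big)$ for a radius $h_n$ to be calibrated, and then set $\wh{\m J}^{\circ}(f)=\wh{\m J}^{\circ}_{l}(f)+\wh{\m J}^{\circ}_{h}(f)+\wh{\m J}^{\circ}_{s}(f)$ exactly as in~\eqref{eqn:surrogate_m}, with the partition weight $\rho_m(X_i)$ replaced by $\mathbf{1}(X_i\in K_0)$ (or a smoothed indicator at scale $h_n$) and with a wavelet-truncated density estimator $\widetilde{\nu}^{\circ}$ of $\wh Q_{\#}(\mu^\ast|_{K_0})$ supported on $\wh\Omega$ feeding the high-frequency term $\wh{\m J}^{\circ}_{h}(f)=\mb E_{\widetilde{\nu}^{\circ}}\big[(\Pi_J^{\perp}f)\circ\wh G\big]$.

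Third, I would establish the analogues of Lemmas~\ref{lemma:low_freq},~\ref{lemma:high_freq} and~\ref{lemma:smooth_corr}: the empirical-process, wavelet-bias, and order-$\lfloor\gamma\rfloor$ Taylor-remainder bounds are unchanged in the interior, and the only additional contributions are the $\mu^\ast$-mass in the boundary layer $\{x:{\rm dist}(x,\partial K_0)\le h_n\}$ (which enters the support-mismatch error in $\wh{\m J}^{\circ}_{h}$ and the indicator-smoothing bias in $\wh{\m J}^{\circ}_{l}$ and $\wh{\m J}^{\circ}_{s}$) together with the extra wavelet-truncation bias coming from the density of $\wh Q_{\#}(\mu^\ast|_{K_0})$ failing to be $\alpha$-H\"{o}lder across the boundary of its support. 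Both are controlled by the fourth condition in the definition of $\overline{\m S}^{\ast}$: the bound $\big|\mb P_{\nu}(G(z)\in K_0)-\mb P_{\nu}(G(z)\in K_r)\big|\le L|r|$ says that the $\mu^\ast$-mass within distance $t$ of $\partial K_0$ is $O(t)$, a margin/low-noise type condition, which is exactly what lets an $h_n$ of the order of the target rate absorb these terms (the auxiliary term $(\tfrac{\log n}{n})^{(\gamma+\beta-1)/d}$ that again appears is, as in Theorem~\ref{upperboundgenerative}, dominated by the second term because $\alpha\le\beta-1$). I expect this calibration --- choosing $h_n$ so that the boundary-layer mass, the indicator-smoothing bias, and the support-estimation error are all simultaneously dominated by $\big(\tfrac{\log n}{n}\big)^{1/2}\vee\big(\tfrac{\log n}{n}\big)^{(\alpha+\gamma)/(2\alpha+d)}\vee\big(\tfrac{\log n}{n}\big)^{\gamma\beta/d}$ --- to be the main obstacle; the rest of the argument transcribes the proof of Theorem~\ref{upperboundgenerative}.
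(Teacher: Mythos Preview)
Your overall framework matches the paper's proof closely: a three–step construction (submanifold estimation on the enlargement $K_1$, a three–part surrogate $\wh{\m J}^{\circ}=\wh{\m J}^{\circ}_l+\wh{\m J}^{\circ}_h+\wh{\m J}^{\circ}_s$, then the argmin over a generative class), followed by analogues of Lemmas~\ref{lemma:low_freq}--\ref{lemma:smooth_corr} and the basic inequality~\eqref{eqn:basic_ineq}. Your treatment of $\wh{\m J}^{\circ}_l$ and $\wh{\m J}^{\circ}_s$ (hard indicator $\bold{1}_{K_0}$, Lemma~\ref{lemma:mani_est} type bound) is exactly what the paper does.

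The one place where the paper's argument differs from yours, and where your plan has a potential gap, is the high–frequency term. You propose to form a wavelet (or KDE) estimator of $\wh Q_{\#}(\mu^\ast|_{K_0})$ itself and then control the extra bias arising from the jump across $\partial\Omega$ via Condition~4. But Condition~4 only says that the boundary layer of width $t$ has $\mu^\ast$–mass $O(t)$; translated to wavelets, the $L^1$ projection error of a density with a jump along a $(d-1)$–dimensional smooth hypersurface is $\|\nu-\Pi_J\nu\|_1\lesssim 2^{-J(\alpha\wedge 1)}$, so the bias of $\wh{\m J}^{\circ}_h$ is $\lesssim 2^{-J(\gamma+\alpha\wedge 1)}=(\tfrac{\log n}{n})^{(\gamma+\alpha\wedge 1)/(2\alpha+d)}$. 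This matches the target only when $\alpha\le 1$; for $\alpha>1$ your direct approach would give a suboptimal rate unless you supply an additional argument (the smoothed–indicator variant you mention could work but needs the mollifier to be $C^\alpha$ with derivatives scaling as $h_n^{-k}$, and the bias/scale trade-off then has to be checked separately).

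The paper sidesteps this entirely by an enlargement trick: in $\wh{\m J}^{\circ}_h$ it first replaces $\bold{1}_{K_0}(X)$ by $\bold{1}_{K_{1/4}}(X)\cdot\bold{1}_{K_0}\big(\wh G(\wh Q(X))\big)$ (the replacement error is $\|\Pi_J^{\perp}f\|_\infty$ times the mass of an $O\big((\tfrac{\log n}{n})^{\beta/d}\big)$ boundary layer, handled by Condition~4 and the pointwise bound from Lemma~\ref{lemma:mani_est}), then forms the KDE $\bar\nu_{\wh Q}$ from samples in $K_{1/4}$, so that the density being estimated is genuinely $C^\alpha$ on a neighbourhood of the target support, and finally restricts the latent integral to the estimated support $\wh\Omega=\{y=\wh Q(x):x\in\wh U,\ \wh G(\wh Q(x))\in K_0\}$ where $\wh U$ is a union of ambient balls around data in $K_{1/4}$. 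Two short lemmas show $\m M^\ast\cap K_{1/4}\subset\wh U$ and that this $\wh\Omega$ coincides with the true one; the KDE error on $\wh\Omega$ is then the standard $(\tfrac{\log n}{n})^{\alpha/(2\alpha+d)}$ with no boundary penalty. This is the missing device you would need to make your high–frequency analysis go through for all $\alpha\le\beta-1$.
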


\section{Proofs of Remaining Results}\label{sec:remaining_results}
In this section, we provide proofs for the remaining results in the main paper.

\subsection{Proofs of lemmas in Section~\ref{sec:prooflowerbound} for minimax lower bound}\label{prooflemmalower}
In this subsection, we provide proofs for Lemmas~\ref{Lemma:MP},~\ref{Lemma:DP} and~\ref{Lemma:LM}.
\subsubsection{Proof of Lemma~\ref{Lemma:MP}}\label{Sec:proof_Lemma_MP}
Recall that $\m M_0=\mb S_2^d\times \bold{0}_{D-d-1}=\{x\in \mb R^D:\, \|x_{1:d+1}\|^2=2,\,x_{d+2:D}=\bold{0}_{D-d-1}\}$ denotes the $d$-dimensional sphere embedded in $\mb R^D$ and $\wt{\m{M}}_0=\{x\in \mb{R}^D:\, x_{1:d}\in \mb B_1^d,\, x_{d+1}=\sqrt{2-\|x_{1:d}\|^2},\, x_{d+2:D}=\bold{0}_{D-d-1}\}$ denotes its middle area. $\mu_0$ is the uniform distribution over $\m M_0$. In addition, $\wt{\m{M}}_0$
admits a global parametrization $G_0:\, \mb B_1^d\to \wt{\m{M}}_0$ defined as $G_0(z)=(z,\,\sqrt{2-\|z\|_2^2}\,,\,\bold{0}_{D-d-1})$ for $z\in \mb B_1^d$. Note that $G_0$ is $\beta$-smooth over $\mb B_1^d$ with bounded H\"{o}lder norm. Let $\nu_0$ denote the density function on $\mb B_1^d$ so that $[G_0]_\# \nu_0$ is the restriction of $\mu_0$ on $\wt{\m{M}_0}$, or
\begin{align*}
    \nu_0(z)=\frac{1}{\wt{C}}\sqrt{{\rm det}(\bold{J}_{G_0}(z)^T\bold{J}_{G_0}(z))}, \quad \forall z\in \mb B_1^d,
\end{align*}
where recall that $\bold{J}_G$ denotes the Jacobian matrix of $G$ and $\wt{C}=\int_{\mb B_1^d}\sqrt{{\rm det}(\bold{J}_{G_0}(z)^T\bold{J}_{G_0}(z))}\,\dd z$ is the normalizing constant. 
Since $\wt{\m{M}}_0$ is a compact set, there exists positive constants $a_1,a_2$ so that for any $z\in \mb B_1^d$, $a_1I_d\preccurlyeq \bold{J}_{G_0}(z)^T\bold{J}_{G_0}(z)\preccurlyeq a_2I_d$. Therefore, $\nu_0$ is an $\alpha$-smooth density function with bounded H\"{o}lder norm on $\mb B_1^d$. Next we will add small bumps to function $G_0$ to construct perturbations of $\wt{\m{M}}_0$, whose unions with the spherical cap $\wt{\m M}_1:\,=\m M_0\setminus \wt{\m{M}}_0$ form our constructed perturbed $\beta$-smooth manifolds with controlled H\"{o}lder norm. 

Let $m=\lfloor b\,n^{\frac{1}{d}}\rfloor$ with tuning parameter $b$ to be determined later. Our constructed perturbed generative maps $G_\omega$ are parametrized by a binary tensor $\omega =(\omega_{\xi})_{\xi\in [m]^d} \in \{0,1\}^{[m]^d}$ 
indexed by all $d$-dimensional grid points in $[1,m]^d$. Here, subscript $\xi\in[m]^d$ indicates the coordinates of the grid points, and $\omega$ indicates the locations of the jumps. More specifically, let 
\begin{equation}\label{eqnk.1}
 k(t)=\left\{\begin{array}{l}
(1-t)^{\beta+1} t^{\beta+1}, \quad t \in(0,1), \\
0, \quad \text { o.w. }
\end{array}\right.
 \end{equation}
defines a localized bump function. For each $d$-dimensional grid point $\xi=(\xi_1,\ldots,\xi_d)\in[m]^d$, let 
\begin{align*}
    \psi_{\xi}(z)=\prod_{i=1}^d k\Big(m\,\sqrt{\frac{d}{2}}\,z_i+\frac{m}{2}-\xi_i\Big),\quad\forall z\in \mb B_1^d,
\end{align*}
denote a localized bump function over $\mb B_1^d$ whose support is contained in the following cube
\begin{align*}
    \left[-\sqrt{\frac{1}{2d}}+\frac{\xi_1}{m}\sqrt{\frac{2}{d}},\  -\sqrt{\frac{1}{2d}}+\frac{\xi_1+1}{m}\sqrt{\frac{2}{d}}\right]\times\cdots\times \left[-\sqrt{\frac{1}{2d}}+\frac{\xi_d}{m}\sqrt{\frac{2}{d}}, \  -\sqrt{\frac{1}{2d}}+\frac{\xi_d+1}{m}\sqrt{\frac{2}{d}}\right]
\end{align*}
which has width $m^{-1}\sqrt{2/d}$ and is contained in $B_{3/4}^d$ when $b\geq 9$.
For any $\omega=(\omega_{\xi})_{\xi\in [m]^d} \in \{0,1\}^{[m]^d}$, we define the multi-bump function 
\begin{align*}
    g_{\omega}(z)=\sum_{\xi\in [m]^d}\frac{1}{m^{\beta}} \,\omega_{\xi} \, \psi_{\xi}(z),\quad z\in \mb B_1^d,
\end{align*}
whose bumps correspond to the non-zero components of $\omega$. Finally, we define $G_\omega(z) = G_0(z)+(\bold{0}_{d},\,g_{\omega}(z),\,\bold{0}_{D-d-1})$ for $z\in \mb B_1^d$ as the perturbed generative map parametrized by $\omega \in \{0,1\}^{[m]^d}$.
It is straightforward to verify that there exists some constant $L$, such that $m^{-\beta}\psi_{\xi}(z)$ belongs to $C^{\beta}_L(\mathbb{R}^d)$ for any $\xi\in [m]^d$, and for any $\omega \in \{0,1\}^{[m]^d}$, $g_\omega$ belongs to $C_L^\beta(\mb R^d)$ and $G_\omega$ belongs to $C^{\beta}_{L}(\mb B_1^d;\mathbb{R}^D)$.

By Lemma~\ref{le8} in Appendix~\ref{Sec:le8},  which is a two-sided version of the Varshamov-Gilbert lemma~\citep{Tsybakov2009}, there exists a subset $\{\omega^{(1)},\cdots , \omega^{(H_0)}\}\subset \{0,1\}^{[m]^d}$ such that:
\begin{enumerate}[topsep=2pt,itemsep=0ex]
\item $\log H_0 \geq \frac{m^d}{8}-\log 2$;
\item for any $j,k\in [H_0]$ with $j\neq k$, the Hamming distance $\rho(\omega^{(j)},\omega^{(k)})$ between $\omega^{(j)}$ and $\omega^{(k)}$ satisfies $\frac{m^d}{4}\leq \rho(\omega^{(j)},\omega^{(k)})\leq \frac{3m^d}{4}$.
\end{enumerate}
For each $\omega\in \{0,1\}^{[m]^d}$, define $\bar{\omega}=1-\omega$ in the element-wise manner. We may expand the above $H_0$ tensors into $H=2H_0$ ones, ordered as
\begin{equation*}
\{{\omega}^{(1)},\cdots, {\omega}^{(H)}\}= \{\omega^{(1)},\cdots, \omega^{(H_0)},
\bar{\omega}^{(1)},\cdots,\bar{\omega}^{(H_0)}\}.
\end{equation*}
Then $\log H \geq \frac{m^d}{8}$ and for any $i,j\in [H]$ with $i\neq j$, it holds that $\rho({\omega}^{(i)}, {\omega}^{(j)}) \geq \frac{m^d}{4}$. We do this expansion since we will use a key property later (c.f.~proof of Lemma~\ref{lemma2}) that for any fixed index $\xi\in [m]^d$, there are equal numbers of $0$'s and $1$'s in the sequence $\big({\omega}_{\xi}^{(1)},\cdots,{\omega}_{\xi}^{(H)}\big)$. 

Next, for each $i\in[H]$, let ${\m{M}}_{{\omega}^{(i)}}=G_{{\omega}^{(i)}}(\mb B_1^d)$ denote the perturbed manifold from $G_{{\omega}^{(i)}}$. We define a perturbation to $\mu_0$ by smoothly gluing together the restriction $\mu_0|_{\wt{\m M}_1}$ of $\mu_0$ onto $\wt{\m M}_1$ and $\mu_{\omega^{(i)}}:\,= [G_{\omega^{(i)}}]|_\# \nu_0$ over $\m M_{\omega^{(i)}}$ as
\begin{equation*}
    \mu_{i}=\Big(1-\frac{\wt{C}}{C}\Big)\cdot\mu_0|_{\wt{\m M}_1}
    +\frac{\wt{C}}{C}\cdot{\mu}_{{\omega}^{(i)}},
\end{equation*}
where recall that $C^{-1}$ is the density function of the uniform distribution over $\m M_0$ so that $\wt C < C$. Then 
$\mu_{i}$ is supported over the manifold $\m M_i:\,= \wt{\m M}_1\cup \m M_{\omega^{(i)}}$. Since $G_{\omega^{(i)}}\in C^{\beta}_L(\mb B_1^d,\mathbb{R}^D)$, $G_{\omega^{(i)}}^{-1}(x)=(x_1,x_2,\cdots,x_d)$ for $x\in \m M_{\omega^{(i)}}$ and by construction $G_{\omega^{(i)}}(z)=G_0(z)$ for any $z\in \mb B_1^d\setminus \mb B_{3/4}^d$, we have that $\m{M}_{i}$ is a compact $\beta$-smooth submanifold. Furthermore, the density function of distribution $\mu_{i}$ with respect to the volume measure of $\m{M}_{i}$ is given by
\begin{equation*}
    \mu_i(\dd x)=\frac{1}{C}\, \bold{1}(x\in \wt{\m M_1})+\frac{1}{C}\, \frac{\sqrt{{\rm det}(\bold{J}_{G_0}(x_{1:d})^T\bold{J}_{G_0}(x_{1:d}))}}{\sqrt{{\rm det}(\bold{J}_{G_{{\omega}^{(i)}}}(x_{1:d})^T\bold{J}_{G_{{\omega}^{(i)}}}(x_{1:d}))}}\, \bold{1}(x\in {\m{M}}_{{\omega}^{(i)}}),\ \ \forall x\in \m M_i.
    \end{equation*}
Note that function $g_i(z):\,=\frac{\sqrt{{\rm det}(\bold{J}_{G_0}(z)^T\bold{J}_{G_0}(z))}}{\sqrt{{\rm det}(\bold{J}_{G_{{\omega}^{(i)}}}(z)^T\bold{J}_{G_{{\omega}^{(i)}}}(z))}}$ is $\alpha$-smooth function with bounded H\"{o}lder norm on $\mb B_1^d$ and $g_i(z)=1$ for $z\in \mb B_1^d\setminus B_{3/4}^d$. Consequently, there exists a constant $L_1$ such that $\mu_i(\dd x)\in C^{\alpha}_{L_1}(\m{M}_{i})$\footnote{We say a function $f:\m M\to \mb R\in C^{\alpha}_{L}(\m M)$ for a manifold $\m M$ embedded in $\mb R^D$, if there exists a open set $\m U\subset \mb R^D$ containing $\m M$ and a function $\overline{f}: \m U\to \mb R$ so that $\overline{f}|_{\m M}=f$ and $f\in C^{\alpha}_{L}(\m U)$. }, or equivalently, $\mu_i$ is an $\alpha$-smooth distribution on 
the $\beta$-smooth manifold $\m M_i$.  Therefore, for sufficiently large $L^\ast$, it holds that $\mu_{i}\in\mathcal{P}^{\ast}(d,D,\alpha,\beta,L^\ast)$ for any $i\in [H]$.

 Let $\bar{\mu}=\frac{1}{H} \sum_{i=1}^{H} \mu_{i}$ be the averaged distribution. The following lemma, whose proof is deferred to Section~\ref{Sec:Proof_lemma2}, provides upper bounds to $D_{\rm KL}(\mu_{i}\,||\,\bar{\mu})$ and lower bounds to the pairwise $d_\gamma$ distances, which concludes the proof. 
\begin{lemma}\label{lemma2}
Assume $\gamma<1$ and $\beta>1$. For any $i \in [H]$, it holds that  $D_{\rm KL}(\mu_{i}\,||\,\bar{\mu})\leq \log 2$. For any $i,j\in [H]$ with $i\neq j$, it holds that $d_\gamma(\mu_i,\mu_j)\geq c_1\, m^{-\gamma\beta} \geq c_2 \,n^{-\frac{\gamma\beta}{d}}$.
\end{lemma}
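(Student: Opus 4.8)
\emph{Proof proposal for Lemma~\ref{lemma2}.} I would treat the two assertions separately, in both cases exploiting that the perturbed generative maps $G_{\omega^{(j)}}$ differ from one another only through the finitely many disjointly supported bumps.

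\textbf{The Kullback--Leibler bound.} The plan is to prove the stronger pointwise domination $\mu_i\le 2\,\bar\mu$ as measures, from which $\log(\dd\mu_i/\dd\bar\mu)\le\log 2$ holds $\mu_i$-almost everywhere and hence $D_{\rm KL}(\mu_i\,||\,\bar\mu)=\int\log(\dd\mu_i/\dd\bar\mu)\,\dd\mu_i\le\log 2$. Because the supports of the $\psi_\xi$ lie in pairwise disjoint cubes $Q_\xi\subset\mb B^d_{3/4}$, for $z\in Q_\xi$ one has $g_\omega(z)=m^{-\beta}\,\omega_\xi\,\psi_\xi(z)$, so $G_\omega$ restricted to $Q_\xi$ depends on $\omega$ only through the single bit $\omega_\xi$, while on $R:=\mb B_1^d\setminus\bigcup_\xi Q_\xi$ every $G_{\omega^{(j)}}$ agrees with $G_0$. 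Writing $p=\wt C/C$ and, for $b\in\{0,1\}$, $\tau^{(b)}_\xi$ for the pushforward of $\nu_0|_{Q_\xi}$ along $z\mapsto G_0(z)+b\,m^{-\beta}\psi_\xi(z)\,e_{d+1}$, the definition of $\mu_j$ decomposes into a part $\mu_{\mathrm{com}}$ that is the same for all $j$ plus cube pieces:
\begin{equation*}
   \mu_j=\mu_{\mathrm{com}}+p\sum_{\xi\in[m]^d}\tau^{(\omega^{(j)}_\xi)}_\xi,\qquad \mu_{\mathrm{com}}:=(1-p)\,\mu_0|_{\wt{\m M}_1}+p\,[G_0]_\#(\nu_0|_R).
\end{equation*}
The ``expansion'' step of the construction was arranged so that for each fixed $\xi$ the sequence $(\omega^{(1)}_\xi,\dots,\omega^{(H)}_\xi)$ contains exactly $H/2$ ones; averaging over $j$ therefore gives $\bar\mu=\mu_{\mathrm{com}}+\tfrac p2\sum_\xi(\tau^{(0)}_\xi+\tau^{(1)}_\xi)$, and since $\tau^{(\omega^{(i)}_\xi)}_\xi\le\tau^{(0)}_\xi+\tau^{(1)}_\xi$ for every $\xi$, we conclude $\mu_i\le 2\bar\mu$.

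\textbf{The adversarial-loss bound.} Since $\gamma<1$, the function $g_j(x):=\min\{{\rm dist}(x,{\rm supp}(\mu_j)),\,1\}^\gamma$ is bounded by $1$ and $\gamma$-H\"older with seminorm at most $1$ (the inner map is $1$-Lipschitz and $|a^\gamma-b^\gamma|\le|a-b|^\gamma$ for $a,b\ge 0$), so $f:=\tfrac12 g_j\in C^\gamma_1(\mb R^D)$. As $f\equiv 0$ on ${\rm supp}(\mu_j)$, we get $d_\gamma(\mu_i,\mu_j)\ge\int f\,\dd\mu_i-\int f\,\dd\mu_j=\int f\,\dd\mu_i$. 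Let $S_{ij}:=\{\xi:\omega^{(i)}_\xi\neq\omega^{(j)}_\xi\}$, which has cardinality $\rho(\omega^{(i)},\omega^{(j)})\ge m^d/4$; over each cube $Q_\xi$ with $\xi\in S_{ij}$ the support of $\mu_i$ carries the piece $\tau^{(\omega^{(i)}_\xi)}_\xi$ while that of $\mu_j$ carries the distinct piece $\tau^{(\omega^{(j)}_\xi)}_\xi$. The remaining ingredient is the geometric estimate
\begin{equation*}
   {\rm dist}\big(x,\,{\rm supp}(\mu_j)\big)\ \ge\ c\,m^{-\beta}\,\psi_\xi(z)\qquad\text{for a constant }c=c(d)>0,
\end{equation*}
valid for $z$ in the middle half of $Q_\xi$ and $x$ the point of ${\rm supp}(\mu_i)$ lying above $z$. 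Granting this and using $\nu_0\asymp 1$ and $\psi_\xi\asymp 1$ on the middle half of $Q_\xi$ (which has Lebesgue volume $\asymp m^{-d}$), summing over $\xi\in S_{ij}$ yields $\int f\,\dd\mu_i\gtrsim p\,|S_{ij}|\,m^{-d}\,m^{-\gamma\beta}\gtrsim m^{-\gamma\beta}$ with constant depending only on $(d,\gamma)$; since $m\le b\,n^{1/d}$ this also gives $\gtrsim b^{-\gamma\beta}n^{-\gamma\beta/d}$, establishing both displayed inequalities of the lemma.

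\textbf{Expected main obstacle.} The genuinely technical point is the geometric lower bound on ${\rm dist}(x,{\rm supp}(\mu_j))$. Its proof would combine three facts: (i) $\beta>1$ makes the bump height $m^{-\beta}$ asymptotically negligible next to the cube width $\asymp m^{-1}$, so the flat and bumped pieces over one cube remain well separated and the nearest competitor on ${\rm supp}(\mu_j)$ must lie over the \emph{same} cube; (ii) $\wt{\m M}_0$ has bounded geometry (e.g.\ positive reach) and $e_{d+1}$ has normal component bounded below over $\mb B^d_{3/4}$, so moving a point of $\wt{\m M}_0$ by $h$ roughly along $e_{d+1}$ changes its distance to $\wt{\m M}_0$ by an amount comparable to $h$; and (iii) the spherical cap $\wt{\m M}_1$ and the parametrization pieces over the other cubes project to regions of $\mb B_1^d$ disjoint from $Q_\xi$, hence sit at distance $\gtrsim 1\gg m^{-\beta}$ from $x$. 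Extracting a single uniform constant $c$ from these is the crux; the rest is bookkeeping with the product structure already set up in the construction.
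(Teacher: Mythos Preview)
Your proof is correct. The Kullback--Leibler part is essentially the paper's argument recast in measure-theoretic language: the paper partitions $\mb R^D$ into the ``bumpy'' region $A_1$, the common region $A_2\cup A_3$, and a null set, and observes that $\dd\mu_i/\dd\bar\mu$ equals $2$ on $A_1$ and $1$ on $A_2\cup A_3$ by the same balanced-bit property you invoke; your inequality $\mu_i\le 2\bar\mu$ is exactly this.

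For the adversarial-loss part, however, you take a genuinely different route. The paper does \emph{not} use the distance-to-support discriminator; instead it constructs an explicit product test function
\[
f(x)=\widetilde f(x_{1:d})\,h\big(x_{d+1}-q(x_{1:d})\big)\,m^{\gamma-\gamma\beta+\beta},
\]
where $\widetilde f(z)=m^{-\gamma}\sum_\xi v_\xi\psi_\xi(z)$ with $v_\xi=\pm 1$ encoding the sign of $\omega^{(i)}_\xi-\omega^{(j)}_\xi$, $h$ is a truncation of the identity to $[-cm^{-\beta},cm^{-\beta}]$, and $q$ is a smooth extension of $z\mapsto\sqrt{2-\|z\|^2}$. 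The bulk of the paper's work is verifying that this product lies in $C^\gamma_1(\mb R^D)$ up to a constant (here the assumption $\gamma<1$ enters, since $h$ is only Lipschitz), after which the integral against $\mu_i-\mu_j$ reduces algebraically to $\sum_{\xi}|\omega^{(i)}_\xi-\omega^{(j)}_\xi|\int\psi_\xi^2\nu_0\gtrsim m^{-\gamma\beta}$.

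The trade-off: your approach gets a valid discriminator for free (since ${\rm dist}(\cdot,A)^\gamma\in C^\gamma$ is immediate for $\gamma<1$) but pays in the geometric separation estimate, which---as you correctly identify---needs the Lipschitz bound on $z\mapsto\sqrt{2-\|z\|^2}$ over $\mb B^d_{3/4}$ (indeed $<1$ there), the width-vs-height comparison $m^{-1}\gg m^{-\beta}$, and the $\Theta(1)$ distance to $\wt{\m M}_1$. The paper's approach trades that geometry for a somewhat fiddly H\"older-regularity check on the product $\widetilde f\cdot h$. Both arguments are confined to $\gamma<1$ and yield the same bound; yours is arguably more in the spirit of the paper's own Section~2.3 discussion of $d_\gamma^{\rm S}$.
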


\subsubsection{Proof of Lemma~\ref{Lemma:DP}}\label{Sec:Proof_lemma_dp}
We adopt the same setting and notations as in the proof of Lemma~\ref{Lemma:MP} in Section~\ref{Sec:proof_Lemma_MP}, where recall that $\mathcal{M}_0=\mathbb{S}_2^d\times \bold{0}_{D-d-1}$ is the $d$-dimensional sphere embedded in $\mb R^D$ and $\mu_0$ is the uniform distribution on $\m M_0$. In addition, we partition $\m M_0$ into its middle area $\wt{\m M}_0$ and the two remaining caps $\wt{\m M}_1$, where $(\nu_0,G_0)$ is the generative model such that $[G_0]_\# \nu_0$ is the restriction $\mu_0|_{\wt{\m M}_0}$ of $\mu_0$ on $\wt{\m M}_0$. Our construction of perturbed distributions on $\m M_0$ will be based on adding small bumps to the generative distribution $\nu_0$ on the $d$-dimensional unit ball $\mb B_1^d$.

Let $\wt m = \lfloor b n^{\frac{1}{2\alpha+d}}\rfloor$. Define the following localized bump function 
\begin{equation}\label{eqnk.2}
 \wt k(t)=\left\{\begin{array}{l}
(1-t)^{\alpha\vee \gamma+1} t^{\alpha\vee\gamma+1}(t-\frac{1}{2}), \quad t \in(0,1) \\
0, \quad \text { o.w. }
\end{array}\right.
\end{equation}
so that $\int_{-\infty}^\infty \wt k(t) \,\dd t=0$, and the corresponding localized bump function over $\mb B_1^d$ 
\begin{equation}\label{eqnk.2.2}
    \wt \psi_{\xi}(z)=\prod_{i=1}^d \wt k\Big(m\sqrt{\frac{d}{2}}z_i+\frac{m}{2}-\xi_i\Big),\quad\forall z\in \mb B_1^d,
\end{equation}
indexed by the $d$-dimensional grid point $\xi=(\xi_1,\ldots,\xi_d)\in[\wt m]^d$. In addition, we define two function sets 
\begin{align*}
\Psi_{\alpha}&=\Big\{ g_{\omega}(z)=\nu_0(z)+b_0\,\Big(\frac{1}{\wt m}\Big)^{\alpha}\sum_{\xi\in[\wt m]^d}  \omega_{\xi} \,\wt \psi_{\xi}(z):\  \omega=\{\omega_{\xi}\}_{\xi\in [\wt m]^d}\in \{0,1\}^{[\wt m]^d}\Big\},\\
\Lambda_{\gamma}&=\Big\{ f_{v}(z)=\Big(\frac{1}{\wt m}\Big)^{\gamma}\sum_{\xi\in[\wt m]^d}  v_{\xi} \,\wt\psi_{\xi}(z):\  v=\{v_{\xi}\}_{\xi\in [\wt m]^d}\in \{-1,1\}^{[\wt m]^d}\Big\},
\end{align*}
where $b_0$ is some constant to be determined later. $\Psi_\alpha$ consists of all perturbed densities around $\nu_0$ and $\Lambda_\gamma$ serves as set of discriminators defined over $\mb B_1^d$ for discriminating the densities in $\Psi_\alpha$.
Since for any $\xi \in [\wt m]^d$, the support ${\rm supp}(\psi_{\xi})$ is contained in $\big[-\sqrt{\frac{1}{2d}}+\frac{\xi_1}{m}\sqrt{\frac{2}{d}},\ -\sqrt{\frac{1}{2d}}+\frac{\xi_1+1}{m}\sqrt{\frac{2}{d}}\,\big]\times\cdots\times \big[-\sqrt{\frac{1}{2d}}+\frac{\xi_d}{m}\sqrt{\frac{2}{d}},\ -\sqrt{\frac{1}{2d}}+\frac{\xi_d+1}{m}\sqrt{\frac{2}{d}}\,\big]$, which is further contained in $\big[-\sqrt{\frac{1}{2d}}+\frac{1}{m} \sqrt{\frac{2}{d}}, \ \sqrt{\frac{1}{2d}}+\frac{1}{m} \sqrt{\frac{2}{d}}\,\big]^d$. Therefore, $\wt{\psi}_\xi$'s with distinct indices $\xi$'s have disjoint supports and if $b\geq 9$, then for each $g\in \Psi_\alpha$, we have: $g(z) =\nu_0(z)$ for all $z\in \mb R^d\setminus B_{3/4}^d$; and $g(z) \geq \inf_{z\in B^d_{3/4}}\nu_0(z)- b_0 \,b^{-\alpha} \sup_{t\in (0,1)}|\wt k(t)|^d >0$ for all $z\in B_{3/4}^d$ when $b_0$ is sufficiently small, which makes $g$ non-negative. In addition, since $\int_{-\infty}^\infty \wt k(t) \,\dd t=0$, we have $\int_{\mb B_1^d} g(z)\,\dd z = \int_{\mb B_1^d} \nu_0(z)\,\dd z =1$. Therefore, all functions in $\Psi_\alpha$ are valid probability density functions. Finally, it is straightforward to verify that there exist constants $(L_1,L_2)$ such that $g\in C^{\alpha}_{L_1}(\mathbb{R}^d)$ and $f\in C^{\gamma}_{L_2}(\mathbb{R}^d)$ for each $g\in\Psi_\alpha$ and $f\in \Lambda_\gamma$. 

For each $\omega\in\{0,1\}^{m^d}$, we define the following distribution over $\m M_0$ as
\begin{equation*}
    \mu_{\omega}=\Big(1-\frac{\wt{C}}{C}\Big)\cdot\mu_0|_{\wt{\m M}_1}+\frac{\wt{C}}{C}\cdot [G_0]_{\#}g_{\omega},
\end{equation*}
where recall that $C$ is the surface area of $\mb{S}_2^d$ and $\wt{C}=\int_{\mb B_1^d}\sqrt{{\rm det}(\bold{J}_{G_0}(z)^T\bold{J}_{G_0}(z))} \dd z$. 
Then $\mu_{\omega}$ has the following density function with respect to the volume measure of $\m{M}_0$,
\begin{equation*}
   \mu_{\omega}(\dd x)=\frac{1}{C}\, \bold{1}(x\in \wt{\m M}_1)+\frac{\wt{C}}{C}\cdot\frac{b_0\,\big(\frac{1}{\wt m}\big)^{\alpha}\sum_{\xi\in[\wt m]^d}  \omega_{\xi}\, \wt \psi_{\xi}(x_{1:d})}{\sqrt{{\rm det}(\bold{J}_{G_0}(x_{1:d})^T\bold{J}_{G_0}(x_{1:d}))}}\cdot \bold{1}(x\in {\wt{\m M}}_0),\ \ \forall x\in \m M_0.
\end{equation*}
Since $\psi_{\xi}(z)=0$ for all $z\in \mb B_1^d \setminus B_{3/4}^d$ and $G_0$ is infinitely-differentiable over the compact set $\mb B_1^d$ with non-singular Jacobian $\bold{J}_{G_0}$, there exists constant $L_3$ such that for each $\omega\in [0,1]^{[\wt m]^d}$ the density function of $\mu_{\omega}$ belongs to $C^{\alpha}_{L_1}(\mathcal{M}_0)$, implying $\mu_{\omega}\in \mathcal{P}^{\ast}(d,D,\alpha,\beta, L^\ast)$ for sufficiently large $L^\ast$.

Next, we will pick up a subset $\big\{\omega^{(0)},\ldots,\omega^{(H')}\big\}$ of $[\wt m]^d$ such that the corresponding distributions $\big\{\mu_h:\,=\mu_{\omega^{(h)}}\big\}_{h=1}^{H'}$ constitute the set of perturbed distributions in the lemma.
Concretely, by the Varshamov-Gilbert lemma~\citep{Tsybakov2009},  there exists a set $\{\omega^{(0)},\cdots , \omega^{(H')}\}\subset \{0,1\}^{[\wt m]^d}$ such that $\log H' \geq \frac{\wt m^d}{8}\log 2$ and the Hamming distance $\rho(\omega^{(j)},\omega^{(k)})\geq \frac{m^d}{8}$ for any distinct pair $j,k\in [H']$. Therefore, for any distinct $j, k\in[H']$, we have by our construction of $\mu_h$'s that 
 \begin{equation*}
\begin{aligned}
d_{\gamma}(\mu_{j}, \mu_{k})&=\frac{\wt{C}}{C} \underset{f\in C^{\gamma}_1(\mathbb{R}^D)}{\sup}\int_{\wt{\m M}_0} f(x)\cdot  \big(\dd([G_0]_{\#} g_{\omega^{(j)}})-\dd([G_0]_{\#}g_{\omega^{(k)}}]\big)\\
&=\frac{\wt{C}}{C} \underset{f\in C^{\gamma}_1(\mathbb{R}^D)}{\sup}\int_{\mb B_1^d} f\circ G_0(z)\cdot  \big(g_{\omega^{(j)}}(z)-g_{\omega^{(k)}}(z)\big)\,\dd z.
 \end{aligned}
\end{equation*}
Now since $G_0$ is infinitely-differentiable over $\mb B_1^d$ with bounded H\"{o}lder norm and the $d$-dimensional discriminator class $\Lambda_\gamma\subset C_{L_2}^\gamma(\mb B_1^d)$, we have $c_0^{-1} \Lambda_\gamma\subset C_{c_0^{-1}L_2}^\gamma(\mb B_1^d)\subset C^{\gamma}_1(\mathbb{R}^D) \circ G_0=\{f\circ G_0:\, f\in C^{\gamma}_1(\mathbb{R}^D)\}$ for some sufficiently small constant $c_0$, and
\begin{align*}
d_{\gamma}(\mu_{j}, \mu_{k})& \geq \frac{\wt{C}}{c_0 \,C}\cdot \underset{f\in\Lambda_{\gamma}}{\sup} \int_{\mb B_1^d} f(z)\cdot  \big(g_{\omega^{(j)}}(z)-g_{\omega^{(k)}}(z)\big)\,\dd z\\
& = \frac{b_0\, \wt{C}}{c_0 \,C} \cdot  \Big(\frac{1}{\wt m}\Big)^{\alpha+\gamma} \cdot  \underset{v\in\{-1,1\}^{[\wt m^d]}}{\sup} \int_{\mb B_1^d} \bigg\{\sum_{\xi\in[\wt m]^d} v_{\xi} \,\wt \psi_{\xi}(z) \bigg\}\cdot \bigg\{ \sum_{\xi\in[\wt m]^d} (\omega^{(j)}_{\xi}-\omega^{(k)}_{\xi}) \cdot \wt \psi_{\xi}(z)\bigg\}\, \dd z\\
&=  \frac{b_0\, \wt{C}}{c_0 \,C} \cdot  \Big(\frac{1}{\wt m}\Big)^{\alpha+\gamma} \cdot  \underset{v\in\{-1,1\}^{[\wt m^d]}}{\sup} \int_{\mb B_1^d} \bigg\{\sum_{\xi\in[\wt m]^d} v_{\xi} \cdot (\omega^{(j)}_{\xi}-\omega^{(k)}_{\xi}) \cdot {\wt \psi_{\xi}}^2(z)\bigg\}\, \dd z\\
&\geq c'\cdot \Big(\frac{1}{\wt m}\Big)^{\alpha+\gamma+d} \cdot \rho(\omega^{(j)},\omega^{(k)}) \geq \frac{c'}{8}\cdot b^{-(\alpha+\gamma+d)}\, n^{-\frac{\alpha+\gamma}{2\alpha+d}},
 \end{align*}
for some constant $c'$.
Moreover, we have
\begin{equation*}
\begin{aligned}
D_{\rm KL}(\mu_{j},\mu_{k})
&=\frac{\wt{C}}{C} \, D_{\rm KL}(G_{\#}g_{\omega},G_{\#}g_{\omega'})\\
&=\frac{\wt{C}}{C}\int_{\big[-\sqrt{\frac{1}{2d}}, \ \sqrt{\frac{1}{2d}}+\sqrt{\frac{2}{d}} \frac{1}{\wt m}\big]^d} -\log\bigg(\underbrace{
\frac{\nu_0(z)+b_0(\frac{1}{\wt m})^{\alpha} \sum_{\xi\in[\wt m]^d} \omega^{(k)}_{\xi} \wt \psi_{\xi}(z)}{\nu_0(z)+b_0(\frac{1}{\wt m})^{\alpha} \sum_{\xi\in[\wt m]^d} \omega^{(j)}_{\xi} \wt \psi_{\xi}(z)}}_{:\,=1+u(z)}
\bigg) g_{\omega^{(j)}}(z) \, \dd z.
 \end{aligned}
\end{equation*}
For sufficiently large $b$, we have $|u(z)| \leq 1/4$ over $\mb B_1^d$ so that $-\log(1+u(z)) \leq u^2(z) - u(z)$. 
This leads to
\begin{equation*}
\begin{aligned}
D_{\rm KL}(\mu_{j},\mu_{k})
&\leq c''\Big(\frac{1}{\wt m}\Big)^{2\alpha} + c''\,b_0\Big(\frac{1}{\wt m}\Big)^{\alpha}\int_{\big[-\sqrt{\frac{1}{2d}}, \ \sqrt{\frac{1}{2d}}+\sqrt{\frac{2}{d}} \frac{1}{\wt m}\big]^d}
 \bigg\{\sum_{\xi\in[\wt m]^d}( \omega_{\xi}^{(j)}-\omega_{\xi}^{(k)})\cdot \psi_{\xi}(z)  \bigg\}\,  \dd z
\\
&= c''\Big(\frac{1}{\wt m}\Big)^{2\alpha} = c'' \,b^{-2\alpha}n^{-\frac{2\alpha}{2\alpha+d}},
\end{aligned}
\end{equation*}
for some constant $c''$, where we used the fact that $\int_{\mb B_1^d} \psi_\xi(z)\,\dd z=0$ for any $\xi\in[\wt m]^d$.

\subsubsection{Proof of Lemma~\ref{Lemma:LM}}\label{Sec:Proof_lemma_LM}
Define function $\bar k:\mathbb{R}\to\mathbb{R}$ by
\begin{equation}\label{eqnk.3}
 \bar k(z)=\left\{\begin{array}{l}
\big(\sqrt{\frac{1}{2d}}-z\big)^{\alpha\vee \gamma+1} \,\big(z+\sqrt{\frac{1}{2d}}\big)^{\alpha\vee\gamma+1}\, z, \quad z \in \big[-\sqrt{\frac{1}{2d}}, \sqrt{\frac{1}{2d}}\,\big], \\
0, \quad \text { o.w. }
\end{array}\right.
\end{equation}
Then we have $\int_{-\infty}^\infty \bar k(z) \,\dd z=0$. For $z\in \mb B_1^d$, define
\begin{equation}\label{eqnk.4}
\begin{aligned}
 \nu_1(z)=\nu_0(z)+\frac{c}{\sqrt{n}} \prod_{j=1}^d \bar k(z_j),
\end{aligned}
\end{equation}
where $\nu_0$ is defined in the proof of Lemma~\ref{Lemma:MP} in Section~\ref{Sec:proof_Lemma_MP} and $c$ is a constant. For sufficiently small $c$, we have $\nu_1(z)\geq 0$, which combined with $\int_{\mb B_1^d} \nu_1(z) \dd z=1$ implies $\nu_1$ to be a valid probability density function on $\mb B_1^d$. Moreover, there exists some sufficiently large constant $c_1$ such that $\nu_0$, $\nu_1 \in C^\alpha_{c_1}(\mb B_1^d)$. Recall $G_0:\mb B_1^d\to \mb{R}^D$ is the generative map defined as $G(z)=(z,\,\sqrt{2-\|z\|^2}\,,\,\bold{0}_{D-d-1}\big)$ for $z\in \mb B_1^d$, $\m{M}_0=\wt{\m M}_0\cap \wt{\m M}_1$ is the partition of manifold $\m M_0$ defined in Section~\ref{Sec:proof_Lemma_MP}, and $\mu_0$ is the uniform distribution on $\m M_0$. Define another distribution on $\m M_0$ as
\begin{equation*}
    \mu_1=\Big(1-\frac{\wt{C}}{C}\Big)\cdot\mu_0|_{\wt{\m M}_1}+\frac{\wt{C}}{C}\cdot [G_0]_{\#}\nu_1.
\end{equation*}
where $C$, $\wt{C}$ are the same normalizing constants defined in Section~\ref{Sec:proof_Lemma_MP}. 
 The density function of $\mu_1$ with respect to the volume measure of $\m{M}_0$ is
 $$\mu_1(\dd x)=\frac{1}{C}\, \bold{1}(x\in \wt{\m M_1})+\frac{\wt{C}}{C}\cdot\frac{\frac{c}{\sqrt{n}} \prod_{j=1}^d \bar k(x_j)}{\sqrt{{\rm det}(\bold{J}_{G_0}(x_{1:d})^T\bold{J}_{G_0}(x_{1:d}))}}\, \bold{1}(x\in \wt{\m{M}}_0),\quad\forall x\in \m M_0.$$
Then by $\bar k(z)=0$ when $|z|>\sqrt{\frac{1}{d}}$,  we can obtain that $\mu_0, \mu_1\in \mathcal{P}^{\ast}(d,D,\alpha,\beta,L^\ast)$ for sufficiently large $L^\ast$. Moreover,
\begin{align*}
    d_{\chi^2} (\mu_1, \mu_0) &= \frac{\wt{C}}{C}\, d_{\chi^2}\big([G_0]_\#\nu_1,[G_0]_\#\nu_0\big)
    = \frac{\wt{C}}{C}\, d_{\chi^2}(\nu_1,\nu_0)\\
    &= \frac{\wt{C}}{C}\, \int_{\big[-\sqrt{\frac{1}{2d}}, \ \sqrt{\frac{1}{2d}}\,\big]^d}  
    \Big(\frac{\nu_1(z)}{\nu_0(z)}-1\Big)^2 \,\nu_0(z) \,\dd z\\
    &=\frac{\wt{C}}{C}\, \int_{\big[-\sqrt{\frac{1}{2d}}, \ \sqrt{\frac{1}{2d}}\,\big]^d}   \frac{c^2}{n}\prod_{j=1}^d \frac{\bar k^2(z_j)}{\nu_0(z)} \,\dd z\leq\frac{1}{n},
\end{align*}
for sufficiently small $c$.
Define discriminator $\bar{f}(z)=\sqrt{n}\,\big(\nu_1(z)-\nu_0(z)\big)=c \prod_{j=1}^d \bar k(z_j)$. Since $G_0$ is infinitely-differentiable with bounded higher-order derivative over $\mb B_1^d$, there exist sufficiently small constants $(c_1,c_2)$ such that $c_1\bar{f}(z) \in C_{c_2}^{\gamma}(\mathbb{R}^d) \subset C^{\gamma}_1(\mathbb{R}^D) \circ G_0=\{f\circ G_0:\, f\in C^{\beta}_1(\mathbb{R}^D)\}$. Therefore, we have
\begin{equation*}
\begin{aligned}
d_{\gamma}(\mu_1,\mu_0)
&=\underset{f\in C^{\gamma}_1(\mb{R}^D)}{\sup}\frac{\wt{C}}{C}\int_{\wt{\m M}_0} f(x)\, \big(\dd([G_0]_{\#}\nu_1]-\dd[G_0]_{\#}\nu_0\big)\\
&\geq \underset{f\in C^{\gamma}_1(\mb{R}^D)}{\sup}\frac{\wt{C}}{C}\int_{\mb B_1^d} f\circ G_0(z)\cdot 
\big(\nu_1(z)-\nu_0(z)\big)\,\dd z\\
&\geq \frac{c_1\,\wt{C}}{C} \, \sqrt{n} \int_{\mb B_1^d} \big(\nu_1(z)-\nu_0(z)\big)^2 \, \dd z\\
&=\frac{c_1\,c^2\,\wt{C}}{C\sqrt{n}} \,\int_{\mb B_1^d} \prod_{j=1}^d \bar k^2(z_j) \,\dd z\geq \frac{c_3}{\sqrt{n}},
 \end{aligned} 
\end{equation*}
for some constant $c_3$.


\subsection{Proofs of lemmas in Section~\ref{sec:proofupperbound} for minimax upper bound}\label{Sec:more_proofs_upperbound}
In this subsection, we provide proofs for Lemmas~\ref{lemma:mani_est},~\ref{lemma:low_freq},~\ref{lemma:high_freq} and~\ref{lemma:smooth_corr}. 
In the following proofs, we only consider those $m\in[M]$ such that $\mathbb{P}_{\mu^{\ast}}(X\in S^{\dagger}_m)\geq \frac{1}{2}\sqrt{\frac{\log n}{n}}$. In fact,
by applying Bernstein's inequality for a binomial random variable and a union bound argument, we obtain that, for any constant $c$, there exits a constant $c_1$ such that with probability at least $1-n^{-c}$,
 \begin{equation}\label{eqnhatp}
     \underset{m\in [M]}{\sup}\,
    \big|\, \wh{p}_m-\mathbb{P}_{\mu^{\ast}}(X\in S^{\dagger}_m)\big|\leq c_1\Big(\frac{\log n}{n}+\sqrt{\frac{\log n}{n}}\sqrt{\mathbb{P}_{\mu^{\ast}}(X\in S^{\dagger}_m)}\,\Big).
 \end{equation}
 Therefore, if $\mathbb{P}_{\mu^{\ast}}(X\in S^{\dagger}_m)\leq \frac{1}{2}\sqrt{\frac{\log n}{n}}$, then there exists some sufficiently large integer $n_0$ such that when $n\geq n_0$, it holds with probability at least $1-n^{-c}$ that  $\wh{p}_m< \sqrt{\frac{\log n}{n}}$, which leads to
 \begin{equation}\label{eqn:mub.1}
 \begin{aligned}
  \underset{f\in C^{\gamma}_1(\mathbb{R}^D)}{\sup} \Big|\mb E_{\mu^\ast}[f(X)\cdot\rho_m(X)]- \wh{\m J}_m(f)\Big| & \overset{(i)}{=} \underset{f\in C^{\gamma}_1(\mathbb{R}^D)}{\sup} \big|\mb E_{\mu^\ast}[f(X)\cdot\rho_m(X)]\big|\\
  &\leq \mathbb{P}_{\mu^{\ast}}(X\in S^{\dagger}_m) \leq  \frac{1}{2}\,\sqrt{\frac{\log n}{n}},
   \end{aligned}
 \end{equation}
 where we have used in step (i) the definition of $\wh{\m J}_m$ that $\wh{\m J}_m(f)\equiv 0$ if $\wh p_m < \sqrt{\frac{\log n}{n}}$.  On the other hand, if $\wh{p}_m< \sqrt{\frac{\log n}{n}}$, then it holds with probability larger than $1-n^{-c}$ that $\mathbb{P}_{\mu^\ast}(X\in S^{\dagger}_m)\leq 2\sqrt{\frac{\log n}{n}}$. Thus $\widehat{p}_m< \sqrt{\frac{\log n}{n}}$ can lead  to
 \begin{equation}\label{eqn:mub.2}
 \begin{aligned}
  \underset{f\in C^{\gamma}_1(\mathbb{R}^D)}{\sup} \Big|\mb E_{\mu^\ast}[f(X)\cdot\rho_m(X)]- \wh{\m J}_m(f)\Big| &  = \underset{f\in C^{\gamma}_1(\mathbb{R}^D)}{\sup} \big|\mb E_{\mu^\ast}[f(X)\cdot\rho_m(X)]\big|\\
  &\leq \mathbb{P}_{\mu^{\ast}}(X\in S^{\dagger}_m) \leq  2\,\sqrt{\frac{\log n}{n}}.
   \end{aligned}
 \end{equation}
In the proofs, we consider a fixed $\mu^{\ast}\in \mathcal{S}^{\ast}$. From the definition of $\m S^\ast$, we have that for each $m\in [M]$, there exists $\widetilde{S}_m\supseteq \mb B_{r_m+1/L}(a_m)$ such that $\mu^{\ast}|_{\widetilde{S}_m}= [G^{\ast}_{[m]}]_\#\nu^{\ast}_{[m]}$ for some generative model pair $\big(\nu^{\ast}_{[m]},G^{\ast}_{[m]}\big)$ satisfying the conditions in Section~\ref{sec:gen_model_class}. In particular, we use $Q^{\ast}_{[m]}$ to denote the smooth extension of the inverse of $G^{\ast}_{[m]}|_{\mb B_1^d}$ from submanifold $\m M$ to the entire ambient space $\mb R^D$ therein. Recall equation~\eqref{eqn:mub.2}, in the following proofs, we will abuse the notation to use $(\wh{\m J}_{m,l}(f),\wh{\m J}_{m,h}(f),\wh{\m J}_{m,s}(f))$ to denote the term in the right hand side of~\eqref{eqn:low_freq},~\eqref{eqn:high_freq} and~\eqref{eqn:smooth_corr} respectively for any $m\in \mb M=\{m\in [M]\,:\,\mb P_{\mu^*}(X\in S_m^{\dagger})\geq \frac{1}{2}\sqrt{\frac{\log n}{n}}\}$, regardless of the value $\wh{p}_m$, and we will show that $(\wh{\m J}_{m,l}(f),\wh{\m J}_{m,h}(f),\wh{\m J}_{m,s}(f))$ is close to $({\m J}_{m,l}(f),{\m J}_{m,h}(f),{\m J}_{m,s}(f))$ for each $m\in \mb M$.


\subsubsection{Proof of Lemma~\ref{lemma:mani_est}}\label{sec:proof_mani_est}
Fix an $m\in \mb M$.
Recall $\mu^{\ast}|_{\widetilde{S}_m}=[G^{\ast}_{[m]}]_{\#}\nu^{\ast}_{[m]}$, where: 
\begin{enumerate}[topsep=2pt,itemsep=0ex]
    \item $G^\ast_{[m]}|_{\mb B_1^d}$ has a smooth inverse $Q^{\ast}_{[m]}\in C_L^\gamma(\mb R^D; \mb R^d)$ such that $Q_\sm^\ast\circ [G_\sm^\ast|_{\mb B_1^d}]={\rm id}_{\mb B_1^d}$;
    \item $\nu_\sm^\ast$ is a probability distribution supported on $\mb B_1^d$ with $\alpha$-smooth density function (also denoted by $\nu_\sm^\ast$) satisfying $e^{-L}(1-\|z\|_2)^{L_0} \leq \nu_\sm^\ast(z)\leq e^{L}(1-\|z\|_2)^{L_0}$ for all $z\in \mb B_1^d$ for some $L_0\geq 0$.
\end{enumerate}
Let $\wh{f}_\sm= \wh G_\sm\circ \wh Q_\sm \circ G^\ast_\sm:\,\mb B_1^d\to \mb R^D$. Then for any $\eta>0$, we have
\begin{equation}\label{eqn:manifold_loss}
\begin{aligned}
& \mathbb{E}_{\mu^{\ast}}\big[ \|X-\wh{G}_\sm(\wh{Q}_\sm(X))\|_2^{\eta}\,\rho_m(X)\big]\\ 
=&\, \mb P_{\mu^{\ast}}(X\in \widetilde{S}_m)\cdot \int \|x-\wh{G}_\sm(\wh{Q}_\sm(x))\|_2^{\eta} \cdot \rho_m(x)\,\dd \big([G^{\ast}_{[m]}]_{\#}\nu^{\ast}_{[m]}(x)\big)\\
 \overset{(i)}{=} &\,\mb P_{\mu^{\ast}}(X\in \widetilde{S}_m)\cdot \int_{\mb B_1^d} \|G^\ast_\sm(z)-\wh{f}_\sm(z)\|_2^{\eta} \cdot \rho_m\big(G^\ast_\sm(z)\big)\cdot \nu^{\ast}_{[m]}(z)\,\dd z,
\end{aligned}
\end{equation}
where step (i) follows by applying the change of variable of $x=G^\ast_\sm(z)$ and $\nu^\ast_\sm$ is supported on $\mb B_1^d$. 

Let $\wt{n}=n\cdot \mathbb{P}_{\mu^{\ast}}(X\in \widetilde{S}_m)$,  since we only consider those $m\in [M]$ such that  $\mathbb{P}_{\mu^{\ast}}(X\in \widetilde{S}_m)\geq \frac{1}{2}\sqrt{\frac{\log n}{n}}$ (recall $\wt S_m\supset \mb B_{r_m+1/L}(a_m) \supset \mb B_{r_m+1/(2L)}(a_m)=S_m^{\dagger}$ and equation~\eqref{eqn:mub.1}), it holds that $\wt{n} \geq \frac{1}{2}\sqrt{n\log n}$.
Let $ S_m=\mb B_{r_m}(a_m)$, we resort to the following lemma that provides an upper bound on $\|G^\ast_\sm(z)-\wh{f}_\sm(z)\|_2$ for all $z\in \mb B_1^d$ such that $G^\ast_\sm(z) \in S_m$. 
\begin{lemma}\label{lemma:pointwise_error}
  It holds with probability at least $1-n^{-c}$ that for all $z\in \mb B_1^d$ such that $G^\ast_\sm(z) \in S_m$,
  \begin{align}
      \|G^\ast_\sm(z)-\wh{f}_\sm(z)\|_2 \leq C\,\min\Big\{ \Big(\frac{\log \wt n}{\wt n (1-\|z\|_2)^{L_0}}\Big)^{\frac{\beta}{d}},\, \Big(\frac{\log \wt n}{\wt n }\Big)^{\frac{\beta}{d+L_0}} \Big\}.
  \end{align}
\end{lemma}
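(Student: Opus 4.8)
The plan rests on one structural observation and one approximation-theoretic inequality, glued together by a uniform covering argument. The structural observation is \emph{exact interpolation}. Since $\mu^\ast|_{\widetilde{S}_m}=[G^\ast_\sm]_\#\nu^\ast_\sm$ with $\nu^\ast_\sm$ supported on $\mb B_1^d$ and $Q^\ast_\sm\circ[G^\ast_\sm|_{\mb B_1^d}]=\mathrm{id}_{\mb B_1^d}$, every sample $X_i$ with $X_i\in S_m^{\dagger}\subset\widetilde{S}_m$ satisfies $X_i=G^\ast_\sm(Z_i)$ for $Z_i:=Q^\ast_\sm(X_i)\in\mb B_1^d$, so the pair $(G^\ast_\sm,Q^\ast_\sm)$, which lies in the feasible set $\ms G\times\ms Q=C^\beta_L(\mb R^d;\mb R^D)\times C^\beta_L(\mb R^D;\mb R^d)$ by the definition of $\m S^\ast$, attains zero empirical reconstruction loss in~\eqref{estimator1}. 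Hence so does the minimizer $(\wh G_\sm,\wh Q_\sm)$, i.e.\ $\wh G_\sm\circ\wh Q_\sm(X_i)=X_i$ for all $i\in I_1$ with $X_i\in S_m^{\dagger}$. It follows that the error map $h:=G^\ast_\sm-\wh f_\sm=G^\ast_\sm-\wh G_\sm\circ\wh Q_\sm\circ G^\ast_\sm:\mb B_1^d\to\mb R^D$ vanishes at each such $Z_i$, while $h\in C^\beta(\mb B_1^d;\mb R^D)$ with $\|h\|_{C^\beta}\le K_0$ for a constant $K_0=K_0(\beta,d,D,L)$, since $G^\ast_\sm,\wh G_\sm,\wh Q_\sm$ have $C^\beta$-norm at most $L$ and compositions of $C^\beta$ maps with bounded derivatives are again $C^\beta$ with controlled norm (Fa\`a di Bruno).

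Next I would invoke the following elementary norming-set inequality. If $h\in C^\beta(\Omega)$ on a convex body $\Omega\subset\mb R^d$ that contains a ball of radius $\rho$ and is contained in a ball of radius $R\rho$, with $\|h\|_{C^\beta}\le K_0$, and $h$ vanishes on a subset of $\Omega$ that is $\theta\rho$-dense in $\Omega$ for a sufficiently small $\theta=\theta(\beta,d,R)$, then $\|h\|_{L^\infty(\Omega)}\le C(\beta,d,R)\,K_0\,\rho^\beta$. This is proved by comparing $h$ to its degree-$\lfloor\beta\rfloor$ Taylor polynomial $p$ at an arbitrary $z\in\Omega$: the H\"older remainder bound gives $|p(z_i)|=|h(z_i)-p(z_i)|\le K_0\rho^\beta$ at every vanishing node $z_i$, a sufficiently dense net of a fat body is a norming set for $\mathcal P_{\lfloor\beta\rfloor}$ with constant at most $2$, and therefore $|h(z)|=|p(z)|\le 2\sup_i|p(z_i)|\le 2K_0\rho^\beta$. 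Applying this coordinatewise to $h$, the lemma reduces to showing that, with probability at least $1-n^{-c}$, for every $z\in\mb B_1^d$ with $G^\ast_\sm(z)\in S_m$ the nodes $\{Z_i:i\in I_1,\,X_i\in S_m^{\dagger}\}$ are $\varrho(z)/C_0$-dense in $\Omega_z:=B_{\varrho(z)}(z)\cap\mb B_1^d$, where $\varrho(z)$ is chosen so that $\varrho(z)^\beta\asymp\min\{(\tfrac{\log\wt n}{\wt n(1-\|z\|)^{L_0}})^{\beta/d},(\tfrac{\log\wt n}{\wt n})^{\beta/(d+L_0)}\}$; the inequality then yields $\|h(z)\|_2\lesssim\varrho(z)^\beta$, which is exactly the asserted bound. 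Along the way one checks that $\Omega_z$ is uniformly fat (a ball in the deep interior, an $O(1)$-fat cap near $\partial\mb B_1^d$) and that the nodes inside it come from samples $X_i\in S_m^{\dagger}$, both consequences of $\varrho(z)\to 0$ and the Lipschitz bound on $G^\ast_\sm$.

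The density claim is then a Chernoff-plus-union-bound exercise, using $e^{-L}(1-\|z\|)^{L_0}\le\nu^\ast_\sm(z)\le e^{L}(1-\|z\|)^{L_0}$ and $\wt n\ge\tfrac12\sqrt{n\log n}$. I would stratify $z$ by the dyadic level of $1-\|z\|$: on $1-\|z\|\in[2^{-j-1},2^{-j}]$ one has $\varrho(z)\asymp\varrho_j$ at a single scale, so one covers the corresponding annulus, together with the thin boundary layer $1-\|z\|\lesssim(\tfrac{\log\wt n}{\wt n})^{1/(d+L_0)}$ (where $\varrho(z)\asymp(\tfrac{\log\wt n}{\wt n})^{1/(d+L_0)}$), by $\mathrm{poly}(\wt n)$ sub-balls $B$ of radius $\varrho_j/(2C_0)$. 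For each such $B$, the pushforward identity $\mu^\ast(G^\ast_\sm(B\cap\mb B_1^d))=\nu^\ast_\sm(B\cap\mb B_1^d)$ together with the density lower bound gives $\mu^\ast(G^\ast_\sm(B\cap\mb B_1^d))\gtrsim\tfrac{\log\wt n}{\wt n}$ (for boundary-layer sub-balls one integrates $(1-\|u\|)^{L_0}$ over the part of $B$ reaching depth $\asymp\varrho_j$ into $\mb B_1^d$), so the number of samples landing in $B$ is binomial with mean $\gtrsim\log\wt n$ and is positive except with probability $\wt n^{-c'}$; a union bound over the $\mathrm{poly}(\wt n)$ sub-balls and over $m\in\mb M$, plus $\wt n\ge n^{1/3}$ for large $n$, yields the $1-n^{-c}$ guarantee.

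I expect the main obstacle to be the bookkeeping near $\partial\mb B_1^d$: the density degeneracy $\nu^\ast_\sm(z)\asymp(1-\|z\|)^{L_0}$ is precisely what produces the $\min$ in the statement, and one must verify both that a $\varrho(z)$-ball around a near-boundary $z$ still captures mass $\gtrsim\log\wt n/\wt n$ by reaching into the interior, and that the resulting spherical cap is fat enough to serve as a norming domain for $\mathcal P_{\lfloor\beta\rfloor}$ with a constant $\theta$ that can be made as small as $C_0$ large demands; the interior case and the norming-set inequality itself are standard, and the exact-interpolation step is immediate.
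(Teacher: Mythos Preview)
Your approach is correct and follows a genuinely different route from the paper. Both proofs start from the same observation---that $(G^\ast_\sm,Q^\ast_\sm)$ is feasible with zero loss, so $\wh G_\sm\circ\wh Q_\sm(X_i)=X_i$ for every $X_i\in S_m^\dagger$, hence $h=G^\ast_\sm-\wh f_\sm$ vanishes at the latent samples $Z_i=Q^\ast_\sm(X_i)$---but then diverge. The paper localizes to balls $\mb B_{\delta_{\tilde z}}(\tilde z)$ at net points $\tilde z$, Taylor-expands the zero empirical loss into a ``localized basic inequality'' for the empirical average of squared Taylor-polynomial values (equation~\eqref{leup2eq1}), lower-bounds the \emph{expectation} of this average via an $L^2$ polynomial norming inequality, and bridges the gap between empirical and expected versions by a Talagrand--chaining--peeling argument (Lemma~\ref{lemma:empirical_proc}); the resulting control on the full Taylor jet at net points is then propagated to arbitrary $z$. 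Your route is more elementary: the only probabilistic input is a Chernoff/union bound showing the $Z_i$ are $\theta\varrho(z)$-dense in $\Omega_z=B_{\varrho(z)}(z)\cap\mb B_1^d$, after which the deterministic $L^\infty$ norming-set inequality (Markov for $\mathcal P_{\lfloor\beta\rfloor}$ on a fat convex body) gives $\|h(z)\|\lesssim\varrho(z)^\beta$ directly, with no empirical-process machinery and no separate net-to-point extension. Your boundary analysis is also sound: for sub-balls of radius $\theta\varrho$ touching $\partial\mb B_1^d$, integrating $(1-\|u\|)^{L_0}$ over the cap still yields mass $\gtrsim\theta^{d+L_0}\varrho^{d+L_0}\asymp\log\wt n/\wt n$, and the cap $\Omega_z$ has aspect ratio $O(1)$ so the Markov constant is uniform. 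What the paper's heavier framework buys is robustness: the empirical-process bound of Lemma~\ref{lemma:empirical_proc} would continue to give useful control if the empirical reconstruction loss were merely small rather than zero (as in the noisy extension of Corollary~\ref{noisecorollary}), whereas your exact-interpolation step would need modification; it also yields bounds on all Taylor coefficients of $h$ simultaneously (inequality~\eqref{leup2eq2}), which your argument could recover a posteriori via Markov but does not state. For the lemma as written, your argument is complete and substantially shorter.
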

\noindent A proof of this lemma is provided in Section~\ref{Sec:proof_lemma:pointwise_error}. Another way to state the lemma is that for some constant $c_0$, we have
\begin{align*}
    \|G^\ast_\sm(z)-\wh{f}_\sm(z)\|_2 \leq 
    \begin{cases}
     C\big(\frac{\log \wt n}{\wt n (1-\|z\|_2)^{L_0}}\big)^{\frac{\beta}{d}} & \mbox{if $\|z\|_2 \leq 1- \delta_n$};\\
     C\big(\frac{\log \wt n}{\wt n }\big)^{\frac{\beta}{d+L_0}} & \mbox{if $1- \delta_n\leq \|z\|_2 \leq 1$},
    \end{cases}
\end{align*}
where $\delta_n = c_0\big(\frac{\log \wt n}{\wt n}\big)^{\frac{1}{d+L_0}}$. The increasing pointwise estimation error around the boundary of $\mb B_1^d$ can be explained by less samples around the boundary --- $\nu_\sm^\ast(z)$ has the decay rate $(1-\|z\|_2)^{L_0}$ as $\|z\|$ approaches one.
In addition, notice that, by the condition on $\nu_\sm^\ast$, for those $z$ satisfying $1-\delta_n\leq \|z\|_2\leq 1$ we have $\nu_\sm^\ast(z) \leq e^L (1-\|z\|_2)^{L_0} \leq C \big(\frac{\log \wt n}{\wt n}\big)^{\frac{L_0}{d+L_0}}$.
Combining these two properties with equation~\eqref{eqn:manifold_loss} and the fact that $\rho_m$ is supported on $S_m=\mb B_{r_m}(a_m)$, we finally obtain that for any $\eta\in\big(0,\frac{d}{\beta}\big]$,
\begin{align*}
    & \mathbb{E}_{\mu^{\ast}}\big[ \|X-\wh{G}_\sm(\wh{Q}_\sm(X))\|_2^{\eta}\,\rho_m(X)\big]\\ 
     \leq &\, C\, \mb P_{\mu^{\ast}}(X\in \widetilde{S}_m)\cdot \big(\frac{\log \wt n}{\wt n}\big)^{\frac{\beta\eta}{d}}\, \int_{\|z\|_2 \leq 1- \delta_n} (1-\|z\|_2)^{L_0(1-\frac{\eta\beta}{d})} \,\dd z
     \\
     &\qquad\qquad\qquad+ C\, \mb P_{\mu^{\ast}}(X\in \widetilde{S}_m)\cdot \big(\frac{\log \wt n}{\wt n }\big)^{\frac{\beta\eta+L_0}{d+L_0}}\,\int_{1- \delta_n\leq \|z\|_2 \leq 1} \dd z \\
     \leq &\, C\, \mb P_{\mu^{\ast}}(X\in \widetilde{S}_m)\cdot\big(\frac{\log \wt n}{\wt n}\big)^{\frac{\beta\eta}{d}} \leq C\, \big(\frac{\log n}{n}\big)^{\frac{\beta\eta}{d}},
\end{align*}
where in the last step we used the fact that $\log \wt n\leq \log n$, $\beta\eta\leq d$ and $\wt n = n \cdot \mb P_{\mu^{\ast}}(X\in \widetilde{S}_m)$.
Since $S_m$ is a bounded set, we have that for any $\eta >\frac{d}{\beta}$,
\begin{align*}
    & \mathbb{E}_{\mu^{\ast}}\big[ \|X-\wh{G}_\sm(\wh{Q}_\sm(X))\|_2^{\eta}\,\rho_m(X)\big]\\ 
    &\qquad\qquad \leq  C_1^{\eta-\frac{d}{\beta}}\, \mathbb{E}_{\mu^{\ast}}\big[ \|X-\wh{G}_\sm(\wh{Q}_\sm(X))\|_2^{\frac{d}{\beta}}\,\rho_m(X)\big] \leq 
    C\, C_1^{\eta-\frac{d}{\beta}}\, \frac{\log n}{n}.
\end{align*}

\subsubsection{Proof of Lemma~\ref{lemma:low_freq}}\label{sec:proof_low_freq}
 Let $\zeta=1+\gamma\vee \beta\vee (\frac{d}{2}-\gamma)\vee (\frac{d}{2}-\beta)$; $\phi_{\mf M}\in C^{\zeta}(\mb R)$ and $\phi_{\mf F}\in C^{\zeta}(\mb R)$ be a compactly supported wavelet and scaling function, respectively, for example Daubechies wavelets~\citep{doi:10.1080/03610926.2015.1019144, hutter2020minimax}.
Recall that any $f \in C(\mathbb{R}^D)$ admits the following wavelet expansion (c.f.~Section~\ref{sec:Wavelet_review} for a brief review)
\begin{equation*}
  f(x)=\underbrace{\sum_{k\in \mathbb{Z}^D} b_k\, \phi_k(x)+\sum_{l=1}^{2^D-1} \sum_{j=1}^{J}\sum_{k\in \mathbb{Z}^D} f_{ljk}\, \psi_{ljk}(x)}_{\Pi_Jf} \ \  + \  \  \underbrace{\sum_{l=1}^{2^D-1} \sum_{j=J+1}^{\infty}\sum_{k\in \mathbb{Z}^D} f_{ljk}\, \psi_{ljk}(x)}_{\Pi^\perp_Jf},
\end{equation*}
where recall that for any multi-index $k\in\mb Z^D$, the level zero basis $\phi_k \in C^{\zeta}(\mb R^D)$ is obtained by translating the $D$-fold tensor product $\phi_{\mf F}^{\otimes D}$ by $k$ as $\phi_k(x) = \prod_{i=1}^D \phi_{\mf F}(x_i-k_i)$ for $x=(x_1,\ldots,x_D)\in\mb R^D$, and for any $j\geq 1$, the level $j$ basis $\big\{\psi_{ljk}:\, l\in[2^D-1]\big\}$ with translation $k$ is any ordering of the following $2^D-1$ functions,
\begin{align*}
    \psi_{k}^{j,g}(x)=2^{\frac{D(j-1)}{2}} \,\prod_{i=1}^D \phi_{g_i}\big(2^{j-1}x_i - k_i\big), \quad \forall g\in G = \{\mf F,\,\mf M\}^D\setminus \{(\mf F,\ldots,\mf F)\}.
\end{align*}
In addition, if $f\in C_1^\gamma(\mathbb{R}^D)$, then it holds for all $k\in\mb Z^D$,\, $j\in\mb N_0$ and $l\in[2^D-1]$ that
\begin{align*}
    |b_k| \leq C \quad \mbox{and}\quad |f_{ljk}| \leq C\,2^{-\frac{Dj}{2}-j\gamma}.
\end{align*}
 For ease of notation, we denote $f=f_1+f_2$, where $f_1=\Pi_J f$ is the projection of $f$ onto the first $J$ level wavelet basis, and $f_2=\Pi^\perp_J f$ the collection of all remaining ``higher-frequency'' components. Using this notation, fix an $m\in \mb M$, we can write
\begin{equation*}
\begin{aligned}
&\quad  \m J_{m,l}(f) =\mathbb{E}_{\mu^{\ast}} \big[\rho_m(X) \cdot f_1\big(\wh G_\sm \circ \wh Q_\sm (X)\big)\big]\\
&=\mathbb{E}_{\mu^{\ast}} \bigg[\rho_m(X)\bigg(\sum_{k\in \mathbb{Z}^D} b_k\, \phi_k\big(\wh G_\sm \circ \wh Q_\sm (X)\big)       +\sum_{l=1}^{2^D-1} \sum_{j=1}^{J}\sum_{k\in \mathbb{Z}^D} f_{ljk}\, \psi_{ljk}\big(\wh G_\sm \circ \wh Q_\sm (X)\big)\bigg)\bigg],
\end{aligned}
\end{equation*}
 where the expectation is taken with respect to $X\sim \mu^\ast$ (not the randomness in $\wh G_\sm$ and $\wh Q_\sm$).  Define 
 \begin{equation*}
\begin{aligned}
&\widetilde{\mathbb{S}}=\big\{k \in\mathbb{Z}^D:\, {\rm supp}(\phi_{k})\cap  \wh G_\sm \circ \wh Q_\sm\circ G^{\ast}_\sm(\mb B^d_1)\neq \emptyset\big\};\\
&\widetilde{\mathbb{S}}_{lj}=\big\{k \in \mathbb{Z}^D:\,{\rm supp}(\psi_{ljk})\cap \wh G_\sm \circ \wh Q_\sm\circ G^{\ast}_\sm(\mb B^d_1)\neq \emptyset\big\}.
 \end{aligned}
\end{equation*}
Then there exists a constant $C$ such that $|\widetilde{\mathbb{S}}|\leq C$. Moreover, 
by the Lipschitzness of $\wh G_\sm$, $\wh Q_\sm$, $G^{\ast}_\sm$, and the fact that the support of $\psi_{ljk}$ is contained in $\mb B_{2^{-j}C_0}\big(2^{1-j}k)$ for some finite constant $C_0$, we can get that $|\widetilde{\mathbb{S}}_{lj}|\leq C 2^{dj}$. 
 Under this notation, we have
\begin{equation}\label{eq1lemma2.4}
\begin{aligned}
   & \big| \wh{\m J}_{m,l}(f) - \m J_{m,l}(f)\big| = \bigg|\frac{1}{|I_2|} \sum_{i\in I_2} f_1\big(\wh G_\sm(\wh Q_\sm (X_i))\big)\cdot \rho_m(X_i)-\mathbb{E}_{\mu^{\ast}} \big[f_1\big(\wh G_\sm(\wh Q_\sm(X))\big)\cdot \rho_m(X)\big]\bigg|\\
   &\leq \sum_{k\in \widetilde{\mathbb{S}}} |b_k|\cdot\Big|\mathbb{E}_{\mu^{\ast}}\big[\phi_k\big(\wh G_\sm(\wh Q_\sm(X))\big)\cdot \rho_m(X)\big]-\frac{1}{|I_2|} \sum_{i\in I_2}\phi_k\big(\wh G_\sm(\wh Q_\sm(X_i))\big)\cdot\rho_m(X_i)\Big|\\
   &\qquad+\sum_{l=1}^{2^D-1}\sum_{j=1}^{J}\sum_{k\in\widetilde{\mathbb{S}}_{lj}}|f_{ljk}|\cdot \Big|\mathbb{E}_{\mu^{\ast}}\big[\psi_{ljk}\big(\wh G_\sm(\wh Q_\sm(X))\big)\cdot \rho_m(X)\big] \\
   &\qquad\qquad\qquad\qquad\qquad\qquad\qquad\qquad
   -\frac{1}{|I_2|} \sum_{i\in I_2}\psi_{ljk}\big(\wh G_\sm(\wh Q_\sm(X_i))\big)\cdot\rho_m(X_i)\Big|.
\end{aligned}
\end{equation}
 By a similar union bound argument plus Bernstein's inequality as the proof of~\eqref{eqn:union+Bernstein}, we obtain
  that with probability at least $1-n^{-c}$,
\begin{equation*}
    \begin{aligned}
   &\underset{k \in\widetilde{ \mathbb{S}}}{\sup} \bigg|\mathbb{E}_{\mu^{\ast}}\big[\phi_k\big(\wh G_\sm(\wh Q_\sm(X))\big)\cdot \rho_m(X)\big]-\frac{1}{|I_2|} \sum_{i\in I_2}\phi_k\big(\wh G_\sm(\wh Q_\sm(X_i))\big)\cdot\rho_m(X_i)\bigg| 
   \leq C_1\sqrt{\frac{\log n}{n}},\\
   & \mbox{and} \ \ \bigg|\mathbb{E}_{\mu^{\ast}}\big[\psi_{ljk}\big(\wh G_\sm(\wh Q_\sm(X))\big)\cdot \rho_m(X)\big]
   -\frac{1}{|I_2|} \sum_{i\in I_2}\psi_{ljk}\big(\wh G_\sm(\wh Q_\sm(X_i))\big)\cdot\rho_m(X_i)\bigg|\\
   &\qquad\qquad\qquad\leq C_1\bigg(\frac{\log n}{n}\cdot 2^{\frac{Dj}{2}}+\sqrt{\frac{\log n}{n}}\sqrt{\mathbb{E}_{\mu^{\ast}}\big[\psi_{ljk}^2\big(\wh G_\sm(\wh Q_\sm(X))\big)\cdot \rho_m^2(X)\big]}\bigg)
 \end{aligned}
\end{equation*}
holds for all $1\leq l\leq 2^D-1$, $1\leq j\leq J$, $k \in \widetilde{\mathbb{S}}_{lj}$, 
where the second inequality used the property that $\|\psi_{ljk}\|_\infty \leq C2^{\frac{Dj}{2}}$ so that the right hand side contains the term $2^{\frac{Dj}{2}}$.
By combining these two inequalities with inequality~\eqref{eq1lemma2.4} and using $2^{dJ}\leq (\frac{n}{\log n})^{\frac{d}{2 \alpha+d}}$ and $|f_{ljk}|\leq C 2^{-\frac{Dj}{2}-j\gamma}$, we obtain that with probability at least $1-n^{-c}$,
\begin{equation*} 
\begin{aligned}
   & \quad \sup_{f\in C_1^\gamma(\mb R^D)}\big| \wh{\m J}_{m,l}(f) - \m J_{m,l}(f)\big|\\
   &\leq  C\, \sqrt{\frac{\log n}{n}}+C\,
  \big(\frac{\log n}{n}\big)^{  \frac{2 \alpha+\gamma\wedge d}{2 \alpha+d}}  \\
   &\qquad\qquad  + C\,\sqrt{\frac{\log n}{n}} \cdot \sum_{l=1}^{2^D-1}\sum_{j=1}^{J}  2^{-\frac{Dj}{2}-j\gamma}\sum_{k\in\widetilde{\mathbb{S}}_{lj}}\sqrt{\mathbb{E}_{\mu^{\ast}}\big[\psi_{ljk}^2\big(\wh G_\sm(\wh Q_\sm(X))\big)\cdot \rho_m^2(X)\big]}\\
   &\leq  C\,\sqrt{\frac{\log n}{n}}+ C\,\big(\frac{\log n}{n}\big)^{  \frac{2 \alpha+\gamma\wedge d}{2 \alpha+d}}  \\
   &\qquad\qquad  + C\,\sqrt{\frac{\log n}{n}}\cdot\sum_{l=1}^{2^D-1}\sum_{j=1}^{J}  2^{-\frac{Dj}{2}-j\gamma+\frac{dj}{2}}\sqrt{\sum_{k\in\widetilde{\mathbb{S}}_{lj}}\mathbb{E}_{\mu^{\ast}}\big[\psi_{ljk}^2\big(\wh G_\sm(\wh Q_\sm(X))\big)\cdot \rho_m^2(X)\big]}\, .
\end{aligned}
\end{equation*}
Since for any $x\in G^\ast_\sm(\mb B_1^d)$, there are at most constant many $k$'s in $\widetilde{\mb S}_{lj}$ such that $\psi_{ljk}\big(\wh G_\sm(\wh Q_\sm(x))\neq 0$ and $\|\psi_{ljk}\|_\infty \leq C2^{\frac{Dj}{2}}$, we obtain
\begin{equation*} 
\begin{aligned}
   \quad \mathbb{E}_{\mu^{\ast}}  \Big[\sum_{k\in\widetilde{\mathbb{S}}_{lj}}\psi_{ljk}^2\big(\wh G_\sm(\wh Q_\sm(X))\big)\cdot \rho_m^2(X)\Big] 
   \leq C\, 2^{Dj}.
\end{aligned}
\end{equation*}
Putting all pieces together, we get that with probability at least $1-n^{-c}$,
\begin{align*}
    & \quad \sup_{f\in C_1^\gamma(\mb R^D)}\big| \wh{\m J}_{m,l}(f) - \m J_{m,l}(f)\big| \\
    &\leq C \,\sqrt{\frac{\log n}{n}} + C\,\big(\frac{\log n}{n}\big)^{  \frac{2 \alpha+\gamma\wedge d}{2 \alpha+d}}
    + C\,\sqrt{\frac{\log n}{n}} \cdot \sum_{l=1}^{2^D-1}\sum_{j=1}^{J}  2^{j\big(\frac{d}{2}-\gamma\big)}\\
    & \leq C\, \sqrt{\frac{\log n}{n}}+ C \,\big(\frac{\log n}{n}\big)^{\frac{ \alpha+\gamma}{2 \alpha+d}} ,
\end{align*}
 where we have used $2^{J}\leq (\frac{n}{\log n})^{\frac{1}{2\alpha+d}}$ in the last step.

\subsubsection{Proof of Lemma~\ref{lemma:high_freq}}\label{sec:proof_high_freq}
Fix an $m\in \mb M$, let $\nu_{\sm,\wh Q_{\sm}}^\ast=[\wh Q_\sm]_\# (\rho_m\mu^\ast)$ denote the nonnegative measure (not necessarily a probability measure) obtained as the pushforward measure of $\rho_m\mu^\ast$ via map $\wh Q_\sm$, or the measure such that for any test function $g\in C(\mb R^D)$, the following identity holds,
\begin{align}\label{eqn:change_of_v}
    \int g(z)\, \nu_{\sm,\wh Q_{\sm}}^\ast(z)\,\dd z = \mb E_{\mu^\ast}\big[g(\wh Q_\sm(X))\cdot\rho_m(X)\big].
\end{align}
As we will show, the smoothness regularized estimator $\widetilde \nu_{\sm,\wh Q_{\sm}}$ attempts to approximate $\nu_{\sm,\wh Q_{\sm}}^\ast$. This motivates us to study the regularity of $\nu_{\sm,\wh Q_{\sm}}^\ast$ first.
 The following lemma provides the form of the density function (also denoted as $\nu_{\sm,\wh Q_{\sm}}^\ast$) associated with measure $\nu_{\sm,\wh Q_{\sm}}^\ast$, and shows $\nu_{\sm,\wh Q_{\sm}}^\ast\in C_{C_0}^\alpha(\mb R^d)$ for some sufficiently large constant $C_0$. As a consequence, we can control the growth of its wavelet expansion coefficients in our analysis to follow. A proof of the lemma is provided in Section~\ref{sec:proof_lemma:density_regularity}.
 \begin{lemma}\label{lemma:density_regularity}
    Let $\wh{l}_\sm=\wh{Q}_\sm\circ G^{\ast}_\sm$ and $\Omega_m=\{z\in \mb B_1^d:\,G^{\ast}_\sm(z)\in \mb B_{r_m}(a_m)\}$. Then for all sufficiently large $n$, it holds with probability at least $1-n^{-c}$ that
 \begin{enumerate}[topsep=2pt,itemsep=0ex]
 \item  $\wh{l}_\sm$ is invertible over $\Omega_m$. If we denote its inverse as $\wh{l}_\sm^{-1}$, then $\wh l_\sm ^{-1}\in C_{C_0}^{\alpha+1}\big(\wh{l}_\sm(\Omega_m);\mb R^d)$ for some constant $C_0$;
 \item $\nu_{\sm,\wh Q_{\sm}}^\ast$ admits a density function as
   \begin{equation}\label{def:nQ}
 {\nu}^{\ast}_{\sm,\wh{Q}_\sm}(x)=
 \mathbb{P}_{\mu^{\ast}}(X\in \widetilde{S}_m)\cdot \nu^{\ast}_\sm (\wh{l}_\sm^{-1}(x))\cdot \rho_m\big(G_\sm^{\ast}(\wh{l}_\sm^{-1}(x))\big)\cdot \Big({\rm det}\big({\bold{J}^T_{\wh{l}_\sm^{-1}(x)}}\bold{J}_{\wh{l}_\sm^{-1}}(x)\big)\Big)^{\frac{1}{2}},
 \end{equation}
 for all $x \in  \wh{l}_\sm(\Omega_m)$ and zero elsewhere.
Moreover, the density function belongs to $C^{\alpha}_{C_1}(\mathbb{R}^d)$ for some constant $C_1$.
 \end{enumerate}
 \end{lemma}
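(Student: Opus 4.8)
The plan is to bootstrap the pointwise reconstruction bound of Lemma~\ref{lemma:pointwise_error} into a uniform bound on the Jacobian of $\wh{l}_\sm$, deduce that $\wh{l}_\sm$ is a diffeomorphism onto its image via the inverse function theorem (giving part~1), and then obtain part~2 by a change-of-variables identity for the pushforward measure followed by a product-of-factors argument for H\"older regularity of its density. Throughout, fix $m\in\mb M$ and work on the intersection of the high-probability events of Lemmas~\ref{lemma:mani_est} and~\ref{lemma:pointwise_error}. The main obstacle, flagged at the end, is the very first step: converting an $L^\infty$ bound on the reconstruction error into a bound on derivatives, which is exactly where the hypothesis $\beta>1$ is consumed.

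\textbf{Step 1 (derivative estimate).} The maps $\wh{l}_\sm=\wh Q_\sm\circ G^\ast_\sm$ and $\wh{f}_\sm:=\wh G_\sm\circ\wh{l}_\sm=\wh G_\sm\circ\wh Q_\sm\circ G^\ast_\sm$ are $C^\beta$ with H\"older norms bounded by a constant $C(d,\beta,L)$, being compositions of the $C^\beta_L$ maps $\wh G_\sm,\wh Q_\sm$ from~\eqref{estimator1} and $G^\ast_\sm\in C^\beta_L$. Lemma~\ref{lemma:pointwise_error} (and its proof applied with the slightly larger set $S_m^\dagger\supset S_m$ entering~\eqref{estimator1} in place of $S_m$) gives $\|G^\ast_\sm-\wh{f}_\sm\|_{L^\infty}\le\epsilon_n$ on a fixed neighbourhood of $\Omega_m$, with $\epsilon_n\to0$. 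A Landau--Kolmogorov interpolation inequality applied to $h:=G^\ast_\sm-\wh{f}_\sm$ on balls of a fixed radius then yields $\|\bold{J}_h\|_{L^\infty(\Omega_m)}\lesssim\epsilon_n^{1-1/\beta}=:\delta_n\to0$.

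\textbf{Step 2 (invertibility of $\wh{l}_\sm$; part~1).} From the chain rule $\bold{J}_{\wh{f}_\sm}=(\bold{J}_{\wh G_\sm}\!\circ\wh{l}_\sm)\,\bold{J}_{\wh{l}_\sm}$, the estimate $\|\bold{J}_{\wh G_\sm}\|\le L$, the lower bound $\sigma_{\min}(\bold{J}_{G^\ast_\sm})\ge 1/L$ (from differentiating $Q^\ast_\sm\circ G^\ast_\sm=\mr{id}_{\mb B_1^d}$), and Step~1, one gets $\|\bold{J}_{\wh{l}_\sm}(z)\,v\|\ge L^{-1}(L^{-1}-C\delta_n)\ge(2L^2)^{-1}$ for every unit vector $v$ and all $z\in\Omega_m$ once $n$ is large; hence $\bold{J}_{\wh{l}_\sm}$ is everywhere invertible on $\Omega_m$ with $\|\bold{J}_{\wh{l}_\sm}^{-1}\|\le 2L^2$. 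For global injectivity on $\Omega_m$, if $\wh{l}_\sm(z_1)=\wh{l}_\sm(z_2)$ then $\wh{f}_\sm(z_1)=\wh{f}_\sm(z_2)$, so $\|G^\ast_\sm(z_1)-G^\ast_\sm(z_2)\|\le 2\epsilon_n$; composing with the Lipschitz inverse $Q^\ast_\sm$ gives $\|z_1-z_2\|\le 2L\epsilon_n$, which is below the uniform radius $r_0=r_0(d,\beta,L)>0$ on which $\wh{l}_\sm$ is injective (a consequence of the uniform invertibility plus the $(\beta-1)$-H\"older continuity of $\bold{J}_{\wh{l}_\sm}$), forcing $z_1=z_2$. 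So $\wh{l}_\sm\colon\Omega_m\to\wh{l}_\sm(\Omega_m)$ is a bijection, and the quantitative inverse function theorem yields $\wh{l}_\sm^{-1}\in C^\beta\big(\wh{l}_\sm(\Omega_m);\mb R^d\big)$ with H\"older norm at most a constant $C_0(d,\beta,L)$; since $\alpha+1\le\beta$, this proves part~1.

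\textbf{Step 3 (density formula, regularity; part~2).} Because $\mr{supp}(\rho_m)\subset S_m\subset\wt{S}_m$ and $\mu^\ast|_{\wt{S}_m}=[G^\ast_\sm]_\#\nu^\ast_\sm$, for any bounded continuous $g$,
\[
\mb E_{\mu^\ast}\big[g(\wh Q_\sm(X))\,\rho_m(X)\big]=\mb P_{\mu^\ast}(X\in\wt{S}_m)\int_{\Omega_m}g(\wh{l}_\sm(z))\,\rho_m(G^\ast_\sm(z))\,\nu^\ast_\sm(z)\,\dd z,
\]
since $\rho_m\circ G^\ast_\sm$ vanishes off $\Omega_m$. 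Changing variables $x=\wh{l}_\sm(z)$ (legitimate by part~1, with Jacobian factor $(\det(\bold{J}_{\wh{l}_\sm^{-1}}^T\bold{J}_{\wh{l}_\sm^{-1}}))^{1/2}$) and comparing with~\eqref{eqn:change_of_v} produces~\eqref{def:nQ} on $\wh{l}_\sm(\Omega_m)$; moreover $\num$ is supported on $\wh{l}_\sm(\Omega_m)=\wh Q_\sm(G^\ast_\sm(\Omega_m))$, so it vanishes elsewhere. For the H\"older bound, on $\wh{l}_\sm(\Omega_m)$ factor $\num$ as the constant $\mb P_{\mu^\ast}(X\in\wt{S}_m)$ times $\nu^\ast_\sm\circ\wh{l}_\sm^{-1}$ — a $C^\alpha$ function with bounded norm, since $\nu^\ast_\sm\in C^\alpha_L$ and $\wh{l}_\sm^{-1}\in C^\beta\subseteq C^{\lceil\alpha\rceil}$ — times $\big(\rho_m\circ G^\ast_\sm\circ\wh{l}_\sm^{-1}\big)\cdot(\det(\bold{J}_{\wh{l}_\sm^{-1}}^T\bold{J}_{\wh{l}_\sm^{-1}}))^{1/2}$, which is $C^\alpha$ with bounded norm (using $\alpha\le\beta-1$, $\rho_m\in C^\infty$, the $C^\beta$ diffeomorphism $\wh{l}_\sm^{-1}$ and $G^\ast_\sm\in C^\beta_L$). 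Since $\rho_m$ vanishes to infinite order on $\partial S_m$ (the mollifier $\chi$ in the partition-of-unity construction) and, in the case of a manifold with boundary, $\nu^\ast_\sm\in C^\alpha_L(\mb R^d)$ decays smoothly to zero on $\partial\mb B_1^d$, this product extends by zero across $\partial(\wh{l}_\sm(\Omega_m))$ to a function in $C^\alpha_{C_1}(\mb R^d)$, which is part~2. The delicate points, as anticipated, are the derivative estimate of Step~1 (forcing the interpolation argument and $\beta>1$) and, secondarily, the passage from infinitesimal to global injectivity together with the handling of the slower pointwise rate near $\partial\mb B_1^d$ in the boundary case — precisely why the buffers $S_m^\dagger\subset\wt{S}_m$ and the lower bound $\nu^\ast_\sm(z)\gtrsim(1-\|z\|)^{L_0}$ are built into the model class.
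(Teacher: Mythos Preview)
Your proposal is correct and follows essentially the same approach as the paper: control the Jacobian of $\wh{l}_\sm$ via the reconstruction error, pass from local to global injectivity by the ``if $\wh{l}_\sm(z_1)=\wh{l}_\sm(z_2)$ then $\|z_1-z_2\|$ is both $\ge b_0$ and $\le C\epsilon_n$'' contradiction, invoke the H\"older inverse function theorem, and then obtain~\eqref{def:nQ} and its regularity by change of variables plus the dichotomy in condition~2(c) of $\m S^\ast$.

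The one genuine methodological difference is your Step~1. The paper does not use a Landau--Kolmogorov interpolation; instead it reaches back into the proof of Lemma~\ref{lemma:pointwise_error} and directly invokes the intermediate estimate~\eqref{leup2eq2}, which already bounds all derivatives $\wh{f}_\sm^{(j)}-G^{*,(j)}_\sm$ up to order $\lfloor\beta\rfloor$ on the covering net, giving in particular $\|\bold{J}_{G^\ast_\sm}-\bold{J}_{\wh{f}_\sm}\|_\infty\lesssim(\log\wt n/\wt n)^{(\beta-1)/d}$ on the interior set $\wt{\m A}_z$ (and handling the boundary strip of width a fixed $\epsilon$ via H\"older continuity). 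Your interpolation route is cleaner in that it uses only the \emph{statement} of Lemma~\ref{lemma:pointwise_error} rather than its internals, at the cost of an extra classical inequality; the paper's route avoids that tool but couples the two proofs more tightly. Both consume $\beta>1$ in the same place. The paper also works on a fixed $\epsilon$-enlargement $\Omega_{m,\epsilon}$ of $\Omega_m$ before applying the inverse function theorem, which is the buffer you allude to but do not make explicit; this is only a matter of presentation.
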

 \paragraph{Smoothness regularized estimator $\widetilde \nu_{\sm,\wh Q_{\sm}}$ constructed based on Wavelet expansion:}
 
 \noindent Note that the support of $\nu_{\sm,\wh Q_{\sm}}^\ast$ is contained in $[-L, L]^d$ (since $\wh Q_\sm\in C_L^\beta(\mb R^D;\mb R^d)$), we have the following wavelet expansion,
 \begin{align*}
\num(z)&=\sum_{k \in  \mathbb{S}}a^{\wh{Q}_{[m]}}_k \,\phi_k(z) + \sum_{l=1}^{2^{d}-1}\sum_{j=0}^{+\infty}\sum_{k\in \mathbb{S}_{lj}}\theta_{ljk}^{\wh Q_{[m]}}\,  \psi_{ljk}(z),\quad\mbox{with}\\
 \mathbb{S}&=\big\{k\in\mathbb{Z}^d:\,{\rm supp}(\phi_k)\cap[-L,L]^d\neq \emptyset\big\};\\
\mathbb{S}_{lj}&=\big\{k \in \mathbb{Z}^d: \,{\rm supp}(\psi_{ljk})\cap [-L,L]^d\neq \emptyset\big\},
 \end{align*}
 where
 \begin{equation*}
\begin{aligned}
{a}^{\wh{Q}_{[m]}}_{k}&=\mb E_{\mu^\ast}\big[\phi_k(\wh Q_\sm(X))\cdot \rho_m(X)\big];\\
{\theta}^{\wh{Q}_{[m]}}_{ljk} &= \mb E_{\mu^\ast}\big[\psi_{ljk}(\wh Q_\sm(X))\cdot \rho_m(X)\big].
 \end{aligned}
\end{equation*}
 Here, the expectation is taken with respect to $X\sim \mu^\ast$ (not the randomness in $\wh Q_\sm$).
According to Lemma~\ref{lemma:density_regularity}, we have $\num\in C^{\alpha}_{C_1}(\mathbb{R}^d)$, which implies for all $k\in \mb Z^d$,\, $j\in\mb N_0$ and $l\in[2^d-1]$,
\begin{align*}
    |a_k^{\wh{Q}_\sm}| \leq C \quad \mbox{and}\quad |\theta_{ljk}^{\wh {Q}_\sm}| \leq C\,2^{-\frac{dj}{2}-j\alpha}.
\end{align*}
 Recall that the smoothness regularized estimator $\widetilde \nu_{\sm,\wh Q_{\sm}}$ corresponds to a truncated empirical version of $\nu_{\sm,\wh Q_{\sm}}^\ast$ by truncating the expansion at a finite level $J$ such that $2^J$ is the largest integer not exceeding $(\frac{n}{\log n})^{\frac{1}{2\alpha+d}}$, and replacing the wavelet coefficients with their sample averages,
 \begin{equation}\label{estimator2}
 \widetilde{\nu}_{[m],\wh Q_\sm}(y)=\sum_{k \in  \mathbb{S}}\widetilde{a}^{\wh{Q}_{[m]}}_k \phi_k(y) +\sum_{l=1}^{2^{d}-1}\sum_{j=0}^J\sum_{k\in \mathbb{S}_{lj}}\widetilde{\theta}_{ljk}^{\wh{Q}_\sm} \psi_{ljk}(y),\\
 \end{equation}
 where
\begin{equation*}
\begin{aligned}
\widetilde{a}^{\wh{Q}_\sm}_{k}&=\frac{1}{|I_2|}\sum_{i\in I_2} \phi_k(\wh Q_\sm(X_i))\cdot \rho_m(X_i);\\
\widetilde{\theta}^{\wh{Q}_{[m]}}_{ljk} &= \frac{1}{|I_2|}\sum_{i\in I_2}  \psi_{ljk}(\wh Q_\sm(X_i))\cdot \rho_m(X_i).
 \end{aligned}
\end{equation*}
 
\noindent Note that from
\begin{equation*}
 \begin{aligned}
 &|\phi_k(z)|\leq C, \quad  \int \phi^2_k(z)\, \dd z=1; \quad\mbox{and}\\
&|\psi_{ljk}(z)|\leq c 2^{\frac{dj}{2}},\quad \int \psi_{ljk}^2(z)\,\dd z=1, \quad \forall z\in \mb R^d,
 \end{aligned}
\end{equation*}
and $\num\in C^{\alpha}_{C_1}(\mathbb{R}^d)$, we can get
\begin{align*}
    &\mathbb{E}_{\mu^{\ast}}\big[\phi^2_{k}(\wt Q_\sm(X))\cdot \rho^2_m(X)\big] \leq C_1\int \phi^2_k(z)\cdot \num(z)\,\dd z\leq C_2,\quad\mbox{and}\\
   & \mathbb{E}_{\mu^{\ast}} \big[\psi^2_{ljk}(\wh Q_\sm(X))\rho_m^2(X)\big] \leq C_1 \int \psi_{ljk}^2(z)\cdot \num(z)\,\dd z\leq C_2.
\end{align*}
In addition since each additive component satisfies $|\phi_k(\wh Q_\sm(X_i))\cdot \rho_m(X_i)| \leq C_0$ and $|\psi_{ljk}(\wh Q_\sm(X_i))\cdot \rho_m(X_i)| \leq C\,2^{\frac{dJ}{2}} \leq C_0 \sqrt{n}$ for all $j\in[J]$, we can apply the Bernstein inequality plus a simple union bound argument similar to the proof of~\eqref{eqn:union+Bernstein} to obtain that with probability at least $1-n^{-c}$ (note that since $\wh Q_\sm$ is constructed from data in $I_1$, it is independent of the data in $I_2$),
\begin{equation}\label{eqnwaveletb}
\begin{aligned}
\underset{ k \in  \mathbb{S}}{\sup} \,\bigg|\underbrace{\frac{1}{|I_2|}\sum_{i\in I_2} \phi_k(\wh Q_\sm(X_i))\cdot \rho_m(X_i)}_{\widetilde{a}^{\wh{Q}_\sm}_{k}}  \ - \  \underbrace{\phantom{\frac{1}{|I_2|}\sum_{i\in I_2}} \!\!\!\!\!\!\!\!\!\!\!\!\!\!\!\!\!\!    
\mb E_{\mu^\ast}\big[\phi_k(\wh Q_\sm(X))\cdot \rho_m(X)\big]}_{{a}^{\wh{Q}_\sm}_{k}}  \bigg| &\leq C\sqrt{ \frac{\log n}{n}};\\
\underset{1\leq l\leq 2^d-1\atop{ j\in\mathbb{N},j \leq J\atop k \in \mathbb{S}_{lj}}}{\sup}\bigg|  \underbrace{ \frac{1}{|I_2|}\sum_{i\in I_2}  \psi_{ljk}(\wh Q_\sm(X_i))\cdot \rho_m(X_i)}_{\widetilde{\theta}^{\wh{Q}_{[m]}}_{ljk} }  \  - \   
\underbrace{ \phantom{\frac{1}{|I_2|}\sum_{i\in I_2}} \!\!\!\!\!\!\!\!\!\!\!\!\!\!\!\!\!\!    
\mb E_{\mu^\ast}\big[\psi_{ljk}(\wh Q_\sm(X))\cdot \rho_m(X)\big]}_{{\theta}^{\wh{Q}_{[m]}}_{ljk} } \bigg| &\leq C\sqrt{ \frac{\log n}{n}}.
\end{aligned}
\end{equation}
Recall that from our constructions we have ($f_2=\Pi^\perp_J f$)
\begin{equation*}
\begin{aligned}
 {\m J}_{m,h}(f) &=\mathbb{E}_{\mu^{\ast}} \big[f_2\big(\wh G_\sm(\wh Q_\sm(X))\big)\cdot  \rho_m(X)\big]\\
 &= \int \sum_{l=1}^{2^D-1} \sum_{j=J+1}^{\infty}\sum_{k\in \mathbb{Z}^D} f_{ljk} \cdot \psi_{ljk}(\wh G_\sm (z))\cdot  \num (z)\, \dd z;\\
 \wh{\m J}_{m,h}(f) &=\int f_2(\wh G_\sm(z))\cdot  \widetilde \nu_{\sm,\wh Q_{\sm}}(z)\, \dd z \\
 &=  \int \sum_{l=1}^{2^D-1} \sum_{j=J+1}^{+\infty}\sum_{k\in \mathbb{Z}^D} f_{ljk} \cdot  \psi_{ljk}(\wh G_\sm(z)) \cdot  \widetilde \nu_{\sm,\wh Q_{\sm}}(z)\, \dd z. 
 \end{aligned}
\end{equation*}
Now we expand the two measures relative to the wavelet basis, take the difference, and apply inequality~\eqref{eqnwaveletb} to obtain
that with probability at least $1-n^{-c}$,
\begin{equation*}
\begin{aligned}
&\quad \sup_{f\in C_1^\gamma(\mb R^D)}\, \big|{\m J}_{m,h}(f) - \wh{\m J}_{m,h}(f)\big|\\
&\leq C\, \sqrt{\frac{\log n}{n}} \, \sum_{l=1}^{2^D-1} \sum_{j=J+1}^{+\infty}\sum_{k'\in \mathbb{S}} \sum_{k\in \mathbb{Z}^D}|f_{ljk}| \cdot  \int \left|\psi_{ljk}(\wh G_\sm(z)) \cdot \phi_{k'}(z)\right| \,\dd z \\
&  \quad+ C\, \sqrt{\frac{\log n}{n}}\,  \sum_{l=1}^{2^D-1} \sum_{j=J+1}^{+\infty} \sum_{l'=1}^{2^{d}-1}\sum_{j'=0}^J\sum_{k'\in \mathbb{S}_{l'j'}}\sum_{k\in \mathbb{Z}^D} |f_{ljk}| \cdot\int \left|\psi_{ljk}(\wh G_\sm(z))\cdot \psi_{l'j'k'}(z) \right| \,\dd z\\
&  \quad+   \sum_{l=1}^{2^D-1} \sum_{j=J+1}^{+\infty} \sum_{l'=1}^{2^{d}-1}\sum_{j'=J+1}^{+\infty}\sum_{k'\in \mathbb{S}_{l'j'}}\sum_{k\in \mathbb{Z}^D}|f_{ljk}| \cdot |\theta^{\wh{Q}_{[m]}}_{l'j'k'}|\cdot \int \left|\psi_{ljk}(\wh G_\sm(z))\cdot \psi_{l'j'k'}(z)\right| \,\dd z.
\end{aligned}
\end{equation*}
\noindent For the first term, since for any $z \in \mathbb{R}^d$, there exists a constant $C$ such that $\sum_{k'\in \mathbb{S}} |\phi_{k'}(z)| \leq C$ and $\sum_{k\in \mathbb{Z}^D}|\psi_{ljk}(\wh G_\sm(z))|\leq C\,2^{\frac{Dj}{2}}$ (each $\phi_k$ or $\psi_{ljk}$ is compactly supported), we can get 
\begin{equation*}
\begin{aligned}
 &\quad \sqrt{\frac{\log n}{n}} \, \sum_{l=1}^{2^D-1} \sum_{j=J+1}^{+\infty}\sum_{k'\in \mathbb{S}} \sum_{k\in \mathbb{Z}^D}|f_{ljk}| \cdot  \int \left|\psi_{ljk}(\wh G_\sm(z)) \cdot \phi_{k'}(z)\right| \,\dd z \\
 &\leq C\, \sqrt{\frac{\log n}{n}}\,  \sum_{l=1}^{2^D-1} \sum_{j=J+1}^{+\infty}  2^{-j\gamma} \sum_{k'\in \mathbb{S}}\int |\phi_{k'}(z)|\, \dd z\\
 &\leq C_1\, \sqrt{\frac{\log n}{n}} \, \sum_{j=J+1}^{+\infty} 2^{-j\gamma} \leq  C_2\ \big(\frac{\log n}{n}\big)^{\frac{1}{2}+\frac{ \gamma}{2\alpha+d}},
\end{aligned}  
\end{equation*}
where in the first inequality we used the bound $|f_{ljk}|\leq C\,2^{-\frac{Dj}{2}-j\gamma}$.

\noindent Similarly, for the second term,  using the additional fact that for any $z \in \mathbb{R}^d$, there exists a constant $C$ such that $\sum_{k' \in\mb S_{l'j'}}|\psi_{l'j'k'}(z)|\leq C\,2^{\frac{dj}{2}}$ (each $\psi_{l'j'k'}$ is compactly supported),  we have
\begin{equation*}
\begin{aligned}
 &  \quad\sqrt{\frac{\log n}{n}}\,  \sum_{l=1}^{2^D-1} \sum_{j=J+1}^{+\infty} \sum_{l'=1}^{2^{d}-1}\sum_{j'=0}^J\sum_{k'\in \mathbb{S}_{l'j'}}\sum_{k\in \mathbb{Z}^D} |f_{ljk}| \cdot\int \left|\psi_{ljk}(\wh G_\sm(z))\cdot \psi_{l'j'k'}(z) \right| \,\dd z\\
 &\leq C\, \sqrt{\frac{\log n}{n}}\, \sum_{l=1}^{2^D-1} \sum_{j=J+1}^{+\infty} \sum_{l'=1}^{2^{d}-1}\sum_{j'=0}^J\sum_{k'\in \mathbb{S}_{l'j'}} 2^{-\frac{Dj}{2}-j\gamma}  \int  \sum_{k\in \mathbb{Z}^D}\left|\psi_{ljk}(\wh G_\sm(z))\right|\cdot \left|\psi_{l'j'k'}(z) \right| \,\dd z\\
 &\leq C_1\, \sqrt{\frac{\log n}{n}} \,\sum_{l=1}^{2^D-1} \sum_{j=J+1}^{+\infty} \sum_{l'=1}^{2^{d}-1}\sum_{j'=0}^J 2^{-j\gamma}  \int  \sum_{k'\in \mathbb{S}_{l'j'}} \left|\psi_{l'j'k'}(z) \right| \,\dd z\\
 &\leq C_2\, \sqrt{\frac{\log n}{n}}\, \sum_{j=J+1}^{+\infty} \sum_{j'=0}^J 2^{\frac{dj'}{2}} 2^{-j\gamma}
 \leq C_3\, \big(\frac{\log n}{n}\big)^{\frac{ \alpha+\gamma}{2 \alpha+d}},
 \end{aligned}
\end{equation*}
 where in the first inequality we used the bound $|f_{ljk}|\leq C\,2^{-\frac{Dj}{2}-j\gamma}$.

\noindent For the third term, we similarly get
\begin{equation*}
\begin{aligned}
&  \quad \sum_{l=1}^{2^D-1} \sum_{j=J+1}^{+\infty} \sum_{l'=1}^{2^{d}-1}\sum_{j'=J+1}^{+\infty}\sum_{k'\in \mathbb{S}_{l'j'}}\sum_{k\in \mathbb{Z}^D}|f_{ljk}| \cdot |\theta^{\wh{Q}_{[m]}}_{l'j'k'}|\cdot \int \left|\psi_{ljk}(\wh G_\sm(z))\cdot \psi_{l'j'k'}(z)\right| \,\dd z\\
 &\leq C\, \sum_{l=1}^{2^D-1} \sum_{j=J+1}^{+\infty} \sum_{l'=1}^{2^{d}-1}\sum_{j'=J+1}^{+\infty} 2^{-\frac{Dj}{2}-j\gamma} \cdot 2^{-\frac{dj'}{2}-j' \alpha}
 \int  \sum_{k\in \mathbb{Z}^D} \left|\psi_{ljk}(\wh G_\sm(z))\right|\sum_{k'\in \mathbb{S}_{l'j'}}\left|\psi_{l'j'k'}(z)\right| \,\dd z\\
&\leq C_1\,   \sum_{j=J+1}^{+\infty} \sum_{j'=J+1}^{+\infty} 2^{-j\gamma} \cdot 2^{-j' \alpha}\leq C_2\, (\frac{\log n}{n})^{\frac{ \alpha+\gamma}{2 \alpha+d}},
\end{aligned}
\end{equation*}
  where in the first inequality we used the bound $|f_{ljk}|\leq C\,2^{-\frac{Dj}{2}-j\gamma}$ and $|\theta_{l'j'k'}^{\wh Q_{[m]}}| \leq C\,2^{-\frac{dj'}{2}-j'\alpha}$.
 
 \noindent Putting all pieces together, we can reach the desired inequality.
\paragraph{Smoothness regularized estimator $\widetilde \nu_{\sm,\wh Q_{\sm}}$ constructed based on  kernel density estimation:}
Note that for any $f\in C^{\gamma}_1(\mb R^D)$ and $x\in \mb R^D$
\begin{equation*}
\begin{aligned}
   f_2(x)=\Pi^{\perp}_J f(x)&= \sum_{l=1}^{2^D-1} \sum_{j=J+1}^{\infty}\sum_{k\in \mathbb{Z}^D} f_{ljk}\, \psi_{ljk}(x)\\
   &\overset{(i)}{\leq} C\, \sum_{j=J+1}^{\infty} 2^{-j\gamma}\leq C_1\, \big(\frac{\log n}{n}\big)^{\frac{\gamma}{2\alpha+d}},
   \end{aligned}
\end{equation*}
 where $(i)$ uses that the support of $\psi_{ljk}$ is contained in $B_{2^{-j}C_0}(2^{1-j}k)$, $\|\psi_{ljk}\|_{\infty}\leq C_0 2^{\frac{Dj}{2}}$, and  $|f_{ljk}|\leq C_0\, 2^{-\frac{Dj}{2}-j\gamma}$.
 We claim that it suffices to show that with probability $1-n^{-c}$,
 \begin{equation}\label{KDErate}
  \int \big|\widetilde \nu_{\sm,\wh Q_{\sm}}(z)-  \nu^\ast_{\sm,\wh Q_{\sm}}(z)\big|\,\dd z\leq C\, \big(\frac{\log n}{n}\big)^{\frac{\alpha}{2\alpha+d}}.
 \end{equation}
 Indeed, under~\eqref{KDErate}, we have  for any $f\in C^{\gamma}_1(\mb R^D)$,
 \begin{equation*}
 \begin{aligned}
     &\big|\wh{\m J}_{m,h}(f)-{\m J}_{m,h}(f)\big|\\
     &=\Big|\int f_2(\wh{G}_\sm (y))\widetilde \nu_{\sm,\wh Q_{\sm}}(y) \dd y- \int f_2(\wh{G}_\sm (y)) \nu^\ast_{\sm,\wh Q_{\sm}}(y)\,\dd y\Big|\\
     &\leq \underset{x\in \mb R^D}{\sup}|f_2(x)| \int \big|\widetilde \nu_{\sm,\wh Q_{\sm}}(y)-  \nu^\ast_{\sm,\wh Q_{\sm}}(y)\big|\,\dd y\\
     &\leq C\,\big(\frac{\log n}{n}\big)^{\frac{\alpha+\gamma}{2\alpha+d}}.
      \end{aligned}
 \end{equation*}
 Now we prove claim~\eqref{KDErate} by following the standard analysis for kernel density estimator~\citep{10.1214/aoms/1177704472}. Recall 
 \begin{equation*}
    \wt \nu_{\sm,\wh{Q}_\sm}(y)=\frac{1}{h^d|I_2|} \sum_{i\in I_2}\bigg[ \Big(\prod_{j=1}^d \bar{k}\Big(\frac{\wh{Q}_{\sm,j}(X_i)-y_j}{h}\Big)\Big)\cdot \rho_m(X_i)\bigg],
  \end{equation*}
 since ${\rm supp}(\bar{k})\subset [0,1]$ and $\wh{Q}_{[m]}\in C^{\beta}_L(\mb R^D;\mb R^d)$, we have for any $y\notin [-L-1,L+1]^d$, $  \wt \nu_{\sm,\wh{Q}_\sm}(y)=\nu^\ast_{\sm,\wh{Q}_\sm}(y)=0$. Thus we only need to show that with probability $1-n^{-c}$, for any $y\in[-L-1,L+1]^d$, 
 \begin{equation*} 
    \big|\widetilde \nu_{\sm,\wh Q_{\sm}}(y)-  \nu^\ast_{\sm,\wh Q_{\sm}}(y)\big|\leq C\, \big(\frac{\log n}{n}\big)^{\frac{\alpha}{2\alpha+d}}.
 \end{equation*}
 Firstly, we bound the difference between the expectation of $\widetilde \nu_{\sm,\wh Q_{\sm}}(y)$ and $\nu^\ast_{\sm,\wh Q_{\sm}}(y)$.
\begin{equation*}
     \begin{aligned}
         &\Bigg|\mathbb{E}_{\mu^{\ast}} \left[\Big( \frac{1}{h^d} \prod_{j=1}^d \bar{k}\Big(\frac{\wh{Q}_{\sm,j}(X)-y_j}{h}\Big)\Big) \cdot\rho_m(X)\right]\\
         &-\mathbb{P}_{\mu^{\ast}}(X\in \widetilde{S}_m)\cdot \nu^{\ast}_\sm (\wh{l}_\sm^{-1}(y))\cdot \rho_m\big(G_\sm^{\ast}(\wh{l}_\sm^{-1}(y))\big)\cdot \Big({\rm det}\big({\bold{J}^T_{\wh{l}_\sm^{-1}(y)}}\bold{J}_{\wh{l}_\sm^{-1}}(y)\big)\Big)^{\frac{1}{2}}\Bigg|\\
         &\overset{(i)}{=}\mathbb{P}_{\mu^{\ast}}(X\in \widetilde{S}_m)\cdot\Bigg|\nu^{\ast}_\sm (\wh{l}_\sm^{-1}(y))\cdot \rho_m\big(G_\sm^{\ast}(\wh{l}_\sm^{-1}(y))\big)\cdot \Big({\rm det}\big({\bold{J}^T_{\wh{l}_\sm^{-1}(y)}}\bold{J}_{\wh{l}_\sm^{-1}}(y)\big)\Big)^{\frac{1}{2}}\\
         &-\int\frac{1}{h^d} \Big(\prod_{j=1}^d \bar{k}\big(\frac{z_j-y_j}{h}\big)\Big) \cdot\nu^{\ast}_\sm (\wh{l}_\sm^{-1}(z))\cdot \rho_m\big(G_\sm^{\ast}(\wh{l}_\sm^{-1}(z))\big)\cdot \Big({\rm det}\big({\bold{J}^T_{\wh{l}_\sm^{-1}(z)}}\bold{J}_{\wh{l}_\sm^{-1}}(z)\big)\Big)^{\frac{1}{2}}\,\dd z\Bigg|\\
         &\overset{(ii)}{=} \mathbb{P}_{\mu^{\ast}}(X\in \widetilde{S}_m)\cdot\Big| \int \Big(\prod_{j=1}^d \bar{k}(t_j) \Big)\cdot \big(\nu^{\diamond}(ht+\widetilde y)-\nu^{\diamond}(\widetilde y)\big) \,\dd t\Big| \\
         &\overset{(iii)}{\leq }\mathbb{P}_{\mu^\ast}(X\in \wt{S}_m)\cdot\bigg| \int \Big(\prod_{j=1}^d \bar{k}(t_j) \Big)\cdot \sum_{\eta\in \mb N_0^d\atop 1\leq |\eta|\leq \lfloor \alpha\rfloor} (v^{\diamond})^{(\eta)}(\widetilde y)\cdot(ht)^{\eta} \,\dd t\bigg|+C\, h^{\alpha}\\
         &\overset{(iiii)}{=}C\,  h^{ \alpha},
     \end{aligned}
 \end{equation*}
 where  $(i)$ uses $\mu^\ast|_{\wt{S}_m}=[G^\ast_{[m]}]_{\#}\nu^\ast_{[m]}$;   $(ii)$ let $t=(t_1,t_2,\cdots,t_d)$ with $t_j=\frac{z_j-y_j}{h}$,  $\nu^{\diamond}(y)=\nu^{\ast}_\sm (\wh{l}_\sm^{-1}(y))\cdot \rho_m\big(G_\sm^{\ast}(\wh{l}_\sm^{-1}(y))\big)\cdot \Big({\rm det}\big({\bold{J}^T_{\wh{l}_\sm^{-1}(y)}}\bold{J}_{\wh{l}_\sm^{-1}}(y)\big)\Big)^{\frac{1}{2}}$ and uses the fact $\int \bar{k}(t) \,\dd t=1$; $(iii)$ uses the conclusion of Lemma~\ref{lemma:density_regularity} and the fact that $\bar{k}$ is compactly supported; $(iiii)$ uses  $\int x^j k(x)=0$ for $j\in [\lceil \alpha\rceil]$. Then it remains to bound the difference between $\widetilde \nu_{\sm,\wh Q_{\sm}}(y)$ and its expectation. Since for any $y\in [-L-1,L+1]^d$,
 \begin{equation*}
 \begin{aligned}
     &\frac{1}{h^{2d}} \int \Big(\prod_{j=1}^d \bar{k}^2\Big(\frac{\wh{Q}_{\sm,j}(X)- y_j}{h}\Big) \Big)\cdot \rho_m^2 (X)\, \dd \mu^{\ast}\\
     &=\mathbb{P}_{\mu^{\ast}}(X\in \widetilde{S}_m)\\
     &\cdot\int_{z\in \mb B_{\sqrt{d}h}(y)}\frac{1}{h^{2d}}  \Big(\prod_{j=1}^d \bar{k}^2\big(\frac{z_j-{y}_j}{h}\big) \Big)\cdot \nu^{\ast}_\sm (\wh{l}_\sm^{-1}(z))\cdot \rho_m^2\big(G_\sm^{\ast}(\wh{l}_\sm^{-1}(z))\big)\cdot \Big({\rm det}\big({\bold{J}^T_{\wh{l}_\sm^{-1}(z)}}\bold{J}_{\wh{l}_\sm^{-1}}(z)\big)\Big)^{\frac{1}{2}}\,\dd z\\
     &\leq C\, \frac{1}{h^d},
     \end{aligned}
 \end{equation*}
 and for any $x\in \mb R^D$,
 \begin{equation*}
    \frac{1}{h^d} 
    \cdot\Big(\prod_{j=1}^d \bar{k} \Big(\frac{\wh{Q}_{\sm,j}(x)- y_j}{h} \Big) \Big)\cdot \rho_m (x)\leq C\, \frac{1}{h^d}.
 \end{equation*}
Now let $\m N_{\frac{1}{n^2}}\subset  [-L-1,L+1]^d$ be a $\frac{1}{n^2}$-covering set $[-L-1,L+1]^d$, where $|N_{\frac{1}{n^2}}|\leq C\, n^{2d}$, then by a similar union bound argument plus Bernstein's inequality as the proof of~\eqref{eqn:union+Bernstein}, it holds with probability at least $1-n^{-c}$ that for any $\widetilde{y} \in \m N_{\frac{1}{n^2}}$, it satisfies that 
\begin{equation*}
     \left|\mathbb{E}_{\mu^{\ast}} \left[\frac{1}{h^d} \Big(\prod_{j=1}^d \bar{k} \Big(\frac{\wh{Q}_{\sm,j}(X)-\widetilde{y}_j}{h} \Big) \Big)\cdot  \rho_m(X)\right]-\wt{\nu}_{\sm,\wh{Q}_\sm}(\widetilde{y})\right|\leq C\,  \sqrt{\frac{\log n}{n}} h^{-\frac{d}{2}}+\frac{\log n}{n} h^{-d}.
 \end{equation*}
 Then by the uniformly Lipschitzness of $\bar{k}(x)$ and $h=\big(\frac{\log n}{n}\big)^{\frac{1}{2 \alpha+d}}$, it holds with probability at least $1-n^{-c}$ that 
 \begin{equation*}
 \begin{aligned}
    &\underset{y\in [-L-1,L+1]^d}{\sup} \left|\mathbb{E}_{\mu^{\ast}} \left[\frac{1}{h^d} \Big(\prod_{j=1}^d \bar{k} \Big(\frac{\wh{Q}_{\sm,j}(X)-{y}_j}{h} \Big)  \Big)\cdot \rho_m(X)\right]-\wt{\nu}_{\sm,\wh{Q}_\sm}(y)\right|\\
    &\leq \underset{\wt y\in \m N_{\frac{1}{n^2}}}{\sup} \left|\mathbb{E}_{\mu^{\ast}} \left[\frac{1}{h^d} \Big(\prod_{j=1}^d \bar{k} \Big(\frac{\wh{Q}_{\sm,j}(X)-\wt{y}_j}{h} \Big) \Big)\cdot  \rho_m(X)\right]-\wt{\nu}_{\sm,\wh{Q}_\sm}(\wt y)\right|+C\, h^{-1-d}\frac{1}{n^2}\\
    &\leq C_1\,\big(\frac{\log n}{n}\big)^{\frac{ \alpha}{2 \alpha+d}}.
     \end{aligned}
 \end{equation*}
 We can then obtain the desired result by putting all pieces together.
\subsubsection{Proof of Lemma~\ref{lemma:smooth_corr}}\label{sec:proof_smooth_corr}
  For any $m\in\mb M$, by applying the Taylor expansion to any $f \in C_1^{\gamma}(\mathbb{R}^D)$, we obtain that
 \begin{equation}\label{uplargebeta0}
\begin{aligned}
 &\sup_{f\in C_1^\gamma(\mb R^D)} \bigg| \underbrace{\mathbb{E}_{\mu^{\ast}} [f(X)\cdot \rho_m(X)] - \mathbb{E}_{\mu^{\ast}} \big[f\big(\wh{G}_\sm(\wh{Q}_\sm (X))\big)\cdot \rho_m(X)\big]}_{\m J_{m,s}} \\
& \qquad\qquad\qquad +\sum_{j\in \mathbb{N}_0^D\atop 1\leq |j|\leq \lfloor\gamma\rfloor}\mathbb{E}_{\mu^{\ast}}\Big[\,\frac{1}{j!}\,f^{(j)}(X)\,\big[\wh{G}_\sm(\wh{Q}_\sm(X))-X\big]^{j}\, \rho_m(X)\Big]\bigg|\\
& \qquad\qquad
\leq C\, \mathbb{E}_{\mu^{\ast}}\big[\|\wh{G}_\sm(\wh{Q}_\sm(X))-X\|_2^{\gamma}\,\rho_m(X)\big] 
\leq C\, \Big(\frac{\log n}{n}\Big)\vee \Big(\frac{\log n}{n}\Big)^{\frac{\gamma\beta}{d}}
  \end{aligned}
\end{equation}
holds with probability at least $1-n^{-c}$ with respect to the randomness in $(\wh G_\sm, \wh Q_\sm)$ (or samples from $I_1$), where the last inequality is due to Lemma~\ref{lemma:mani_est}. In the rest of the proof, we restrict ourselves to the high probability event where the inequality in Lemma~\ref{lemma:mani_est} holds for any $\eta\in [\lceil2\gamma\rceil]$.

Since $-\wh {\m J}_{m,s}(f)$ is the sample version (samples from $I_2$) of the sum in the second line of~\eqref{uplargebeta0}, it remains to derive a high probability bound to
\begin{equation*} 
\begin{aligned}
& V_n(j) :\,=\underset{f\in C^{\gamma}_1(\mb R^D)}{\sup} \Big|\,\mathbb{E}_{\mu^{\ast}}\big[f^{(j)}(X)\, \big(\wh{G}_\sm(\wh{Q}_\sm(X))-X \big)^{j}\, \rho_m(X)\big]\\
&\qquad\qquad\qquad\qquad\quad - \ \underbrace{\frac{1}{|I_2|}\sum_{i\in I_2} f^{(j)}(X_i)\,\big(\wh{G}_\sm(\wh{Q}_\sm(X_i))-X_i\big)^{j}\, \rho_m(X_i)}_{\small \mbox{sample version}}\,\Big|,
\end{aligned}
\end{equation*}
for each $j\in \mathbb{N}_0^D$ and $1\leq |j| \leq \lfloor \gamma\rfloor$.
Note that we can bound the second moment of the supreme (over $f\in C^{\gamma}_1(\mb R^D)$) of each term inside the sum above as
\begin{align*}
   &\quad \mb E_{\mu^\ast}\Big[ \underset{f\in C^{\gamma}_1(\mb R^D)}{\sup} \big|f^{(j)}(X_i)\big|^2\cdot\big|\big(\wh{G}_\sm(\wh{Q}_\sm(X_i))-X_i\big)^{j}\big|^2\, \rho^2_m(X_i)\Big]\\
    &\leq C\, \mb E_{\mu^\ast}\big[\|\wh{G}_\sm(\wh{Q}_\sm(X))-X\|_2^{2|j|}\,\rho_m(X)\big] \leq C_1\, \Big(\frac{\log n}{n}\Big)\vee \Big(\frac{\log n}{n}\Big)^{\frac{2|j|\beta}{d}},
\end{align*}
where the last step is due to Lemma~\ref{lemma:mani_est}. In addition, each term is almost surely bounded by a constant $C_0$. Therefore, we can apply the Talagrand concentration inequality to obtain that for all $t\geq 0$,
\begin{align*}
    \mb P\bigg\{ V_n(j)\geq \mb E_{\mu^\ast} [V_n(j)] + C\, \sqrt{\frac{t}{n}} \cdot \Big[\sqrt{\frac{\log n}{n}}\vee \Big(\frac{\log n}{n}\Big)^{\frac{|j|\beta}{d}}\Big]  + \frac{C\,t}{n}\,\bigg\} \leq 2\,e^{-t}.
\end{align*}
Therefore, with probability at least $1-n^{-c}$, it holds that
\begin{align*}
     V_n(j)\leq \mb E_{\mu^\ast} [V_n(j)] + C\, \sqrt{\frac{\log n}{n}} \cdot \Big[\sqrt{\frac{\log n}{n}}\vee \Big(\frac{\log n}{n}\Big)^{\frac{|j|\beta}{d}}\Big].
\end{align*}
It remains to bound the expectation $\mb E_{\mu^\ast} [V_n(j)]$, which by the standard symmetrization argument, satisfies
\begin{align*}
    \mb E_{\mu^\ast} [V_n(j)] \leq \frac{2}{\sqrt{|I_2|}}\, \mb E \bigg[ \,\underset{f\in C^{\gamma}_1(\mb R^D)}{\sup} \,\bigg|\frac{1}{\sqrt{|I_2|}}\sum_{i\in I_2} \varepsilon_i\, f^{(j)}(X_i)\,\big(\wh{G}_\sm(\wh{Q}_\sm(X_i))-X_i\big)^{j}\, \rho_m(X_i) \bigg|\bigg],
\end{align*}
where $\{\varepsilon_i\}_{i=1}^n$ are $n$ i.i.d.~Rademacher random variables, i.e.~$\mb P(\varepsilon_i=1)=\mb P(\varepsilon_i=-1)=0.5$.

Since given $\{X_i\}_{i\in I_1\cup I_2}$, the stochastic process inside the supreme is a sub-Gaussian process with intrinsic metric 
 \begin{equation*}
 \begin{aligned}
& \quad d_{n,j}(f, \widetilde f)=\sqrt{\frac{1}{|I_2|}\sum_{i\in I_2} \Big[\big(f^{(j)}(X_i)-\widetilde{f}^{(j)}(X_i)\big)\cdot \big(\wh{G}_\sm(\wh{Q}_\sm(X_i))-X_i\big)^{j}\cdot \rho_m(X_i)\Big]^2}\\
&\leq C\,  \bigg(\,\underset{z\in \mb B_1^d } {\sup} \, \big|f^{(j)}(G^{\ast}_\sm(z))-\widetilde{f}^{(j)}(G^{\ast}_\sm(z))\big| \bigg)\cdot  \sqrt{ \frac{1}{|I_2|}\sum_{i\in I_2}  \big\|\wh{G}_\sm(\wh{Q}_\sm(X_i))-X_i\big\|_2^{2|j|}\cdot\rho^2_m(X_i)}.
 \end{aligned}
\end{equation*}
 
We then state the following lemma for bounding the covering entropy of smooth functions with respect to metrics that only concern evaluations of functions in low-dimensional submanifolds. Its proof is provided in Section~\ref{sec:proof_coveringmanifold}.
  \begin{lemma}\label{coveringmanifold}
Let $\mathcal{X}_G=\big\{x\in \mathbb{R}^D:\, x=G(z), z\in \mb B_1^d\big\}$ be a $d$-dimensional submanifold induced by a Lipschitz continuous map $G:\,\mb R^d\to \mb R^D$, then it holds for any $\widetilde \gamma>0$ that
\begin{equation*}
\log N \big(C_1^{\widetilde{\gamma}}(\mathbb{R}^D), \,\|\cdot\|_{L^{\infty}(\mathcal{X}_{G})},\,\epsilon \big)\leq C\,\epsilon^{-\frac{d}{\widetilde{\gamma}}}, \quad \forall \epsilon>0,
\end{equation*}
where $N(\mathcal{F},\,\widetilde{d}, \,\epsilon)$ denotes the $\epsilon$-covering number of function space $\mathcal{F}$ with respect to pseudo-metric $\widetilde{d}$, and $\|f\|_{L^{\infty}(\mathcal{X}_{G})}=\underset{x\in \mathcal{X}_{G}} {\sup }\big|f(x)\big|$ denotes the functional supreme norm constrained on set $\m X_G$.
\end{lemma}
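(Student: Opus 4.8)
\emph{The plan.} The goal is to reduce the restricted sup-norm covering number over the $d$-dimensional set $\mathcal{X}_G$ to a counting problem about Taylor coefficients at the centers of a thin net of $\mathcal{X}_G$, the net having mesh $\delta \asymp \epsilon^{1/\widetilde\gamma}$ chosen so that a Taylor remainder of order $\lfloor\widetilde\gamma\rfloor$ is of size $\epsilon$. First I would build the net: since $G$ is Lipschitz with some constant $L_G$ and $\mathbb{B}_1^d$ is path-connected and admits a $\big(\delta/(2L_G)\big)$-net of cardinality at most $C\delta^{-d}$, the images $x_i=G(z_i)$ of such a net furnish points $x_1,\dots,x_{N_\delta}$ with $N_\delta\le C\delta^{-d}$ such that $\mathcal{X}_G\subset\bigcup_i\mathbb{B}_\delta(x_i)$; moreover $\mathcal{X}_G$ is bounded (hence lies in some $\mathbb{B}_L^D$) and connected, and the graph joining $x_i,x_j$ whenever $\|z_i-z_j\|\le\delta/L_G$ is connected with all edges of Euclidean length $\le\delta$. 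Fix a spanning tree $T$ of this graph rooted at $x_1$. We may assume $\epsilon$ is below a fixed constant, otherwise a single function is a net.

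Next I would localize via Taylor's theorem. For $f\in C_1^{\widetilde\gamma}(\mathbb{R}^D)$ and $x\in\mathbb{B}_\delta(x_i)$ write $f(x)=P_i^f(x)+R_i^f(x)$ with $P_i^f(x)=\sum_{|a|\le\lfloor\widetilde\gamma\rfloor}\frac{1}{a!}f^{(a)}(x_i)(x-x_i)^a$ and, using $\|f\|_{C^{\widetilde\gamma}(\mathbb{R}^D)}\le 1$, $|R_i^f(x)|\le C\|x-x_i\|^{\widetilde\gamma}\le C\delta^{\widetilde\gamma}=C\epsilon$. Hence if $f,\widetilde f\in C_1^{\widetilde\gamma}(\mathbb{R}^D)$ satisfy $|f^{(a)}(x_i)-\widetilde f^{(a)}(x_i)|\le\epsilon\,\delta^{-|a|}$ for all $i\in[N_\delta]$ and all $|a|\le\lfloor\widetilde\gamma\rfloor$, then on each $\mathbb{B}_\delta(x_i)$ the polynomials $P_i^f,P_i^{\widetilde f}$ differ by at most $\sum_{|a|\le\lfloor\widetilde\gamma\rfloor}\frac{1}{a!}\epsilon\,\delta^{-|a|}\delta^{|a|}\le C\epsilon$, so $\|f-\widetilde f\|_{L^\infty(\mathcal{X}_G)}\le C\epsilon$. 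Thus it suffices to partition $C_1^{\widetilde\gamma}(\mathbb{R}^D)$ into $\exp(C\epsilon^{-d/\widetilde\gamma})$ classes inside each of which this coefficient closeness holds, since then picking one representative per class gives a $C\epsilon$-net and rescaling $\epsilon$ proves the lemma.

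Finally, the crux is the counting, and this is the step I expect to be the main obstacle: quantizing each of the $\le K N_\delta$ numbers $f^{(a)}(x_i)$ (with $K=\binom{D+\lfloor\widetilde\gamma\rfloor}{D}$) directly to precision $\epsilon\,\delta^{-|a|}$ would cost $\exp(CN_\delta\log(1/\epsilon))$ classes, producing a spurious logarithmic factor. To remove it I would quantize \emph{increments along $T$}: at the root, quantize each $f^{(a)}(x_1)$ (which lies in $[-1,1]$) to precision $\epsilon\,\delta^{-|a|}$, costing only $O(\log(1/\epsilon))$ bits in total; at a non-root vertex $x_j$ with parent $x_p$, quantize $f^{(a)}(x_j)$ minus the order-$\lfloor\widetilde\gamma\rfloor$ Taylor prediction of $f^{(a)}$ from $x_p$, again to precision $\epsilon\,\delta^{-|a|}$. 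Because $f^{(a)}$ is $(\widetilde\gamma-|a|)$-Hölder with norm controlled by $\|f\|_{C^{\widetilde\gamma}}$ for $|a|\le\lfloor\widetilde\gamma\rfloor$, this increment has size $O(\delta^{\widetilde\gamma-|a|})$, so the number of levels needed is $O(\delta^{\widetilde\gamma-|a|}/(\epsilon\,\delta^{-|a|}))=O(\delta^{\widetilde\gamma}/\epsilon)=O(1)$; each non-root vertex therefore contributes only $O(1)$ bits and the total number of classes is $\exp(CN_\delta+C\log(1/\epsilon))\le\exp(C'\delta^{-d})=\exp(C'\epsilon^{-d/\widetilde\gamma})$. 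The delicate points are the existence of the $O(\delta)$-edge spanning tree (inherited from connectedness of $\mathbb{B}_1^d$) and the increment bound $O(\delta^{\widetilde\gamma-|a|})$, which needs the usual care when $\widetilde\gamma$ is an integer — there one either works with top derivative order $\lfloor\widetilde\gamma\rfloor-1$ or invokes the embedding $C^{\widetilde\gamma}(\mathbb{R}^D)\subset B^{\widetilde\gamma}_{\infty,\infty}(\mathbb{R}^D)$ recorded in Appendix~\ref{sec:Wavelet_review}; everything else is routine bookkeeping.
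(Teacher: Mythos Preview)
Your proposal is correct and follows essentially the same Kolmogorov--Tikhomirov argument as the paper: quantize the Taylor coefficients of $f$ at a $\delta$-net of $\mathcal{X}_G$ (with $\delta\asymp\epsilon^{1/\widetilde\gamma}$) and exploit an incremental encoding along a chain of nearby points so that each non-initial vertex costs only $O(1)$ bits. The only cosmetic difference is that the paper builds the net by taking a regular grid $\{z_\xi=\Delta\xi:\xi\in[m]^d\}$ in the domain $\mathbb{B}_1^d$ and orders it so that consecutive grid points differ in one coordinate by $\Delta$ (hence their images are $O(\Delta)$-close), whereas you use a general net together with a spanning tree; both implementations yield the same bound.
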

\noindent Let $\mathcal{K}_j=\sqrt{\frac{1}{|I_2|}\sum_{i\in I_2}  \big\|\wh{G}_\sm(\wh{Q}_\sm(X_i))-X_i\big\|_2^{2|j|} 
\cdot \rho_m(X_i)^2}$. From Lemma~\ref{coveringmanifold}, we can get 
$$\log N\big(C_1^{{\gamma}}(\mathbb{R}^D),\, d_{n,j},\, \epsilon\big)\leq C\, \Big(\frac{ \mathcal{K}_j}{\epsilon}\Big)^{\frac{d}{\gamma-|j|}},\quad\forall \epsilon>0,$$
where we used the fact that for all $f\in C_1^\gamma(\mb R^D)$, $f^{(j)}$ belongs to $C_1^{\gamma-|j|}(\mb R^D)$.
By Dudley’s entropy integral bound for bounding the expectation of the supreme of sub-Gaussian processes (see for example, Theorem 5.22 of~\cite{wainwright_2019}), we obtain
\begin{equation*}
    \begin{aligned}
    & \mb E_{\mu^\ast} [V_n(j)] \leq C\,\mathbb{E}_{\mu^\ast} \bigg[\underset{\delta\in [0,1]}{\min} \bigg\{\,\delta+\frac{1}{\sqrt{n}}\int_{\delta}^{ \m K_j} \left(\frac{ \m K_j}{\epsilon}\right)^{\frac{d}{2(\gamma-|j|)}}\, \dd \epsilon \bigg\}\bigg].
    \end{aligned}
\end{equation*}
By choosing $\delta=\m K_j\cdot\big(n^{-\frac{\gamma-|j|}{d}}\vee \frac{\log n}{\sqrt{n}}\big)$, we obtain
\begin{equation*}
    \begin{aligned}
   \mb E_{\mu^\ast} [V_n(j)] \leq C\, \Big[\Big(\frac{\log n}{n}\Big)^{\frac{|j|\beta}{d}}\vee \sqrt{\frac{\log n}{n}}\,\Big]\cdot\Big[n^{-\frac{\gamma-|j|}{d}}\vee \frac{\log n}{\sqrt{n}}\Big],
    \end{aligned}
\end{equation*}
where the last step used the fact that 
$$\mb E[\m K_j]\leq \sqrt{\mb E[\m K_j^2]}=\sqrt{\mb {E} \big[\big\|\wh{G}(\wh{Q}(X))-X\big\|_2^{2|j|} 
\rho_m^2(X)\big]}\leq C\,\Big(\frac{\log n}{n}\Big)^{\frac{|j|\beta}{d}}\vee \sqrt{\frac{\log n}{n}}$$
according to Lemma~\ref{lemma:mani_est}.
 Finally, putting all pieces together, we obtain by a simple union bound over all $j\in \mathbb{N}_0^D$, $1\leq |j|\leq \lfloor\gamma\rfloor$ (at most $C\,D^{\lfloor\gamma\rfloor}$ many) that with probability at least $1-c_1n^{-c_2}$,
\begin{equation}\label{uplargebeta1}
\begin{aligned}
 \underset{j\in \mathbb{N}_0^D\atop 1\leq |j|\leq \lfloor\gamma\rfloor}{\sup} V_n(j)
 &\leq C\, \underset{j\in \mathbb{N}_0^D\atop 1\leq |j|\leq \lfloor\gamma\rfloor}{\sup} \,\Big[\Big(\frac{\log n}{n}\Big)^{\frac{|j|\beta}{d}}\vee \sqrt{\frac{\log n}{n}}\,\Big]\cdot\Big[n^{-\frac{\gamma-|j|}{d}}\vee \frac{\log n}{\sqrt{n}}\Big]\\
 &\leq C\, \Big(\frac{\log n}{n}\Big)^{\frac{\beta+\gamma-1}{d}}\vee \sqrt{\frac{\log n}{n}}.
   \end{aligned}
\end{equation}

\section{Technical Results and Proofs}\label{App:technical}
In this subsection, we collect all technical results used in the proofs and their proofs.

\subsection{Lemma~\ref{lemmaempiricalmean} and its proof}
\begin{lemma}\label{lemmaempiricalmean}
 There exists a constant $C_1$ such that for any $\mu^\ast\in \m P^\ast(d,D,\alpha,\beta,L^\ast)$, it holds with probability larger than $1-n^{-1}$ that 
 \begin{equation*}
    \sup_{f\in C_1^\gamma(\mb R^D)}\Big| \mb E_{\mu^\ast}[f(X)] - \frac{1}{n} \sum_{i=1}^n f(X_i)\Big| \leq C_1 \,\Big(\sqrt{ \frac{\log n}{n}} \vee n^{-\frac{\gamma}{d}}\Big).
 \end{equation*}
 Moreover, there exist $\mu^\ast\in \m P^\ast(d,D,\alpha,\beta,L^\ast)$ and constant $C_2$ such that  
  \begin{equation*}
   \mb E  \bigg[\sup_{f\in C_1^\gamma(\mb R^D)}\Big| \mb E_{\mu^\ast}[f(X)] - \frac{1}{n} \sum_{i=1}^n f(X_i)\Big|\bigg] \geq C_2 \,\Big(\sqrt{ \frac{1}{n}} \vee \frac{n^{-\frac{\gamma}{d}}}{\log n}\Big).
 \end{equation*}
\end{lemma}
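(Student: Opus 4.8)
The plan is to establish the two bounds separately using standard empirical-process tools. For the upper bound, the key observation is that although the discriminator class $C_1^\gamma(\mb R^D)$ is defined on the full ambient space, the quantity $\sup_{f}\big|\mb E_{\mu^\ast}[f]-\frac1n\sum_i f(X_i)\big|$ only depends on the values of $f$ on a neighborhood of $\mr{supp}(\mu^\ast)\cup\{X_1,\dots,X_n\}$, which is (with high probability) contained in a small tube around the $d$-dimensional manifold $\m M$. First I would invoke the covering-entropy bound of the flavor of Lemma~\ref{coveringmanifold}: restricting to a bounded tube $\m X$ of a compact $d$-dimensional submanifold, $\log N\big(C_1^\gamma(\mb R^D),\,\|\cdot\|_{L^\infty(\m X)},\,\epsilon\big)\lesssim \epsilon^{-d/\gamma}$. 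Then I would apply Talagrand's concentration inequality for the supremum of the empirical process to reduce to bounding the expected supremum, and bound the latter via Dudley's entropy integral $\int_0^{1}\sqrt{\epsilon^{-d/\gamma}/n}\,\dd\epsilon$. This integral converges when $d/\gamma<2$ (giving the $n^{-1/2}$ rate) and otherwise is dominated by a lower truncation at scale $\delta\asymp n^{-\gamma/d}$, yielding the $n^{-\gamma/d}$ term; combining with the $\sqrt{\log n/n}$ fluctuation contribution from Talagrand gives the claimed $C_1\big(\sqrt{\log n/n}\vee n^{-\gamma/d}\big)$. Care is needed to handle the high-probability event that all samples lie in the tube, which follows from a union bound on $X_i\in\mb B_L^D$ and the boundedness of $\m M$.

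For the lower bound, I would exhibit a single bad instance $\mu^\ast\in\m P^\ast$ — say the uniform distribution $\mu_0$ on the sphere $\m M_0$ used in Section~\ref{sec:prooflowerbound} — and show the empirical mean deviates by at least $C_2\big(\sqrt{1/n}\vee n^{-\gamma/d}/\log n\big)$ in expectation. The $\sqrt{1/n}$ part is a standard central-limit lower bound: pick a single fixed $f_0\in C_1^\gamma$ with $\mr{Var}_{\mu_0}(f_0)>0$, and then $\mb E\big|\mb E_{\mu_0}[f_0]-\frac1n\sum f_0(X_i)\big|\gtrsim n^{-1/2}$ by e.g.\ the Paley–Zygmund or Berry–Esseen argument. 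For the $n^{-\gamma/d}/\log n$ part (relevant when $d/\gamma\geq 2$), I would use a localized bump: partition a coordinate chart of $\m M_0$ into $\asymp n$ cells of radius $\asymp n^{-1/d}$, and on a cell $T$ containing no sample point (which exists with constant probability by a balls-in-bins/second-moment argument, up to the $\log n$ slack) place a bump $f_T(x)=c\,n^{-\gamma/d}\,k\big(n^{1/d}\,\mr{dist}\text{-like coordinate}\big)$ lifted from $\mb R^d$ via the chart; this $f_T\in C_1^\gamma(\mb R^D)$ has $\frac1n\sum_i f_T(X_i)=0$ while $\mb E_{\mu_0}[f_T]\asymp n^{-\gamma/d}$. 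Taking the supremum over choices of empty cell and over the event that such a cell exists gives the bound; the $\log n$ factor absorbs the probability that every cell is hit.

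The main obstacle I anticipate is the lower-bound construction of the bump term: one must argue that with probability bounded away from zero (or at least $1/\mr{poly}$) there is a cell among the $\asymp n$ cells that receives no sample, and then lift a $C^\gamma$ bump from the flat chart to the manifold while controlling the Hölder norm of the extension to all of $\mb R^D$ (using, as elsewhere in the paper, that $G_0$ and $G_0^{-1}$ are smooth with bounded norms so $C_1^\gamma(\mb R^D)\circ G_0\supset c_0\cdot C^\gamma$ on the chart). The probabilistic part is the delicate one: a naive Poissonization shows each cell is empty with probability $\approx e^{-1}$, but the cells are not independent, so I would instead bound $\mr{Var}$ of the number of empty cells and apply Chebyshev, or simply observe $\mb E[\#\text{empty cells}]\gtrsim n$ and use a one-sided concentration, the $\log n$ in the statement giving ample room for slack. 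With the empty cell in hand, plugging $f_T$ into the supremum and taking expectations is routine.
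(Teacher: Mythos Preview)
Your upper-bound plan is essentially the paper's first proof (chaining via Lemma~\ref{coveringmanifold}, Dudley, then Talagrand), and your $n^{-1/2}$ lower bound via a fixed test function and Berry--Esseen matches the paper exactly. One small remark: you do not need to worry about ``all samples lie in the tube'' --- in this noiseless setting the samples are supported exactly on $\m M$, so the covering bound applies directly to $\|\cdot\|_{L^\infty(\m M)}$.

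The gap is in your $n^{-\gamma/d}$ lower bound. A single bump $f_T$ supported on one cell $T$ of radius $\asymp n^{-1/d}$ has $\|f_T\|_\infty\asymp n^{-\gamma/d}$ but $\mu_0(T)\asymp n^{-1}$, so $\mb E_{\mu_0}[f_T]\asymp n^{-\gamma/d-1}$, not $n^{-\gamma/d}$ as you claim. Taking a supremum over which empty cell to use does not help: the supremum of $n^{-\gamma/d-1}$ is still $n^{-\gamma/d-1}$. The natural repair is to \emph{sum} bumps over all empty cells: with $n$ cells and $n$ samples there are $\Theta(n)$ empty cells with high probability, the bumps have disjoint supports so the sum stays in $C_1^\gamma$, the empirical mean is still zero, and now $\mb E_{\mu_0}[f]\asymp n\cdot n^{-\gamma/d-1}=n^{-\gamma/d}$. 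Note that with this fix the $\log n$ loss is not actually needed --- the probabilistic slack you describe (``absorbs the probability that every cell is hit'') is not where the $\log n$ comes from.

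The paper takes a different route for this term: it places a bump $\sigma_i$ centered at \emph{every} data point $X_i$ and sets $f^\ast=(m^\gamma\log n)^{-1}\sum_i\sigma_i$. The $\log n$ normalization is forced because any point on $\m M$ lies in at most $O(\log n)$ of the $\sigma_i$'s with high probability (a balls-in-bins bound), which is what keeps $f^\ast\in C_{c_1}^\gamma$. The lower bound then comes from the diagonal terms: $\frac1n\sigma_j(X_j)=(k(\tfrac12))^D/n$ is the peak value, while $\mb E_{\mu_0}[\sigma_j]\le\mu_0(\m A_j)\cdot\sup\sigma_j<\frac1n(k(\tfrac12))^D$ strictly, and summing over $j$ gives the result. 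Your (repaired) empty-cell construction is arguably cleaner and avoids the overlap-counting step, at the cost of needing a concentration bound on the number of empty cells; the paper's construction trades that for the $\log n$ loss but needs only a crude union bound on local occupancy.
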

\begin{proof}
 We first prove the upper bound. We provide two proofs here: one is based on the usual chaining technique in the empirical process theory; the other is based on embedding the discrimator space $C_1^\gamma(\mb R^D)$ into Besov space $B_{\infty,\infty}^\alpha(\mb R^D)$ and truncating the wavelet expansion of $f$ to a proper degree.
 \begin{itemize}
     \item{Proof based on chaining technique:} Firstly by standard symmetrization argument, it satisfies 
     \begin{equation*}
         \mb E_{\mu^\ast} \bigg[\sup_{f\in C_1^\gamma(\mb R^D)}\Big| \mb E_{\mu^\ast}[f(X)] - \frac{1}{n} \sum_{i=1}^n f(X_i)\Big|\bigg]\leq \frac{2}{\sqrt{n}}\mb E  \bigg[\sup_{f\in C_1^\gamma(\mb R^D)}\Big| \frac{1}{\sqrt{n}} \sum_{i=1}^n \varepsilon_i f(X_i)\Big|\bigg]
     \end{equation*}
      where $\{\varepsilon_i\}_{i=1}^n$ are $n$ i.i.d.~Rademacher random variables. Given $\{X_i\}_{i\in [n]}$, the stochastic process inside the supreme is a sub-Gaussian process with intrinsic metric 
      \begin{equation*}
          d_n(f,\widetilde f)=\sqrt{\frac{1}{n}\sum_{i=1}^n (f(X_i)-\widetilde f(X_i))^2}\leq \underset{x\in {\rm supp}(\mu^\ast)}{\sup}|f(x)-\widetilde f(x)|.
      \end{equation*}
By Lemma~\ref{lemmaboundaryless}, there exist  a constant $M$ and $\beta$-smooth functions $\{G_{[m]}\}_{m\in [M]}$ such that ${\rm supp}(\mu^\ast)\subset \bigcup_{m=1}^M G_{[m]}(\mb B_1^d)$. Therefore by Lemma~\ref{coveringmanifold}, we can get 
$$\log N\big(C_1^{{\gamma}}(\mathbb{R}^D),\, d_{n},\, \epsilon\big)\leq C\, \Big(\frac{ 1}{\epsilon}\Big)^{\frac{d}{\gamma}},\quad\forall \epsilon>0.$$
 By Dudley’s entropy integral bound for bounding the expectation of the supreme of sub-Gaussian processes, we obtain
\begin{equation*}
    \begin{aligned}
    &  \mb E \bigg[\sup_{f\in C_1^\gamma(\mb R^D)}\Big| \mb E_{\mu^\ast}[f(X)] - \frac{1}{n} \sum_{i=1}^n f(X_i)\Big|\bigg] \leq C\, \underset{\delta\in [0,1]}{\min} \bigg\{\,\delta+\frac{1}{\sqrt{n}}\int_{\delta}^{1} \left(\frac{1}{\epsilon}\right)^{\frac{d}{2\gamma}}\, \dd \epsilon \bigg\}.
    \end{aligned}
\end{equation*}
By choosing $\delta=n^{-\frac{\gamma}{d}}\vee \frac{\log n}{\sqrt{n}}$, we obtain
\begin{equation*}
    \begin{aligned}
  \mb E \bigg[\sup_{f\in C_1^\gamma(\mb R^D)}\Big| \mb E_{\mu^\ast}[f(X)] - \frac{1}{n} \sum_{i=1}^n f(X_i)\Big|\bigg]\leq C\, \Big[n^{-\frac{\gamma}{d}}\vee \frac{\log n}{\sqrt{n}}\Big].
    \end{aligned}
\end{equation*}
 Then by Talagrand concentration inequality, similar as the proof for Lemma~\ref{lemma:smooth_corr} in Section~\ref{sec:proof_smooth_corr}, we can obtain that it holds with probability at least $1-n^{-1}$ that 
\begin{equation*}
    \begin{aligned}
 \sup_{f\in C_1^\gamma(\mb R^D)}\Big| \mb E_{\mu^\ast}[f(X)] - \frac{1}{n} \sum_{i=1}^n f(X_i)\Big| \leq C_1\, \Big[n^{-\frac{\gamma}{d}}\vee \frac{\log n}{\sqrt{n}}\Big].
    \end{aligned}
\end{equation*}
  \item{Proof based on Wavelet expansion:}
 Recall $f\in C^{\gamma}_1(\mb R^D)$ admits the following wavelet expansion
 \begin{equation*}
  f(x)=\underbrace{\sum_{k\in \mathbb{Z}^D} b_k\, \phi_k(x)+\sum_{l=1}^{2^D-1} \sum_{j=1}^{J}\sum_{k\in \mathbb{Z}^D} f_{ljk}\, \psi_{ljk}(x)}_{\Pi_Jf} \ \  + \  \  \underbrace{\sum_{l=1}^{2^D-1} \sum_{j=J+1}^{\infty}\sum_{k\in \mathbb{Z}^D} f_{ljk}\, \psi_{ljk}(x)}_{\Pi^\perp_Jf},
\end{equation*}
and here we choose $J$ to be the largest integer such that $2^J \leq (\frac{n}{\log n})^{\frac{1}{d}}$. Then it holds for all $k\in\mb Z^D$,\, $j\in\mb N_0$ and $l\in[2^D-1]$ that
\begin{align*}
    |b_k| \leq C \quad \mbox{and}\quad |f_{ljk}| \leq C\,2^{-\frac{Dj}{2}-j\gamma}.
\end{align*}
We can then write 
\begin{equation*}
\begin{aligned}
    \sup_{f\in C_1^\gamma(\mb R^D)}\Big| \mb E_{\mu^\ast}[f(X)] - \frac{1}{n} \sum_{i=1}^n f(X_i)\Big|&\leq  \sup_{f\in C_1^\gamma(\mb R^D)}\Big| \mb E_{\mu^\ast}[\Pi_Jf(X)] - \frac{1}{n} \sum_{i=1}^n \Pi_Jf(X_i)\Big|\\
    &+\sup_{f\in C_1^\gamma(\mb R^D)}\Big[\big| \mb E_{\mu^\ast}[\Pi_J^{\perp}f(X)] \big|+\Big| \frac{1}{n} \sum_{i=1}^n \Pi_J^\perp f(X_i)\Big|\Big].
    \end{aligned}
\end{equation*}
Let $\m M={\rm supp}(\mu^\ast)$, define 
 \begin{equation*}
\begin{aligned}
&\widetilde{\mathbb{S}}=\big\{k \in\mathbb{Z}^D:\, {\rm supp}(\phi_{k})\cap \m M\neq \emptyset\big\};\\
&\widetilde{\mathbb{S}}_{lj}=\big\{k \in \mathbb{Z}^D:\,{\rm supp}(\psi_{ljk})\cap \m M\neq \emptyset\big\}.
 \end{aligned}
\end{equation*}
Then there exists a constant $C$ such that $|\widetilde{\mathbb{S}}|\leq C$. Moreover, as by Lemma~\ref{lemmaboundaryless}, $\m M\subset \bigcup_{m=1}^M G_{[m]}(\mb B_1^d)$ for some $\beta$-smooth functions $\{G_{[m]}\}_{m\in [M]}$ and the support of $\psi_{ljk}$ is contained in $\mb B_{2^{-j}C_0}\big(2^{1-j}k)$ for some finite constant $C_0$, we can get that $|\widetilde{\mathbb{S}}_{lj}|\leq C 2^{dj}$. 

  Under this notation, we have
\begin{equation}\label{eq1lemma31}
\begin{aligned}
   & \sup_{f\in C_1^\gamma(\mb R^D)}\Big| \mb E_{\mu^\ast}[\Pi_Jf(X)] - \frac{1}{n} \sum_{i=1}^n \Pi_Jf(X_i)\Big|\\
   &\leq \sum_{k\in \widetilde{\mathbb{S}}} |b_k|\cdot\Big|\mathbb{E}_{\mu^{\ast}} [\phi_k(X)]-\frac{1}{n} \sum_{i=1}^n\phi_k(X_i)\Big|\\
   &\qquad+\sum_{l=1}^{2^D-1}\sum_{j=1}^{J}\sum_{k\in\widetilde{\mathbb{S}}_{lj}}|f_{ljk}|\cdot \Big|\mathbb{E}_{\mu^{\ast}} [\psi_{ljk}(X)]-\frac{1}{n} \sum_{i=1}^n\psi_{ljk}(X_i)\Big|.
\end{aligned}
\end{equation}
 By a similar union bound argument plus Bernstein's inequality as the proof of~\eqref{eqn:union+Bernstein}, we obtain
  that with probability at least $1-n^{-1}$,
\begin{equation*}
    \begin{aligned}
   &\underset{k \in\widetilde{ \mathbb{S}}}{\sup} \Big|\mathbb{E}_{\mu^{\ast}} [\phi_k(X)]-\frac{1}{n} \sum_{i=1}^n\phi_k(X_i)\Big|
   \leq C_1\sqrt{\frac{\log n}{n}},\\
   & \mbox{and} \ \  \Big|\mathbb{E}_{\mu^{\ast}} [\psi_{ljk}(X)]-\frac{1}{n} \sum_{i=1}^n\psi_{ljk}(X_i)\Big|\leq C_1\bigg(\frac{\log n}{n}\cdot 2^{\frac{Dj}{2}}+\sqrt{\frac{\log n}{n}}\sqrt{\mathbb{E}_{\mu^{\ast}}\big[\psi_{ljk}^2(X)\big]}\bigg)
 \end{aligned}
\end{equation*}
holds for all $1\leq l\leq 2^D-1$, $1\leq j\leq J$, $k \in \widetilde{\mathbb{S}}_{lj}$, 
where the second inequality used the property that $\|\psi_{ljk}\|_\infty \leq C2^{\frac{Dj}{2}}$ so that the right hand side contains the term $2^{\frac{Dj}{2}}$.
By combining these two inequalities with inequality~\eqref{eq1lemma31} and using $2^{dJ}\leq \frac{n}{\log n}$ and $|f_{ljk}|\leq C 2^{-\frac{Dj}{2}-j\gamma}$, we obtain that with probability at least $1-n^{-1}$,
\begin{equation*} 
\begin{aligned}
   & \quad  \sup_{f\in C_1^\gamma(\mb R^D)}\Big| \mb E_{\mu^\ast}[\Pi_Jf(X)] - \frac{1}{n} \sum_{i=1}^n \Pi_Jf(X_i)\Big|\\
   &\leq  C\, \sqrt{\frac{\log n}{n}}+C\,
  \big(\frac{\log n}{n}\big)^{  \frac{\gamma\wedge d}{d}} + C\,\sqrt{\frac{\log n}{n}} \cdot \sum_{l=1}^{2^D-1}\sum_{j=1}^{J}  2^{-\frac{Dj}{2}-j\gamma}\sum_{k\in\widetilde{\mathbb{S}}_{lj}}\sqrt{\mathbb{E}_{\mu^{\ast}}\big[\psi_{ljk}^2(X)\big]}\\
   &\leq  C\,\sqrt{\frac{\log n}{n}}+ C\,\big(\frac{\log n}{n}\big)^{  \frac{ \gamma\wedge d}{d}}  + C\,\sqrt{\frac{\log n}{n}}\cdot\sum_{l=1}^{2^D-1}\sum_{j=1}^{J}  2^{-\frac{Dj}{2}-j\gamma+\frac{dj}{2}}\sqrt{\sum_{k\in\widetilde{\mathbb{S}}_{lj}}\mathbb{E}_{\mu^{\ast}}\big[\psi_{ljk}^2(X)\big]}\, .
\end{aligned}
\end{equation*}
Since for any $x\in \m M$, there are at most constant many $k$'s in $\widetilde {\mb {S}}_{lj}$ such that $\psi_{ljk}(x)\neq 0$ and $\|\psi_{ljk}\|_\infty \leq C2^{\frac{Dj}{2}}$, we obtain
\begin{equation*} 
\begin{aligned}
   \quad \mathbb{E}_{\mu^{\ast}}  \Big[\sum_{k\in\widetilde{\mathbb{S}}_{lj}}\psi_{ljk}^2(X)\Big] 
   \leq C\, 2^{Dj}.
\end{aligned}
\end{equation*}
Then we get that with probability at least $1-n^{-1}$,
\begin{align*}
    &  \quad  \sup_{f\in C_1^\gamma(\mb R^D)}\Big| \mb E_{\mu^\ast}[\Pi_Jf(X)] - \frac{1}{n} \sum_{i=1}^n \Pi_Jf(X_i)\Big|\\
    &\leq C \,\sqrt{\frac{\log n}{n}} + C\,\big(\frac{\log n}{n}\big)^{  \frac{ \gamma\wedge d}{d}}
    + C\,\sqrt{\frac{\log n}{n}} \cdot \sum_{l=1}^{2^D-1}\sum_{j=1}^{J}  2^{j\big(\frac{d}{2}-\gamma\big)}\\
    & \leq C\, \sqrt{\frac{\log n}{n}}+ C \,\big(\frac{\log n}{n}\big)^{\frac{\gamma}{d}} ,
\end{align*}
 where we have used $2^{J}\leq (\frac{n}{\log n})^{\frac{1}{d}}$ in the last step. It remains to bound
 \begin{equation*}
     \begin{aligned}
     &\sup_{f\in C_1^\gamma(\mb R^D)}\Big[\big| \mb E_{\mu^\ast}[\Pi_J^{\perp}f(X)] \big|+\Big| \frac{1}{n} \sum_{i=1}^n \Pi_J^\perp f(X_i)\Big|\Big]\\
    &\leq \sum_{l=1}^{2^D-1}\sum_{j=J+1}^{\infty}\sum_{k\in\widetilde{\mathbb{S}}_{lj}}|f_{ljk}|\cdot\Big[ \big|\mathbb{E}_{\mu^{\ast}} [\psi_{ljk}(X)]\big|+\Big|\frac{1}{n} \sum_{i=1}^n\psi_{ljk}(X_i)\Big|\Big]\\
    &\leq  C \sum_{l=1}^{2^D-1}\sum_{j=J+1}^{\infty} 2^{-\frac{Dj}{2}-j\gamma}  \sup_{x\in\m M} \big[\sum_{k\in\widetilde{\mathbb{S}}_{lj}} \psi_{ljk}(x)\big].
     \end{aligned}
 \end{equation*}
 Then by the bound   $\sup_{x\in\m M} \big[\sum_{k\in\widetilde{\mathbb{S}}_{lj}} \psi_{ljk}(x)\big]\leq C 2^{-\frac{Dj}{2}}$, we can obtain
 \begin{equation*}
     \begin{aligned}
     &\sup_{f\in C_1^\gamma(\mb R^D)}\Big[\big| \mb E_{\mu^\ast}[\Pi_J^{\perp}f(X)] \big|+\Big| \frac{1}{n} \sum_{i=1}^n \Pi_J^\perp f(X_i)\Big|\Big]\\
    &\leq  C \sum_{j=J+1}^{\infty} 2^{-j\gamma}\leq  C \,\big(\frac{\log n}{n}\big)^{\frac{\gamma}{d}}.
     \end{aligned}
 \end{equation*}
 Putting all pieces together, we can obtain  it holds with probability at least $1-n^{-1}$ that 
 \begin{equation*}
 \sup_{f\in C_1^\gamma(\mb R^D)}\Big| \mb E_{\mu^\ast}[ f(X)] - \frac{1}{n} \sum_{i=1}^n  f(X_i)\Big|\leq C\, \sqrt{\frac{\log n}{n}}+ C \,\big(\frac{\log n}{n}\big)^{\frac{\gamma}{d}}.
\end{equation*}
  \end{itemize}
 The desired upper bound then follows from the bounds derived by the above two methods. We then prove the lower bound.  Consider $\m M=\{x=(x_1,x_2,\cdots,x_D): \|x_{1:d+1}\|^2=1, x_{d+2:D}=\bold{0}_{D-d-1}\}$ and $\mu^\ast$ being uniform distribution on $\m M$, then $\mu^\ast\in \m P^\ast(d,D,\alpha,\beta, L^\ast)$.
 \begin{itemize}
      
 \item {Proof for the rate $n^{-\frac{\gamma}{d}}\cdot (\log n)^{-1}$:}
     To prove the desired result,  we will construct function $f^\ast(\cdot\,; X_{1:n})\in C^{\gamma}_1(\mb R^D)$ depend on the data $X_{1:n}$ so that 
     \begin{equation*}
        \big| \mb {E}_{\mu^\ast} [f^\ast(X; X_{1:n})]-n^{-1}\sum_{i=1}^n f^\ast(X_i; X_{1:n})\big|\geq C\, n^{-\frac{\gamma}{d}}\cdot (\log n)^{-1}
     \end{equation*}
   with high probability.  The function  $f^\ast(\cdot\,; X_{1:n})$ is constructed as follows: let 
   \begin{align*}
    k(t)=\left\{\begin{array}{l}
(1-t)^{\gamma+1} t^{\gamma+1}, \quad t \in(0,1), \\
0, \quad \text { o.w. }
\end{array}\right.
\end{align*}
  Then choose $m=c_0 n^{\frac{1}{d}}$ for a large enough constant $c_0$ and define 
  \begin{equation*}
      f^\ast(y;X_{1:n})=\frac{1}{m^{\gamma}\cdot\log n}\cdot \sum_{i=1}^n \sigma_i(y),
  \end{equation*}
    with 
    \begin{equation*}
     \sigma_i(y)=\prod_{j=1}^{D} k\big(m(y_j-X_{i,j}+\frac{1}{2m})\big),
  \end{equation*}
     where $X_{i,j}$ and $y_j$ denote the $j$-th dimension of vectors $X_i$ and $y$ respectively.  Then we have the following lemma which implies the desired result.
     \begin{lemma}\label{lemmafstar}
       There exist constant $c_1,c_2$ so that  it holds with probability larger than $1-\frac{1}{n}$ that 
       \begin{enumerate}
           \item $ f^\ast(\cdot\,;X_{1:n})\in C^{\gamma}_{c_1}(\mb R^D)$;
           \item   $\big| \mb {E}_{\mu^\ast} [f^\ast(X; X_{1:n})]-n^{-1}\sum_{i=1}^n f^\ast(X_i; X_{1:n})\big|\geq c_2\, n^{-\frac{\gamma}{d}}\cdot (\log n)^{-1}$.
       \end{enumerate}
 
     \end{lemma}

\item Proof for the rate $n^{-\frac{1}{2}}$.
Define $\chi:\mathbb{R}\to \mathbb{R}$ is defined by $\chi(t)=e^{-1/t}$ for $t>0$ and $\chi(t)=0$ for $t\leq 0$. For $x\in \mathbb{R}^D$, we define $f(x)=\sum_{j=1}^{D} x_j\cdot \frac{\chi(3/2-\|x\|_2)}{\chi(3/2-\|x\|_2)+\chi(\|x\|_2-1)}$.  Then
\begin{equation*}
  \mb {E}\,\Big|\frac{1}{n}\sum_{i=1}^n f(X_i)-\mb E_{\mu^\ast} [f(X)] \Big|= \frac{1}{\sqrt{n}}\mb {E}\,\Big|\frac{1}{\sqrt{n}}\sum_{i=1}^n \sum_{j=1}^D X_{i,j}\Big|.
\end{equation*}
Define $\widetilde Y$ to be the random variable $\widetilde Y=\sum_{j=1}^D Y_j$ where $Y=(Y_1,Y_2,\cdots, Y_D)$ is a random vector from $\mu^\ast$. Then $\mb E(\widetilde Y)=0$ and $\sigma^2=\mb E [\widetilde Y^2]>0$. So we have 
\begin{equation*}
    \begin{aligned}
    &\frac{1}{\sqrt{n}}\mb {E}\,\Big|\frac{1}{\sqrt{n}}\sum_{i=1}^n \sum_{j=1}^D X_{i,j}\Big|\\
    &=\frac{1}{\sqrt{n}}\int_{0}^{+\infty} \mb P\Big(\Big|\frac{1}{\sqrt{n}}\sum_{i=1}^n \sum_{j=1}^D X_{i,j}\Big|\geq t\Big)dt\\
    &\geq \frac{1}{\sqrt{n}}  \mb P\Big(\Big|\frac{1}{\sqrt{n}}\sum_{i=1}^n \sum_{j=1}^D X_{i,j}\Big|\geq 1\Big)\\
    &\overset{(i)}{\geq}\frac{1}{\sqrt{n}}\Big( \mb P (-1\leq \m N(0,\sigma^2)\leq 1)-\frac{C}{\sqrt n}\Big)\geq  \frac{C_1}{\sqrt{n}},
    \end{aligned}
\end{equation*} 
where inequality (i) is due to Berry-Essen theorem~\citep{Rai__2019}. Proof is completed.

 \end{itemize}
\end{proof}
\subsection{Proof of Lemma~\ref{lemmafstar}}
     We first show the smoothness of $f^\ast(\cdot\,;\,X_{1:n})$.   Recall $m=c_0 n^{\frac{1}{d}}$ where $c_0$ is a large enough constant so that 
     \begin{equation*}
         \underset{x\in \m M}{\sup}\,\bigg[\mu^\ast\Big(\prod_{j=1}^{D}\big[x_j-\frac{2}{m},x_j+\frac{2}{m}\big]\Big)\bigg]\leq \frac{1}{n}.
     \end{equation*}
 For any multi-index $a\in \mb N_0^D$ with $|a|\leq \gamma$, it holds that ${\rm supp}\big(\prod_{j=1}^D k^{(a_j)}(y_j)\big)\subset [0,1]^D$ and thus ${\rm supp}\big(\sigma_i(y)\big)\subset \m A_i= \prod_{j=1}^D \big[-X_{i,j}-\frac{1}{2m}, X_{i,j}+\frac{1}{2m}\big]$. So for any $y\in \m M$, if there exists $i,k \in [n]$ so that $\sigma_i(y)\neq 0$ and $\sigma_k(y)\neq 0$, then $X_k\in \prod_{j=1}^D \big[X_{i,j}-\frac{1}{m}, X_{i,j}+\frac{1}{m}\big]$.  We claim that it suffices to show that it holds with probability at least $1-n^{-1}$ that  for any $y\in \m M$,  
 \begin{equation}\label{hpbound}
     \frac{1}{n}\sum_{i=1}^n \bold{1}\Big(X_i\in \prod_{j=1}^D\big[y_j-\frac{1}{m}, y_j+\frac{1}{m}\big]\Big)\leq C\cdot \frac{\log n}{n}.
 \end{equation}
 Indeed, under the above statement, it holds that for any $y\in \m M$ and multi-index $a\in \mb N_0^{D}$ with $|a|\leq \gamma$, 
 \begin{equation*}
     \Big|\big\{k\in [n]\,:\, y\in {\rm supp}\big(\sigma_k^{(a)}\big)\big\}\Big|\leq C\cdot \log n,
 \end{equation*}
 and hence $f^\ast(\cdot\,;\, X_{1:n})\in C^{\gamma}_{c_1}(\mb R^D)$ for a constant $c_1$. Now we prove equation~\eqref{hpbound}.  Let $\m N_{\frac{1}{n}}$ denotes the minimal $n^{-1}$-covering set of $\m M$, then $|\m N_{\frac{1}{n}}|\leq C n^d$. Since 
 \begin{equation*}
     \underset{x\in \m M}{\sup}\,\bigg[\mu^\ast\Big(\prod_{j=1}^{D}\big[x_j-\frac{2}{m},x_j+\frac{2}{m}\big]\Big)\bigg]\leq \frac{\log n}{n},
 \end{equation*}
 by a similar union bound argument plus Bernstein's inequality as the proof of~\eqref{eqn:union+Bernstein}, we can get it holds with probability at least $1-n^{-1}$ that for any $\widetilde y\in \m N_{\frac{1}{n}}$ 
 \begin{equation*}
     \frac{1}{n}\sum_{i=1}^n \bold{1}\Big(X_i\in \prod_{j=1}^D\big[\widetilde y_j-\frac{2}{m}, \widetilde y_j+\frac{2}{m}\big]\Big)\leq C\cdot \frac{\log n}{n}.
 \end{equation*}
 Thus for any $y\in \m M$, there exists  $\widetilde y\in \m N_{\frac{1}{n}}$ so that 
 \begin{equation*}
    \frac{1}{n}\sum_{i=1}^n \bold{1}\Big(X_i\in \prod_{j=1}^D\big[ y_j-\frac{1}{m},   y_j+\frac{1}{m}\big]\Big)\leq  \frac{1}{n}\sum_{i=1}^n \bold{1}\Big(X_i\in \prod_{j=1}^D\big[\widetilde y_j-\frac{2}{m}, \widetilde y_j+\frac{2}{m}\big]\Big)\leq C\cdot \frac{\log n}{n}.
 \end{equation*}
 The proof for the first statement is then completed. For the second statement, 
 \begin{equation*}
 \begin{aligned}
   & \frac{1}{n}\sum_{i=1}^n f^\ast(X_i;X_{1:n})-\mb E[f(X;X_{1:n})]\\
    &=(\log n)^{-1}\Big[ \frac{1}{n}\sum_{i=1}^n \sum_{j=1}^n \frac{1}{m^\gamma}\sigma_j(X_i)-\mb E \big[ \sum_{j=1}^n \frac{1}{m^\gamma}\sigma_j(X)\big] \Big]\\
    &\geq  \frac{1}{m^\gamma\cdot \log n}\Big[ \sum_{j=1}^n \big(\frac{1}{n}\sigma_j(X_j)-\mu^\ast(\m A_j)\mb {E}_{\mu^\ast|_{\m A_j}} [\sigma_j(X)]\big) \Big]\\
    &\geq C n^{-\frac{\gamma}{d}} (\log n)^{-1}\underset{i\in [n]}{\inf}\bigg[\Big(k\big(\frac{1}{2}\big)\Big)^D-\mb {E}_{Y\sim \mu^\ast|_{\m A_i}} \Big[\prod_{l=1}^D k\big(m(Y_l-X_{i,l}+\frac{1}{2m})\big)\Big] \bigg],\\
     \end{aligned}
 \end{equation*}
 where the last inequality uses the fact $\mu^\ast(\m A_j)\leq \frac{1}{n}$ and $m=c_0 n^{\frac{1}{d}}$.  The desired conclusion then follows from the fact that $\mu^\ast$ is uniform distribution on $\m M$ and function $k(t)$ achieves its unique maximum at point $t=\frac{1}{2}$.
 
\subsection{Lemma~\ref{lemma:gen_model_as_mix} and its proof}\label{Sec:gen_model_as_mix}
\begin{lemma}\label{lemma:gen_model_as_mix}
  For any probability measure $\mu$ in the generative model class $\m S^\ast=\m S^\ast(D,d,\alpha,\beta,\ms O_M,L)$ defined in Appendix~\ref{sec:gen_model_class}, where the property 2 of  $\m S^\ast$ is satisfied with pairs $(\nu_{[m]}, G_{[m]})$ for $m\in [M]$, then $\mu$ can be expressed as a mixture of generative model with rejection sampling:
   \begin{align*}
        \mu =\sum_{m=1}^M w_{[m]} \,\mathcal{A}(G_{[m]},\nu_{[m]},\rho_m),
   \end{align*}
  where (1) $\{w_{[m]}\}_{m\in [M]} $ are the mixing weights given by $w_{[m]}=\mathbb{E}_{\mu}[\rho_m(X)]$ for $m\in [M]$ and $\{\rho_m\}_{m\in [M]}$ is the partition of unity subordinate to $\ms O_M$ defined in Section~\ref{sec:pou}; (2) $\mathcal{A}(G_{[m]}, \nu_{[m]}, \rho_m)$ is the probability measure induced by the data generating process where $X\sim [G_{[m]}]_{\#}\nu_{[m]}$ is accepted with probability $\rho_m(X)\in[0,1]$.
\end{lemma}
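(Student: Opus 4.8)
The plan is to realise $\mu$ through the partition of unity and then identify each localised piece with a single generative model equipped with rejection sampling. First I would record the two structural facts that make the decomposition legitimate. From property 1 in the definition of $\m S^{\ast}$, ${\rm supp}(\mu)\subseteq \mb B_L^D$, so the partition of unity $\{\rho_m\}_{m\in[M]}$ subordinate to $\ms O_M$ satisfies $\sum_{m=1}^M \rho_m(x)=1$ for every $x\in{\rm supp}(\mu)$; integrating this identity against $\mu$ gives $\sum_{m=1}^M w_{[m]}=1$ with $w_{[m]}=\mb E_\mu[\rho_m(X)]\ge 0$, so the $w_{[m]}$'s are valid mixing weights, and $\mu=\sum_{m=1}^M \rho_m\mu$ is a valid identity of finite Borel measures on $\mb R^D$ (here $\rho_m\mu$ is the nonnegative measure with Radon--Nikodym derivative $\rho_m$ with respect to $\mu$).

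The key step is the measure identity $\rho_m\mu=\rho_m\cdot[G_{[m]}]_{\#}\nu_{[m]}$ for each $m\in[M]$. This is a locality statement: by the construction of the partition of unity in Section~\ref{sec:pou}, ${\rm supp}(\rho_m)\subseteq \mb B_{r_m}(a_m)\subseteq \mb B_{r_m+1/L}(a_m)\subseteq \widetilde S_m$, while property 2(a) of $\m S^{\ast}$ gives $\mu|_{\widetilde S_m}=[G_{[m]}]_{\#}\nu_{[m]}$. Concretely, for any Borel set $A$, since $\rho_m\equiv 0$ off $\widetilde S_m$, $\int_A \rho_m\,\dd\big([G_{[m]}]_{\#}\nu_{[m]}\big)=\int_{A\cap \widetilde S_m}\rho_m\,\dd\big([G_{[m]}]_{\#}\nu_{[m]}\big)=\int_{A\cap \widetilde S_m}\rho_m\,\dd\mu=\int_A \rho_m\,\dd\mu$, where the middle equality uses $\mu|_{\widetilde S_m}=[G_{[m]}]_{\#}\nu_{[m]}$ and the outer ones use the support property of $\rho_m$.

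Next I would compute the normalising constant of $\mathcal{A}(G_{[m]},\nu_{[m]},\rho_m)$. By its definition, this constant is exactly the acceptance probability $\int \rho_m\,\dd\big([G_{[m]}]_{\#}\nu_{[m]}\big)$, and taking $A=\mb R^D$ in the previous identity shows it equals $\int \rho_m\,\dd\mu=w_{[m]}$. Hence, when $w_{[m]}>0$ we get $\mathcal{A}(G_{[m]},\nu_{[m]},\rho_m)=w_{[m]}^{-1}\,\rho_m\cdot[G_{[m]}]_{\#}\nu_{[m]}=w_{[m]}^{-1}\,\rho_m\mu$, so that $w_{[m]}\,\mathcal{A}(G_{[m]},\nu_{[m]},\rho_m)=\rho_m\mu$; when $w_{[m]}=0$ we have $\rho_m\mu=0$ (as $\rho_m\ge 0$), so the $m$-th term drops out regardless of how $\mathcal{A}$ is interpreted. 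Summing over $m$ and invoking $\mu=\sum_{m=1}^M\rho_m\mu$ yields $\mu=\sum_{m=1}^M w_{[m]}\,\mathcal{A}(G_{[m]},\nu_{[m]},\rho_m)$, as claimed.

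The whole argument is essentially bookkeeping; the only points needing care are the measure-theoretic justification of the locality identity $\rho_m\mu=\rho_m\cdot[G_{[m]}]_{\#}\nu_{[m]}$, which is precisely where the inclusion ${\rm supp}(\rho_m)\subseteq\widetilde S_m$ built into the definitions is used, and the degenerate indices with $w_{[m]}=0$, for which $\mathcal{A}$ need not be defined but is harmlessly absorbed by the zero weight. I do not expect a genuine obstacle here.
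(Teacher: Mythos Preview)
Your approach is the same as the paper's: decompose $\mu=\sum_m\rho_m\mu$ via the partition of unity, use ${\rm supp}(\rho_m)\subseteq\widetilde S_m$ together with $\mu|_{\widetilde S_m}=[G_{[m]}]_\#\nu_{[m]}$ to identify each localised piece, and then read off the rejection-sampling normaliser.

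There is one bookkeeping slip. In this paper $\mu|_{\widetilde S_m}$ denotes the \emph{normalised} (conditional) restriction: since $\nu_{[m]}\in\m P(\mb R^d)$, the pushforward $[G_{[m]}]_\#\nu_{[m]}$ is itself a probability measure. Consequently your identity $\int_{A\cap\widetilde S_m}\rho_m\,\dd([G_{[m]}]_\#\nu_{[m]})=\int_{A\cap\widetilde S_m}\rho_m\,\dd\mu$ is off by the factor $\mb P_\mu(X\in\widetilde S_m)$, and the acceptance probability of $\mathcal A(G_{[m]},\nu_{[m]},\rho_m)$ is $w_{[m]}/\mb P_\mu(X\in\widetilde S_m)$, not $w_{[m]}$. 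These two errors cancel, so your conclusion $w_{[m]}\,\mathcal A(G_{[m]},\nu_{[m]},\rho_m)=\rho_m\mu$ is still correct; just carry the $\mb P_\mu(X\in\widetilde S_m)$ factor through the intermediate steps, as the paper's proof does.
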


\begin{proof}
For each $m\in [M]$, since the partition of unity $\{\rho_m\}_{m\in [M]} $ is subordinate to the open cover $\ms O_M=\{\mb B_{r_m}(a_m)^\circ\}_{m \in {M}}$, by $\mb B_{r_m}(a_m)\subset \wt S_m$, we have ${\rm supp}(\rho_m)\subset \wt S_m$ for any $m\in [M]$. Then by the property that $\mu|_{\widetilde S_m} = [G_{[m]}]_\# \nu_{[m]}$,
 \begin{align*}
     w_{[m]} = \int \rho_m\,\dd\mu = \mb P(X\in\widetilde S_m) \,\int \rho_m\, \dd \big([G_{[m]}]_{\#}\nu_{[m]}\big) = \mb P(X\in\widetilde S_m)\,\int \dd \big(\rho_m\, [G_{[m]}]_{\#}\nu_{[m]}\big),
 \end{align*}
 where recall that for any measure $\nu$, $\rho_m\, \nu$ stands for the measure whose Radon-Nikodym derivative relative to $\nu$ is $\rho_m$.
Therefore, by the definition of $\mathcal{A}(G_{[m]}, \nu_{[m]}, \rho_m)$, we have that
 for any measurable function $f: \mathbb{R}^D\to \mathbb{R}$,
\begin{align*}
    \int f\, \dd \m A(G_{[m]},\nu_{[m]},\rho_m) =&\, \frac{\mb P(X\in\widetilde S_m)}{w_{[m]}}\,\int f \,\dd \big(\rho_m\, [G_{[m]}]_{\#}\nu_{[m]}\big) \\
    = &\, \frac{\mb P(X\in\widetilde S_m)}{w_{[m]}}\,\int f\,\rho_m \,\dd \big([G_{[m]}]_{\#}\nu_{[m]}\big).
\end{align*}
Moreover, we have, by using the partition of unity $\{\rho_m\}_{m\in [M]} $, that
     \begin{align*}
     &\int f\, \dd \mu =\sum_{m=1}^M\int  f\, \rho_m\, \dd \mu = \sum_{m=1}^M \mb P(X\in \widetilde S_m)\, \int  f\, \rho_m\, \dd \big([G_{[m]}]_\# \nu_{[m]}\big).
     \end{align*}
By combining the two preceding displays, we obtain $\mu =\sum_{m=1}^M w_{[m]} \,\mathcal{A}(G_{[m]},\nu_{[m]},\rho_m)$.
\end{proof}

\subsection{Lemma~\ref{le8} and its proof}\label{Sec:le8}
 \begin{lemma}\label{le8}
There exists a set $\{\omega^{(1)},\cdots , \omega^{(H)}\}\subset \{0,1\}^{[m^d]}$ such that $\log H \geq \frac{m^d}{8}-\log 2$ and for any $j,k\in [H]$ and $j\neq k$ it holds that  $\frac{m^d}{4}\leq \rho(\omega^{(j)},\omega^{(k)})\leq \frac{3m^d}{4}$, where $\rho(\omega,\omega')$ denotes the Hamming distance between $\omega$ and $\omega'$ on the hypercube.
 \end{lemma}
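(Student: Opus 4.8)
The plan is to realize $\{\omega^{(1)},\dots,\omega^{(H)}\}$ as a \emph{maximal} packing in the Hamming cube and then lower bound its cardinality by a sphere-packing (volume) argument. Write $N=m^d$ and identify $\{0,1\}^{[m^d]}$ with $\{0,1\}^N$ equipped with the Hamming distance $\rho$. Let $\mathcal C\subset\{0,1\}^N$ be a subset that is maximal (under inclusion) subject to the requirement that $N/4\le\rho(\omega,\omega')\le 3N/4$ for all distinct $\omega,\omega'\in\mathcal C$. The key structural observation is that $\rho(\omega,\omega')>3N/4$ is equivalent to $\rho(\bar\omega,\omega')<N/4$, where $\bar\omega$ denotes the bitwise complement of $\omega$; hence the set of points ``forbidden'' for a given $\omega$ — those at Hamming distance $<N/4$ from $\omega$ or $>3N/4$ from $\omega$ — is contained in the union of two Hamming balls of radius $N/4$, centred at $\omega$ and at $\bar\omega$, and so has cardinality at most $2\sum_{i\le N/4}\binom{N}{i}$.

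The steps I would carry out are: (i) by maximality of $\mathcal C$, every point of $\{0,1\}^N$ lies in the forbidden region of some element of $\mathcal C$ (any point is forbidden for itself, since $0<N/4$), so $2^N\le |\mathcal C|\cdot 2\sum_{i\le N/4}\binom{N}{i}$; (ii) apply the standard binary-entropy volume bound $\sum_{i\le\alpha N}\binom{N}{i}\le 2^{N H_2(\alpha)}$ for $\alpha\le 1/2$ (with $H_2(\alpha)=-\alpha\log_2\alpha-(1-\alpha)\log_2(1-\alpha)$) at $\alpha=1/4$, yielding $|\mathcal C|\ge 2^{\,N(1-H_2(1/4))-1}$; (iii) note $1-H_2(1/4)=\tfrac12-\tfrac34\log_2\tfrac43$, which exceeds $\tfrac1{8\ln 2}$ (numerically $0.1887>0.1803$), so that $2^{\,N(1-H_2(1/4))}=e^{N(1-H_2(1/4))\ln 2}\ge e^{N/8}$, giving $|\mathcal C|\ge\tfrac12 e^{N/8}$ and hence $\log H\ge N/8-\log 2=m^d/8-\log 2$ with $H=|\mathcal C|$. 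Rounding subtleties when $N/4\notin\mathbb Z$ are harmless, since $\sum_{i\le\lfloor N/4\rfloor}\binom{N}{i}\le 2^{N H_2(1/4)}$ still holds and the distance constraints only tighten.

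The main obstacle is obtaining the constant with the precise additive term $-\log 2$ rather than merely $-c$ for some $c>0$: a first-moment/union-bound argument over independent uniform codewords only delivers $\log H\gtrsim N/16$, and the alteration (expurgation) method delivers the still-suboptimal $\log H\ge N/8-2\log 2$; it is precisely the maximality/sphere-packing argument, combined with the strict slack in $1-H_2(1/4)>\tfrac1{8\ln 2}$, that produces the stated bound. Everything else — the volume bound, the complement identity, and the arithmetic — is routine.
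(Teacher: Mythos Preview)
Your proposal is correct and follows essentially the same route as the paper's proof: take a maximal two-sided packing, use the complement identity $\rho(\omega,\omega')>3N/4\Leftrightarrow\rho(\bar\omega,\omega')<N/4$ to cover the cube by $2H$ Hamming balls of radius $N/4$, and bound the ball volume. The only cosmetic difference is that the paper bounds the binomial tail $\mathbb P(\mathrm{Binomial}(N,1/2)\le N/4)$ directly by Hoeffding's inequality to get $\exp(-N/8)$, whereas you use the entropy bound $\sum_{i\le N/4}\binom{N}{i}\le 2^{NH_2(1/4)}$ followed by the numerical check $(1-H_2(1/4))\ln 2>1/8$; both yield exactly $H\ge\tfrac12 e^{N/8}$.
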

\begin{proof}
The proof of Lemma 8 is a simple modification of the proof of the Varshamov-Gilbert lemma~\citep{Tsybakov2009}. We include it here for completeness. Let $\{\omega^{(1)}, \cdots. \omega^{(H)}\}$ be the largest set satisfying that for any $j,k\in [H]$ and $j\neq k$, $\frac{m^d}{4}\leq \rho(\omega^{(j)},\omega^{(k)})\leq \frac{3m^d}{4}$. Let $\bar{\omega}=1-\omega$, it holds that 
\begin{equation*}
\{0,1\}^{m^d}\subset \left\{\bigcup\limits_{i=1}^H B_{\rm H}\left(w^{(i)},\frac{m^d}{4}\right)\right\}\cup  \left\{ \bigcup\limits_{i=1}^H B_{\rm H}\left(\bar{w}^{(i)},\frac{m^d}{4}\right)\right\},
\end{equation*}
where $B_{\rm H}(\omega,r)$ denote the ball centered at $\omega$ with radius $r$ with respect to the Hamming distance.
Therefore, we can obtain that
\begin{equation*}
\begin{aligned}
2^{m^d}&\leq  \sum_{i=1}^H \left|B_{\rm H}\left(w^{(i)},\frac{m^d}{4}\right)\right|+\sum_{i=1}^H\left| B_{\rm H}\left(\bar{w}^{(i)},\frac{m^d}{4}\right)\right|  \\
&=2H\sum_{i=0}^{\frac{m^d}{4}}\binom{m^d}{i}\\
&=2H\times2^{m^d}\times\mathbb{P}\left({\rm Binomial}(m^d, \frac{1}{2})\leq \frac{1}{4}m^d\right)\\
&=2H\times2^{m^d}\times\mathbb{P}\left({\rm Binomial}(m^d, \frac{1}{2})\geq \frac{3}{4}m^d\right)\\
&=2H\times2^{m^d}\times\mathbb{P}\left({\rm Binomial}(m^d, \frac{1}{2})-\frac{1}{2}m^d\geq \frac{1}{4}m^d\right)
\end{aligned}
\end{equation*}
Hoeffding's inequality~\citep{doi:10.1080/01621459.1963.10500830} then yields,
\begin{equation*}
2^{m^d}\leq 2\times2^{m^d}\times H \times\exp(-\frac{m^d}{8}),
\end{equation*}
implying $\log H\geq \frac{m^d}{8}-\log 2$.
\end{proof}

\subsection{Proof of Lemma~\ref{lemma2}}\label{Sec:Proof_lemma2}
We first prove the first part on the KL divergence upper bound.
Consider the following partition of ambient space $\mb R^D$:
\begin{equation*}
\begin{aligned}
&A_1=\Bigg\{x\in \mathbb{R}^d\, \Bigg|\, x_{1:d}\in \left[-\sqrt{\frac{1}{2d}}+\frac{1}{m}\sqrt{\frac{2}{d}},\ \sqrt{\frac{1}{2d}}+\frac{1}{m}\sqrt{\frac{2}{d}}\right]^d,\\
& \qquad\qquad\qquad\qquad x_{d+1}=G_{{\omega}^{(i)}}(x_{1:d}); \,x_{d+2:D}=\bold{0}_{D-d-1}\Bigg\}.\\
&A_2=\Bigg\{x\in \mathbb{R}^d\, \Bigg|\, x_{1:d}\in \mb B_1^d\setminus \left[-\sqrt{\frac{1}{2d}}+\frac{1}{m}\sqrt{\frac{2}{d}},\ \sqrt{\frac{1}{2d}}+\frac{1}{m}\sqrt{\frac{2}{d}}\right]^d; \\
& \qquad\qquad\qquad\qquad x_{d+1}=G_0(x_{1:d}); \,x_{d+2:D}=\bold{0}_{D-d-1}\Bigg\},\\
&A_3=\mathcal{M}_0\setminus\wt{\mathcal{M}}_0,\quad\mbox{and}\quad A_4=\mathbb{R}^D \setminus (A_1\cup A_2\cup A_3).
 \end{aligned}
\end{equation*}
 Under this partition, we claim that for any fixed $i\in [H]$, 
\begin{enumerate}
\item if  $x\in A_1$, then $\frac{d \mu_{i}}{d\bar{\mu}}(x)=2$;
\item if $x\in A_2\cup A_3$, then $\frac{d \mu_{i}}{d\bar{\mu}}(x)=1$;
\item for $A_4$, then $\mu_i(A_4)=\bar\mu(A_4)=0$,
 \end{enumerate}
where recall that $\frac{\dd P}{\dd Q}$ denotes the Radon–Nikodym derivative of $P$ with respect to $Q$. 

In fact, by construction, all $\mu_i$'s are the same outside $A_1$, which leads to 2 and 3. To see 1, note that
by construction, for each index $\xi\in\{0,1\}^{[m]^d}$, there are equal numbers of $0$'s and $1$'s in $\{{\omega}_{\xi}^{(0)},\cdots, {\omega}_{\xi}^{(H)}\}$. So for each $x\in A_1$, half $\mu_i(\dd x)$'s (density with respect to the volume measure of $\m M_i$) are equal and the rest half of $\mu_i(\dd x)$'s are zero, implying that $\frac{d \mu_{i}}{d\bar{\mu}}(x)=2$.
Now let us bound $D_{\rm KL}(\mu_{i}\,||\,\bar{\mu})$. Using above properties, we obtain $D_{\rm KL}(\mu_{i}\,||\,\bar{\mu})=\int_{A_1\cup A_2\cup A_3} \log \frac{d\mu_i}{d\bar{\mu}}\, \dd\mu_i=\log 2 \int_{A_1}   \dd\mu_i\leq \log 2$.
 
Next, we upper bound $\underset{j,k\in [H]\atop j\neq k} {\inf} d_{\gamma}(\mu_{j},\mu_{k})$ by constructing discriminator $f$ for discriminating $\mu_{j}$ and  $\mu_{k}$ so that $\int f(x)\, \dd\mu_{j}-\int f(x)\,\dd\mu_{k}$ is large, for each distinct index pair $j$ and $k$. Fix any pair of $j,k\in [H]$ with $j\neq k$, by construction we have $\rho(\omega^{(j)}, \omega^{(k)}) \geq \frac{m^d}{4}$. Define
\begin{equation*}
\widetilde{f}(z)=\sum_{\xi\in[m]^d}\left(\frac{1}{m}\right)^{\gamma} v_{\xi} \psi_{\xi}(z),
\end{equation*}
where
\begin{equation*}
v_{\xi}=\left\{\begin{array}{l}
1, \quad  \omega_{\xi}^{(j)}=1\text{ and }\omega_{\xi}^{(k)}=0;\text{ or }\omega_{\xi}^{(j)}=\omega_{\xi}^{(k)},\\
-1, \quad   \omega_{\xi}^{(j)}=0\text{ and }\omega_{\xi}^{(k)}=1.
\end{array}\right.
\end{equation*}
By the definition of $g_{\omega}(z)$, there exists a constant $c$ such that  for any $j \in[H]$,  it holds that ${\rm supp}(\mu_{\omega^{(j)}})\subset  \mathbb{R}^d\times\{x_{d+1}: |x_{d+1}-\sqrt{2-\|x_{1:d}\|^2}|\leq \frac{c}{m^{\beta}}\}\times \{(x_{d+2},\cdots,x_{D})^T=\bold{0}_{D-d-1}\}$.  Define function $h: \mathbb{R}\to \mathbb{R}$ by $h(x)=\max(- \frac{c}{m^{\beta}},\,\min(\frac{c}{m^{\beta}},x))$, then $h$ is a 1-Lipschitz function over $\mathbb{R}$. Recall that $\chi:\mathbb{R}\to \mathbb{R}$ is defined by $\chi(t)=e^{-1/t}$ for $t>0$ and $\chi(t)=0$ for $t\leq 0$. For $z\in \mathbb{R}^d$, we define $q(z)=\sqrt{2-\|z\|^2}\cdot\frac{\chi(3/2-\|z\|_2)}{\chi(3/2-\|z\|_2)+\chi(\|z\|_2-1)}$. Note that when $z\in \mb B_1^d$, $q(z)=\sqrt{2-\|z\|^2}$ and we multiply $\sqrt{2-\|z\|^2}$ by $\frac{\chi(3/2-\|z\|_2)}{\chi(3/2-\|z\|_2)+\chi(\|z\|_2-1)}$ to smoothly extend $\sqrt{2-\|z\|^2}$ from $\mb B_1^d$ to the entire space. Now define 
\begin{equation*}
f(x)=\widetilde{f}(x_{1:d})\, h\big(x_{d+1}-q(x_{1:d})\big)\, m^{\gamma-\gamma\beta+\beta}.
\end{equation*}
We then prove that $f$ is $\gamma$-smooth with bounded H\"{o}lder norm. Since for any $x, x' \in \mathbb{R}^D$, it holds that $|h\big(x_{d+1}-q(x_{1:d})\big)|\leq \frac{c}{m^{\beta}}$ and $|h\big(x'_{d+1}-q(x'_{1:d})\big)|\leq \frac{c}{m^{\beta}}$. Therefore, we have
\begin{equation*}
|h\big(x_{d+1}-q(x_{1:d})\big)-h\big(x_{d+1}-q(x'_{1:d})\big)|\leq (2c)^{1-\gamma} \frac{1}{m^{\beta(1-\gamma)}} |h\big(x_{d+1}-q(x_{1:d})\big)-h\big(x'_{d+1}-q(x'_{1:d})\big)|^{\gamma}.
\end{equation*}
Moreover, for any $z,z'\in \mathbb{R}^d$, there exists a constant $c_1$ such that 
\begin{equation*}
|\widetilde{f}(z)-\widetilde{f}(z')|\leq c_1 \frac{1}{m^{\gamma-1}}\|z-z'\|_2.
\end{equation*}
Therefore, in the case $\|z-z'\|_2\leq \frac{1}{m}$, then we have $\|z-z'\|_2\leq \frac{1}{m^{1-\gamma}}\|z-z'\|_2^\gamma$, and thus $|\widetilde{f}(z)-\widetilde{f}(z')|\leq c_1 \|z-z'\|_2^\gamma$; in the case $\|z-z'\|_2> \frac{1}{m}$, since there exists a constant $c_2$ such that $\sup_{z\in \mathbb{R}^d}|\widetilde{f}(z)|\leq \frac{c_2}{m^{\gamma}}$, it holds that $|f(z)-f(z')|\leq 2c_2 \|z-z'\|_2^{\gamma}$. Putting pieces together, we have that for any $x, x' \in \mathbb{R}^D$, there exists a constant $c_3$ such that 
\begin{equation*}
\begin{aligned}
|f(x)-f(x')|&\leq m^{\gamma-\gamma\beta+\beta}\bigg(\Big|\widetilde{f}(x_{1:d}) \cdot \Big(h\big(x_{d+1}-q(x_{1:d})\big)-h\big(x'_{d+1}-q(x'_{1:d})\big)\Big)\Big|\\
&\qquad\qquad+\Big|h\big(x'_{d+1}-q(x'_{1:d})\big) \cdot (\widetilde{f}(x_{1:d})-\widetilde{f}(x_{1:d}'))\Big|\bigg)\\
&\leq (2c)^{1-\gamma}\,c_2 \, \|x_{1:d+1}-x_{1:d+1}'\|^{\gamma}+c\,(2c_2\vee c_1)\, m^{\gamma-\gamma\beta\,}\|x_{1:d}-{x_{1:d}'}\|^{\gamma}\\
&\leq c_3\,\|x-x'\|^{\gamma},
\end{aligned}
\end{equation*}
where the last inequality is due to $\beta>1$. Consequently, there exists a constant $c_4$ such that $\frac{1}{c_4}f(x)\in C^{\gamma}_1(\mathbb{R}^D)$ (recall that we only consider $\gamma<1$). Furthermore, since for any $\omega\in \{0,1\}^{[m]^d}$, the support of $g_{\omega}(z)$ is contained in $B_{{3}/{4}}^d$, we can get
\begin{equation*}
\begin{aligned}
d_{\gamma}(\mu_{j}, \mu_{k})&\geq \frac{1}{c_4} \cdot\Big(\int f(x) \,\dd\mu_{j}-\int f(x) \,\dd\mu_{k}\Big)\\
&=\frac{\wt{C}}{c_4\cdot C}\, m^{\gamma-\gamma\beta+\beta} \Big(\int \widetilde f(x) \,\dd[\mu_{j}|_{{M}_{\omega^{(j)}}}]-\int \widetilde f(x) \,\dd[\mu_{k}|_{{M}_{\omega^{(k)}}}]\Big)\\
&=\frac{\wt{C}}{c_4\cdot C\, }m^{\gamma-\gamma\beta+\beta} \int \widetilde{f}(z)\cdot \big(g_{\omega^{(j)}}(z)-g_{\omega^{(k)}}(z)\big) \,\nu_0(z)\,\dd z\\
&=\frac{\wt{C}}{c_4\cdot C}\int m^{-\gamma\beta} \sum_{\xi\in [m]^d}v_{\xi} \psi_{\xi}(z) \sum_{\xi_1\in[m]^d} \big(\omega_{\xi_1}^{(j)}-\omega_{\xi_1}^{(k)}\big)\,\psi_{\xi_1}(z)\, \nu_0(z)\, \dd z \\
&\geq \frac{\wt{C}}{c_4\cdot C}\, m^{-\gamma\beta}\, \underset{z\in B_{{3}/{4}}^d}{\inf}\nu_0(z)\, \sum_{\xi\in[m]^d}\int |\omega_{\xi}^{(j)}-\omega_{\xi}^{(k)}| \,\psi_{\xi}^2(z)\,\dd z
\geq c'\,n^{-\frac{\gamma\beta}{d}},
 \end{aligned}
\end{equation*}
which completes the proof.

   \subsection{Proof of Lemma~\ref{lemmaboundaryless}}
 Fix any $\mu\in \m P^{\ast}$, let $\m M ={\rm supp}(\mu)$. Then there exist positive constants $\tau,L$ such that 
 \begin{enumerate}
     \item $\m M \subset \mb B_L^D$;
     \item for any $x_0\in \m M$, there exists $\lambda=\lambda(x_0)\in \Lambda$ such that
     \begin{enumerate}
         \item $U_{\lambda}\supset \m M\cap B_{\tau}(x_0)$ and $V_{\lambda}=\phi_{\lambda}(U_{\lambda})\supset B_\tau(\phi_{\lambda}(x_0))$;
         \item $\phi_{\lambda}^{-1}\in C^{\beta}_L(V_{\lambda})$ and $\mu\circ \phi_{\lambda}^{-1}\in C^{\alpha}_L(V_{\lambda})$, where we also use $\mu$ to denote the density of probability measure $\mu$ with respect to the volume measure of $\m M$;
         \item $[\inf_{z\in V_{\lambda}}\lambda_{\rm min}(J_{\phi^{-1}_{\lambda}}(z)^TJ_{\phi^{-1}_{\lambda}}(z))]^{-1}\leq L$.
          \end{enumerate}
 \end{enumerate}
  We will construct local parametrization $(\mb B_{r'}(x_0)\cap\m M, Q_{x_0})$ of $\m M$ in a neighborhood of $x_0$, where $Q_{x_0}(x)$ projects $x$ to the tangent space of $\m M$ at $x_0$  and $Q_{x_0}^{-1}$ can be $\beta$-smoothly extended to whole spaces with bounded H\"{o}lder norms. More specifically, we have the following lemma.
 \begin{lemma}\label{parametrization}
  There exist positive constants $(\tau_1,L_1)$ such that for any $x_0\in \m M$, define $Q_{x_0}: \mathbb{R}^D\to \mathbb{R}^d$ as $Q_{x_0}(x)=W_{x_0}^T(x-x_0)$ where $W_{x_0}\in \mathbb{R}^{D\times d}$ is an arbitrary orthonormal basis of the tangent space of $\m M$ at $x_0$, then there exists a set $\widetilde U_{x_0} $ satisfying $\mb B_{\tau_1}(x_0)\cap \m M\subset \widetilde U_{x_0}\subset  \m M$ and function $G_{x_0}\in C_{L_1}^{\beta}(\mb R^d;\mb R^D)$ so that  
  \begin{enumerate}
      \item $G_{x_0}(\mb B_1^d)=\widetilde U_{x_0}$ and for any $z\in \mb B_1^d$,    $Q_{x_0}(G_{x_0}(z))=z$; 
      \item $\mu\circ G_{x_0}|_{\mb B_1^d}\in C^{\alpha}_{L_1}(\mb B_1^d)$ and for any $z\in \partial \mb B_1^d$,  $\|G_{x_0}(z)-x_0\|\geq \tau_1$.
      \end{enumerate}
 \end{lemma}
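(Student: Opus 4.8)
\textbf{Proof proposal for Lemma~\ref{parametrization}.}

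The plan is to work locally at a fixed $x_0 \in \m M$, pass to the ambient chart $(U_\lambda, \phi_\lambda)$ guaranteed by the definition of $\m P^\ast$ (with $\lambda = \lambda(x_0)$ so that $U_\lambda \supset \m M \cap \mb B_\tau(x_0)$), and then reparametrize so that the new coordinate map is the orthogonal projection onto the tangent space $T_{x_0}\m M$. Concretely, let $W_{x_0} \in \mb R^{D\times d}$ be an orthonormal basis of $T_{x_0}\m M$ and set $Q_{x_0}(x) = W_{x_0}^T(x - x_0)$. Composing with the existing chart, consider the map $z \mapsto Q_{x_0}\big(\phi_\lambda^{-1}(z)\big)$ from a neighborhood of $\phi_\lambda(x_0)$ in $\mb R^d$ into $\mb R^d$. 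Its differential at $\phi_\lambda(x_0)$ is $W_{x_0}^T \bold{J}_{\phi_\lambda^{-1}}(\phi_\lambda(x_0))$, which is invertible because the columns of $\bold{J}_{\phi_\lambda^{-1}}(\phi_\lambda(x_0))$ span $T_{x_0}\m M$ and $W_{x_0}$ is an orthonormal basis of that same space; moreover its smallest singular value is bounded below by a constant depending only on $L$ (using condition 2(c) in the definition of $\m P^\ast$ and the fact that the projection of a $d$-dimensional subspace onto itself is the identity). By the quantitative inverse function theorem (with uniform constants, since all the relevant $C^\beta$ norms and Jacobian lower bounds are controlled by $L$), there is a radius $\tau_1 > 0$, independent of $x_0$, on which $z \mapsto Q_{x_0}(\phi_\lambda^{-1}(z))$ has a $C^\beta$ inverse with $C^\beta$ norm bounded by a constant $L_1$. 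Call this inverse $h_{x_0}$ and set $G_{x_0}^{\rm loc} = \phi_\lambda^{-1}\circ h_{x_0}$, which is $C^\beta$ with controlled norm and satisfies $Q_{x_0}(G_{x_0}^{\rm loc}(y)) = y$ on a small ball; after an affine rescaling of the domain we may arrange the domain to be $\mb B_1^d$, define $\widetilde U_{x_0} = G_{x_0}^{\rm loc}(\mb B_1^d) \subset \m M$, and check that $\mb B_{\tau_1}(x_0)\cap \m M \subset \widetilde U_{x_0}$ (shrinking $\tau_1$ if needed, using that $Q_{x_0}$ is bi-Lipschitz on $\m M$ near $x_0$ with constants depending only on $L$).

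Next I would extend $G_{x_0}^{\rm loc}$ from $\mb B_1^d$ to all of $\mb R^d$ as a $C^\beta$ map with bounded H\"older norm: this is a routine application of a Whitney-type / Stein extension operator for $C^\beta$ functions on a ball, applied componentwise, with the extension norm controlled by a dimensional constant times $L_1$; denote the resulting map $G_{x_0} \in C^\beta_{L_1}(\mb R^d;\mb R^D)$ (enlarging $L_1$ to absorb the extension constant). Property 1 then holds by construction on $\mb B_1^d$. For property 2, the Jacobian estimate $0 < 1/L \le$ singular values of $\bold{J}_{\phi_\lambda^{-1}}$ transfers (via the chain rule and the bounds on $h_{x_0}$) to a uniform lower bound on the singular values of $\bold{J}_{G_{x_0}}$ on $\mb B_1^d$, so the area formula change of variables gives $\mu\circ G_{x_0}|_{\mb B_1^d}(z) = \big(\mu\circ\phi_\lambda^{-1}\big)(h_{x_0}(z)) \cdot \sqrt{\det(\bold{J}_{\phi_\lambda^{-1}}^T\bold{J}_{\phi_\lambda^{-1}})}(h_{x_0}(z)) \cdot \big|\det \bold{J}_{h_{x_0}}(z)\big|$ up to the volume normalization; each factor is $C^\alpha$ with controlled norm (here $\alpha \le \beta - 1$ is exactly what makes the Jacobian determinant factors $C^\alpha$), so $\mu\circ G_{x_0}|_{\mb B_1^d}\in C^\alpha_{L_1}(\mb B_1^d)$ after enlarging $L_1$. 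Finally $\|G_{x_0}(z) - x_0\| \ge \tau_1$ for $z \in \partial \mb B_1^d$ follows from the bi-Lipschitz lower bound: on $\partial\mb B_1^d$ we have $\|Q_{x_0}(G_{x_0}(z))\| = \|z\| = 1$, hence $\|G_{x_0}(z) - x_0\| \ge \|Q_{x_0}(G_{x_0}(z) - x_0)\| = 1$ since $Q_{x_0}$ is an orthogonal projection, and then absorbing the rescaling constant into the definition of $\tau_1$.

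The main obstacle is ensuring that \emph{all} constants ($\tau_1$, $L_1$) are uniform over $x_0 \in \m M$ rather than merely existing for each $x_0$ separately. This requires a careful bookkeeping of how the inverse function theorem radius and the extension operator norm depend only on: (i) the uniform $C^\beta_{L^\ast}$ bound on $\phi_\lambda^{-1}$, (ii) the uniform lower bound $1/L^\ast$ on the singular values of $\bold{J}_{\phi_\lambda^{-1}}$, (iii) the uniform covering radius $1/L^\ast$ in condition 3 of the definition of $\m P^\ast$, and (iv) compactness of $\m M$ to control the modulus of continuity of $x_0 \mapsto W_{x_0}$ (or, better, avoiding any dependence on $W_{x_0}$ beyond orthonormality, since the projection estimate is basis-independent). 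I would handle this by phrasing the quantitative inverse function theorem and the extension lemma with explicit constants and verifying each input is bounded by a fixed power of $L^\ast$ and a dimensional constant; once this is done the rest is a sequence of chain-rule and change-of-variables computations that are routine given $\alpha \le \beta - 1$.
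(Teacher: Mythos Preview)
Your approach is essentially the paper's: compose the given chart $\phi_\lambda^{-1}$ with the tangent projection $Q_{x_0}$, invert locally, extend by a Stein/Sobolev operator, and read off the properties. Where the paper differs is that instead of invoking a black-box quantitative inverse function theorem, it constructs the inverse explicitly by a Newton-type iteration $z^{k}=z^{k-1}-(W^T\bold{J}_{\psi}(z^{k-1}))^{-1}\big(W^T(\psi(z^{k-1})-x_0)-y\big)$, and tracks the recursion constants directly; this makes the uniform radius $\tau_2$ and the $C^\beta$ bound on the inverse fully explicit in terms of $L^\ast$ and handles the H\"older case $\beta\in(1,2)$ without appealing to an outside H\"older inverse function theorem. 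Your identification of uniformity in $x_0$ as the main obstacle is correct, and the paper's iteration is precisely how it resolves it.

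One genuine slip: your formula for $\mu\circ G_{x_0}$ is wrong. The object $\mu\circ G_{x_0}$ is simply the composition $z\mapsto \mu(G_{x_0}(z))=(\mu\circ\phi_\lambda^{-1})(h_{x_0}(z))$; there are no Jacobian factors, since we are evaluating the density (with respect to the volume measure of $\m M$) at a point, not computing a pushforward. The expression you wrote is what would arise if you were computing the Lebesgue density of $[Q_{x_0}]_\#\mu$. Your conclusion survives because the correct expression is a composition of a $C^\alpha$ function with a $C^\beta$ map ($\beta\ge\alpha+1$), which is $C^\alpha$ with controlled norm---in fact simpler than the argument you gave. Finally, note that both you and the paper rescale the domain to $\mb B_1^d$; strictly speaking this forces $Q_{x_0}(G_{x_0}(z))=\tau_2 z$ rather than $z$ unless $Q_{x_0}$ is rescaled too (as the paper in fact does in its proof), so be aware of that bookkeeping when matching the lemma statement.
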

 \noindent Then consider any open cover $\ms O_{M}=\{\mb B_{r_m}(a_m)^\circ\}_{m\in [M]} $ of $\mb B_{L+1}^D$ where $\min\{r_1,r_2,\cdots,r_M\}\leq \frac{\tau_1}{4}$. For an  arbitrary $m\in [M]$ such that $\mb B_{r_m+\frac{\tau_1}{4}}(a_m)\cap \mathcal{M}\neq \emptyset$, there exists $x_0\in \mb B_{ r_m+\frac{\tau_1}{4}}(a_m)\cap\mathcal{M}$ such that $\mb B_{\tau_1}(x_0)\supset \mb B_{r_m+\frac{\tau_1}{4}}(a_m)$. Then by Lemma~\ref{parametrization}, $\mb B_{\tau_1}(x_0)\cap \m M\subset\widetilde{U}_{x_0}$ and  we can let $\widetilde{S}_m$ be any set containing  $\mb B_{r_m+\frac{\tau_1}{4}}(a_m)$ such that $\widetilde{S}_m\cap \m M=\widetilde{U}_{x_0}$. Then we can define $G_{[m]}=G_{x_0}$.  Moreover,  by Sobolev extension theorem~\citep{stein2016singular}, there exists $\bar{Q}_{x_0}\in C^{\beta}_{L_1}(\mathbb{R}^D)$  such that $\bar{Q}_{x_0}|_{\widetilde{U}_{x_0}}=Q_{x_0}|_{\widetilde{U}_{x_0}}$ and we can define $Q_{[m]}=\bar{Q}_{x_0}$ Therefore, by the assumption that the density $\mu$ w.r.t. the volume measure of $\mathcal{M}$  is bounded  from above and below, $\mu|_{\widetilde{S}_m}$ can be written as the push forward measure $[{G}_{[m]}]_{\#}\nu_{[m]}$, where 
\begin{equation*}
    \nu_{[m]}(z)=\frac{\mu(G_{[m]}(z))\sqrt{{\rm det}(\bold{J}_{G_{[m]} }(z)^T\bold{J}_{G_{[m]}}(z))}}{\int_{\mb B_1^d}\mu(G_{[m]}(z))\sqrt{{\rm det}(\bold{J}_{G_{[m]}}(z)^T\bold{J}_{G_{[m]}}(z))}\,\dd z},\quad z\in \mb B_1^d.
\end{equation*}
 Then by Lemma~\ref{parametrization}, $|\log\nu_{[m]}(z)|$ is uniformly bounded above over $z\in \mb B_1^d$ and $\nu_{[m]}\in C^{\alpha}_{L_1}(\mb B_1^d)$ for a constant $L_1$ (recall that $\beta\geq \alpha+1$). Moreover, by the Caffarelli’s global regularity theory~\citep{Villani2009,10.2307/2118564} which states that for $\alpha$-smooth probability densities $\mu_P,\mu_Q$ supported on $\mb B_1^d$ and bounded from above and below on their supports,  the optimal transport from $\mu_P$ to $\mu_Q$ is $(\alpha+1)$-smooth, we can write $\nu_{[m]}$  as a push forward measure ${V_{[m]}}_{\#}\nu_0$ with $V_{[m]}$ being $\alpha$-smooth and $\nu_0$ being the uniform distribution on $\mb B_1^d$. Furthermore,  by the third statement of Lemma~\ref{parametrization} and Lipschitz-continuity of $G_{[m]}$, there exists $0<\epsilon<1$ so that $G_{[m]}(\mb B_1^d\setminus \mb B_{1-\epsilon}^d)\cap \mb B_{r_m}(a_m)=\emptyset$. On the other hand, for any  $m\in [M]$ such that $\mb B_{r_m+\frac{\tau_1}{4}}(a_m)\cap \mathcal{M}= \emptyset$, we can choose $\wt{S}_m=\mb B_{r_m+\frac{\tau_1}{4}}(a_m)\cup \wt{S}_{m^\ast}$, where $m^\ast$ can be any $m^\ast\in [M]$ so that  $\mb B_{r_{m^\ast}+\frac{\tau_1}{4}}(a_{m^\ast})\cap \mathcal{M}\neq \emptyset$. Proof is completed.
 
 \subsection{Proof of Lemma~\ref{parametrization}}
Fix an $x_0\in \m M$, write $z_0=\phi_{\lambda(x_0)}(x_0)$, $\psi_{\lambda(x_0)}=\phi_{\lambda(x_0)}^{-1}$ and $W_{x_0}\in \mathbb{R}^{D\times d}$ be an arbitrary orthonormal basis of the tangent space of $\m M$ at $x_0$.
 For ease of notation, we suppress the subscript $\lambda(x_0)$ and $x_0$ in $\phi_{\lambda(x_0)}, \psi_{\lambda(x_0)}, W_{x_0}$ in the following analysis for a fixed $x_0\in \m M$. 
 
 Recall $Q_{x_0}: \mathbb{R}^D\to \mathbb{R}^d$ defined as $Q_{x_0}(x)=W^T(x-x_0)$. Write $Q=Q_{x_0}$, then given  $y=Q(x)$ for some $x\in \m M$, we may recover $x$ by considering the solution  $\bar{z}$ of the following equation with respect to $z$:
\begin{equation}\label{eqnsolvez}
    W^T(\psi(z)-x_0)=y,
\end{equation}
and $x=\psi(\bar{z})$ if a unique solution exists. Now we show that equation~\eqref{eqnsolvez} has a unique solution in a small neighborhood of $z_0$ when $\|y\|$ is small. 

Firstly, by the $\beta$-smoothness of $\psi$, there exists a positive constant $L_1$ such that for any $y\in \mb B_{\tau}^d$,
\begin{equation}\label{eqnfnorm}
    \|(W^T\bold{J}_{\psi}(z_0+y))^{-1}\|_F\leq L_1,
\end{equation}
and for any $y,y'\in \mb B_{\tau}^d$
\begin{equation}\label{eqntaylorexp}
    \|W^T\psi(z_0+y)-W^T\psi(z_0+y')-W^T\bold{J}_{\psi}(z_0+y')(y-y')\|_2\leq L_1\|y-y'\|^{\beta},
\end{equation}
where recall $\bold{J}_f(x)$  denotes the Jacobian matrix of function $f$ at $x$. Then for $y\in \mb B_{\tau_2}^d$, where $\tau_2$ is a small enough positive constant that will be chosen later, we construct a solution $\bar{z}$ to equation~\eqref{eqnsolvez} as follows:  define $z^0=z_0$ and recursively define $z^k=z^{k-1}-(W^T\bold{J}_{\psi}(z^{k-1}))^{-1}(W^T(\psi(z^{k-1})-x_0)-y)$ for $k=1,2,\cdots$. Then define a sequence $\{b_k\}_{k\in \mb N}$ as $b_k=L_1^{\frac{1+\beta}{1-\beta}}(L_1^{\frac{1+\beta}{\beta-1}}\tau_2)^{\beta^k}$. Then if $\tau_2$ satisfies that $\sum_{k=0}^{+\infty} L_1b_k\leq \frac{\tau}{2}\wedge \frac{1}{4L_1^{2(\beta-1)}}$, we can obtain that $\|W^T(\psi(z^k)-x_0)-y\|_2\leq b_k$ and $\|z^{k+1}-z^{k}\|_2\leq L_1b_k$ for $k\in \mathbb{N}$. Hence $\lim_{k\to +\infty}z^k=\bar{z}$ exists, $ W^T(\psi(\bar{z})-x_0)=y$ and $\|\bar{z}-z_0\|_2\leq \tau'=\frac{\tau}{2}\wedge \frac{1}{4L_1^{2(\beta-1)}}$.

Now we show that for any $y\in \mb B_{\tau_2}^d$, the equation $W^T(\psi(z)-x_0)=y$ has a unique solution on $\mb B_{\tau'}(z_0)$. Suppose there are two solutions $\bar{z}, \bar{z}'$ on $\mb B_{\tau'}(z_0)$. Then $\|\bar{z}-\bar{z}'\|_2\leq2\tau'\leq \frac{1}{2L_1^{2(\beta-1)}} $ and
$W^T(\psi(\bar{z})-\psi(\bar{z}'))=0$. Then by equation~\eqref{eqntaylorexp}, we can obtain that
\begin{equation*}
    \|W^T\bold{J}_{\psi}(\bar{z}')(\bar{z}-\bar{z}')\|_2\leq L_1\|\bar{z}-\bar{z}'\|^{\beta},
\end{equation*}
which leads to 
\begin{equation*}
    \frac{\|\bar{z}-\bar{z}'\|_2}{\|(W^T\bold{J}_{\psi}(\bar{z}'))^{-1}\|_2}\leq L_1 \|\bar{z}-\bar{z}'\|^{\beta}.
\end{equation*}
Then combined with equation~\eqref{eqnfnorm}, we can obtain that \begin{equation*}
    \|\bar{z}-\bar{z}'\|_2\geq \frac{1}{L_1^{2(\beta-1)}},
\end{equation*}
which cause contradiction.  So we can define a function $ q: \mb B_{\tau_2}^d\to \mathbb{R}^d$ so that $q(y)$ is defined as the unique solution of $W^T(\psi(z)-x_0)=y$ over $z\in \mb B_{\tau'}(z_0)$. Consider $\widetilde{V}=\mb B_{\tau_2}^d$ and $\widetilde{U}=\psi(q(\widetilde{V}))=\{\psi(z): z\in \mb B_{\tau'}(z_0),\, \|W^T(\psi(z)-x_0)\|_2\leq \tau_2\}$. Next we show that there exists a positive constant $\tau_1$ such that $\mb B_{\tau_1}(x_0)\cap \m M\subset\widetilde{U}$.  First we know that $\mb B_\tau(x_0)\cap \m M\subset U=U_{\lambda(x_0)}$.  Then consider $\tau_1\leq \tau$ and any $x\in \mb B_{\tau_1}(x_0)\cap \m M$, we have $\|x-\psi(z_0)\|=\|x-x_0\|\leq \tau_1$. Define $\widetilde{z}^0=z_0$ and recursively define $\widetilde{z}^k=\widetilde{z}^{k-1}-\bold{J}_{\psi}(\widetilde{z}^{k-1})(\psi(\widetilde{z}^{k-1})-x)$ for $k\in \mathbb{N}^+$, then similar to the above analysis for the function $q(y)$, when $\tau_1$ is small enough, we can obtain that $\phi(x)=\lim_{k\to +\infty}\widetilde{z}^k$ and $\|\phi(x)-z_0\|\leq \tau'\wedge \frac{\tau_2}{\sup_{z\in \mb B_{\tau'}(z_0)}\|W^T\bold{J}_{\psi}(z)\|_F}$. Thus, $\mb B_{\tau_1}(x_0)\cap \m M\subset\widetilde{U}$ for a small enough positive constant $\tau_1$.

Then define 
$G: \mb B_1^d\to \widetilde{U}$ as $G(z)=\psi(q(\tau_2z))$  and $G^{-1}=Q: \widetilde{U}\to \mb B_1^d$ as $Q(x)=\frac{1}{\tau_2}W^T(x-x_0)$.  Since $\bold{J}_q(z)=(W^T\bold{J}_{\psi}(q(z)))^{-1}$, we can obtain that $\bold{J}_G(z)=\tau_2\bold{J}_{\psi}(q(\tau_2z))(W^T\bold{J}_{\psi}(q(\tau_2z)))^{-1}$. Then by $\psi\in C^{\beta}_{L}(V)$ and  $[\inf_{z\in V}\lambda_{\rm min}(\bold{J}_{\psi}(z)^T\bold{J}_{\psi}(z))]^{-1}\leq L$, we can obtain that $G\in C^{\beta}_{L^\ast}(\mb B_1^d)$ and $q\in C^{\beta}_{L^\ast}(\mb B_{\tau_2}^d)$ for a constant $L^\ast$, then by Sobolev extension theorem~\citep{stein2016singular}, there exists $\bar{G}\in C^{\beta}_{L^\ast}(\mathbb{R}^d)$ such that $\bar{G}|_{\mb B_1^d}=G$.  Moreover, by the assumption that $\mu\circ \psi\in C^{\alpha}_L(V)$, we can obtain  $\mu\circ G \in C^{\alpha}_{L^\ast}(\mb B_1^d)$.
Finally, since $G(\partial \mb B_1^d)=\{\psi(z): z\in \mb B_{\tau'}(z_0),\, \|W^T(\psi(z)-x_0)\|_2=\tau_2\}$. We can obtain that for any $x\in G(\partial \mb B_1^d)$, $\|x-x_0\|\geq \tau_2$. Proof is completed. 
\subsection{Proof of Lemma~\ref{lemma:pointwise_error}}\label{Sec:proof_lemma:pointwise_error}
Let $\delta=b_1\left(\frac{\log \wt{n}}{\wt{n}}\right)^{\frac{1}{d+L_0}}$, where $b_1\geq 1$ is a sufficiently large positive constant to be specified later. Let $\mathcal{A}_z=\{z\in \mb B_1^d:\,\|z\|_2\leq 1-\delta,\,  G^{\ast}_\sm(z)\in \mb B_{r_m+0.25/L}(a_m)\}$ be a proper subset of $\mb B_1^d$. Let $h_{\wt{n}}=(\frac{\log \wt{n}}{\wt{n}})^{\frac{1}{d}}$ and $\m N_{h_{\wt{n}}}\subset \mathcal{A}_z$ be a minimal $h_{\wt{n}}$-covering set of $\mathcal{A}_z$ under the $\ell_2$ distance, where its cardinality satisfies $|\m N_{h_{\wt{n}}}|\leq C \frac{\wt{n}}{\log \wt{n}}$. For any $\widetilde z\in \m N_{h_{\wt{n}}}$, define $\delta_{\widetilde z}=b_2 (\frac{\log \wt{n}}{\wt{n}(1-\|{\widetilde z}\|_2)^{L_0}})^{\frac{1}{d}}$ with $b_2=\frac{1}{2} b_1^{\frac{L_0}{d}+1}$. 

We claim that it suffices to show that for sufficiently large $b_1$, it holds with probability at least $1-n^{-c}$ that for any $\widetilde{z}\in \m N_{h_{\wt{n}}}$, 
\begin{equation}\label{leup2eq2}
 \sum_{j\in \mathbb{N}_0^d\atop |j|\leq \lfloor\beta\rfloor}\frac{1}{j!}\, \|\wh{f}_\sm^{(j)}(\widetilde{z})- G^{*,(j)}_\sm(\widetilde{z})\|_{2}\, (\delta_{\widetilde{z}})^{|j|} \leq C \left(\frac{\log \wt{n}}{\wt{n}(1-\|\widetilde{z}\|_2)^{L_0}}\right)^{\frac{\beta}{d}}.
 \end{equation}
In fact, if this inequality holds, then we can apply a standard argument of approximation by the $h_{\wt{n}}$-covering set. Concretely, for any $z\in \mathcal{A}_z$, there exists $\widetilde{z}\in \m N_{h_{\wt{n}}}$ such that  $\|z-\widetilde{z}\|_2\leq h_{\wt{n}}= (\frac{\log \wt n}{\wt n})^{\frac{1}{d}}$, we can obtain by applying Taylor expansion to $G^{\ast}_\sm(z)-\wh{f}_\sm(z)$ that 
\begin{equation*}
 \begin{aligned}
 \|G^{\ast}_\sm(z)-\wh{f}_\sm(z)\|_2&\leq  C \sum_{j\in \mathbb{N}_0^d\atop |j| \leq \lfloor\beta\rfloor}
 \frac{1}{j!}\,\|\wh{f}_\sm^{(j)}(\widetilde{z})- G_\sm^{*,(j)}(\widetilde{z})\|_{2}\,\Big(\frac{\log \wt n}{\wt n}\Big)^{\frac{|j|}{d}} \,+ \, C\, \Big(\frac{\log \wt n}{\wt n}\Big)^{\frac{\beta}{d}}\\
 &\leq C\left(\frac{\log \wt{n}}{\wt{n}(1-\|\widetilde{z}\|_2)^{L_0}}\right)^{\frac{\beta}{d}} \leq C_1\left(\frac{\log \wt{n}}{\wt{n}(1-\|z\|_2)^{L_0}}\right)^{\frac{\beta}{d}},
     \end{aligned}
\end{equation*}
where the last step follows by choosing $b_1$ large enough such that $|\|z\|_2 - \|\widetilde z\|_2| \leq \|z-\widetilde z\|_2 \leq h_{\wt{n}} \leq \frac{1}{2}\delta \leq \frac{1}{2} (1-\|z\|_2)$, which implies $1-\|\widetilde z\|_2 \geq \frac{1}{2}(1-\|z\|_2)$.
Moreover, for any $z\in \mathcal{A}'_{z}=\{z\in \mb B_1^d:\, 1-\delta\leq \|z\|_2\leq 1,\, G^{\ast}_\sm(z)\in S_m\}$, there exists $\widetilde z\in \m N_{h_{\wt{n}}}$ such that $\|z-\widetilde{z}\|_2\leq C(\frac{\log \wt{n}}{\wt{n}})^{\frac{1}{d+L_0}}$. Thus for any $z\in \mathcal{A}'_{z}$, there exists $\widetilde z\in \m N_{h_{\wt n}}$ such that
\begin{equation*}
\begin{aligned}
  &\|G_\sm^{\ast}(z)-\wh{f}_\sm(z)\|_2\leq  C \sum_{j\in \mathbb{N}_0^d\atop |j|\leq \lfloor\beta\rfloor}\frac{1}{j!}\,\|\wh{f}_\sm^{(j)}(\widetilde{z})- G_\sm^{*,(j)}(\widetilde{z})\|_{2}\, \Big(\frac{\log \wt n}{\wt n}\Big)^{\frac{|j|}{d+L_0}} \,+ \, \Big(\frac{\log \wt n}{\wt n}\Big)^{\frac{\beta}{d+L_0}} \\ &\leq  C\sum_{j\in \mathbb{N}_0^d\atop |j|\leq \lfloor\beta\rfloor}\frac{1}{j!}\,\|\wh{f}_\sm^{(j)}(\widetilde{z})- G^{*,(j)}_\sm(\widetilde{z})\|_{2}\,\underset{\bar z\in \m N_{h_{\wt n}}}{\inf}(\delta_{\bar z})^{|j|} \,+ \, \Big(\frac{\log \wt n}{\wt n}\Big)^{\frac{\beta}{d+L_0}} \leq C\left(\frac{\log \wt{n}}{\wt{n}}\right)^{\frac{\beta}{d+L_0}}.
  \end{aligned}
   \end{equation*}
   
Now let us prove inequality~\eqref{leup2eq2}.
Since $S^{\dagger}_m=\mb B_{r_m+0.5/L}(a_m)\subset \widetilde{S}_m$, we have that $(G_\sm^\ast,Q_\sm^\ast)$ is a feasible solution to the optimization problem~\eqref{estimator1} in step one of submanifold estimation and $G_\sm^\ast\big(Q_\sm^\ast(X_i)\big)=X_i$ for all $X_i\in S_m^\dagger$. Therefore, we obtain by the optimality of the estimator $(\wh G_\sm,\wh Q_\sm)$ that
\begin{align*}
     \frac{1}{|I_1|}\sum_{i\in I_1} \|X_i-\wh{G}_\sm(\wh{Q}_\sm(X_i))\|_2^2\, \bold{1}_{S^{\dagger}_m}(X_i)=0.
\end{align*}
In particular, by restricting the sum to those $X_i$ in $\mb B_{\delta_{\widetilde{z}}}(\widetilde{z})$ for a fixed $\widetilde{z}\in \m N_{h_{\wt{n}}}$, we further obtain (recall that $\wh f_\sm = \wh{G}_\sm\circ\wh{Q}_\sm\circ G^{\ast}$)
\begin{align*}
      \frac{1}{|I_1|}\sum_{i\in I_1}  \|G^{\ast}_\sm( Q^{\ast}_\sm(X_i))-\wh{f}_\sm\, ( Q^{\ast}_\sm(X_i))\|^2_2 \cdot \bold{1}_{\mb B_{\delta_{\widetilde{z}}}(\widetilde{z})}(Q^{\ast}_\sm(X_i)) \cdot \bold{1}_{ S_{m}^{\dagger}}(X_i)=0.
\end{align*}
To proceed, we utilize the property that $\bold{1}_{\mb B_{\delta_{\widetilde{z}}}(\widetilde{z})}(Q^{\ast}_\sm(X_i))\cdot \bold{1}_{ S_{m}^{\dagger}}(X_i) = \bold{1}_{\mb B_{\delta_{\widetilde{z}}}(\widetilde{z})}(Q^{\ast}_\sm(X_i)) \cdot \bold{1}_{ \widetilde{S}_{m}}(X_i)$ where recall $S_m^{\dagger}\subset \mb B_{r_m+1/L}(a_m) \subset \widetilde{S}_m$. To see this, we only need to show that $Q^\ast_\sm(X_i) \in \mb B_{\delta_{\widetilde{z}}}(\widetilde z)$ plus $X_i\in \widetilde S_m$ imply $X_i\in S_m^\dagger$. In fact, by the Lipschitzness of $G^{\ast}_\sm$,  we have $\|X_i-G^{\ast}(\widetilde{z})\|_2\leq L\sqrt{D} \delta_{\widetilde{z}}$, which combined with $ G^{\ast}(\widetilde{z})\in \mb B_{r_m+0.25/L}(a_m)$ implies $X_i\in \mb B_{r_m+0.5/L}(a_m) ={S}_m^\dagger$ when $n\geq (4L^2b_1\sqrt{D})^{4(d+L_0)}$. Based on this replacement of indicator functions, we further obtain
\begin{align*}
     \frac{1}{|I_1|}\sum_{i\in I_1}  \|G^{\ast}_\sm( Q^{\ast}_\sm(X_i))-\wh{f}_\sm\, ( Q^{\ast}(X_i))\|^2_2 \cdot \bold{1}_{\mb B_{\delta_{\widetilde{z}}}(\widetilde{z})}(Q^{\ast}_\sm(X_i)) \cdot \bold{1}_{ \widetilde S_{m}}(X_i)=0.
\end{align*}
By applying the Taylor expansion to $G^{\ast}_\sm(z)-\wh{f}_\sm(z)$ around $\widetilde z$ in the preceding display and using the fact that $G^{\ast}_\sm-\wh{f}_\sm \in C_{C_0}^\beta(\mb B_1^d;\,\mb R^D)$ with some sufficiently large constant $C_0$, we can get the following \emph{localized basic inequality} after some algebra calculation
\begin{equation}\label{leup2eq1}
   \begin{aligned}
      & U_n(\widetilde z,\,\wh{f}_\sm):\,= \\
      &\frac{1}{|I_1|}\sum_{i\in I_1}  \bigg\|\sum_{j\in \mathbb{N}_0^d\atop |j|\leq \lfloor\beta\rfloor} \frac{1}{j!} \, \big(G_\sm^{*,(j)}(\widetilde{z}) - \wh{f}_\sm^{(j)}(\widetilde{z})\big) \, ( Q_\sm^{\ast}(X_i)-\widetilde{z})^{j}\bigg\|_2^2 \cdot \bold{1}_{\mb B_{\delta_{\widetilde{z}}}(\widetilde{z})}(Q_\sm^{\ast}(X_i))  
       \cdot \bold{1}_{ \widetilde{S}_{m}}(X_i)
       \\
 \leq &\, c\bigg((\delta_{\widetilde{z}})^{2\beta}+(\delta_{\widetilde{z}})^{\beta} \sum_{j\in \mathbb{N}_0^d\atop |j|\leq \lfloor\beta\rfloor}\frac{1}{j!}\, \big\|G^{*,(j)}_\sm(\widetilde{z})-\wh{f}_\sm^{(j)}(\widetilde{z})\big\|_2\,(\delta_{\widetilde{z}})^{|j|}\bigg) \cdot 
 \frac{1}{|I_1|}\sum_{i\in I_1}   \bold{1}_{\mb B_{\delta_{\widetilde{z}}}(\widetilde{z})}(Q_\sm^{\ast}(X_i))
 \cdot \bold{1}_{ \widetilde{S}_{m}}(X_i).
\end{aligned} 
\end{equation}
The second factor on the right hand side of~\eqref{leup2eq1} can be bounded by applying a simple union bound argument and Bernstein's inequality for binomials as follows. First, we can bound the probability
\begin{equation}\label{eqnboundprob}
\begin{aligned}
 &\mathbb{P}_{\mu^{\ast}}\big(X\in \widetilde{S}_{m}, \, Q^{\ast}_\sm(X)\in \mb B_{\delta_{\widetilde{z}}}(\widetilde{z})\big)
 = \mb P_{\mu^\ast}(X\in \widetilde S_m)\cdot \mb P_{\mu^\ast}(Q_\sm^{\ast}(X)\in \mb B_{\delta_{\widetilde{z}}}(\widetilde{z})\,|\,X\in \widetilde S_m) \\
 &\overset{(i)}{=} \mb P_{\mu^\ast}(X\in \widetilde S_m)\,\int_{\mb B_{\delta_{\widetilde{z}}}(\widetilde{z})} \nu^\ast_\sm(z) \,\dd z
 \leq C\,(1-\|\widetilde z\|_2)^{L_0}\,\delta_{\widetilde{z}}^d \cdot \mb P_{\mu^\ast}(X\in \widetilde S_m)\\
 &\leq C_1\, b_2^d\, \frac{\log \widetilde n}{\widetilde n} \cdot \mb P_{\mu^{\ast}}(X\in \widetilde{S}_m)\leq C_1\, b_2^d\, \frac{\log n}{n},
 \end{aligned}
\end{equation}
where step (i) follows by the fact that $Q^\ast_\sm(X)$ given $X\in \widetilde S_m$ is distributed as $\nu^\ast_\sm$, and the last step follows since $\widetilde n = n\cdot\mb P_{\mu^{\ast}}(X\in \widetilde{S}_m)$.
 Since the random variable $\bold{1}_{\mb B_{\delta_{\widetilde{z}}}(\widetilde{z})}(Q^{\ast}(X))\cdot \bold{1}_{ \widetilde{S}_{m}}(X)$ is uniformly bounded by $1$, and inequality~\eqref{eqnboundprob} implies its variance to be bounded by $C_1\, b_2^d\, \frac{\log n}{n}$, we may apply the Bernstein inequality and a simple union bound argument over all $\widetilde z \in \m N_{h_{\wt{n}}}$ (with $|\m N_{h_{\wt{n}}}|\leq C\frac{\wt n}{\log \wt n}$) to obtain that with probability at least $1-n^{-c}$,
 \begin{equation}\label{eqn:union+Bernstein}
\underset{\widetilde{z}\in \m N_{h_{\wt{n}}}}{\sup} \bigg| \frac{1}{|I_1|}\sum_{i\in I_1} \bold{1}_{\mb B_{\delta_{\widetilde{z}}}(\widetilde{z})}(Q_\sm^{\ast}(X_i)) \cdot \bold{1}_{ \widetilde{S}_{m}}(X_i)-  \mathbb{P}_{\mu^{\ast}} \big(X\in \widetilde{S}_{m},\, Q^{\ast}_\sm(X)\in \mb B_{\delta_{\widetilde{z}}}(\widetilde{z})\big)\bigg|\leq C_2 b_2^{\frac{d}{2}}\cdot\frac{\log n}{n},
\end{equation}
which together with~\eqref{eqnboundprob} leads to 
\begin{equation}\label{eqn:binomial_union}
\underset{\widetilde{z}\in \m N_{h_{\wt{n}}}}{\sup}  \bigg[\frac{1}{|I_1|}\sum_{i\in I_1} \bold{1}_{\mb B_{\delta_{\widetilde{z}}}(\widetilde{z})}(Q_\sm^{\ast}(X_i))\cdot \bold{1}_{ \widetilde{S}_{m}}(X_i)\bigg]\leq C\, b_2^d\cdot \frac{\log n}{n}.
\end{equation}

To analyze the quantity $U_n(\widetilde z,\,\wh{f}_\sm)$ on the left hand side of the localized basic inequality~\eqref{leup2eq1}, we will resort to the following lemma.
A proof of the lemma is provided in Section~\ref{sec:proof_lemma:empirical_proc}, which is quite technical and involved, based on applying the chaining and peeling techniques in empirical process theory to analyze the supreme of the empirical process $U_n(\widetilde z,\,f)$ indexed by $\widetilde z\in \m N_{h_{\widetilde n}}$ and a $\beta$-smooth function $f\in C_{L}^\beta(\mb B_1^d;R^D)$.
\begin{lemma}\label{lemma:empirical_proc}
  With probability at least $1-n^{-c}$, the following inequality holds for any $\beta$-smooth function $f\in C_{L}^\beta(\mb B_1^d;R^D)$ and $\widetilde z \in \m N_{h_{\widetilde n}}$,
  \begin{align*}
      \big|U_n(\widetilde z,\,f) - \mb E_{\mu^\ast}[U_n(\widetilde z,\,f)]\big| \leq  C\, b_2^{\frac{d}{2}}\cdot\frac{\log n}{n}\cdot \bigg\{\bar{\delta}^2+\Big[ \sum_{j\in \mathbb{N}_0^d\atop |j|\leq \lfloor\beta\rfloor}\frac{1}{j!} \, \|{f}^{(j)}(\widetilde{z})- G^{*,(j)}_\sm(\widetilde{z})\|_{2}\,(\delta_{\widetilde{z}})^{|j|}\Big]^2\bigg\},
  \end{align*}
  where $\bar{\delta} = \big(\frac{\log \widetilde n}{\widetilde n}\big)^{\frac{\beta}{d}}$ and the expectation is taken with respect to the randomness in $\{X_i\}_{i\in I_1}$.
\end{lemma}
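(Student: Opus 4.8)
\textbf{Proof proposal (Lemma~\ref{lemma:empirical_proc}).}
The plan is to treat $U_n(\widetilde z,\cdot)$ as a \emph{localized} empirical process indexed by a finite-dimensional parameter, and to bound its centered supremum by a peeling (slicing) argument in which Bernstein's inequality is applied at the scale of each slice, using the small--ball probability estimate~\eqref{eqnboundprob} as the variance proxy and the empirical-count bound~\eqref{eqn:binomial_union} to control the within-slice residual.

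First I would fix $\widetilde z\in\m N_{h_{\widetilde n}}$ (a union bound over this finite set, of cardinality $\lesssim\widetilde n/\log\widetilde n$, is taken at the end) and observe that $U_n(\widetilde z,f)$ depends on $f$ \emph{only} through the finitely many numbers $\big(f^{(j)}(\widetilde z)\big)_{j\in\mb N_0^d,\,|j|\le\lfloor\beta\rfloor}$. Accordingly set $\theta_j=G^{*,(j)}_\sm(\widetilde z)-f^{(j)}(\widetilde z)\in\mb R^D$ and $\theta=(\theta_j)_{|j|\le\lfloor\beta\rfloor}\in\mb R^{D N_\beta}$ with $N_\beta=\#\{j\in\mb N_0^d:|j|\le\lfloor\beta\rfloor\}$; since $f,G^\ast_\sm\in C_L^\beta$, $\theta$ ranges over a box of constant diameter. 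Writing $P_\theta(z)=\sum_{|j|\le\lfloor\beta\rfloor}\tfrac1{j!}\theta_j(z-\widetilde z)^j$, $A(\theta)=\sum_{|j|\le\lfloor\beta\rfloor}\tfrac1{j!}\|\theta_j\|_2\,\delta_{\widetilde z}^{|j|}$ (the bracketed quantity in the lemma), and $\xi_i(\theta)=\|P_\theta(Q^\ast_\sm(X_i))\|_2^2\,\bold{1}_{\mb B_{\delta_{\widetilde z}}(\widetilde z)}(Q^\ast_\sm(X_i))\,\bold{1}_{\widetilde S_m}(X_i)$, one has $U_n(\widetilde z,f)=|I_1|^{-1}\sum_{i\in I_1}\xi_i(\theta)$; on the ball event $|(z-\widetilde z)^j|\le\delta_{\widetilde z}^{|j|}$, so $0\le\xi_i(\theta)\le A(\theta)^2$ and, by~\eqref{eqnboundprob}, $\mb E_{\mu^\ast}[\xi_i(\theta)^2]\le A(\theta)^4\,\mathbb P_{\mu^\ast}\big(X\in\widetilde S_m,\ Q^\ast_\sm(X)\in\mb B_{\delta_{\widetilde z}}(\widetilde z)\big)\le C\,b_2^{d}\,\tfrac{\log n}{n}\,A(\theta)^4$.

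Next I would peel: let $\Theta_0=\{A(\theta)\le\bar\delta\}$ and $\Theta_k=\{2^{k-1}\bar\delta<A(\theta)\le 2^k\bar\delta\}$ for $1\le k\le K$ with $2^K\bar\delta$ past the box diameter, so $K\lesssim\log n$. With the weighted norm $\|\theta\|_\star:=A(\theta)$ on $\mb R^{D N_\beta}$, each $\Theta_k$ lies in a $\|\cdot\|_\star$-ball of radius $2^k\bar\delta$ and thus carries a $(\eta\,2^k\bar\delta)$-net $N_k$ with $|N_k|\le(3/\eta)^{D N_\beta}$; I would take $\eta=\eta_0 b_2^{-d/2}$ for a small absolute constant $\eta_0$, so $|N_k|$ is still a constant. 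For $\|\theta-\theta'\|_\star\le\eta2^k\bar\delta$ one gets, on the ball event, $\|P_\theta(z_i)-P_{\theta'}(z_i)\|_2\le\eta 2^k\bar\delta$, hence $|\xi_i(\theta)-\xi_i(\theta')|\le C\,\eta(2^k\bar\delta)^2\,\bold{1}_{\mb B_{\delta_{\widetilde z}}(\widetilde z)}(Q^\ast_\sm(X_i))\bold{1}_{\widetilde S_m}(X_i)$; summing and invoking~\eqref{eqn:binomial_union} for the empirical count and~\eqref{eqnboundprob} for its expectation gives the within-cell control $\sup\big|(U_n-\mb E U_n)(\theta)-(U_n-\mb E U_n)(\theta')\big|\le C\eta_0 b_2^{d/2}\tfrac{\log n}{n}(2^k\bar\delta)^2$. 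In parallel, Bernstein's inequality at each $\theta'\in N_k$ with the two displayed bounds and $t\asymp\log n$ yields, with probability $1-e^{-t}$, $|(U_n-\mb E U_n)(\theta')|\le C\big(\sqrt{b_2^d\tfrac{\log n}{n}\tfrac{t}{n}}+\tfrac{t}{n}\big)A(\theta')^2\le C b_2^{d/2}\tfrac{\log n}{n}A(\theta')^2$ (using $b_2\ge1$ to absorb the second term). A union bound over $\theta'\in\bigcup_kN_k$, over $k\le K$, and over $\widetilde z\in\m N_{h_{\widetilde n}}$ --- altogether $\lesssim n$ events --- makes all of these hold simultaneously on an event of probability $1-n^{-c}$; combining the net-point estimate with the within-cell residual, and using $A(\theta)\ge 2^{k-1}\bar\delta$ on $\Theta_k$ (so $(2^k\bar\delta)^2\le4A(\theta)^2$), gives $|(U_n-\mb E U_n)(\theta)|\le C b_2^{d/2}\tfrac{\log n}{n}A(\theta)^2$ on the shells, while the identical argument on $\Theta_0$ produces the floor $C b_2^{d/2}\tfrac{\log n}{n}\bar\delta^2$; adding the two and translating $\theta\leftrightarrow f$ finishes the proof.

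The main obstacle is to obtain a bound \emph{proportional to} $A(\theta)^2$ --- rather than a crude uniform bound of order $\bar\delta^2$ or worse --- carrying the precise factor $b_2^{d/2}$: this forces the peeling to be done in the $\|\cdot\|_\star$-geometry, the net resolution inside each slice to be tied to $b_2^{-d/2}$, and the variance of $\xi_i$ to be estimated through the small--ball probability~\eqref{eqnboundprob} instead of the trivial $A(\theta)^2$. Conversely, the finite-dimensional reduction, the constant within-slice covering numbers, and the final union bound are then routine.
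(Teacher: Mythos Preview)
Your proposal is correct and reaches the same bound as the paper, but the two arguments differ in how the supremum over the finite-dimensional index set is controlled inside each peeling shell. The paper parametrizes by $T\in\bar{\mathcal T}(\widetilde\delta)$ just as you do, but then applies Talagrand's concentration inequality to the supremum $Z_n(\widetilde\delta)$, bounds its expectation $R_n(\widetilde\delta)$ via symmetrization and Dudley's entropy integral (the entropy being $\log(\widetilde\delta/u)$ since the index set is finite-dimensional), and only afterwards peels over $\widetilde\delta=2^s\bar\delta$ and union-bounds over $\widetilde z$. Your route replaces the Talagrand-plus-chaining block by the more elementary combination of a single-scale $\|\cdot\|_\star$-net in each shell, Bernstein at each net point, and a deterministic within-cell residual controlled through~\eqref{eqn:binomial_union} and~\eqref{eqnboundprob}. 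This works precisely because the index set has fixed dimension $DN_\beta$, so a constant-cardinality net suffices (your choice $\eta=\eta_0 b_2^{-d/2}$ is what makes the residual carry the right $b_2^{d/2}$ factor, and since $b_2$ is a constant the net size remains a constant). What you gain is a shorter, lower-technology argument that avoids Talagrand and Dudley entirely; what the paper's version buys is a template that would still go through if the index set were genuinely infinite-dimensional. Structurally the two proofs agree on the essential ingredients: finite-dimensional reduction, the variance proxy from~\eqref{eqnboundprob}, peeling in $A(\theta)$, and the final union bound over~$\m N_{h_{\widetilde n}}$.
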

\noindent Before applying this lemma, notice that for any $\widetilde{z}\in \m N_{h_{\wt{n}}}$, we can bound the expectation $\mb E_{\mu^\ast}[U_n(\widetilde z,\,\wh{f}_\sm\,)]$, where $f$ has been plugged-in with $\wh{f}_\sm$\,, by
\begin{equation}\label{eqn:emp_exp}
 \begin{aligned}
& \quad \mb E_{\mu^\ast}[U_n(\widetilde z,\,\wh{f}_\sm\,)]\\
&=\mathbb{E}_{\mu^{\ast}} \bigg[\Big\|\sum_{j\in \mathbb{N}_0^d\atop |j|\leq \lfloor\beta\rfloor} \frac{1}{j!} \,\big(G^{*,(j)}_\sm(\widetilde{z}) - \wh{f}_\sm^{(j)}(\widetilde{z}) \big)\, (Q^{\ast}_\sm(X)-\widetilde{z})^{j}\Big\|_2^2 \cdot  \bold{1}_{\mb B_{\delta_{\widetilde{z}}}(\widetilde{z})}(Q^{\ast}_\sm(X)) \cdot \bold{1}_{\widetilde{S}_m}(X)\bigg]\\
&\overset{(i)}{\geq} \underset{z\in \mb B_{\delta_{\widetilde{z}}}(\widetilde{z})}{\inf} \nu^{\ast}_\sm(z)\cdot\mathbb{P}(X\in \widetilde{S}_m) \,\int_{z\in \mb B_{\delta_{\widetilde{z}}}(\widetilde{z})} \Big\|\sum_{j\in \mathbb{N}_0^d\atop |j|\leq \lfloor\beta\rfloor}\frac{1}{j!} \,\big( G^{*,(j)}_\sm(\widetilde{z})-\wh{f}_\sm^{(j)}(\widetilde{z})\big)
\, (z-\widetilde{z})^{j}\Big\|_2^2\,\dd z\\
 &\overset{(ii)}{=} \delta_{\widetilde{z}}^d \, \underset{z\in \mb B_{\delta_{\widetilde{z}}}(\widetilde{z})}{\inf} \nu^{\ast}_\sm(z)\cdot\mathbb{P}(X\in \widetilde{S}_m) \,\int_{\mb B_1^d}  \Big\|\sum_{j\in \mathbb{N}_0^d\atop |j|\leq \lfloor\beta\rfloor}\frac{1}{j!}\,\delta_{\widetilde{z}}^j \big( G^{*,(j)}_\sm(\widetilde{z})-\wh{f}_\sm^{(j)}(\widetilde{z})\big)\, z^{j}\Big\|_2^2 \, \dd z \\
 &\geq C\, b_2^d\cdot \frac{\log n}{n} \cdot \int_{\mb B_1^d}   \Big\|\sum_{j\in \mathbb{N}_0^d\atop |j|\leq \lfloor\beta\rfloor}\frac{1}{j!}\delta_{\widetilde{z}}^j \,\big( G^{*,(j)}_\sm(\widetilde{z})-\wh{f}_\sm^{(j)}(\widetilde{z})\big)\,z^{j}\Big\|_2^2 \,\dd z,
   \end{aligned}
\end{equation}
where step (i) uses the fact that $Q_\sm^\ast(X)$ given $X\in \widetilde S_m$ is distributed as $\nu^\ast_\sm$, step (ii) follows by applying the change of variable of $\frac{z-\widetilde z}{\delta_{\widetilde z}} \to z$, and the last step uses $\widetilde n= n\,\mathbb{P}(X\in \widetilde{S}_m)$ and
 the property that $\nu^\ast_\sm(z) \geq c(1-\|\widetilde z\|_2)^{L_0}$ for all $z\in \mb B_{\delta_{\widetilde{z}}}(\widetilde{z})$ as long as $b_1$ in the definition of $\delta$ is sufficiently large.
Now using the fact that for any $d$-variate polynomial $\mathcal{S}(y)=\sum_{j\in \mb N_0^d,\,|j|\leq k} a_{j} y^{j}$, $y\in\mb R^d$, there exists some positive constant $C(d,k)$ only depending on $(d,k)$ such that 
\begin{equation*}
\int_{\mb B_1^d} \mathcal{S}^2(y) \, \dd y \geq C(d,k) \sum_{j\in \mb N_0^d,\,|j|\leq k} a_{j}^2,
  \end{equation*}
we can obtain that
\begin{equation}\label{eqn:exp_low_b}
 \begin{aligned}
  \int_{\mb B_1^d}   \Big\|\sum_{j\in \mathbb{N}_0^d\atop |j|\leq \lfloor\beta\rfloor}\frac{1}{j!}\delta_{\widetilde{z}}^j \,\big( G^{*,(j)}_\sm(\widetilde{z})-\wh{f}_\sm^{(j)}(\widetilde{z})\big)\,z^{j}\Big\|_2^2 \,\dd z
   \geq c\, \bigg(\sum_{j\in \mathbb{N}_0^d\atop |j|\leq \lfloor\beta\rfloor}\frac{1}{j!}\,\|\wh{f}_\sm^{(j)}(\widetilde{z})- G^{*,(j)}_\sm(\widetilde{z})\|_{2}\, (\delta_{\widetilde{z}})^{|j|}\bigg)^2.
     \end{aligned}
\end{equation}
Finally, by combining equations~\eqref{leup2eq1},~\eqref{eqnboundprob},~\eqref{eqn:emp_exp},~\eqref{eqn:exp_low_b} and Lemma~\ref{lemma:empirical_proc}, we obtain that with probability at least $1-n^{-c}$, for any $\widetilde{z}\in \m N_{h_{\wt{n}}}$, 
\begin{equation*}
 \begin{aligned}
 &\quad b_2^d \cdot \frac{\log n}{n} \cdot \bigg(\sum_{j\in \mathbb{N}_0^d\atop |j|\leq \lfloor\beta\rfloor}\frac{1}{j!}\, \|\wh{f}_\sm^{(j)}(\widetilde{z})- G_\sm ^{*,(j)}(\widetilde{z})\|_{2}\, (\delta_{\widetilde{z}})^{|j|}\bigg)^2\\
 &\leq C b_2^{\frac{d}{2}} \cdot \frac{\log n}{n} \cdot \bigg(\sum_{j\in \mathbb{N}_0^d\atop |j|\leq \lfloor\beta\rfloor}\frac{1}{j!}\, \|\wh{f}_\sm^{(j)}(\widetilde{z})- G_\sm ^{*,(j)}(\widetilde{z})\|_{2}\, (\delta_{\widetilde{z}})^{|j|}\bigg)^2 
 \, +\,  C b_2^{\frac{d}{2}}\cdot \Big(\frac{\log \wt{n}}{\wt{n}}\Big)^{\frac{2\beta}{d}}\cdot \frac{\log n}{n}\\
& \qquad +\, C b_2^d\cdot \frac{\log n}{n}\cdot  \bigg((\delta_{\widetilde{z}})^{2\beta}+(\delta_{\widetilde{z}})^{\beta} \sum_{j\in \mathbb{N}_0^d\atop |j|\leq \lfloor\beta\rfloor}\frac{1}{j!}\, \| G^{*,(j)}_\sm(\widetilde{z})-\wh{f}_\sm^{(j)}(\widetilde{z})\|_{2}(\delta_{\widetilde{z}})^{|j|}\bigg).
\end{aligned}
\end{equation*}
Consequently, the claimed inequality~\eqref{leup2eq2} follows from the above by choosing a sufficiently large $b_1$ (recall that $b_2=\frac{1}{2}b_1^{\frac{L_0}{d}+1}$) and the definition that $\delta_{\widetilde z}=b_2 (\frac{\log \wt{n}}{\wt{n}(1-\|{\widetilde z}\|_2)^{L_0}})^{\frac{1}{d}}$.


\subsection{Proof of Lemma~\ref{lemma:empirical_proc}}\label{sec:proof_lemma:empirical_proc}
Since $f\in C_{L}^\beta(\mb B_1^d;R^D)$, for any $z\in \mb B_1^d$ and $j\in \mathbb{N}_0^d$ with $|j|\leq \lfloor\beta\rfloor$, it holds that $\|{f}^{(j)}(z)\|_2\leq \sqrt{D} L=C_0$. For any fixed $\widetilde{z}\in \m N_{h_{\wt{n}}}$ and $\widetilde\delta>0$, let 
\begin{equation*}
\begin{aligned}
\bar{\mathcal{T}}(\widetilde{\delta})=\Big\{T=\{T_j\}_{j\in \mathbb{N}_0^d, \, |j|\leq \lfloor\beta\rfloor}\in [-C_0,\,C_0]^{D\times \binom{d+\lfloor \beta\rfloor-1}{d}} :\, \sum_{j\in \mathbb{N}_0^d\atop |j|\leq \lfloor\beta\rfloor}\frac{1}{j!}\, \big\|T_j-G^{*,(j)}_\sm(\widetilde{z})\big\|_2\, (\delta_{\widetilde{z}})^{|j|}\leq \widetilde{\delta}\Big\}.
\end{aligned}
\end{equation*}
We also define the following supreme of an empirical process indexed by $T\in \bar{\mathcal{T}}(\widetilde{\delta})$,
\begin{equation*}
 \begin{aligned}
&\quad Z_n(\widetilde{\delta})=\\
 &\underset{T\in \bar{\mathcal{T}}(\widetilde{\delta}) }{\sup}\Bigg| \, \mathbb{E}_{\mu^{\ast}} \bigg[\Big\|\sum_{j\in \mathbb{N}_0^d\atop |j|\leq \lfloor\beta\rfloor} \frac{1}{j!}\,\big(G^{*,(j)}_\sm(\widetilde{z}) -T_j\big)\, (Q_\sm^{\ast}(X)-\widetilde{z})^{j} \Big\|_2^2\cdot  \bold{1}_{\mb B_{\delta_{\widetilde{z}}}(\widetilde{z})}(Q^{\ast}_\sm(X)) \cdot \bold{1}_{\widetilde{S}_m}(X)\bigg]\\
& \ \ - \frac{1}{|I_1|} \sum_{i\in I_1} \bigg[\Big\|\sum_{j\in \mathbb{N}_0^d\atop |j|\leq \lfloor\beta\rfloor} \frac{1}{j!}\,\big(G^{*,(j)}_\sm(\widetilde{z}) -T_j\big)\, (Q_\sm^{\ast}(X_i)-\widetilde{z})^{j}\Big\|_2^2\cdot  \bold{1}_{\mb B_{\delta_{\widetilde{z}}}(\widetilde{z})}(Q^{\ast}_\sm(X)) \cdot \bold{1}_{\widetilde{S}_m}(X) \bigg]\Bigg|,
  \end{aligned}
\end{equation*}
and $R_n(\widetilde{\delta})=\mathbb{E}_{\mu^\ast}\big[ Z_n(\widetilde{\delta}) \big]$. We will first prove a concentration inequality for a fixed radius $\widetilde{\delta}>0$, and then using the peeling technique to allow the radius to be random, which leads to the desired result.

To apply the Talagrand concentration inequality (see, for example, Theorem 3.27 of~\cite{wainwright_2019}) for bounding the difference $|Z_n(\widetilde{\delta}) - R_n(\widetilde \delta)|$ for a fixed $\widetilde{\delta}>0$, we notice that each additive component in the second empirical sum above has second moment uniformly bounded by
\begin{equation*}
 \begin{aligned}
& \mathbb{E}_{\mu^{\ast}} \bigg[\underset{T\in \bar{\mathcal{T}}(\widetilde{\delta}) }{\sup}\Big(\Big\|\sum_{j\in \mathbb{N}_0^d\atop |j|\leq \lfloor\beta\rfloor} \frac{1}{j!}\,\big(G^{*,(j)}_\sm(\widetilde{z}) -T_j\big)\, (Q_\sm^{\ast}(X)-\widetilde{z})^{j} \Big\|_2^4\cdot  \bold{1}_{\mb B_{\delta_{\widetilde{z}}}(\widetilde{z})}(Q^{\ast}_\sm(X)) \cdot \bold{1}_{\widetilde{S}_m}(X)\Big)\bigg]\\
&\leq \underset{z\in \mb B_{\delta_{\widetilde{z}}}(\widetilde{z})\atop T\in \bar{\mathcal{T}}(\widetilde{\delta})} {\sup} \Big\|\sum_{j\in \mathbb{N}_0^d\atop |j|\leq \lfloor\beta\rfloor} \frac{1}{j!}\,\big(G^{*,(j)}_\sm(\widetilde{z}) -T_j\big)\, (z-\widetilde{z})^{j} \Big\|_2^4 \cdot \mathbb{P}_{\mu^{\ast}}\big(X\in \widetilde{S}_{m},\, Q_\sm^{\ast}(X)\in \mb B_{\delta_{\widetilde{z}}}(\widetilde{z})\big)\\
&\leq C \underset{T\in \bar{\mathcal{T}}(\widetilde{\delta}) }{\sup}\bigg(\sum_{j\in \mathbb{N}_0^d\atop |j|\leq \lfloor\beta\rfloor}\frac{1}{j!}\,\|T_j- G^{*,(j)}_\sm(\widetilde{z})\|_{2}\, (\delta_{\widetilde{z}})^{|j|}\bigg)^4\cdot  b_2^d \cdot \frac{\log n}{n}\\
&\leq C\, b_2^d\, \widetilde \delta^4\cdot \frac{\log n}{n},
  \end{aligned}
\end{equation*}
where we have used inequality~\eqref{eqnboundprob} to bound $\mathbb{P}_{\mu^{\ast}}\big(X\in \widetilde{S}_{m},\, Q_\sm^{\ast}(X)\in \mb B_{\delta_{\widetilde{z}}}(\widetilde{z})\big)$.
Moreover, each additive component can be almost surely bounded by 
\begin{equation*}
 \begin{aligned}
& \quad \underset{z\in \mb B_{\delta_{\widetilde{z}}}(\widetilde{z})}{\sup} \Big\|\sum_{j\in \mathbb{N}_0^d\atop |j|\leq \lfloor\beta\rfloor} \frac{1}{j!}\,\big(G^{*,(j)}_\sm(\widetilde{z}) -T_j\big)\, (Q_\sm^{\ast}(X)-\widetilde{z})^{j} \Big\|_2^2\\
&\leq C\,  \bigg(\sum_{j\in \mathbb{N}_0^d\atop |j|\leq \lfloor\beta\rfloor}\frac{1}{j!} \, \|T_j- G^{*,(j)}(\widetilde{z})\|_{2}\, (\delta_{\widetilde{z}})^{|j|}\bigg)^2
\leq C\, \widetilde \delta^2.
   \end{aligned}
\end{equation*}
Based on these two bounds, we can apply the Talagrand concentration inequality to obtain that for any $s\geq 0$,
\begin{equation}~\label{Talagrand}
\mathbb{P} \big(Z_n(\widetilde{\delta}) \geq R_n(\widetilde{\delta}) +s^2\big )\leq 2 \exp\left(-\frac{c\,ns^4}{s^2 \,\widetilde{\delta}^2 + b_2^d\,\widetilde{\delta}^4 \cdot \frac{\log n}{n}}\right).
   \end{equation}
It remains to bound the expectation $R_n(\widetilde{\delta})$ via the symmetrization technique and chaining. 
By a standard symmetrization, we can get 
\begin{equation*}
\begin{aligned}
&R_n(\widetilde{\delta})\leq \frac{2}{\sqrt{|I_1|}}\, \mathbb{E}\Bigg[\underset{T\in \bar{\mathcal{T}}(\widetilde{\delta})}{\sup}\\
&\qquad \Bigg|\frac{1}{\sqrt{|I_1|}} \sum_{i\in I_1} \varepsilon_i\bigg[\Big\|\sum_{j\in \mathbb{N}_0^d\atop |j|\leq \lfloor\beta\rfloor} \frac{1}{j!}\,\big(G^{*,(j)}_\sm(\widetilde{z}) -T_j\big)\, (Q_\sm^{\ast}(X)-\widetilde{z})^{j} \Big\|_2^2\cdot \bold{1}_{\mb B_{\delta_{\widetilde{z}}}(\widetilde{z})}(Q_\sm^{\ast}(X_i)) \cdot \bold{1}_{\widetilde{S}_m}(X_i)\bigg]\Bigg|\Bigg],
\end{aligned}
\end{equation*}
where $\{\varepsilon_i\}_{i=1}^n$ are $n$ i.i.d.~copies from the Rademacher distribution, i.e.~$\mb P(\varepsilon_i = 1) = \mb P(\varepsilon_i= -1) = 0.5$. 
Since given $\{X_i\}_{i\in I_1}$, the stochastic process inside the supreme is a sub-Gaussian process with intrinsic metric 
 \begin{equation*}
 \begin{aligned}
& \quad d_n^2(T,\,\widetilde{T})\\
&=\frac{1}{|I_1|} \sum_{i\in I_1}  \bigg(\Big\|\sum_{j\in \mathbb{N}_0^d\atop |j|\leq \lfloor\beta\rfloor} \frac{1}{j!}\,\big(G^{*,(j)}_\sm(\widetilde{z}) -T_j\big)\, (Q_\sm^{\ast}(X_i)-\widetilde{z})^{j} \Big\|_2^2 \\
&\qquad\qquad\qquad - \Big\|\sum_{j\in \mathbb{N}_0^d\atop |j|\leq \lfloor\beta\rfloor} \frac{1}{j!}\,\big(G^{*,(j)}_\sm(\widetilde{z}) -\widetilde T_j\big)\, (Q_\sm^{\ast}(X_i)-\widetilde{z})^{j} \Big\|_2^2\bigg)^2 \cdot \bold{1}_{\mb B_{\delta_{\widetilde{z}}}(\widetilde{z})}(Q^{\ast}_\sm (X_i))) \cdot \bold{1}_{\widetilde{S}_m}(X_i)\\
 &\leq C\,\widetilde{\delta}^4\, \frac{1}{|I_1|} \sum_{i\in I_1}   \bold{1}_{\mb B_{\delta_{\widetilde{z}}}(\widetilde{z})}(Q_\sm^{\ast}(X_i))) \cdot \bold{1}_{\widetilde{S}_m}(X_i),
 \end{aligned}
\end{equation*}
for any $T,\widetilde{T}\in\bar{\mathcal{T}}(\widetilde \delta)$, where the last step uses the definition of $\bar{\mathcal{T}}(\widetilde \delta)$. The above combined with inequality~\eqref{eqnboundprob} implies 
\begin{equation*}
\mathbb{E}_{\mu^\ast}\Big[ \underset{T,\widetilde{T}\in \bar{\mathcal{T}}(\delta)}{\sup} d_n^2(T,\widetilde{T})\Big]\leq C\,b_2^d\, \widetilde{\delta}^4 \cdot\frac{\log n}{n}\quad \mbox{and}\quad
d_n(T,\widetilde{T}) \leq C \widetilde{\delta} \sum_{j\in \mathbb{N}_0^d\atop |j|\leq \lfloor\beta\rfloor}\frac{1}{j!}\, \|T_j-\widetilde{T}_{j}\|_{2}\, \delta_{\widetilde{z}}^{|j|}.
 \end{equation*}
 Lastly, let $\m K_n(\delta)= \underset{T,\widetilde{T}\in \bar{\mathcal{T}}(\delta)}{\sup} d_n^2(T,\widetilde{T})$, by applying the standard chaining via Dudley's inequality, we can get 
\begin{equation}\label{dudley}
 \begin{aligned}
 R_n(\widetilde{\delta})&\leq C\, \frac{1}{\sqrt{n}}\, \mathbb{E}_{\mu^\ast} \Big[ \int_{0}^{ \m K_n(\widetilde\delta)}\sqrt{\log \frac{\widetilde{\delta}}{u}}\,\dd u\Big]\\
 &= C\, \frac{1}{\sqrt{n}}\, \mathbb{E}_{\mu^\ast} \Big[ \m K_n(\widetilde\delta)\cdot\int_{0}^{1}\sqrt{\log \frac{\widetilde{\delta}}{u\cdot \m K_n(\widetilde\delta)}}\,\dd u\Big]\\
 &= C\, \frac{1}{\sqrt{n}}\, \mathbb{E}_{\mu^\ast} \Big[   \m K_n(\widetilde\delta)\cdot \bold{1}( \m K_n(\widetilde\delta)\leq b_2^{\frac{d}{2}}\widetilde \delta^2\sqrt{\frac{\log n}{n}})\int_{0}^{1}\sqrt{\log \frac{\widetilde{\delta}}{u \cdot\m K_n(\widetilde\delta)}}\,\dd u\Big]\\
 &+C\, \frac{1}{\sqrt{n}}\, \mathbb{E}_{\mu^\ast} \Big[   \m K_n(\widetilde\delta)\cdot \bold{1}( \m K_n(\widetilde\delta)> b_2^{\frac{d}{2}}\widetilde \delta^2\sqrt{\frac{\log n}{n}})\int_{0}^{1}\sqrt{\log \frac{\widetilde{\delta}}{u \cdot\m K_n(\widetilde\delta)}}\,\dd u\Big]\\
 &\leq C_1\, b_2^{\frac{d}{2}} \cdot \frac{\log ({n}/{\widetilde\delta})}{n}\cdot \widetilde{\delta}^2,
    \end{aligned}
\end{equation}
where we have used the fact that the $u$-covering entropy of $\bar{\mathcal{T}}(\widetilde \delta)$ relative to metric $d_n$ is at most $C_2\log\frac{\widetilde\delta}{u}$ for $u\in(0,1)$ where $C_2$ depends on $(d,D)$ (at most polynomial dependence on $D$). By combining this with inequality~\eqref{Talagrand}, we obtain that for all $t\geq 1$,
\begin{align}\label{fix_delta_tala}
    \mathbb{P} \Big(Z_n(\widetilde{\delta}) \geq  C\, t^2\,b_2^{\frac{d}{2}} \cdot \frac{\log (n/\widetilde \delta)}{n}\cdot \widetilde{\delta}^2\Big )\leq 2 \exp\Big(-c\,t^2\, \log (n/\widetilde \delta)\Big).
\end{align}

Finally, we apply the peeling technique to extend the above high probability bound on $Z_n(\widetilde\delta)$ to the random radius $\widetilde\delta=\sum_{j\in \mathbb{N}_0^d\atop |j|\leq \lfloor\beta\rfloor}\frac{1}{j!}\, \big\|\wh f^{(j)}_\sm-G^{*,(j)}_\sm(\widetilde{z})\big\|_2\, (\delta_{\widetilde{z}})^{|j|}$. Specifically, we first set the basic level $\bar{\delta} =(\frac{\log \wt{n}}{\wt{n}})^{\frac{\beta}{d}}$, and for $s=1,\cdots, S$ with $S\leq C\log \frac{1}{\bar{\delta}}$, define sets
\begin{equation*}
\begin{aligned}
\widetilde{\mathcal{T}}_0&=\Big\{T=\{T_j\}_{j\in \mathbb{N}_0^d, |j|\leq \lfloor\beta\rfloor}\in [-C_0,\,C_0]^{D\times \binom{d+\lfloor \beta\rfloor-1}{d}}:\,  \sum_{j\in \mathbb{N}_0^d\atop |j|\leq \lfloor\beta\rfloor}\frac{1}{j!}\, \|T_j- G^{*,(j)}(\widetilde{z})\|_{2}\, (\delta_{\widetilde{z}})^{|j|}\leq \bar{\delta}\Big\};\\
\widetilde{\mathcal{T}}_s&=\Big\{T=\{T_j\}_{j\in \mathbb{N}_0^d, |j|\leq \lfloor\beta\rfloor}\in [-C_0,\,C_0]^{D\times \binom{d+\lfloor \beta\rfloor-1}{d}}:\,  2^{s-1}\bar{\delta} \leq\sum_{j\in \mathbb{N}_0^d\atop |j|\leq \lfloor\beta\rfloor}\frac{1}{j!}\,\|T_j- G^{*,(j)}(\widetilde{z})\|_{2}\,(\delta_{\widetilde{z}})^{|j|}\leq 2^s\bar{\delta}\Big\}.
\end{aligned}
\end{equation*}
 By applying inequality~\eqref{fix_delta_tala} to $\widetilde\delta = 2^s\bar\delta$ for $s\in[S]$ with sufficiently large constant $t>0$, as $C_1\leq-\log(2^s\bar\delta)\leq C_2\log n$,  we obtain that 
 \begin{equation*}
\mathbb{P}\left(Z_n(\bar{\delta})\geq C\,b_2^{\frac{d}{2}}\,\frac{\log n}{n} \,\bar{\delta}^2\right) + \sum_{s=1}^S \mathbb{P}\left(Z_n(2^s \bar{\delta})\geq C\,b_2^{\frac{d}{2}} \,\frac{\log n}{n}\, 4^s \bar{\delta}^2\right)\leq n^{-(c+1)}.
\end{equation*}
Note that for any $T\in \widetilde{\mathcal{T}}_s$ and any $s\in\{0\}\cup [S]$, the event $Z_n(2^s \bar{\delta})\leq C\,b_2^{\frac{d}{2}} \,\frac{\log n}{n}\, 4^s \bar{\delta}^2$ implies
 \begin{align*}
     &\Bigg| \, \mathbb{E}_{\mu^{\ast}} \bigg[\Big\|\sum_{j\in \mathbb{N}_0^d\atop |j|\leq \lfloor\beta\rfloor} \frac{1}{j!}\,\big(G^{*,(j)}_\sm(\widetilde{z}) -T_j\big)\, (Q_\sm^{\ast}(X)-\widetilde{z})^{j} \Big\|_2^2\cdot  \bold{1}_{\mb B_{\delta_{\widetilde{z}}}(\widetilde{z})}(Q^{\ast}_\sm(X)) \cdot \bold{1}_{\widetilde{S}_m}(X)\bigg]\\
& \ \ - \frac{1}{|I_1|} \sum_{i\in I_1} \bigg[\Big\|\sum_{j\in \mathbb{N}_0^d\atop |j|\leq \lfloor\beta\rfloor} \frac{1}{j!}\,\big(G^{*,(j)}_\sm(\widetilde{z}) -T_j\big)\, (Q_\sm^{\ast}(X_i)-\widetilde{z})^{j}\Big\|_2^2\cdot  \bold{1}_{\mb B_{\delta_{\widetilde{z}}}(\widetilde{z})}(Q^{\ast}_\sm(X)) \cdot \bold{1}_{\widetilde{S}_m}(X) \bigg]\Bigg|\\
&\leq  c_1\,b_2^{\frac{d}{2}}\,\frac{\log n}{n}\, \Bigg\{\bar\delta^2 + \bigg( \sum_{j\in \mathbb{N}_0^d\atop |j|\leq \lfloor\beta\rfloor}\frac{1}{j!}\,\|T_j- G^{*,(j)}_\sm(\widetilde{z})\|_{2}\, (\delta_{\widetilde{z}})^{|j|}\bigg)^2\Bigg\}.
 \end{align*}
 Finally, since for any $f\in C_{L}^\beta(\mb B_1^d;R^D)$, $T_f:\,=\{T_{f,j}=f^{(j)}\}_{j\in \mathbb{N}_0^d,|j|\leq \lfloor\beta\rfloor}$ must belong to some $\widetilde {\m T}_s$, the claimed result is a consequence of the two preceding displays and a simple union bound over $\widetilde z \in \m N_{h_{\wt n}}$ where $|\m N_{h_{\wt{n}}}|\leq C\, \frac{\wt{n}}{\log \wt{n}} \leq C\, n$.

 \subsection{Proof of Lemma~\ref{lemma:density_regularity}}\label{sec:proof_lemma:density_regularity}
   Let $\wt{\m A}_z=\{z\in \mb B_1^d:\,\|z\|_2\leq 1-\epsilon, G^{\ast}_\sm(z)\in \mb B_{r_m+0.25/L}(a_m)\}$ for $\epsilon$ being a small positive constant independent of $n$ that will be specified later. From inequality~\eqref{leup2eq2}, we can get that (recall that $\wh{f}_\sm=\wh G_\sm\circ \wh l_\sm$)
\begin{equation*}
 \begin{aligned}
&\underset{z\in \wt{\m A}_z}{\sup}\|G^{\ast}_\sm(z)-\wh{f}_\sm(z)\|_2\leq C_{\epsilon} \Big(\frac{\log \wt{n}}{\wt{n}}\Big)^{\frac{\beta}{d}}, \quad\mbox{and}\\
&\underset{z\in \wt{\m A}_z}{\sup}\|\bold{J}_{G^{\ast}_\sm}(z)-\bold{J}_{\wh{f}_\sm}(z)\|_F \leq C_{\epsilon} \Big(\frac{\log \wt{n}}{\wt{n}}\Big)^{\frac{\beta-1}{d}},\quad\mbox{(since $\beta>1$)}
      \end{aligned}
\end{equation*} 
where $C_{\epsilon}$ is a constant depending on $\epsilon$. Moreover, since $G^{\ast}_\sm\in C^{\beta}_{C_0}(\mathbb{R}^d;\mathbb{R}^D)$ and $\wh{f}_\sm \in C^{\beta}_{C_0}(\mathbb{R}^d;\mathbb{R}^D)$ for some constant $C_0>0$, we can extend the supreme to $\Omega_m$ as
 \begin{equation}\label{eqndensitysmooth}
 \begin{aligned}
&\underset{z\in \Omega_m}{\sup}\,\|G^{\ast}_\sm(z)-\wh{f}_\sm(z)\|_2\leq C_{\epsilon} \Big(\frac{\log \wt{n}}{\wt{n}}\Big)^{\frac{\beta}{d}}+C_1\sqrt{Dd}\, \epsilon;\\
&\underset{z\in\Omega_m}{\sup}\,\|\bold{J}_{G_\sm^{\ast}}(z)-\bold{J}_{\wh{f}_\sm}(z)\|_F \leq C_{\epsilon} \Big(\frac{\log \wt{n}}{\wt{n}}\Big)^{\frac{\beta-1}{d}}+C_1\sqrt{D}\,d\, \epsilon.
      \end{aligned}
\end{equation} 
 By the fact that for $z\in \mb B_1^d$, it holds that $z=Q^{\ast}_\sm(G^{\ast}_\sm(z))$, we obtain $I_{d}=\bold{J}_{Q^{\ast}_\sm}(G^{\ast}_\sm(z)) \, \bold{J}_{G^{\ast}_\sm}(z)$. Since $Q^{\ast}_\sm$ is $L$-Lipschitz, $\bold{J}_{Q^{\ast}_\sm}(G^{\ast}_\sm(z))$ has bounded operator norm, which implies
 ${\rm det}({\bold{J}^T_{G^{\ast}_\sm}(z)}\,\bold{J}_{G^{\ast}_\sm}(z))\geq c$ for some positive constant $c>0$. Therefore, the second display in~\eqref{eqndensitysmooth} implies ${\rm det}(\bold{J}^T_{\wh{f}_\sm}(z)\,\bold{J}_{\wh{f}_\sm}(z))\geq \frac{c}{2}$ for all sufficiently small $\epsilon$ and sufficiently large $n$ (recall that $\wt n\geq \frac{1}{2} \sqrt{n\log n}$).
 Now by using the identity (using $\wh{f}_\sm=\wh G_\sm\circ \wh l_\sm$)
\begin{equation*}
 \begin{aligned}
 \bold{J}^T_{\wh{f}_\sm}(z) \,  \bold{J}_{\wh{f}_\sm}(z)
 &=\left(\bold{J}_{\wh{G}_\sm}(\wh{l}_\sm(z))  \, \bold{J}_{\wh{l}_\sm}(z)\right)^T\left(\bold{J}_{\wh{G}_\sm}(\wh{l}_\sm(z)) \, \bold{J}_{\wh{l}_\sm}(z)\right)\\
 &=\bold{J}_{\wh{l}_\sm}^T(z) \, \bold{J}_{\wh{G}_\sm}^T(\wh{l}_\sm(z))\, \bold{J}_{\wh{G}_\sm}(\wh{l}_\sm(z)) \, \bold{J}_{\wh{l}_\sm}(z),
      \end{aligned}
\end{equation*} 
 by taking determinant we further obtain (note that $\bold{J}_{\wh{l}_\sm}(z)$ is a square matrix)
\begin{equation*}
 \begin{aligned}
{\rm det}^2\left(\bold{J}_{\wh{l}_\sm}(z)\right) \cdot {\rm det}\left(\bold{J}_{\wh{G}_\sm}^T(\wh{l}_\sm(z)) \,  \bold{J}_{\wh{G}_\sm}(\wh{l}_\sm(z))\right)\geq \frac{c}{2}.
        \end{aligned}
\end{equation*} 
 Since both $\wh G_\sm$ and $\wh Q_\sm$ are $L$-Lipschitz, we can further deduce that $c_1\leq{\rm det}(\bold{J}_{\wh{l}_\sm}(z))\leq c_2$ for all $z\in \Omega_m$. In addition, since by definition $\wh l_\sm\in C_{C_0}^\beta(\mb R^d;\mb R^d)$ for some constant $C_0$ and $\beta>1$, for sufficiently small $\epsilon>0$, we have that 
 \begin{align}\label{eq:jacob_bound}
     \frac{1}{2}c_1\leq{\rm det}(\bold{J}_{\wh{l}_\sm}(z))\leq 2c_2
 \end{align}
  holds for all $z\in \Omega_{m,\epsilon}:\,=  \{z\in \mb B_{\epsilon}(\widetilde{z}):\, \widetilde{z}\in \Omega_m\}$, the $\epsilon$-enlargement of set $\Omega_m$. 
 
 We claim that $\wh l_\sm$ is globally invertible over $\Omega_{m,\epsilon}$ when $\epsilon$ is small enough. Otherwise, suppose there exist distinct $z_0$ and $z_1$ in $\Omega_{m,\epsilon}$ such that $\wh l_\sm(z_0)=\wh l_\sm(z_1)$.
 Since~\eqref{eq:jacob_bound} implies $\wh l_\sm$ to be locally invertible, meaning that there exists some constant $b_0>0$ independent of $\epsilon$ such that $\|z_0-z_1\|\geq b_0$. By the definition of $\Omega_{m,\epsilon}$ and the Lipschitzness of $\wh G_\sm$ and $\wh l_\sm$, there exist $\bar z_0$ and $\bar z_1$ in $\Omega_m$ such that (for sufficiently small $\epsilon$)
 \begin{align}
     \|\bar z_0 - \bar z_1\|\geq \frac{1}{2} b_0,\quad \|\wh{l}_\sm(\bar{z}_0)-\wh{l}_\sm(\bar{z}_1)\|_2\leq C \epsilon\quad \mbox{and}\quad 
     \|\wh f_\sm(\bar z_0)- \wh f_\sm(\bar z_1)\| \leq C\epsilon.
 \end{align}
 The third display above combined with the first display in~\eqref{eqndensitysmooth} implies $\|G^{\ast}_\sm(\bar{z}_0)-G^{\ast}_\sm(\bar{z}_1)\|_2\leq C_1\epsilon$. On the other hand, from the first display above and the Lipschitzness of $Q^\ast_\sm$, we have
 \begin{align*}
     \frac{1}{2} b_0 \leq \|\bar z_0 - \bar z_1\| = \|Q^{\ast}_\sm(G^{\ast}_\sm(\bar{z}_0)-Q_\sm^{\ast}(G_\sm^{\ast}(\bar{z}_1))\|_2\leq C\|G^{\ast}_\sm(\bar{z}_0)-G^{\ast}_\sm(\bar{z}_1)\|_2\leq CC_1\epsilon,
 \end{align*}
 which is a contradiction when $\epsilon$ is chosen small enough.
 
 Let $\wh{l}_\sm^{-1}:  \wh{l}_\sm(\Omega_{m,\epsilon/2})\to\Omega_{m,\epsilon/2}$ be the inverse of $\wh{l}_\sm$ over ${\Omega_{m,\epsilon/2}}$. By using~\eqref{eq:jacob_bound} and the inverse function theorem for H\"{o}lder space (see for example, Appendix A of ~\citep{Eldering2013}), we can conclude $\wh{l}_\sm^{-1} \in C^{ \alpha+1}_{C_0} (\wh{l}_\sm(\Omega_{m,\epsilon/2});\mathbb{R}^d)$ for some sufficiently large constant $C_0$. This completes the proof of the first part in the lemma.
 
 The expression of the density function of $\nu_{\sm,\wh Q_{\sm}}^\ast=[\wh Q_\sm]_\# (\rho_m\mu^\ast)$ is an immediate consequence by applying the change of variable of $z=\wh Q_\sm(x)$ or $x=G^\ast_\sm\circ \wh l_\sm^{-1}(z)$ to the right hand side of~\eqref{eqn:change_of_v} and using the definition that $[G^\ast_\sm]_\# \nu^\ast_\sm = \mu^\ast|_{\widetilde S_m} = \big[\mb P(X\in \widetilde S_m)\big]^{-1} \mu^\ast$ on $\widetilde S_m$. Moreover, since $G^\ast_\sm\in C_L^\beta(\mb R^d;\mb R^D)$, $\rho_m\in C^\infty(\mb B_L^D)$ is supported on $S_m = \mb B_{r_m}(a_m)$ and the restriction of $\nu^\ast_\sm$ on $\mb B_1^d$ belongs to $C_L^\alpha(\mb B_1^d)$, we can conclude that $\nu^\ast_\sm \cdot (\rho_m\circ G^\ast_\sm) \in C_L^\alpha(\mb R^d)$, where we have used the facts that $\beta \geq \alpha+1$ and either $\mb B_{r_m}(a_m)\cap G^\ast_\sm\big(\mb B_1^d\setminus \mb B_{1-\epsilon}^d\big)=\emptyset$ or $\nu_\sm^\ast\in C_L^\alpha(\mb R^d)$.
This together with $\wh{l}_\sm^{-1} \in C^{ \alpha+1}_{C_0} (\wh{l}_\sm(\Omega_{m,\epsilon/2});\mathbb{R}^d)$ implies 
  ${\nu}^{\ast}_{\wh{Q}}\in C^{ \alpha}_{C_1} (\mathbb{R}^d)$ for some constant $C_1$.

\subsection{Proof of Lemma~\ref{coveringmanifold}}\label{sec:proof_coveringmanifold}
The bound for $\epsilon\geq 1$ is trivial, so we only consider $\epsilon\in(0,1)$.
Choose $\Delta={\epsilon}^{\frac{1}{\widetilde{\gamma}}}$ and $m=\lceil\Delta^{-1}\rceil$. For each $\xi=(\xi_1,\xi_2, \cdots, \xi_{d})\in [m]^d$, define $z_{\xi}=\Delta\xi$ and $x_{\xi}=G(z_{\xi})$. For any integer $s\in\mb N_0$, denote $\epsilon_s=\frac{\epsilon}{\Delta^s}= \epsilon^{1-\frac{s}{\widetilde{\gamma}}}$. For any multi-index $k\in \mb N_0^d$, let $\alpha_{\xi}^{(k)}(f)=\big\lfloor\frac{f^{(k)}(x_{\xi})}{\epsilon_{|k|}}\big\rfloor$. 

Consider any two functions $f_1,f_2 \in C^{\widetilde{\gamma}}_1(\mathbb{R}^D)$. Suppose for any $\xi\in[m]^d$ and multi-index $k\in\mathbb{N}_0^d$ with $|k|\leq \lfloor\widetilde{\gamma}\rfloor$, it holds that $\alpha_{\xi}^{(k)}(f_1)=\alpha_{\xi}^{(k)}(f_2)$. Then by the Lipschitzness of $G$, we can get that for any $z\in[0,1]^d$,
\begin{equation*}
 \begin{aligned}
&\quad \big|f_1(G(z))-f_2(G(z))\big|\\
&\leq C\, \sum_{k\in \mathbb{N}_0^d, \, |k|\leq \lfloor\widetilde{\gamma}\rfloor}\frac{\big|f_1^{(k)}(x_{\xi})-f_2^{(k)}(x_{\xi})\big|}{k!}\, \Delta^{|k|}+\Delta^{\widetilde{\gamma}}\leq C_1\epsilon.
\end{aligned}
\end{equation*} 
Therefore, $\|f_1-f_2\|_{L^{\infty}(\mathcal{X}_{G})}\leq C\epsilon$. 

First consider $\xi^{[1]}=(1,1,\cdots,1)$. For any $k\in \mathbb{N}_0^d$ with $|k|\leq  \lfloor\widetilde{\gamma}\rfloor$, since $f\in C^{\widetilde{\gamma}}_1(\mathbb{R}^D)$, we have $\alpha^{(k)}_{\xi^{[1]}}\leq C\,\frac{1}{\epsilon_{|k|}}$. Therefore the total number $N^{(k)}_{\xi^{[1]}}$ of possible values of $\alpha^{(k)}_{\xi^{[1]}}$ (it must be an integer) is upper bounded by $ \frac{C_1}{\epsilon_{|k|}}$. Therefore, the logarithm of  total number of choices for $\{\alpha^{(k)}_{\xi^{[1]}}:\,k\in \mathbb{N}_0^d,\,|k|\leq  \lfloor\widetilde{\gamma}\rfloor\}$ is at most
\begin{equation*}
 \begin{aligned}
 \sum_{k\in \mathbb{N}_0^d, \, |k|\leq \lfloor\widetilde{\gamma}\rfloor} \log N^{(k)}_{\xi^{[1]}} \leq C\,  \sum_{k\in \mathbb{N}_0^d, \, |k|\leq \lfloor\widetilde{\gamma}\rfloor} \Big(1-\frac{|k|}{\widetilde{\gamma}}\Big)\log \frac{1}{\epsilon} \leq C_1 \log\frac{1}{\epsilon},
         \end{aligned}
\end{equation*} 
where constant $C_1$ only depends on $d$ and $\gamma$.

Next consider $\xi^{[2]}=(2,1,1,\cdots,1)$. For any $f\in C^{\widetilde{\gamma}}_1(\mathbb{R}^D)$ and $k\in \mathbb{N}_0^d$ with $|k|\leq  \lfloor\widetilde{\gamma}\rfloor$, note that
\begin{equation*}
\bigg|f^{(k)}(x_{\xi^{[2]}})-\sum_{\eta\in \mathbb{N}_0^d, \, |\eta|\leq \lfloor\widetilde{\gamma}\rfloor-|k|} \frac{\epsilon_{|k|+|\eta|}\, \alpha_{\xi^{[2]}}^{(k+\eta)}(f)}{\eta!}\,(x_{\xi^{[2]}}-x_{\xi^{[1]}})^{\eta}\bigg|\leq C\, \epsilon_{|k|}.
\end{equation*}
Therefore, given $\{\alpha^{(k)}_{\xi^{[1]}}:\,k\in \mathbb{N}_0^d,\,|k|\leq  \lfloor\widetilde{\gamma}\rfloor\}$ which consists of all $\alpha^{(k)}$ values at location $\xi^{[1]}$, the total number of possible values of $\{\alpha^{(k)}_{\xi^{[2]}}:\,k\in \mathbb{N}_0^d,\,|k|\leq  \lfloor\widetilde{\gamma}\rfloor\}$ at location $\xi^{[2]}$ is at most $\frac{ C\, \epsilon_{|k|}}{\epsilon_{|k|}}=C$, which is a constant. 

Similarly, by considering the rest grid points in $[m]^d$ through the order of $\xi^{[3]}=(3,1,1,\cdots,1)$, $\cdots$, $\xi^{[m^d]}=(m,m,\cdots,m)$ such that the Hamming distance between any two adjacent grid points equals one, we can conclude that given the $\alpha^{(k)}$ values on the previous grid point $\xi^{[j]}$, the $\alpha^{(k)}$ values on $\xi^{[j+1]}$ can take at most constant $C$ many values, for $j\in[m^d-1]$.
Therefore, we can conclude that 
$$\log N\big(C^{\widetilde \gamma}_1(\mathbb{R}^D),\, \|\cdot\|_{L^{\infty}(\mathcal{X}_{G})}, \,\epsilon\big)\leq  C_1\,\log\frac{1}{\epsilon} + C_2\,  m^d \leq C_3\,\Big(\frac{1}{\epsilon}\Big)^{\frac{d}{\widetilde{\gamma}}} \quad \forall \epsilon \in(0,1).$$


\section{Proof of Extensions of Main Results}\label{sec:proof_extension}

\subsection{Proof of Corollary~\ref{Lepski}}
The proof follows the standard analysis for Lepski's estimator~\citep{lepskii1991problem}.  Consider $\mu^\ast\in \m S^\ast(d,D,\alpha^\ast,\beta^\ast,\ms O_M,L)$,  by the analysis in Appendix~\ref{Sec:more_proofs_upperbound}, we only need to consider $m\in \mb M\cap \wh{\mb M}$. Let $\widetilde \beta=\max\{\beta\in \m B_1:\beta\leq \beta^\ast\}$ and $(\wh{G}^{\dagger}_{[m]},\wh{Q}^{\dagger}_{[m]})=(\wh{G}_{[m]}^{[\wh\beta_{[m]}]},\wh{Q}_{[m]}^{[\wh\beta_{[m]}]})$, then it holds that  for any $m\in \mb M\cap \wh{\mb M}$,
\begin{equation*}
  \sum_{i\in I_1} \|X_i-\wh G^{\dagger}_{[m]}\circ \wh Q^{\dagger}_{[m]}(X_i)\|_2^2 \cdot \bold{1}(X_i\in S_m^{\dagger})=0,
\end{equation*}
and $\wh G^{\dagger}_{[m]}\in C^{\widetilde \beta}_L(\mb R^d;\mb R^D)$, $\wh Q^{\dagger}_{[m]}\in C^{\widetilde \beta}_L(\mb R^D;\mb R^d)$. Then by replacing $\beta$ with $\widetilde \beta$ in the proof of Lemma~\ref{lemma:pointwise_error} in Section~\ref{Sec:proof_lemma:pointwise_error} and the proof of Lemma~\ref{lemma:smooth_corr} in Section~\ref{sec:proof_smooth_corr}, we can obtain it holds with probability at least $1-n^{-1}$ that for any $m\in \mb M\cap \wh{\mb M}$,
\begin{equation*}
 \begin{aligned}
      \sup_{f\in C_1^\gamma(\mb R^D)} \big|\wh{\m J}^{\dagger}_{m,s}(f) - {\m J}_{m,s}(f)\big| &\leq   C\,\sqrt{\frac{\log n}{n}} + C\, \Big(\frac{\log n}{n}\Big)^{\frac{\gamma\widetilde \beta}{d}}+ C\,\Big(\frac{\log n}{n}\Big)^{\frac{\gamma+\widetilde \beta-1}{d}}\\
       &\leq   C_1\,\sqrt{\frac{\log n}{n}} + C_1\, \Big(\frac{\log n}{n}\Big)^{\frac{\gamma  \beta^\ast}{d}}+ C_1\,\Big(\frac{\log n}{n}\Big)^{\frac{\gamma+  \beta^\ast-1}{d}},
  \end{aligned}
  \end{equation*}
 where the last ineuqlity is due to $\beta^\ast-\widetilde\beta\leq \frac{c}{\log n}$. Then it suffice to show that it holds with probability larger than $1-\frac{1}{n}$ that for any $m\in  \mb M\cap \wh{\mb M}$, 
   \begin{align*}
      \sup_{f\in C_1^\gamma(\mb R^D)} \big|\wh {\m J}_{m,h}^{[\wh\alpha_{[m]}]}(f)+\wh {\m J}_{m,l}^{[\wh\alpha_{[m]}]}(f) - {\m J}_{m,h}(f)-{\m J}_{m,l}(f)\big| \leq   C\,\sqrt{\frac{\log n}{n}} + C\,\big(\frac{\log n}{n}\big)^{\frac{\alpha^\ast+\gamma}{2\alpha^\ast+d}}.
     \end{align*}
  Firstly by replacing $\alpha$ with any $\alpha_k\leq \alpha^*$ in proofs of Lemma~\ref{lemma:low_freq} and Lemma~\ref{lemma:high_freq} in Section~\ref{sec:proof_low_freq} and Section~\ref{sec:proof_high_freq}, we can get it holds with probability at least $1-n^{-1}$ that  for any  $\alpha_k\in \{\alpha\in \m B_2: \alpha\leq \alpha^\ast\}$ and $m \in  \mb M\cap \wh{\mb M}$,
\begin{equation*}
\begin{aligned}
    \sup_{f\in C_1^\gamma(\mb R^D)} \big|\wh {\m J}_{m,h}^{[ \alpha_k]}(f)+\wh {\m J}_{m,l}^{[ \alpha_k]}(f) - {\m J}_{m,h}(f)-{\m J}_{m,l}(f)\big|&\overset{(i)}{\leq} c_1\,\sqrt{\frac{\log n}{n}} \vee\big(\frac{\log n}{n}\big)^{\frac{ \alpha_k\wedge (\wt\beta-1)+\gamma}{2\alpha_k+d}}\\
    &\overset{(ii)}{\leq} c_2\, \sqrt{\frac{\log n}{n}} \vee\big(\frac{\log n}{n}\big)^{\frac{ \alpha_k+\gamma}{2\alpha_k+d}},
    \end{aligned}
\end{equation*}  
where $(i)$ uses $\wh Q^{\dagger}_{[m]}\in C^{\widetilde \beta}_L(\mb R^D;\mb R^d)$ and $(ii)$ uses $\alpha_k\leq \alpha^\ast\leq \beta^\ast-1\leq \wt\beta-1+c\,(\log n)^{-1}$.    Then let $ \alpha_{k^\ast}=\max\{\alpha\in \m B_2:\alpha\leq \alpha^\ast\}$, if $ \wh\alpha_{[m]}< \alpha_{k^\ast}$, then there exists $k< k^\ast$  so that 
     \begin{equation*}
     \begin{aligned}
         c_0  \sqrt{\frac{\log n}{n}}\vee \big(\frac{\log n}{n}\big)^{\frac{\alpha_k+\gamma}{2\alpha_k+d}} &< \underset{f\in C^{\gamma}_1(\mb R^D)}{\sup}\big|\wh{\m J}^{[\alpha_{k^\ast}]}_{m,h}(f)+\wh{\m J}^{[ \alpha_{k^\ast}]}_{m,l}(f)-\wh{\m J}^{[\alpha_k]}_{m,h}(f)-\wh{\m J}^{[\alpha_k]}_{m,l}(f)\big|\\
         &\leq \underset{f\in C^{\gamma}_1(\mb R^D)}{\sup}\big|\wh{\m J}^{[\alpha_{k^\ast}]}_{m,h}(f)+ \m J^{[ \alpha_{k^\ast}]}_{m,l}(f)- \m J_{m,h}(f)-  \m J_{m,l}(f)\big|\\
         &+\underset{f\in C^{\gamma}_1(\mb R^D)}{\sup}\big|\wh{\m J}^{[\alpha_{k}]}_{m,h}(f)+ \m J^{[ \alpha_{k}]}_{m,l}(f)- \m J_{m,h}(f)-  \m J_{m,l}(f)\big|.\\
              \end{aligned}
     \end{equation*}
Thus when $c_0\geq 2c_2$, we can obtain
\begin{equation*}
\begin{aligned}
   & \mb P_{{\mu^\ast}^{\otimes n}}(\wh\alpha_{[m]}< \alpha_{k^\ast})\\
    &\leq    \mb P_{{\mu^\ast}^{\otimes n}}\Big( \underset{f\in C^{\gamma}_1(\mb R^D)}{\sup}\big|\wh{\m J}^{[\alpha_{k^\ast}]}_{m,h}(f)+ \m J^{[ \alpha_{k^\ast}]}_{m,l}(f)- \m J_{m,h}(f)-  \m J_{m,l}(f)\big|\geq c_2\, \sqrt{\frac{\log n}{n}}\vee \big(\frac{\log n}{n}\big)^{\frac{\alpha_{k^\ast}+\gamma}{2\alpha_{k^\ast}+d}} \Big)\\
    &+ \mb P_{{\mu^\ast}^{\otimes n}}\Big(\exists\, k<k^\ast, \underset{f\in C^{\gamma}_1(\mb R^D)}{\sup}\big|\wh{\m J}^{[\alpha_{k}]}_{m,h}(f)+ \m J^{[ \alpha_{k}]}_{m,l}(f)- \m J_{m,h}(f)-  \m J_{m,l}(f)\big|\geq c_2\, \sqrt{\frac{\log n}{n}}\vee \big(\frac{\log n}{n}\big)^{\frac{\alpha_k+\gamma}{2\alpha_k+d}} \Big)\\
    &\leq \frac{1}{n}.
    \end{aligned}
\end{equation*}
Then if $ \alpha_{k^\ast}\leq \wh\alpha_{[m]}$,
\begin{equation*}
    \begin{aligned}
     \sup_{f\in C_1^\gamma(\mb R^D)} \big|\wh {\m J}_{m,h}^{[\wh\alpha_{[m]}]}(f)+\wh {\m J}_{m,l}^{[\wh\alpha_{[m]}]}(f) - \wh {\m J}_{m,h}^{[\alpha_{k^\ast}]}(f)-\wh {\m J}_{m,l}^{[\alpha_{k^\ast}]}(f)\big| \leq   c_0\,\sqrt{\frac{\log n}{n}} + c_0\,\big(\frac{\log n}{n}\big)^{\frac{\alpha_{k^\ast}+\gamma}{2\alpha_{k^\ast}+d}}.
    \end{aligned}
\end{equation*}
Thus 
\begin{equation*}
    \begin{aligned}
     &\sup_{f\in C_1^\gamma(\mb R^D)} \big|\wh {\m J}_{m,h}^{[\wh\alpha_{[m]}]}(f)+\wh {\m J}_{m,l}^{[\wh\alpha_{[m]}]}(f) - {\m J}_{m,h}(f)-{\m J}_{m,l}(f)\big|\\
     &\leq \sup_{f\in C_1^\gamma(\mb R^D)} \big|\wh {\m J}_{m,h}^{[\wh\alpha_{[m]}]}(f)+\wh {\m J}_{m,l}^{[\wh\alpha_{[m]}]}(f) - \wh {\m J}_{m,h}^{[\alpha_{k^\ast}]}(f)-\wh {\m J}_{m,l}^{[\alpha_{k^\ast}]}(f)\big|\\
    & + \sup_{f\in C_1^\gamma(\mb R^D)} \big|\wh {\m J}_{m,h}^{[\alpha_{k^\ast}]}(f)+\wh {\m J}_{m,l}^{[\alpha_{k^\ast}]}(f) - {\m J}_{m,h}(f)-{\m J}_{m,l}(f)\big|.
        \end{aligned}
\end{equation*}
The desired result then follows from $\alpha^\ast-\frac{c}{\log n}\leq \alpha_{k^\ast}\leq\alpha^\ast$.

\subsection{Proof of Corollary~\ref{twosample}}
Firstly for any $\mu_1=\mu_2\in \mathcal{S}^{\ast}$, it holds that 
\begin{equation*}
    \begin{aligned}
    &\mathbb{E}_{\mu_1^{\otimes n},\mu_2^{\otimes n}} (\Phi_{\gamma,c})\\
    &=\mathbb{P}_{\mu_1^{\otimes n},\mu_2^{\otimes n}} \Big( \underset{f\in C^{\gamma}_1(\mathbb{R}^D)}{\sup}  \big(\wh{\mathcal{J}} (f;\,X_{1:n})-\wh{\mathcal{J}} (f;\,Y_{1:n})\big)
 \geq c\,\delta_n^{\ast}\Big)\\
    &\leq  \mathbb{P}_{\mu_1^{\otimes n},\mu_2^{\otimes n}} \Big( \underset{f\in C^{\gamma}_1(\mathbb{R}^D)}{\sup}\big(\wh{\mathcal{J}} (f;\,X_{1:n})-\int f(X) \, \dd \mu_1\big)\\
 &+\underset{f\in C^{\gamma}_1(\mathbb{R}^D)}{\sup}\big(\wh{\mathcal{J}} (f;\,Y_{1:n})-\int f(Y) \, \dd \mu_2\big)\geq c\,\delta_n^{\ast}\Big)\\
    &\leq \mathbb{P}_{\mu_1^{\otimes n}} \Big(\underset{f\in C^{\gamma}_1(\mathbb{R}^D)}{\sup}\big(\wh{\mathcal{J}} (f;\,X_{1:n})-\int f(X) \, \dd \mu_1\big)\geq \frac{c}{2}\,\delta_n^{\ast}\Big)\\
   & + \mathbb{P}_{\mu_2^{\otimes n}} \Big(\underset{f\in C^{\gamma}_1(\mathbb{R}^D)}{\sup}\big(\wh{\mathcal{J}} (f;\,Y_{1:n})-\int f(Y) \, \dd \mu_2\big)\geq \frac{c}{2}\,\delta_n^{\ast}\Big).\\
    \end{aligned}
\end{equation*}
Then by Theorem~\ref{upperboundgenerative}, for any constant $r$, there exists a constant $c$ such that 
\begin{equation*}
\begin{aligned}
&\mathbb{P}_{\mu_1^{\otimes n}} \Big(\underset{f\in C^{\gamma}_1(\mathbb{R}^D)}{\sup}\big(\wh{\mathcal{J}} (f;\,X_{1:n})-\int f(X) \, \dd \mu_1\big)\geq \frac{c}{2}\,\delta_n^{\ast}\Big)\\
   & + \mathbb{P}_{\mu_2^{\otimes n}} \Big(\underset{f\in C^{\gamma}_1(\mathbb{R}^D)}{\sup}\big(\wh{\mathcal{J}} (f;\,Y_{1:n})-\int f(Y) \, \dd \mu_2\big)\geq \frac{c}{2}\,\delta_n^{\ast}\Big)\leq \frac{n^{-r}}{2}.
    \end{aligned}
\end{equation*}
Moreover, for any $\mu_1,\mu_2\in \mathcal{S}^{\ast}$ with $d_{\gamma}(\mu_1,\mu_2)\geq c_1\,\delta_n^{\ast}$, it holds that 
\begin{equation*}
    \begin{aligned}
   & \mathbb{E}_{\mu_1^{\otimes n},\mu_2^{\otimes n}}(1-\Phi_{\gamma,c})\\
    &=\mathbb{P}_{\mu_1^{\otimes n},\mu_2^{\otimes n}} \Big(\underset{f\in C^{\gamma}_1(\mathbb{R}^D)}{\sup} \big(\wh{\mathcal{J}} (f;\,X_{1:n})-\wh{\mathcal{J}} (f;\,Y_{1:n})\big)< c\,\delta_n^{\ast}\Big)\\
    &\leq  \mathbb{P}_{\mu_1^{\otimes n},\mu_2^{\otimes n}} \Big(\underset{f\in C^{\gamma}_1(\mathbb{R}^D)}{\sup}\big(\wh{\mathcal{J}} (f;\,X_{1:n})-\int f(X) \, \dd \mu_1\big)\\
   & +\underset{f\in C^{\gamma}_1(\mathbb{R}^D)}{\sup}\big(\wh{\mathcal{J}} (f;\,Y_{1:n})-\int f(Y) \, \dd \mu_2\big)\geq (c_1-c)\, \delta_n^{\ast}\Big)\\
    &\leq \mathbb{P}_{\mu_1^{\otimes n}} \Big(\underset{f\in C^{\gamma}_1(\mathbb{R}^D)}{\sup}\big(\wh{\mathcal{J}} (f;\,X_{1:n})-\int f(X) \, \dd \mu_1\big)\geq \frac{c_1-c}{2}\,\delta_n^{\ast}\Big)\\
    &+ \mathbb{P}_{\mu_2^{\otimes n}} \Big(\underset{f\in C^{\gamma}_1(\mathbb{R}^D)}{\sup}\big(\wh{\mathcal{J}} (f;\,Y_{1:n})-\int f(Y) \, \dd \mu_2\big)\geq \frac{c_1-c}{2}\,\delta_n^{\ast}\Big).\\
    \end{aligned}
\end{equation*}
Then by Theorem~\ref{upperboundgenerative}, for any constant $r$ and $c$, there exists a constant $c_1$ such that 
\begin{equation*}
\begin{aligned}
& \mathbb{P}_{\mu_1^{\otimes n}} \Big(\underset{f\in C^{\gamma}_1(\mathbb{R}^D)}{\sup}\big(\wh{\mathcal{J}} (f;\,X_{1:n})-\int f(X) \, \dd \mu_1\big)\geq \frac{c_1-c}{2}\,\delta_n^{\ast}\Big)\\
    &+ \mathbb{P}_{\mu_2^{\otimes n}} \Big(\underset{f\in C^{\gamma}_1(\mathbb{R}^D)}{\sup}\big(\wh{\mathcal{J}} (f;\,Y_{1:n})-\int f(Y) \, \dd \mu_2\big)\geq \frac{c_1-c}{2}\,\delta_n^{\ast}\Big)\leq \frac{n^{-r}}{2}.
    \end{aligned}
\end{equation*}
Proof is completed.

 \subsection{Proof of Theorem~\ref{th:unbounded}}\label{proof:unbounded}
We first describe the estimator. Similar as the analysis in Section~\ref{sec:optimal_proce}, our proposed minimax-optimal estimator $\wh \mu^{\circ}$ is constructed in three steps. Let $[n]= I_1\cap I_2$ be a random splitting of the data indices into two sets with $|I_1|= \lceil n/2 \rceil$ and $|I_2| = n-|I_1|$.

\noindent {\bf Step 1}: (Submanifold estimation) Recall $K_1$ to be the $1/L$-enlargement of $K_0$ given by $K_1=\{x\in B_{1/L}(y)\,:\, y\in K_0\}$, we use the first half of data to compute 
\begin{equation*}
     \begin{aligned}
       (\wh{G},\wh{Q})=\underset{G\in {\ms{G}}, Q\in {\ms{Q}}}{\arg\min}\bigg( \frac{1}{|I_1|}\sum_{i\in I_1} \Big[\|X_i-G(Q(X_i))\|_2^2\bold{1}_{  K_{1}}(X_i)\Big]\bigg).
     \end{aligned}
 \end{equation*}
where recall $\ms{G}=C_{L}^{\beta}(\mathbb{R}^d;\, \mathbb{R}^D)$ and $\ms {Q}= C_{L}^{\beta}(\mathbb{R}^D;\, \mathbb{R}^d)$. Let $\wh{p}_0=\frac{1}{|I_2|}\sum_{i\in I_2}  \bold{1}_{K_0}(X_i)$ denote the sample frequency of falling into $K_0$.

\noindent {\bf Step 2}: (Surrogate functional construction) 
Follow the analysis in Section~\ref{sec:optimal_proce}, we construct a finite sample surrogate to $\int_{K_0} f(X)\,\dd \mu^*$ as follows. Firstly if $\wh{p}_0<\sqrt{\frac{\log n}{n}}$, then define $\wh{\m J}^{\circ}(f)=0$; otherwise, let $J$ be the largest integer such that $2^J \leq (\frac{n}{\log n})^{\frac{1}{2\alpha+d}}$, $\Pi_J f$ denote the projection of any $f\in\m F$ onto the first scale $J$ wavelet coefficients and $\Pi_J^\perp f = f-\Pi_J f$. The expectation of low frequency components of $f$: $\int_{K_0}\Pi_J f(\wh{G}(\wh{Q}(X)))\,\dd \mu^*$ is estimated with the empirical mean 
\begin{equation*}
\wh{\m J}^{\circ}_{l}(f)=\frac{1}{|I_2|} \sum_{i\in I_2} [\Pi_J f(\wh{G}(\wh{Q}(X_i)))\bold{1}_{K_0}(X_i)].
\end{equation*}
For the high frequency part, we construct kernel density estimator (KDE) to estimate the density of $\wh{Q}_{\#}(\bold{1}_{K_0}\cdot\mu^{\ast})$. More specifically, choose a kernel function $\bar{k}(x)\in C^{1+ \alpha}_{c_1}(\mathbb{R})$ such that  (1) ${\rm supp}(\bar{k}(x))\subset [0,1]$; (2) $\int \bar{k}(x)\dd x=1$; (3) for any $j\in \mb N^{+}$ with $ j\leq \lceil  \alpha\rceil$, $\int x^j \bar{k}(x)\, \dd x=0$. Define the kernel density estimator
 \begin{equation*}
      \bar{\nu}_{\wh{Q}}(y)=\frac{1}{h^d|I_2|} \sum_{i\in I_2} \Big(\prod_{j=1}^d \bar{k}\Big(\frac{\wh{Q}_j(X_i)-y_j}{h}\Big)\Big) \cdot\bold{1}_{K_{1/4}}(X_i).
  \end{equation*}
with bandwidth $h=\big(\frac{\log n}{n}\big)^{\frac{1}{2 \alpha+d}}$. Define 
  \begin{equation*}
    \wh{U}=\underset{i\in I_2: \,X_i \in K_{1/4}}{\bigcup} \mb B_{c_0(\frac{1}{\wh{p}_0}\cdot\frac{\log n}{n})^{\frac{1}{d}}}(X_i)
\end{equation*}
for a large enough constant $c_0$, we use
$\{ y=\wh{Q}(x)\,|\,x\in \wh{U}; \wh{G}(\wh{Q}(x))\in K_0\}$ to estimate the support of $\wh{Q}_{\#}(\bold{1}_{K_0}\cdot\mu^{\ast})$. Therefore, we can define the functional 
\begin{equation*}
\wh{\m J}^{\circ}_{h}(f)=\int_{\{ y=\wh{Q}(x)\,|\,x\in \wh{U}; \wh{G}(\wh{Q}(x))\in K_0\}}\Pi_J^{\perp}f(\wh{G}(y))\bar{\nu}_{\wh{Q}}(y)\,\dd y
\end{equation*}
to estimate $\int_{K_0}\Pi_J^{\perp} f(\wh{G}(\wh{Q}(X)))\,\dd\mu^*$. Finally we can construct a (sample version of) higher-order smoothness correction $\wh{\m J}^{\circ}_{s}(f)$ to $ \int_{K_0}  f\big(\wh G  \circ \wh Q (X)\big)\,\dd\mu^*$ for approximating $ \int_{K_0}f(X)\,\dd\mu^*$, defined as
\begin{equation*}
  \wh{\m J}^{\circ}_s(f)= -\frac{1}{|I_2|}\sum_{i\in I_2} \sum_{j\in \mathbb{N}_0^D\atop 1\leq |j|\leq \lfloor\gamma\rfloor}\left[\frac{1}{j!}f^{(j)}(X_i)(\wh{G}(\wh{Q}(X_i))-X_i)^{j}\bold{1}_{K_0}(X_i)\right]
\end{equation*}
Putting all pieces together,  we can construct. a estimator of $\int_{K_0}f(X)\,\dd\mu^\ast$ as
\begin{equation*}
 \wh{\mathcal{J}}^{\circ}(f)= \wh{\mathcal{J}}^{\circ}_{s}(f)+ \wh{\mathcal{J}}^{\circ}_l(f)+ \wh{\mathcal{J}}^{\circ}_h(f),
   \end{equation*} for arbitrary $f\in C^{\gamma}_1(\mathbb{R}^D)$.
   
 \noindent {\bf Step 3}: (Generative model estimation) Then we can  define the estimator 
 \begin{equation*}
 \begin{aligned}
 \wh{\mu}^{\circ}=\underset{\mu=G_{\#}\nu\atop G\in C^{\beta}_L(\mb R^d;\mb R^D), \nu\in \m{P}(\mb R^d)}{\inf} \underset{f\in C^{\gamma}_1(\mathbb{R}^D)}{\sup}
     \Big(\frac{1}{\wh{p}_0}\cdot\wh{\mathcal{J}}^{\circ}(f)-\int f(X)\,\dd\mu\Big).\\
      \end{aligned}
 \end{equation*}
We now show that $\wh{\mu}^{\circ}$ achieves the near-optimal rate. Consider a fixed $\mu^{\ast}\in \overline{\mathcal{S}}^{\ast}$, it suffices to show that 
\begin{align*} 
    \sup_{f\in C_1^\gamma(\mb R^D)}\Big|   \int_{K_0}f(X)\,\dd\mu^* - \wh{ \m J}^{\circ}(f)\Big| \leq C \, \Big(\frac{n}{\log n}\Big)^{-\frac{1}{2}}\vee \Big(\frac{n}{\log n}\Big)^{-\frac{ \alpha+\gamma}{2\alpha+d}}\vee \Big(\frac{n}{\log n}\Big)^{-\frac{\gamma\beta}{d}},
\end{align*}
where the regularized surrogate $\wh{ \m J}^{\circ}(f)=\wh{\m J}^{\circ}_l(f)+\wh{\m J}^{\circ}_h(f)+\wh{\m J}^{\circ}_s(f)$. We may similarly decompose the target functional into three terms as $\mb \int_{K_0}f(X) \,\dd\mu^* = \m J^{\circ}_l(f) + \m J^{\circ}_h(f)+ \m J^{\circ}_s(f)$, where
\begin{align*}
    \m J_l^{\circ}(f)  &=   \int_{K_0}\Pi_J f(\wh{G}(\wh{Q}(X)))\,\dd\mu^*,\\
     \m J_h^{\circ}(f)  &= \int_{K_0}\Pi_J^{\perp} f(\wh{G}(\wh{Q}(X)))\,\dd\mu^*,\\
      \m J_s^{\circ}(f)  &= \int_{K_0}f(X)\,\dd\mu^*-\int_{K_0}f\big(\wh G(\wh Q(X))\big)\,\dd\mu^*.
\end{align*}
Note that similar as the analysis in Appendix~\ref{Sec:more_proofs_upperbound}, we only need to consider the case that $\mathbb{P}_{\mu^{\ast}}(X\in K_0)\geq \frac{1}{2}\sqrt{\frac{\log n}{n}}$ and $\wh{p}_0\geq \sqrt{\frac{\log n}{n}}$.  Recall $\mu^\ast\in \overline{\m S}^\ast$, then there exists a compact set $\widetilde{K}\supseteq K_1$  such that $\mu^{\ast}|_{\widetilde{K}}$ can be expressed as a generative model $\mu^{\ast}|_{\widetilde{K}}=G^{\ast}_{\#}\nu^{\ast}$, where $G^{\ast}\in C^{\beta}_{L}(\mathbb{R}^d;\mb R^D)$. Denote  $\Omega^{\ast}={\rm supp}(\nu^{\ast})$, then there exists $Q^{\ast}:\mathbb{R}^D\to \mathbb{R}^d$ such that for any $z\in \Omega^{\ast}$, $Q^{\ast}(G^{\ast}(z))=z$ and  $Q^{\ast}\in C^{\beta}_{L}(\mathbb{R}^D;\mb R^d)$. 

The following three lemmas show that $\wh{\m J}^{\circ}_{s}$, $\wh{\m J}^{\circ}_{l}$ and $\wh{\m J}^{\circ}_{h}$ are good estimators for $\m J^{\circ}_{s}$, $\m J^{\circ}_{l}$ and $\m J^{\circ}_{h}$ respectively.
\begin{lemma}[Smoothness correction]\label{lemma:constrained_smooth_corr}
  When $\mu^\ast\in \overline{\m S}^\ast$, it holds with probability at least $1-n^{-c}$ that the functional $\wh{\m J}^{\circ}_{s}:\, C^\gamma(\mb R^D) \to \mb R$ satisfies
  \begin{align*}
      \sup_{f\in C_1^\gamma(\mb R^D)} \big|\wh{\m J}_{s}^{\circ}(f) - {\m J}_{s}^{\circ}(f)\big| \leq   C\,\sqrt{\frac{\log n}{n}} + C\, \Big(\frac{\log n}{n}\Big)^{\frac{\gamma\beta}{d}}+ C\,\Big(\frac{\log n}{n}\Big)^{\frac{\gamma+\beta-1}{d}}.
  \end{align*}
\end{lemma}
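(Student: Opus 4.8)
The plan is to follow the template of the proof of Lemma~\ref{lemma:smooth_corr}, with the partition-of-unity weight $\rho_m$ replaced by the indicator $\bold{1}_{K_0}$ and the patch $S_m^{\dagger}$ replaced by the enlargement $K_1$. First I would Taylor-expand any $f\in C_1^\gamma(\mb R^D)$ about each $X_i$ up to order $\lfloor\gamma\rfloor$; since $f^{(j)}$ is $(\gamma-\lfloor\gamma\rfloor)$-H\"older when $|j|=\lfloor\gamma\rfloor$, the remainder is $O\big(\|\widehat G(\widehat Q(X))-X\|_2^{\gamma}\big)$, so that $-\widehat{\m J}^\circ_s(f)$ is exactly the empirical (over $I_2$) counterpart of $\sum_{1\le |j|\le \lfloor\gamma\rfloor}\frac{1}{j!}\,\mb E_{\mu^\ast}\big[f^{(j)}(X)\,(\widehat G(\widehat Q(X))-X)^{j}\,\bold{1}_{K_0}(X)\big]$, up to a deterministic bias bounded by $C\,\mb E_{\mu^\ast}\big[\|\widehat G(\widehat Q(X))-X\|_2^{\gamma}\,\bold{1}_{K_0}(X)\big]$. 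Thus the lemma reduces to: (i) a constrained submanifold estimation bound analogous to Lemma~\ref{lemma:mani_est}, namely that for every fixed $\eta>0$, with probability at least $1-n^{-c}$, $\mb E_{\mu^\ast}\big[\|X-\widehat G(\widehat Q(X))\|_2^{\eta}\,\bold{1}_{K_0}(X)\big]\le C\,\big(\tfrac{\log n}{n}\big)\vee\big(\tfrac{\log n}{n}\big)^{\eta\beta/d}$; and (ii) a concentration bound for the centered empirical process.

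For ingredient (i) I would rerun the argument behind Lemmas~\ref{lemma:pointwise_error}--\ref{lemma:empirical_proc}, but in the simpler regime where the latent density is bounded away from zero. By the definition of $\overline{\m S}^\ast$, on $\widetilde K\supseteq K_1$ we have $\mu^\ast|_{\widetilde K}=G^\ast_\#\nu^\ast$ with $G^\ast\in C_L^\beta$, a smooth left inverse $Q^\ast\in C_L^\beta$ on $\Omega^\ast={\rm supp}(\nu^\ast)$, $|\log\nu^\ast|\le L$ on $\Omega^\ast$, and the thickening condition $\bigcup_{z\in\Omega^\ast,\,G^\ast(z)\in K_1}B_{1/L}(z)\subset\Omega^\ast$; the latter makes $(G^\ast,Q^\ast)$ a feasible pair for the reconstruction-loss minimization over $K_1$ with zero objective, so by optimality of $(\widehat G,\widehat Q)$ the empirical reconstruction loss over $K_1$ vanishes. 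Localizing this identity to $Q^\ast$-balls of a \emph{uniform} radius $\delta_n\asymp(\tfrac{\log n}{n})^{1/d}$ (no boundary decay is needed since the density is two-sided bounded) over an $h_n$-net of $Q^\ast(\mathcal M\cap K_0)$ produces the localized basic inequality, to which the Talagrand-plus-chaining argument of Lemma~\ref{lemma:empirical_proc} applies essentially verbatim, giving the pointwise estimate $\|G^\ast(z)-\widehat G(\widehat Q(G^\ast(z)))\|_2\le C(\tfrac{\log n}{n})^{\beta/d}$ for all $z$ with $G^\ast(z)\in K_0$. The mass-continuity condition $|\mb P_\nu(G(z)\in K_0)-\mb P_\nu(G(z)\in K_r)|\le L|r|$ bounds the $\mu^\ast$-mass of the shell where $G^\ast(z)$ is within $\delta_n$ of $\partial K_0$ by $O(\delta_n)$, which is negligible against the target rate; integrating the pointwise bound against the bounded density via the change of variables $X=G^\ast(z)$ (and the usual norm interpolation for $\eta>d/\beta$) then yields (i).

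For ingredient (ii), for each $j$ with $1\le |j|\le\lfloor\gamma\rfloor$ set $V_n(j)=\sup_{f\in C_1^\gamma(\mb R^D)}\big|\mb E_{\mu^\ast}[f^{(j)}(X)(\widehat G(\widehat Q(X))-X)^{j}\bold{1}_{K_0}(X)]-\frac{1}{|I_2|}\sum_{i\in I_2}f^{(j)}(X_i)(\widehat G(\widehat Q(X_i))-X_i)^{j}\bold{1}_{K_0}(X_i)\big|$; each summand is bounded, with second moment $\lesssim\mb E_{\mu^\ast}[\|\widehat G(\widehat Q(X))-X\|_2^{2|j|}\bold{1}_{K_0}]\lesssim(\tfrac{\log n}{n})\vee(\tfrac{\log n}{n})^{2|j|\beta/d}$ by (i). Talagrand's inequality reduces $V_n(j)$ to its expectation; symmetrization plus Dudley's entropy integral, using Lemma~\ref{coveringmanifold} with the $d$-dimensional submanifold $\{\widehat G(\widehat Q(x)):x\in K_0\}$ and $f^{(j)}\in C_1^{\gamma-|j|}$, bounds the expectation by $\big[(\tfrac{\log n}{n})^{|j|\beta/d}\vee\sqrt{\tfrac{\log n}{n}}\big]\cdot\big[n^{-(\gamma-|j|)/d}\vee\tfrac{\log n}{\sqrt n}\big]$, and a union bound over the finitely many $j$ collapses this to $(\tfrac{\log n}{n})^{(\beta+\gamma-1)/d}\vee\sqrt{\tfrac{\log n}{n}}$. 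Combining with the Taylor bias bound $C\,\mb E_{\mu^\ast}[\|\widehat G(\widehat Q(X))-X\|_2^{\gamma}\bold{1}_{K_0}]\lesssim(\tfrac{\log n}{n})\vee(\tfrac{\log n}{n})^{\gamma\beta/d}$ from (i) gives the stated rate. The main obstacle is ingredient (i): one must redo the delicate empirical-process analysis of Lemmas~\ref{lemma:pointwise_error}--\ref{lemma:empirical_proc} with the reconstruction loss restricted to $K_1$ rather than to a partition-of-unity patch, verifying that neither $\partial K_0$ nor the support boundary $\partial\Omega^\ast$ degrades the rate --- which is exactly where the thickening and mass-continuity conditions in the definition of $\overline{\m S}^\ast$ enter.
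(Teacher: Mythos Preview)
Your proposal is correct and follows essentially the same approach as the paper: establish the constrained analog of Lemma~\ref{lemma:mani_est} by rerunning the localized empirical-process argument (Lemmas~\ref{lemma:pointwise_error}--\ref{lemma:empirical_proc}) with uniform radii, then feed the resulting $\eta$-moment bounds into the Taylor-remainder plus Talagrand/symmetrization/Dudley machinery of Lemma~\ref{lemma:smooth_corr}. Two minor points: (a) the paper avoids any boundary-shell argument by taking the net over $\mathcal Z_{1/2}=Q^\ast(\mathcal M^\ast\cap K_{1/2})$ rather than $Q^\ast(\mathcal M^\ast\cap K_0)$, so the localizing balls stay inside $K_1$ and the pointwise bound already covers all of $K_0$---the mass-continuity condition is not needed for this lemma (it is used only in Lemma~\ref{lemma:constrained_high_freq}); (b) in the Dudley step the covering should be taken over the \emph{true} submanifold $G^\ast(\Omega^\ast)\cap K_0$ (where the $X_i$ live), not over $\{\widehat G(\widehat Q(x)):x\in K_0\}$, since the empirical process is indexed by $f^{(j)}(X_i)$.
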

  
\begin{lemma}[Low frequency components]\label{lemma:constrained_low_freq}
 When $\mu^\ast\in \overline{\m S}^\ast$, it holds with probability at least $1-n^{-c}$ that the functional $\wh{\m J}_{l}^{\circ}:\, C^\gamma(\mb R^D) \to \mb R$ satisfies
  \begin{align*}
      \sup_{f\in C_1^\gamma(\mb R^D)} \big|\wh{\m J}_{l}^{\circ}(f) - {\m J}_{l}^{\circ}(f)\big| \leq C\,\sqrt{\frac{\log n}{n}} +  C\, \Big(\frac{\log n}{n}\Big)^{\frac{\alpha+\gamma}{2\alpha+d}}.
  \end{align*}
\end{lemma}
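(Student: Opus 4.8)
## Proof proposal for Lemma~\ref{lemma:constrained_low_freq}

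**Setup.** Recall the target is to bound $\sup_{f\in C_1^\gamma(\mb R^D)}|\wh{\m J}_l^\circ(f) - \m J_l^\circ(f)|$, where $\m J_l^\circ(f) = \int_{K_0} \Pi_J f(\wh G(\wh Q(X)))\,\dd\mu^\ast$ and $\wh{\m J}_l^\circ(f) = |I_2|^{-1}\sum_{i\in I_2}\Pi_J f(\wh G(\wh Q(X_i)))\,\bold{1}_{K_0}(X_i)$. Thus this is exactly the empirical-average estimation error for the low-frequency (scale-$\le J$) part of $f$, composed with the estimated reconstruction map $\wh G\circ\wh Q$ and localized by $\bold 1_{K_0}$ instead of $\rho_m$. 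The plan is to mirror the proof of Lemma~\ref{lemma:low_freq} in Section~\ref{sec:proof_low_freq}, with $\rho_m$ replaced by $\bold 1_{K_0}$ throughout and the single reconstruction pair $(\wh G,\wh Q)$ in place of $(\wh G_{[m]},\wh Q_{[m]})$.

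**Main steps.** First I would expand $f = f_1 + f_2$ with $f_1 = \Pi_J f$ into its wavelet basis, $f_1(x) = \sum_{k\in\mb Z^D} b_k\phi_k(x) + \sum_{l=1}^{2^D-1}\sum_{j=1}^J\sum_{k\in\mb Z^D} f_{ljk}\psi_{ljk}(x)$, using the bounds $|b_k|\le C$ and $|f_{ljk}|\le C\,2^{-Dj/2-j\gamma}$ valid for $f\in C_1^\gamma(\mb R^D)$. Since $\bold 1_{K_0}$ is supported on the compact set $K_0$ and $\wh G\circ\wh Q$ is Lipschitz (as $\wh G\in C_L^\beta$, $\wh Q\in C_L^\beta$), only finitely many level-zero indices $k$ and at most $C\,2^{dj}$ many level-$j$ translates $k$ contribute to $\mb E_{\mu^\ast}[\cdot\,\bold 1_{K_0}(X)]$ and to the empirical sum — because on $K_1$ the distribution is a single generative model $G^\ast_\#\nu^\ast$ supported on a $d$-dimensional manifold, so the relevant support $\wh G(\wh Q(G^\ast(\Omega^\ast)))\cap (\text{range near }K_0)$ has intrinsic dimension $d$. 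Then I would apply Bernstein's inequality for each fixed coefficient term together with a union bound (exactly the $\eqref{eqn:union+Bernstein}$ argument) to get, with probability $1-n^{-c}$, that the per-coefficient deviations are at most $C\sqrt{\log n/n}$ (for the $\phi_k$ terms) and $C(\tfrac{\log n}{n}2^{Dj/2} + \sqrt{\tfrac{\log n}{n}}\sqrt{\mb E_{\mu^\ast}[\psi_{ljk}^2(\wh G(\wh Q(X)))\bold 1_{K_0}]})$ for the $\psi_{ljk}$ terms, using $\|\psi_{ljk}\|_\infty\le C2^{Dj/2}$.

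**Summation.** Next I would sum the per-coefficient bounds weighted by $|f_{ljk}|$. The second-moment factor is controlled by $\mb E_{\mu^\ast}\big[\sum_{k}\psi_{ljk}^2(\wh G(\wh Q(X)))\bold 1_{K_0}\big]\le C\,2^{Dj}$, since at each point only constantly many $k$ contribute and each is bounded by $C2^{Dj}$. Combining with $|f_{ljk}|\le C2^{-Dj/2-j\gamma}$, the $\psi$-sum telescopes to $C\sqrt{\tfrac{\log n}{n}}\sum_{j=1}^J 2^{j(d/2-\gamma)}$, which with $2^J\le (n/\log n)^{1/(2\alpha+d)}$ yields at most $C\sqrt{\tfrac{\log n}{n}}\vee C(\tfrac{\log n}{n})^{(\alpha+\gamma)/(2\alpha+d)}$ — precisely the claimed bound. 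The boundedness of $\|\Pi_J^\perp$ contributions'' do not enter here since we only deal with $f_1$.

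**Anticipated obstacle.** The delicate point — and the only genuine difference from Lemma~\ref{lemma:low_freq} — is justifying that the effective support of $\wh G\circ\wh Q$ restricted to $K_0$ is $d$-dimensional and bounded, so that the index-set cardinality bounds $|\widetilde{\mb S}_{lj}|\le C2^{dj}$ hold. This requires an analogue of the submanifold estimation control: on $K_1$ we have $\mu^\ast|_{\widetilde K} = G^\ast_\#\nu^\ast$ with $(G^\ast,Q^\ast)$ a feasible pair for the Step-1 optimization, so $X_i = G^\ast(Q^\ast(X_i))$ for $X_i\in K_1$, forcing the empirical reconstruction error to vanish and (via the analogue of Lemma~\ref{lemma:mani_est}, or directly via the proof technique of Lemma~\ref{lemma:pointwise_error}) giving $\mb E_{\mu^\ast}[\|X - \wh G(\wh Q(X))\|_2^\eta\bold 1_{K_0}]\le C(\tfrac{\log n}{n})\vee(\tfrac{\log n}{n})^{\eta\beta/d}$; in particular $\wh G(\wh Q(X))$ stays within a bounded neighborhood of the $d$-dimensional set $G^\ast(\Omega^\ast)\cap(\text{preimage of }K_0)$ with high probability, which is what the covering/index-counting argument needs. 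I would state this as a preliminary observation (with proof deferred to the constrained analogue of Lemma~\ref{lemma:mani_est}, parallel to how Lemma~\ref{lemma:smooth_corr} invokes Lemma~\ref{lemma:mani_est}) and then the remainder of the argument is a routine transcription of Section~\ref{sec:proof_low_freq}.
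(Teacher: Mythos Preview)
Your proposal is correct and takes essentially the same approach as the paper: both prove the lemma by transcribing the proof of Lemma~\ref{lemma:low_freq} with $\rho_m$ replaced by $\bold{1}_{K_0}$ and the single pair $(\wh G,\wh Q)$ in place of $(\wh G_{[m]},\wh Q_{[m]})$.

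One minor simplification you can take from the paper: your ``anticipated obstacle'' about the index-set cardinality bound $|\widetilde{\mb S}_{lj}|\le C\,2^{dj}$ is resolved more directly than you suggest. The paper defines $\widetilde{\mb S}_{lj}=\{k\in\mb Z^D:\operatorname{supp}(\psi_{ljk})\cap\wh G([-L,L]^d)\neq\emptyset\}$; since $\wh Q\in C_L^\beta(\mb R^D;\mb R^d)$ forces $\wh Q(X_i)\in[-L,L]^d$, every evaluation $\wh G(\wh Q(X_i))$ already lies in $\wh G([-L,L]^d)$, which is the Lipschitz image of a bounded $d$-dimensional set and hence has $d$-dimensional covering. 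No submanifold estimation control (analogue of Lemma~\ref{lemma:mani_est}) is needed here --- that control is reserved for Lemma~\ref{lemma:constrained_smooth_corr}.
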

\begin{lemma}[High frequency components]\label{lemma:constrained_high_freq}
  When $\mu^\ast\in \overline{\m S}^\ast$, it holds with probability at least $1-n^{-c}$ that the functional $\wh{\m J}_{h}^{\circ}:\, C^\gamma(\mb R^D) \to \mb R$ satisfies
  \begin{align*}
      \sup_{f\in C_1^\gamma(\mb R^D)} \big|\wh{\m J}_{h}^{\circ}(f) - {\m J}_{h}^{\circ}(f)\big| \leq  C\, \Big(\frac{\log n}{n}\Big)^{\frac{\alpha+\gamma}{2\alpha+d}}.
  \end{align*}
\end{lemma}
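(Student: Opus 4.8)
The plan is to adapt the kernel‑density‑estimator argument from the KDE part of the proof of Lemma~\ref{lemma:high_freq}, the one genuinely new feature being that in the constrained problem the pushforward measure $\wh Q_{\#}(\bold{1}_{K_0}\mu^\ast)$ carries a nontrivial boundary, inherited from $\partial K_0\cap\m M$, so its kernel estimator $\bar\nu_{\wh Q}$ has to be restricted to the data‑driven support estimate $\wh S:=\{y=\wh Q(x)\,:\,x\in\wh U,\ \wh G(\wh Q(x))\in K_0\}$ rather than used on all of $\mb R^d$. First I would reduce the statement to an $L^1$ bound: since $\|\Pi_J^{\perp}f\|_{L^\infty(\mb R^D)}\leq C\,(\tfrac{\log n}{n})^{\gamma/(2\alpha+d)}$ uniformly over $f\in C_1^\gamma(\mb R^D)$ (the same wavelet‑tail estimate used in the proof of Lemma~\ref{lemma:high_freq}), writing $\nu^{\circ}_{\wh Q}$ for the density of $\wh Q_{\#}(\bold{1}_{K_0}\mu^\ast)$ one has $\sup_f|\wh{\m J}^{\circ}_h(f)-\m J^{\circ}_h(f)|\leq\|\Pi_J^{\perp}f\|_\infty\cdot\|\bar\nu_{\wh Q}\,\bold{1}_{\wh S}-\nu^{\circ}_{\wh Q}\|_{L^1}$, so it suffices to show
\[
\int\big|\bar\nu_{\wh Q}(y)\,\bold{1}_{\wh S}(y)-\nu^{\circ}_{\wh Q}(y)\big|\,\dd y\ \leq\ C\,\Big(\tfrac{\log n}{n}\Big)^{\alpha/(2\alpha+d)}
\]
on the event $\mb P_{\mu^\ast}(X\in K_0)\gtrsim\sqrt{\log n/n}$; off this event the crude bound $|\wh{\m J}^{\circ}_h(f)|+|\m J^{\circ}_h(f)|\lesssim\|\Pi_J^{\perp}f\|_\infty\sqrt{\log n/n}$ is already below the target rate because $\tfrac12\geq\tfrac{\alpha}{2\alpha+d}$.

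For the interior I would mimic Lemma~\ref{lemma:density_regularity}: using the reconstruction bound $\|\wh G\circ\wh Q(x)-x\|_2\lesssim(\tfrac{\log n}{n})^{\beta/d}$ on $K_1$ (a Lemma~\ref{lemma:pointwise_error}‑type estimate, but now with the simpler no‑boundary geometry of $K_1$), together with $G^\ast,Q^\ast\in C^\beta_L$ and $\nu^\ast\in C^\alpha_L$, one shows that $\wh l:=\wh Q\circ G^\ast$ is invertible on the relevant region with $\wh l^{-1}\in C^{\alpha+1}$, that $\nu^{\circ}_{\wh Q}$ has the explicit change‑of‑variables form, and that $\nu^{\circ}_{\wh Q}$ is uniformly bounded and lies in $C^\alpha_{C_1}$ on the interior of $\{y:\wh G(y)\in K_0^{\circ}\}$. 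On this interior region the standard decomposition of the truncated KDE gives a bias $O(h^\alpha)=O((\tfrac{\log n}{n})^{\alpha/(2\alpha+d)})$ from the vanishing‑moment property of $\bar k$ and the $C^\alpha$ regularity, and a stochastic term handled by the Bernstein/covering/union‑bound argument of the KDE case of the proof of Lemma~\ref{lemma:high_freq} with effective sample size $n\,\mb P_{\mu^\ast}(X\in K_{1/4})\gtrsim\sqrt{n\log n}$, again $O((\tfrac{\log n}{n})^{\alpha/(2\alpha+d)})$.

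The main obstacle is the boundary strip, where the estimated and true supports disagree, and it is handled in two parts. First, a covering argument: because $\nu^\ast$ is bounded below on its support, with probability $1-n^{-c}$ the samples $\{X_i:i\in I_2,\ X_i\in K_{1/4}\}$ form an $\eta_n$‑net of $\m M\cap K_{1/4}$ with $\eta_n\asymp(\tfrac{\log n}{n\wh p_0})^{1/d}$ (every ball of this radius receives a sample, by Bernstein plus a union bound over a finite net of $\m M\cap K_{1/4}$), so $\wh U\supseteq\m M\cap K_{1/4}$, and transferring between the estimated parametrization $(\wh G,\wh Q)$ and the true one $(G^\ast,Q^\ast)$ through the $O((\tfrac{\log n}{n})^{\beta/d})$ reconstruction error shows that $\wh S$ coincides with $\wh l(\{z:G^\ast(z)\in K_0\})$ off an $O(\eta_n)$‑neighbourhood of $\partial(K_0\cap\m M)$. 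Second, Condition~4 in the definition of $\overline{\m S}^{\ast}$ bounds the $\mu^\ast$‑mass of the $r$‑boundary strip $K_r\triangle K_0$ by $L|r|$, so the contribution of this mismatch region to the $L^1$ error is $O(\eta_n)$. Combining this with the regime split on the size of $\mb P_{\mu^\ast}(X\in K_0)$ — so that $\wh p_0\gtrsim\sqrt{\log n/n}$ hence $\eta_n\lesssim(\tfrac{\log n}{n})^{1/(2d)}$, with the crude bound absorbing the moderate‑mass case — is what yields the claimed rate. The step I expect to absorb most of the work is precisely this last piece of bookkeeping: one must track how $\eta_n$, the manifold‑reconstruction error $(\tfrac{\log n}{n})^{\beta/d}$, the bandwidth $h=(\tfrac{\log n}{n})^{1/(2\alpha+d)}$ and the cutoff $\bold{1}_{\wh G(\wh Q(x))\in K_0}$ interact, and check in each regime of $\mb P_{\mu^\ast}(X\in K_0)$ that the resulting error is dominated by $(\tfrac{\log n}{n})^{1/2}\vee(\tfrac{\log n}{n})^{(\alpha+\gamma)/(2\alpha+d)}\vee(\tfrac{\log n}{n})^{\gamma\beta/d}$.
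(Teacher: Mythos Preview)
Your overall strategy—bounding $\|\Pi_J^\perp f\|_\infty\lesssim(\log n/n)^{\gamma/(2\alpha+d)}$ and then controlling a kernel‐density error in the $y$–space—matches the paper's. The gap is in your treatment of the boundary.

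You assert that the estimated support $\wh S$ agrees with $\wh l(\{z:G^\ast(z)\in K_0\})$ only up to an $O(\eta_n)$ strip, and you then invoke Condition~4 to bound the mass of that strip by $\eta_n$. This is both too coarse and misidentifies the role of the covering radius. In the paper the covering argument (Lemma~\ref{lemmaunbounded1}) is used \emph{only} to prove the inclusion $\m M^\ast\cap K_{1/4}\subset\wh U$; once that holds, a Newton/inverse–function–theorem construction (Lemma~\ref{lemmaunbounded2}) shows that in fact
\[
\wh S\;=\;\wh\Omega:=\{y=\wh Q(x):x\in\m M^\ast\cap K_{1/4},\ \wh G(\wh Q(x))\in K_0\}
\]
\emph{exactly}, not just up to $\eta_n$. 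The only boundary error then arises from replacing $\bold{1}_{K_0}(X)$ by $\bold{1}_{K_{1/4}}(X)\,\bold{1}_{K_0}(\wh G(\wh Q(X)))$ in the original $X$–space, and since $\|X-\wh G(\wh Q(X))\|\lesssim(\log\wt n/\wt n)^{\beta/d}$ on $K_{1/2}$, Condition~4 gives a strip mass of order $(\log\wt n/\wt n)^{\beta/d}$, not $\eta_n$. After multiplying by $\|\Pi_J^\perp f\|_\infty$ and using $\alpha\leq\beta-1$, this is absorbed by the target $(\log n/n)^{(\alpha+\gamma)/(2\alpha+d)}$.

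With your $O(\eta_n)$ strip bound the lemma's stated rate is not achieved: taking $\wh p_0\asymp 1$ so $\eta_n\asymp(\log n/n)^{1/d}$, the boundary contribution is $(\log n/n)^{1/d+\gamma/(2\alpha+d)}$, and the inequality $1/d\geq\alpha/(2\alpha+d)$ needed to dominate the target fails whenever $d\geq 3$ and $\alpha>d/(d-2)$; the small–$\wh p_0$ regime is worse. So you need the exact‐equality step (or equivalently, to recognize that the support mismatch between $\wh S$ and $\mathrm{supp}(\nu^\circ_{\wh Q})$ is governed by the reconstruction error $(\log n/n)^{\beta/d}$, since it is the condition $\wh G(y)\in K_0$ versus $G^\ast(\wh l^{-1}(y))\in K_0$ that determines the two boundaries).
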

By combining Lemmas~\ref{lemma:constrained_smooth_corr},~\ref{lemma:constrained_low_freq} and~\ref{lemma:constrained_high_freq}, similar as the proof of minimax upper bound in Theorem~\ref{maintheorem} in Section~\ref{sec:proofupperbound}, we can obtain the desired result.

\subsubsection{Proof of Lemma~\ref{lemma:constrained_smooth_corr}}
 Recall $K_r=\{x\in B_{|r|/L}(y)\,:\, y\in K_0\}$ for $r>0$ and $K_r=\{x\in\mb R^D\,:\,  B_{|r|/L}(x)\subset K_0\}$ for $r<0$. Define $\mathcal{M}^{\ast}=G^{\ast}(\Omega^{\ast})$ and $\mathcal{Z}_{a}=Q^{\ast}(\mathcal{M}^{\ast} \cap K_{a})$. Then $\mathcal{Z}_1\subset [-L,L]^d$ and for any $z_1,z_2\in \mathcal{Z}_1$,
 \begin{equation}\label{eqnunboundedlip}
    \|G^{\ast}(z_1)-G^{\ast}(z_2)\|_2\leq L \sqrt{D}\|z_1-z_2\|_2.
 \end{equation}
Let $\wt{n}=\mb P_{\mu^*}(X\in \wt K)\cdot n\geq \mb P_{\mu^*}(X\in K_0)\cdot n\geq \frac{1}{2}\sqrt{n\log n}$ and $\epsilon_n=c_0\,(\frac{\log \wt n}{\wt n})^\frac{1}{d}$, combined with~\eqref{eqnunboundedlip}, when $n$ is large enough, for any $z^{\ast}\in \mathcal{Z}_{1/2}$, it holds that 
 \begin{equation*}
     \bold{1}_{\mathcal{M}^{\ast}  \cap G^{\ast}(\mb B_{\epsilon_n}(z^{\ast}))}(X)\leq \bold{1}_ {\mathcal{M}^{\ast} \cap K_{1}}(X).
     \end{equation*}
 Therefore, for any $z^{\ast}\in \mathcal{Z}_{1/2}$,
 \begin{equation*}
      \frac{1}{\lceil\frac{n}{2}\rceil}\sum_{i\in I_1} \left[\|G^{\ast}(Q^{\ast}(X_i))-\wh{G}(\wh{Q}(G^{\ast}(Q^{\ast}(X_i))))\|_2^2\cdot \bold{1}_{\mathcal{M}^{\ast}  \cap G^{\ast}(\mb B_{\epsilon_n}(z^{\ast}))}(X_i)\right]=0.
  \end{equation*}
Let $\wh{l}=\wh{Q}(G^{\ast}(z))$, similar as the proof of Lemma~\ref{lemma:pointwise_error} in Section~\ref{Sec:proof_lemma:pointwise_error}, we can obtain that it holds with probability at least $1-\frac{1}{n^2}$ that
 \begin{equation}\label{eqnunbounded1}
 \begin{aligned}
   &\underset{z\in \mathcal{Z}_{1/2}}{\sup}\|G^{\ast}(z)-\wh{G}(\wh{l}(z)))\|_2\leq C\, \big(\frac{\log \wt n}{\wt n}\big)^{\frac{\beta}{d}};\\
  &\underset{x\in\mathcal{M}^{\ast} \cap K_{1/2}}{\sup}\|x-\wh{G}(\wh{Q}(x))\|_2\leq C\, \big(\frac{\log \wt n}{\wt n}\big)^{\frac{\beta}{d}},
      \end{aligned}
  \end{equation}
 and thus for any $\eta\in (0,\frac{d}{\beta}]$
 \begin{equation*}
     \int_{K_0} \|X-\wh{G}(\wh{Q}(X))\|^{\eta}\,\dd\mu^\ast\leq C\,\mb P_{\mu^\ast}(X\in K_0)\cdot\big(\frac{\log \wt n}{\wt n}\big)^{\frac{\beta}{d}}\leq \big(\frac{\log n}{n}\big)^{\frac{\beta}{d}},
 \end{equation*}
 and for any  $\eta>\frac{d}{\beta}$,
  \begin{equation*}
     \int_{K_0} \|X-\wh{G}(\wh{Q}(X))\|^{\eta}\,\dd\mu^\ast\leq C\,C_1^{\eta-\frac{d}{\beta}}\,\big(\frac{\log n}{n}\big)^{\frac{\beta}{d}}.
 \end{equation*}
Then similar as the proof of Lemma~\ref{lemma:smooth_corr}, it holds with probability at least $1-\frac{2}{n^2}$ that for any $f\in C^{\gamma}_1(\mathbb{R}^D)$,
 \begin{equation*}
     \begin{aligned}
     & \big|{\m J}^{\circ}_s(f)-\wh{\m J}^{\circ}_s(f)\big|\\
      &=\Bigg|\int_{K_0} f(X)\,\dd\mu^\ast- \int_{K_0} f(\wh{G}(\wh{Q}(X)) \, \dd \mu^{\ast}\\
      & + \frac{1}{|I_2|}\sum_{i\in |I_2|} \sum_{j\in \mathbb{N}_0^D\atop 1\leq |j|\leq \lfloor\gamma\rfloor}\left[\frac{1}{j!}f^{(j)}(X_i)(\wh{G}(\wh{Q}(X_i))-X_i)^{j}\bold{1}_{K_0}(X_i)\right]\Bigg|\\
      &\leq  C_1\,\sqrt{\frac{\log n}{n}} + C_1\, \Big(\frac{\log n}{n}\Big)^{\frac{\gamma\beta}{d}}+ C_1\,\Big(\frac{\log n}{n}\Big)^{\frac{\gamma+\beta-1}{d}}.
     \end{aligned}
 \end{equation*}
 
  \subsubsection{Proof of Lemma~\ref{lemma:constrained_low_freq}}
 Same as the proof of Lemma~\ref{lemma:low_freq} in Section~\ref{sec:proof_low_freq}, for any $f\in C^{\gamma}_1(\mathbb{R}^D)$, it can be written as 
   \begin{equation*}
  f(x)=\underbrace{\sum_{k\in \mathbb{Z}^D} b_k \phi_k(x)+\sum_{l=1}^{2^D-1} \sum_{j=1}^{J}\sum_{k\in \mathbb{Z}^D} f_{ljk} \psi_{ljk}(x)}_{\Pi_J f}+\underbrace{\sum_{l=1}^{2^D-1} \sum_{j=J+1}^{+\infty}\sum_{k\in \mathbb{Z}^D} f_{ljk} \psi_{ljk}(x)}_{\Pi_j^{\perp} f},
\end{equation*}
where $\{\phi_k, \psi_{ljk}\,|\, l\in [2^D-1], j\in \mathbb{N}, k\in \mathbb{Z}^D\}$ is the orthonormal wavelet basis for Besov space on $\mathbb{R}^D$ and  recall $J$ is the largest integer such that $2^J \leq (\frac{n}{\log n})^{\frac{1}{2\alpha+d}}$. Then define 
  \begin{equation*}
\begin{aligned}
&\widetilde{\mathbb{S}}=\{k \in\mathbb{Z}^D\,| \,{\rm supp}(\phi_{k})\cap  \wh{G}([-L,L]^d)\neq \emptyset\};\\
&\widetilde{\mathbb{S}}_{lj}=\{k \in \mathbb{Z}^D,| \,{\rm supp}(\psi_{ljk})\cap \wh{G}([-L,L]^d)\neq \emptyset\}.\\
 \end{aligned}
\end{equation*}
 By changing $\rho_m(x)$ in the proof of Lemma~\ref{lemma:low_freq} in Section~\ref{sec:proof_low_freq} to $\bold{1}_{K_0}(x)$, we can get it holds with probability at least $1-\frac{1}{n^2}$ that  for any $f\in C_{1}^{\gamma}(\mathbb{R}^D)$, 
  \begin{equation*}
    \begin{aligned}
   & \left|\frac{1}{|I_2|} \sum_{i\in I_2} [\Pi_J f(\wh{G}(\wh{Q}(X_i)))\bold{1}_{K_0}(X_i)]-\mathbb{E}_{\mu^{\ast}} [\Pi_J f(\wh{G}(\wh{Q}(X)))\bold{1}_{K_0}(X)]\right|\\
   &\leq C\, \sum_{k\in \widetilde{\mathbb{S}}}  \left|\mathbb{E}_{\mu^{\ast}}[\phi_k(\wh{G}(\wh{Q}(X)))\bold{1}_{K_0}(X)]-\frac{1}{|I_2|} \sum_{i\in I_2}[\phi_k(\wh{G}(\wh{Q}(X_i)))\bold{1}_{K_0}(X_i)]\right|\\
   &+\sum_{l=1}^{2^D-1}\sum_{j=1}^J\sum_{k\in\widetilde{\mathbb{S}}_{lj}} 2^{-\frac{Dj}{2}-j\gamma}\left|\mathbb{E}_{\mu^{\ast}}[\psi_{ljk}(\wh{G}(\wh{Q}(X)))\bold{1}_{K_0}(X)]-\frac{1}{|I_2|} \sum_{i\in I_2}[\psi_{ljk}'(\wh{G}(\wh{Q}(X_i)))\bold{1}_{K_0}(X_i)]\right|\\
   &\leq C_1\,\sqrt{\frac{\log n}{n}}+\big(\frac{\log n}{n}\big)^{\frac{ \alpha+\gamma}{2 \alpha+d}}.
          \end{aligned}
\end{equation*}
 \subsubsection{Proof of Lemma~\ref{lemma:constrained_high_freq}}
 Recall $\mathcal{M}^{\ast}=G^{\ast}(\Omega^{\ast})$; $K_r=\{x\in B_{|r|/L}(y)\,:\, y\in K_0\}$ for $r>0$ and $K_r=\{x\in\mb R^D\,:\,  B_{|r|/L}(x)\subset K_0\}$ for $r<0$; and $\mathcal{Z}_{a}=Q^{\ast}(\mathcal{M}^{\ast} \cap K_{a})$.  Consider $\wh{l}=\wh{Q}\circ G^\ast$, then similar as the proof of Lemma~\ref{lemma:density_regularity} in Section~\ref{sec:proof_lemma:density_regularity}, we can obtain that
 there exist positive constants $(n_0,c_2,c_3)$ such that  when $n\geq n_0$, for any $z\in \mathcal{Z}_{1/2}$, it holds that
  \begin{equation*} 
   c_2\leq |{\rm det}(\bold{J}_{\wh{l}}(z))|\leq c_3.
  \end{equation*}
 Moreover, by the lipschitzness of $G^\ast$ and $\bigcup_{z\in \Omega^\ast, G^\ast(z)\in K_1} B_{1/L}(z)\subset \Omega^\ast$, there exists a constant $\epsilon>0$ such that 
\begin{equation*}
    \{z\in \mb B_{\epsilon}(z^{\ast})\,:\, z^{\ast}\in \mathcal{Z}_{{1}/{3}}\}\subset \mathcal{Z}_{1/2}.
\end{equation*}
Thus if there exist $z_1, z_2\in \mathcal{Z}_{\frac{1}{3}}$ such that $\|z_1-z_2\|\leq \epsilon$ and $\wh{l}(z_1)=\wh{l}(z_2)$, then by $\beta> 1$ and for any $t\in [0,1]$, $|{\rm det}(\bold{J}_{\wh{l}}(tz_1+(1-t)z_2)|\geq c_2>0$, we can obtain that there exists a positive constant $c_4$ such that $\|z_1-z_2\|\geq \epsilon\wedge c_4$. On the opposite side,  by equation~\eqref{eqnunbounded1}, it holds that 
\begin{equation*}
    \|z_1-z_2\|_2= \|Q^{\ast}(G^{\ast}(z_1))-Q^{\ast}(G^{\ast}(z_2))\|_2\leq \sqrt{d}L \|G^{\ast}(z_1)-G^{\ast}(z_2)\|_2\leq C \big(\frac{\log \wt n}{\wt n}\big)^{\frac{\beta}{d}}.
\end{equation*}
 Therefore, when $n$ is large enough, it holds with probability at least $1-\frac{1}{n^2}$ that $\wh{l}$ is one to one on $\mathcal{Z}_{\frac{1}{3}}$. So we can define $\wh{l}^{-1}: \wh{l}(\mathcal{Z}_{\frac{1}{3}})\to \mathcal{Z}_{\frac{1}{3}}$ as the inverse of $\wh{l}|_{\mathcal{Z}_{\frac{1}{3}}}$ and by inverse function theorem in holder space (see for example, Appendix A of ~\citep{Eldering2013}), it holds that  $\wh{l}^{-1}\in C^{\beta}_{C_1}(\wh{l}({\mathcal{Z}_{\frac{1}{3}}});\mb R^d)$. \\
  
\noindent Furthermore,  recall $\Pi_J^{\perp}(f)= \sum_{l=1}^{2^D-1} \sum_{j=J+1}^{+\infty}\sum_{k\in \mathbb{Z}^D} f_{ljk}\psi_{ljk}(x)$ where $2^J=c\,\big(\frac{n}{\log n}\big)^{\frac{1}{2\alpha+d}}$, $|f_{ljk}|\leq C 2^{-\frac{Dj}{2}-j\gamma}$ and $\|\sum_{k\in \mb Z^D}\psi_{ljk}\|_\infty \leq C2^{\frac{Dj}{2}}$. Then by the fact that there exists a constant $C_1$ such that for any $j\in [2^D-1]$ and $j\in \mb Z$, $\sum_{k \in\mb Z^D}|\psi_{ljk}(z)|\leq C_1\,2^{\frac{Dj}{2}}$ (recall that the support of $\psi_{ljk}$ is contained in $\mb B_{2^{-j}C}(2^{1-j}k)$), we can get  for any $x\in \mb R^D$,
\begin{equation*}
    |\Pi_J^{\perp}f(x)|\leq C\, \big(\frac{\log n}{n}\big)^{\frac{\gamma}{2 \alpha+d}},
\end{equation*}
and
\begin{equation*}
    \begin{aligned}
        &\left|\int \Pi_J^{\perp}f(\wh{G}(\wh{Q}(X))) \bold{1}_{K_0}(X) \, \dd \mu^{\ast}-\int \Pi_J^{\perp}f(\wh{G}(\wh{Q}(X))) \bold{1}_{\{X\in K_{1/4}\,:\, \wh{G}(\wh{Q}(X))\in K_0\}}(X)\, \dd \mu^{\ast}\right|\\
        &\leq C \big(\frac{\log n}{n}\big)^{\frac{\gamma}{2 \alpha+d}} \int \left|\bold{1}_{K_0}(X)-\bold{1}_{\{X\in K_{1/4}\,:\, \wh{G}(\wh{Q}(X))\in K_0\}}(X)\right|\, \dd \mu^{\ast}.
    \end{aligned}
\end{equation*}
By equation~\eqref{eqnunbounded1}, there exists a constant $c$ such that for $\delta_n=c(\frac{\log \wt n}{\wt n})^{\frac{\beta}{d}}$ with $\wt{n}=\mb P_{\mu^*}(X\in \wt K)\cdot n\geq \mb P_{\mu^*}(X\in K_0)\cdot n\geq  \frac{1}{2}\sqrt{n\log n}$, it holds that
\begin{equation*}
    \begin{aligned}
        &\left|\int \Pi_J^{\perp}f(\wh{G}(\wh{Q}(X))) \bold{1}_{K_0}(X) \, \dd \mu^{\ast}-\int \Pi_J^{\perp}f(\wh{G}(\wh{Q}(X))) \bold{1}_{\{X\in K_{1/4}\,:\, \wh{G}(\wh{Q}(X))\in K_0\}}(X)\, \dd \mu^{\ast}\right|\\
        &\overset{(i)}{\leq} C\,\big(\frac{\log n}{n}\big)^{\frac{\gamma}{2 \alpha+d}}\cdot\mb P_{\mu^*}(X\in \wt{K}) \cdot\int  \Big(\bold{1}_{K_{\delta_n}}(G^{\ast}(z))-\bold{1}_{K_{-\delta_n}}(G^{\ast}(z))\Big)\nu^{\ast}(z) \,\dd z\\
        &\overset{(ii)}{\leq} C_1 \,\big(\frac{\log n}{n}\big)^{\frac{\gamma+\beta}{2 \alpha+d}}\vee \sqrt{\frac{\log n}{n}}\\
        &\leq C_1  \,\big(\frac{\log n}{n}\big)^{\frac{\gamma+ \alpha}{2 \alpha+d}}\vee \sqrt{\frac{\log n}{n}},\\
        \end{aligned}
\end{equation*}
where $(i)$ is due to equation~\eqref{eqnunbounded1}, $(ii)$ is due to the fact that $\big|\mathbb{P}_{\nu^\ast}\big(\{z\in \Omega\,:\, G^\ast(z)\in K_0\}\big)- \mathbb{P}_{\nu^\ast}\big(\{z\in \Omega\,:\, G^\ast(z)\in K_r\}\big)\big|\leq L\,|r|$ holds for any $|r|\leq 1$ and the last step is due to $ \alpha\leq \beta-1$. It remains to bound the difference between $\int \Pi_J^{\perp}f(\wh{G}(\wh{Q}(X))) \bold{1}_{\{X\in K_{1/4}\,:\, \wh{G}(\wh{Q}(X))\in K_0\}}(X)\, \dd \mu^{\ast}$ and $$\wh{\m J}^{\circ}_{h}(f)= \int_{\{ y=\wh{Q}(x)\,:\,x\in \wh{U}; \wh{G}(\wh{Q}(x))\in K_0\}}\Pi_J^{\perp}f(\wh{G}(y))\bar{\nu}_{\wh{Q}}(y)\,\dd y$$ for arbitrary $f\in C^{\gamma}_1(\mathbb{R}^d)$, where recall
\begin{equation*}
    \wh{U}=\underset{i\in I_2: X_i \in K_{1/4}}{\bigcup} \mb B_{c_0(\frac{1}{\wh{p}_0}\cdot\frac{\log n}{n})^{\frac{1}{d}}}(X_i).
\end{equation*}
Next lemma shows that $\mathcal{M}^{\ast} \cap K_{1/4}\subset \wh{U}$ with high probability.
\begin{lemma}\label{lemmaunbounded1}
There exists a constant $c_1$ such that when $c_0\geq c_1$, it holds with probability at least $1-\frac{1}{n^2}$ that $\mathcal{M}^{\ast} \cap K_{1/4}\subset \wh{U}=\underset{i\in I_2: X_i \in K_{1/4}}{\bigcup} \mb B_{c_0(\frac{1}{\wh{p}_0}\cdot\frac{\log n}{n})^{\frac{1}{d}}}(X_i)$.
 \end{lemma}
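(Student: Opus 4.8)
\textbf{Proof proposal for Lemma~\ref{lemmaunbounded1}.}

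The plan is to show that every point of $\mathcal{M}^\ast \cap K_{1/4}$ lies within distance $c_0\big(\frac{1}{\wh p_0}\cdot\frac{\log n}{n}\big)^{1/d}$ of some sample $X_i$ with $i\in I_2$ and $X_i\in K_{1/4}$. The key quantitative ingredient is a lower bound on the $\mu^\ast$-mass of small balls centered on $\mathcal{M}^\ast$ near $K_{1/4}$: for $x^\ast\in \mathcal{M}^\ast\cap K_{1/4}$ and radius $\delta$, I would write $\mu^\ast|_{\widetilde K}=G^\ast_\#\nu^\ast$, pull back the ball $\mb B_\delta(x^\ast)$ through $Q^\ast$ using that $Q^\ast\circ G^\ast={\rm id}$ on $\Omega^\ast$ and the Lipschitz bounds on $G^\ast, Q^\ast$ from the definition of $\overline{\m S}^\ast$, so that $\mb B_{\delta}(x^\ast)\cap\mathcal{M}^\ast$ contains $G^\ast$ of a $d$-dimensional ball of radius $\asymp \delta$ around $z^\ast=Q^\ast(x^\ast)$; combined with the assumption $|\log\nu^\ast(z)|\le L$ on $\Omega^\ast$ and $\bigcup_{z\in\Omega^\ast,\,G^\ast(z)\in K_1}\mb B_{1/L}(z)\subset\Omega^\ast$ (which guarantees such a $d$-ball stays inside $\Omega^\ast$ for $\delta$ small enough), this yields $\mathbb{P}_{\mu^\ast}\big(X\in\mb B_\delta(x^\ast)\cap K_{1/4}\big)\ge c\,\mathbb{P}_{\mu^\ast}(X\in\widetilde K)\,\delta^d$, where one also uses $K_1\supset K_{1/4}$ so that a full $\delta$-ball around $x^\ast$ for small $\delta$ stays inside $K_{1/4}$.

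Next I would set up a covering/union-bound argument. Let $\widetilde n = n\,\mathbb{P}_{\mu^\ast}(X\in\widetilde K)$ and $\delta_n = \frac{c_0}{4}\big(\frac{1}{\wh p_0}\cdot\frac{\log n}{n}\big)^{1/d}$. Take a $\delta_n$-net $\m N$ of $\mathcal{M}^\ast\cap K_{1/4}$ of cardinality at most $C\delta_n^{-d}\le C\,\frac{n}{\log n}$ (using compactness of $\m M^\ast$ and its bounded $d$-dimensional volume). For each net point $\widetilde x$, the mass bound above gives that a $\delta_n$-ball around $\widetilde x$ intersected with $K_{1/4}$ has $\mu^\ast$-probability $\ge c\,\mathbb{P}_{\mu^\ast}(X\in\widetilde K)\,\delta_n^d \asymp \frac{\log n}{n}$ once $c_0$ is a large enough constant (this is exactly where the constant $c_1$ enters). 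A Bernstein / multiplicative-Chernoff bound for the binomial count $\sum_{i\in I_2}\bold 1\big(X_i\in\mb B_{\delta_n}(\widetilde x)\cap K_{1/4}\big)$, together with a union bound over the $\le C\frac{n}{\log n}$ net points, shows that with probability at least $1-n^{-2}$ every such ball contains at least one sample from $I_2$ that lies in $K_{1/4}$. Then for an arbitrary $x^\ast\in\mathcal{M}^\ast\cap K_{1/4}$, pick $\widetilde x\in\m N$ with $\|x^\ast-\widetilde x\|\le\delta_n$ and a sample $X_i\in\mb B_{\delta_n}(\widetilde x)\cap K_{1/4}$; the triangle inequality gives $\|x^\ast-X_i\|\le 2\delta_n < c_0\big(\frac{1}{\wh p_0}\cdot\frac{\log n}{n}\big)^{1/d}$, so $x^\ast\in\wh U$.

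One technical wrinkle is that $\delta_n$ depends on the random quantity $\wh p_0$; I would handle this by first noting that on the event considered we have $\wh p_0\ge\sqrt{\frac{\log n}{n}}$ and, by Bernstein's inequality (as in the derivation of~\eqref{eqnhatp}), $\wh p_0 \asymp \mathbb{P}_{\mu^\ast}(X\in K_0)$ up to constants with probability $1-n^{-c}$, so $\frac{1}{\wh p_0}\cdot\frac{\log n}{n}$ is, up to a constant, $\frac{\log n}{\widetilde n}$; this lets me replace the random radius by a deterministic comparable one before running the net argument, absorbing the discrepancy into $c_0$. I expect the main obstacle to be the geometric mass lower bound — carefully controlling the pullback of a Euclidean ball in $\mathbb{R}^D$ centered on the (possibly curved) manifold through $Q^\ast$ and verifying that the resulting region in $\Omega^\ast$ genuinely contains a full $d$-dimensional ball of radius of order $\delta_n$, which is where the Lipschitz bounds on both $G^\ast$ and $Q^\ast$ and the ``interior thickness'' assumption $\bigcup_{z\in\Omega^\ast,\,G^\ast(z)\in K_1}\mb B_{1/L}(z)\subset\Omega^\ast$ must be combined; the remaining steps are routine covering and concentration arguments analogous to those already used in the proof of Lemma~\ref{lemma:pointwise_error}.
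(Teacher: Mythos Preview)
Your proposal is correct and follows essentially the same approach as the paper: establish a lower bound of order $\delta^d$ on the $\mu^\ast$-mass of a $\delta$-ball centered on $\mathcal{M}^\ast\cap K_{1/4}$ by pulling back through $Q^\ast$ and using the density lower bound $|\log\nu^\ast|\le L$, cover $\mathcal{M}^\ast\cap K_{1/4}$ by an $\epsilon_n$-net of cardinality $\lesssim n$, apply Bernstein plus a union bound to ensure each net ball contains a sample, and finish with the triangle inequality; you also handle the randomness of $\wh p_0$ the same way the paper does, by first comparing $\wh p_0$ to $\mathbb{P}_{\mu^\ast}(K_0)$ via Bernstein. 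If anything, you are slightly more explicit than the paper about two points---that the captured sample actually lies in $K_{1/4}$, and the role of the interior-thickness assumption $\bigcup_{z\in\Omega^\ast,\,G^\ast(z)\in K_1}\mb B_{1/L}(z)\subset\Omega^\ast$ in keeping the pulled-back ball inside $\Omega^\ast$---but these are refinements of the same argument rather than a different route.
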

\noindent Let 
\begin{equation}\label{def:omega}
    \wh{\Omega}=\{ y=\wh{Q}(x)\,:\,x\in \mathcal{M}^{\ast} \cap K_{1/4}; \wh{G}(\wh{Q}(x))\in K_0\}.
    \end{equation}
    Since 
 \begin{equation*}
 \begin{aligned}
 &\int \Pi_J^{\perp}f(\wh{G}(\wh{Q}(X))) \bold{1}_{\{X\in K_{1/4}, \wh{G}(\wh{Q}(X)))\in K_0\}}\, \dd \mu^{\ast}\\
 &=P_{\mu^{\ast}}(X\in \widetilde{K})\int_{\wh{\Omega}}\Pi_J^{\perp} f(\wh{G}(y)) {\nu^{\ast}(\wh{l}^{-1}(y))}{|{\rm det}(\bold{J}_{\wh{l}^{-1}}(y))|}\bold{1}_{\{y\,:\,\wh{l}^{-1}(y)\in Q^{\ast}(\mathcal{M}^{\ast} \cap K_{1/4})\}}(y) \,\dd y\\
 &=P_{\mu^{\ast}}(X\in \widetilde{K})\int_{\wh{\Omega}}\Pi_J^{\perp}f(\wh{G}(y)) {\nu^{\ast}(\wh{l}^{-1}(y))}{|{\rm det}(\bold{J}_{\wh{l}^{-1}}(y))|} \,\dd y.
  \end{aligned}
  \end{equation*}
 where recall that $\wh{l}^{-1}$ is the inverse of $\wh{l}|_{\mathcal{Z}_{\frac{1}{3}}}$ with $\wh{l}(z)=\wh{Q}(G^{\ast}(z))$. While the definition of $\wh{\Omega}$ in equation~\eqref{def:omega} is intractable due to the unknown $\mathcal{M}^{\ast} $. The next lemma shows that with high probability, $\wh{\Omega}=\{ y=\wh{Q}(x)\,:\,x\in \wh{U}; \wh{G}(\wh{Q}(x))\in K_0\}$.  
\begin{lemma}\label{lemmaunbounded2}
 There exists a positive constant $c$ such that it holds with probability at least $1-\frac{c}{n^2}$ that 
 \begin{equation*}
     \begin{aligned}
        \{ y=\wh{Q}(x)\,:\,x\in \mathcal{M}^{\ast} \cap K_{1/4}; \wh{G}(\wh{Q}(x))\in K_0\}= \{ y=\wh{Q}(x)\,:\,x\in \wh{U}; \wh{G}(\wh{Q}(x))\in K_0\}.
     \end{aligned}
 \end{equation*}
\end{lemma}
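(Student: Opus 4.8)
\emph{Proof proposal.} Write $\wh\Omega$ for the left-hand set $\{\,y=\wh Q(x):x\in\mathcal{M}^\ast\cap K_{1/4},\ \wh G(\wh Q(x))\in K_0\,\}$ and $\wh V$ for the right-hand set $\{\,y=\wh Q(x):x\in\wh U,\ \wh G(\wh Q(x))\in K_0\,\}$, and abbreviate $\epsilon_n=c_0\big(\frac{1}{\wh p_0}\cdot\frac{\log n}{n}\big)^{1/d}$, the radius appearing in the definition of $\wh U$; on the event $\wh p_0\ge\sqrt{\log n/n}$ one has $\epsilon_n\le c_0(\log n/n)^{1/(2d)}\to0$. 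I would prove the two inclusions separately. The inclusion $\wh\Omega\subseteq\wh V$ is immediate from Lemma~\ref{lemmaunbounded1}: on its high-probability event $\mathcal{M}^\ast\cap K_{1/4}\subseteq\wh U$, so every $x\in\mathcal{M}^\ast\cap K_{1/4}$ with $\wh G(\wh Q(x))\in K_0$ also lies in $\wh U$, whence $\wh Q(x)\in\wh V$.

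For the reverse inclusion $\wh V\subseteq\wh\Omega$, fix $x\in\wh U$ with $\wh G(\wh Q(x))\in K_0$ and put $y=\wh Q(x)$. By construction of $\wh U$ there is a sample index $i\in I_2$ with $X_i\in K_{1/4}$ and $\|x-X_i\|_2\le\epsilon_n$. Since $X_i\sim\mu^\ast$ and $K_{1/4}\subseteq K_1\subseteq\widetilde K$ with $\mu^\ast|_{\widetilde K}=G^\ast_\#\nu^\ast$ supported on $\mathcal{M}^\ast$, almost surely $X_i\in\mathcal{M}^\ast\cap K_{1/4}$, so $z_i:=Q^\ast(X_i)\in\mathcal{Z}_{1/4}$ and $\wh Q(X_i)=\wh Q(G^\ast(z_i))=\wh l(z_i)$. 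Lipschitzness of $\wh Q$ gives $\|y-\wh l(z_i)\|_2\le L\epsilon_n$. I would then argue, exactly as in the proof of Lemma~\ref{lemma:constrained_high_freq} (with $K_{1/4},K_{1/3}$ in place of $K_{1/3},K_{1/2}$), that $\mathcal{Z}_{1/4}$ admits a constant-width neighbourhood inside $\mathcal{Z}_{1/3}$, so $\mb B_c(z_i)\subseteq\mathcal{Z}_{1/3}$ for a fixed $c>0$, and recall from that same proof that on its event $\wh l=\wh Q\circ G^\ast$ is injective on the open set $\mathcal{Z}_{1/3}$ with $\wh l^{-1}$ uniformly Lipschitz on $\wh l(\mathcal{Z}_{1/3})$. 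Invariance of domain makes $\wh l(\mathcal{Z}_{1/3})$ open, and a quantitative inverse-function-theorem / connectedness argument using the uniform Lipschitz bound on $\wh l^{-1}$ yields $\wh l(\mb B_c(z_i))\supseteq\mb B_{c'}(\wh l(z_i))$ for a fixed $c'>0$. For $n$ large enough $L\epsilon_n<c'$, hence $y\in\wh l(\mathcal{Z}_{1/3})$ and $z_1:=\wh l^{-1}(y)\in\mathcal{Z}_{1/3}$ is well defined.

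Set $x':=G^\ast(z_1)\in\mathcal{M}^\ast$. Then $\wh Q(x')=\wh Q(G^\ast(z_1))=\wh l(z_1)=y$, so $\wh G(\wh Q(x'))=\wh G(y)\in K_0$ by hypothesis; it remains only to see $x'\in K_{1/4}$. By~\eqref{eqnunbounded1}, on the relevant event $\|G^\ast(z)-\wh G(\wh l(z))\|_2\le\delta_n:=C(\log\wt{n}/\wt{n})^{\beta/d}$ for all $z\in\mathcal{Z}_{1/2}\supseteq\mathcal{Z}_{1/3}$, and applying this at $z_1$ gives $\|x'-\wh G(y)\|_2\le\delta_n$. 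Since $\wh G(y)\in K_0$ and $\wt{n}\ge\frac12\sqrt{n\log n}$, we get $x'\in K_{L\delta_n}\subseteq K_{1/4}$ once $n$ is large, so $x'\in\mathcal{M}^\ast\cap K_{1/4}$, $\wh Q(x')=y$ and $\wh G(\wh Q(x'))\in K_0$, i.e.\ $y\in\wh\Omega$. The claim with probability at least $1-c/n^2$ then follows from a union bound over the events of Lemma~\ref{lemmaunbounded1}, of~\eqref{eqnunbounded1}, of the injectivity and Jacobian control of $\wh l$ from the proof of Lemma~\ref{lemma:constrained_high_freq}, and of the Bernstein concentration of $\wh p_0$ about $\mb P_{\mu^\ast}(X\in K_0)$, each of probability at least $1-n^{-2}$ up to a constant.

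The main obstacle is the quantitative local surjectivity of the \emph{estimated} map $\wh l$ used in the second paragraph: upgrading ``$y$ lies within $O((\log n/n)^{1/(2d)})$ of a boundary-free image point $\wh l(z_i)$'' to ``$y$ actually lies in $\wh l(\mathcal{Z}_{1/3})$'' requires the ball statement $\wh l(\mb B_c(z_i))\supseteq\mb B_{c'}(\wh l(z_i))$ with $c'$ uniform in $n$ and in $z_i$. I would obtain this by combining invariance of domain (openness of $\mathcal{Z}_{1/3}$ in $\mb R^d$, which rests on the away-from-boundary/non-collision assumptions built into $\overline{\mathcal{S}}^\ast$) with the uniform upper and lower bounds on $\det\bold{J}_{\wh l}$ already established in the proof of Lemma~\ref{lemma:constrained_high_freq}, while tracking that the constant margin separating $\mathcal{Z}_{1/4}$ from $\mathcal{Z}_{1/3}$ survives application of the Lipschitz maps $Q^\ast$ and $G^\ast$.
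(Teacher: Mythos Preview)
Your proposal is correct and follows the same two-inclusion structure as the paper: $\wh\Omega\subseteq\wh V$ via Lemma~\ref{lemmaunbounded1}, and for $\wh V\subseteq\wh\Omega$ one picks a nearby sample $X_{i^\ast}\in\mathcal{M}^\ast\cap K_{1/4}$, shows $y=\wh Q(x)$ lies in the image of $\wh l=\wh Q\circ G^\ast$ near $z^{(0)}=Q^\ast(X_{i^\ast})$, and then checks that the resulting $x'=G^\ast(\bar z)$ lands in $\mathcal{M}^\ast\cap K_{1/4}$. The only substantive difference is in the local surjectivity step you flag as the ``main obstacle'': the paper builds the preimage $\bar z$ explicitly by the Newton iteration $z^{(k)}=z^{(k-1)}+\bold{J}_{\wh l}(z^{(k-1)})^{-1}(y-\wh l(z^{(k-1)}))$ starting at $z^{(0)}$, using $\beta>1$ and the uniform lower bound on $|\det\bold{J}_{\wh l}|$ over $\mathcal{Z}_{1/2}$ to obtain convergence with $\|\bar z-z^{(0)}\|\le C(\log n/n)^{1/(2d)}$. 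This directly delivers the quantitative ball inclusion you need and sidesteps the invariance-of-domain/connectedness bookkeeping; conversely, your route via the Lipschitz bound on $\wh l^{-1}$ is cleaner conceptually but, when made rigorous, unwinds to the same fixed-point estimate. A minor variation worth noting: to place $x'$ in $K_{1/4}$ you apply~\eqref{eqnunbounded1} at $z_1$ and use $\wh G(y)\in K_0$, whereas the paper first upgrades $X_{i^\ast}\in K_{1/4}$ to $X_{i^\ast}\in K_{c(\log n/n)^{1/(2d)}}$ (via $\wh G(\wh Q(x))\in K_0$, Lipschitzness, and~\eqref{eqnunbounded1}) and then uses $\|\bar z-z^{(0)}\|$ small; both are valid and your version is arguably tidier.
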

\noindent  Recall 
 \begin{equation*}
      \bar{\nu}_{\wh{Q}}(y)=\frac{1}{h^d|I_2|} \sum_{i\in I_2}  \Big(\prod_{j=1}^d \bar{k} \Big(\frac{\wh{Q}_j(X_i)-y_j}{h} \Big)  \Big)\cdot \bold{1}_{X_i\in K_{1/4}}.
  \end{equation*}
  where $h=\big(\frac{\log n}{n}\big)^{\frac{1}{2 \alpha+d}}$. The next lemma shows that with high probability, for any $y\in \wh{\Omega}$,  $\bar{\nu}_{\wh{Q}}(y)$ is close to $\mathbb{P}_{\mu^{\ast}}(X\in \widetilde{K})\cdot{\nu^{\ast}(\wh{l}^{-1}(y))}\cdot{|{\rm det}(\bold{J}_{\wh{l}^{-1}}(y))|}$. 
\begin{lemma}\label{lemmaunbounded3}
  There exists a positive constant $c$ such that it holds with probability at least $1-\frac{c}{n^2}$ that for any $y\in \wh{\Omega}$, 
 \begin{equation*}
     \begin{aligned}
      \left|\bar{\nu}_{\wh{Q}}(y)-\mathbb{P}_{\mu^{\ast}}(X\in \widetilde{K})\cdot{\nu^{\ast}(\wh{l}^{-1}(y))}\cdot{|{\rm det}(\bold{J}_{\wh{l}^{-1}}(y))|}\right|\leq C\, \big(\frac{\log n}{n}\big)^{\frac{ \alpha}{2 \alpha+d}}.
     \end{aligned}
 \end{equation*}
\end{lemma}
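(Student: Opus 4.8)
The plan is to prove Lemma~\ref{lemmaunbounded3} by the classical bias--variance decomposition for kernel density estimators, essentially repeating the KDE analysis carried out in the proof of Lemma~\ref{lemma:high_freq} (the paragraph on the KDE-based regularized estimator in Section~\ref{sec:proof_high_freq}), the only genuinely new ingredient being the bookkeeping needed to replace the partition-of-unity weight $\rho_m$ there by the set restriction $\bold{1}_{K_{1/4}}$ here. Throughout I would work on the high-probability event obtained by intersecting the events from the proof of Lemma~\ref{lemma:constrained_high_freq} and from~\eqref{eqnunbounded1}; on this event $\wh{l}=\wh{Q}\circ G^\ast$ is invertible on $\mathcal{Z}_{1/3}$ with $\wh{l}^{-1}\in C^{\beta}_{C_1}(\wh{l}(\mathcal{Z}_{1/3});\mb R^d)$ and $c_2\le|{\rm det}(\bold{J}_{\wh{l}}(z))|\le c_3$ for all $z\in\mathcal{Z}_{1/2}$. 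Note also that $\wh{Q}$ is built from $I_1$, hence independent of the samples $\{X_i\}_{i\in I_2}$ entering $\bar\nu_{\wh{Q}}$.

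First I would control the bias. Fix $y\in\wh{\Omega}$. Since ${\rm supp}(\bar{k})\subset[0,1]$ and $\wh{Q}\in C^\beta_L$, the summand defining $\bar\nu_{\wh{Q}}(y)$ vanishes unless $\wh{Q}(X_i)\in\mb B_{\sqrt{d}h}(y)$; combining the pointwise bound~\eqref{eqnunbounded1} with the Lipschitzness of $G^\ast,Q^\ast,\wh{G},\wh{Q}$, for $h$ small (hence $n$ large) any such $X_i$ with $X_i\in K_{1/4}$ has its latent preimage in $\mathcal{Z}_{1/2}$, so the change of variables $x=G^\ast(\wh{l}^{-1}(z))$ is legitimate there and the density formula from the analogue of Lemma~\ref{lemma:density_regularity} applies. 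Writing $\nu^{\diamond}(z)=\nu^\ast(\wh{l}^{-1}(z))\cdot|{\rm det}(\bold{J}_{\wh{l}^{-1}}(z))|$, which belongs to $C^\alpha_{C_0}$ by the regularity of $\nu^\ast$ and $\wh{l}^{-1}$, one obtains
\begin{equation*}
\mb E_{\mu^\ast}\!\big[\bar\nu_{\wh{Q}}(y)\big] = \mathbb{P}_{\mu^\ast}(X\in\wt{K})\int \Big(\prod_{j=1}^d \bar{k}(t_j)\Big)\,\nu^{\diamond}(ht+y)\,\dd t,
\end{equation*}
and Taylor expanding $\nu^{\diamond}$ around $y$ to order $\lfloor\alpha\rfloor$ and using $\int t^j\bar{k}(t)\,\dd t=0$ for $1\le j\le\lceil\alpha\rceil$ yields $\big|\mb E_{\mu^\ast}[\bar\nu_{\wh{Q}}(y)]-\mathbb{P}_{\mu^\ast}(X\in\wt{K})\,\nu^{\diamond}(y)\big|\le C h^\alpha$, which is exactly the claimed bias since $\nu^{\diamond}(y)=\nu^\ast(\wh{l}^{-1}(y))|{\rm det}(\bold{J}_{\wh{l}^{-1}}(y))|$.

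Next I would bound the stochastic fluctuation $\big|\bar\nu_{\wh{Q}}(y)-\mb E_{\mu^\ast}[\bar\nu_{\wh{Q}}(y)]\big|$ uniformly in $y$. Each summand is almost surely bounded by $Ch^{-d}$, and by the same change of variables and the local boundedness of $\nu^{\diamond}$ its variance is $O(h^{-d})$. A Bernstein inequality together with a union bound over a $1/n^2$-net of $[-L-1,L+1]^d$ (of cardinality $\lesssim n^{2d}$), followed by the uniform Lipschitzness of $\bar{k}$ to pass from the net to arbitrary $y$, gives with probability at least $1-c/n^2$ the bound $C\sqrt{\log n/n}\,h^{-d/2}+C(\log n/n)h^{-d}$ uniformly over $y\in[-L-1,L+1]^d\supseteq\wh{\Omega}$. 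Plugging in $h=(\log n/n)^{1/(2\alpha+d)}$: the bias is $h^\alpha=(\log n/n)^{\alpha/(2\alpha+d)}$, the fluctuation term $\sqrt{\log n/n}\,h^{-d/2}=(\log n/n)^{\alpha/(2\alpha+d)}$, and the remainder $(\log n/n)h^{-d}=(\log n/n)^{2\alpha/(2\alpha+d)}$ is of smaller order; adding these gives the asserted rate.

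The step I expect to be the main obstacle is the domain bookkeeping in the bias computation: one must verify that for every $y\in\wh{\Omega}$ the \emph{entire} $h$-neighborhood of $y$ in latent space sits inside $\wh{l}(\mathcal{Z}_{1/2})$, so that the integral is taken against the smooth function $\nu^{\diamond}$ and not against a truncated version of it. This is precisely where the choice of the intermediate set $K_{1/4}$ (rather than $K_0$), the pointwise estimate~\eqref{eqnunbounded1}, and the boundary-regularity condition $\big|\mathbb{P}_\nu(G(z)\in K_0)-\mathbb{P}_\nu(G(z)\in K_r)\big|\le L|r|$ enter; once this containment is established, the remaining arguments are the routine KDE computations already performed elsewhere in the paper.
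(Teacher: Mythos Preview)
Your bias--variance strategy (Taylor expansion plus kernel moment conditions for the bias, Bernstein over a $1/n^2$-net plus Lipschitz extension for the fluctuation) is exactly the paper's. Two corrections on the domain bookkeeping, which as you say is the only real issue.

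First, the containment you need is $\mb B_{\sqrt{d}h}(y)\subset\wh{l}(\mathcal{Z}_{1/4})$, not just $\wh{l}(\mathcal{Z}_{1/2})$: the indicator in $\bar\nu_{\wh{Q}}$ is $\bold{1}_{K_{1/4}}$, and your displayed formula for $\mb E_{\mu^\ast}[\bar\nu_{\wh{Q}}(y)]$ only holds once this indicator is identically $1$ on the kernel's support, i.e.\ every $X\in{\rm supp}(\mu^\ast)$ with $\wh{Q}(X)\in\mb B_{\sqrt{d}h}(y)$ already lies in $K_{1/4}$. The implication you wrote in your second paragraph (``$X_i\in K_{1/4}\Rightarrow$ latent preimage in $\mathcal{Z}_{1/2}$'') is the trivial direction and does not give this.

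Second, the boundary-regularity condition $\big|\mathbb{P}_\nu(G(z)\in K_0)-\mathbb{P}_\nu(G(z)\in K_r)\big|\le L|r|$ is not used in this lemma at all; it enters only in the earlier step of Lemma~\ref{lemma:constrained_high_freq}. The paper establishes the containment via a Newton-type iteration: for $\widetilde y\in\wh{\Omega}$, equation~\eqref{eqnunbounded1} gives $\widetilde z=\wh{l}^{-1}(\widetilde y)$ with $G^\ast(\widetilde z)\in K_{c(\log n/n)^{\beta/(2d)}}$; then for any $y\in\mb B_{\sqrt{d}h}(\widetilde y)$ one iterates $z^{(k)}=z^{(k-1)}+\bold{J}_{\wh{l}}(z^{(k-1)})^{-1}(y-\wh{l}(z^{(k-1)}))$ starting from $z^{(0)}=\widetilde z$, and the uniform lower bound on $|{\rm det}(\bold{J}_{\wh{l}})|$ forces convergence to some $\bar z$ with $\wh{l}(\bar z)=y$ and $\|\bar z-\widetilde z\|\le Ch$, hence $G^\ast(\bar z)\in K_{1/4}$ and $\bar z\in\Omega^\ast$.
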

 \noindent Then by Lemma~\ref{lemmaunbounded3}, it holds with probability at least $1-\frac{c}{n^2}$ that 
 \begin{equation*}
     \begin{aligned}
         &\underset{f\in C^{\gamma}_1(\mathbb{R}^D)}{\sup}\left|\int_{\wh{\Omega}}\Pi_J^{\perp}f(\wh{G}(y))\bar{\nu}_{\wh{Q}}(y)\,\dd y-\mathbb{P}_{\mu^{\ast}}(X\in \widetilde{K})\int_{\wh{\Omega}}\Pi_J^{\perp}f(\wh{G}(y)){\nu^{\ast}(\wh{l}^{-1}(y))}{|{\rm det}(\bold{J}_{\wh{l}^{-1}}(y))|}\,\dd y\right|\\
         &\leq C\, \big(\frac{\log n}{n}\big)^{\frac{ \alpha}{2 \alpha+d}}\underset{f\in C^{\gamma}_1(\mathbb{R}^D)}{\sup}\int_{\wh{\Omega}}|\Pi_J^{\perp}f(\wh{G}(y))|\,\dd y\\
         &\leq C_1\,\big(\frac{\log n}{n}\big)^{\frac{ \alpha+\gamma}{2 \alpha+d}}.
     \end{aligned}
 \end{equation*}
 We can then obtain the desired result by putting all pieces together.

\subsubsection{Proof of Lemma~\ref{lemmaunbounded1}}
 Let $\wt{\epsilon}_n=a\,(\frac{1}{\wh{p}_0}\cdot\frac{\log n}{n})^{\frac{1}{d}}$. By Bernstein's inequality, we have  it holds with probability at least $1-n^{-3}$ that 
 \begin{equation*}
    \big| \wh{p}_0-\mb{P}_{\mu^*}(K_0)\big|\leq C\,\Big(\frac{\log n}{n}+\sqrt{\frac{\log n}{n}}\sqrt{\mb{P}_{\mu^*}(K_0)}\Big)\leq C_1\,\sqrt{\frac{\log n}{n}}\sqrt{\mb{P}_{\mu^*}(K_0)},
 \end{equation*}
 where the last inequality is due to $\mb{P}_{\mu^*}(K_0)\geq \frac{1}{2}\sqrt{\frac{\log n}{n}}$. So we have 
 \begin{equation}\label{diffp_0}
     \Big|\frac{\wh{p_0}}{\mb{P}_{\mu^*}(K_0)}-1\Big|\leq C_1\, \frac{\sqrt{\frac{\log n}{n}}}{\sqrt{{\mb{P}_{\mu^*}(K_0)}}}\leq C_2\, \big(\frac{\log n}{n}\big)^{\frac{1}{4}}.
 \end{equation}
 So when $n$ is large enough, we have it holds with probability larger than $1-n^{-3}$ that $\wt{\epsilon}_n=a\,(\frac{1}{\wh{p}_0}\cdot\frac{\log n}{n})^{\frac{1}{d}}\geq \frac{1}{2}a\,(\frac{1}{\mb{P}_{\mu^*}(K_0)}\cdot\frac{\log n}{n})^{\frac{1}{d}}=\epsilon_n$.
 Let $\bar{N}_{\epsilon_n}\subset \mathcal{M}^{\ast} \cap K_{1/4}$ be the minimal $\epsilon_n$-covering  set of $\mathcal{M}^{\ast} \cap K_{1/4}$, then $|\bar{N}_{\epsilon_n}|\leq C\, n$. Moreover, for any $\widetilde{x}\in \bar{N}_{\epsilon_n}$,  since there exist positive constants $c,c_1$ such that 
 \begin{equation*}
         \{z\in \Omega^\ast\,:\,\|z-Q^\ast(\widetilde{x})\|\leq c\,\epsilon_n\}\subset \{z\in \Omega^\ast\,:\,\|G^\ast(z)-\widetilde{x}\|\leq \epsilon_n\}    \subset \{z\in \Omega^\ast\,:\,\|z-Q^\ast(\widetilde{x})\|\leq c_1\,\epsilon_n\},
 \end{equation*}
  it holds that 
 \begin{equation*}
 \begin{aligned}
     &C\,a^d\,\frac{\mb P_{\mu^*}(\wt K)}{\mb P_{\mu^*}(K_0)}\cdot \frac{\log n}{n}\geq \mb{P}_{\mu^{\ast}}(\widetilde K)\int_{\{z\in \Omega^\ast\,:\,\|G^\ast(z)-\widetilde{x}\|\leq \epsilon_n\}} \nu^\ast(z) \,\dd z= \mathbb{P}_{\mu^{\ast}}(\mb B_{\epsilon_n}(\widetilde{x}))\geq C_1\,  a^d\,\frac{\mb P_{\mu^*}(\wt K)}{\mb P_{\mu^*}(K_0)}\cdot \frac{\log n}{n}.
     \end{aligned}
 \end{equation*}
 Then by Bernstein's inequality, it holds with probability 
 larger than  $1-\frac{1}{n^3}$ that 
 \begin{equation*}
     \left|\frac{1}{|I_2|} \sum_{i\in I_2}\bold{1}_{ \mb B_{\epsilon_n}(\widetilde{x})}(X_i)-\mathbb{P}_{\mu^{\ast}}(\mb B_{\epsilon_n}(\widetilde{x}))\right| \leq C\, \Big(1+a^{\frac{d}{2}}\sqrt{\frac{\mb P_{\mu^*}(\wt K)}{\mb P_{\mu^*}(K_0)}}\Big)\frac{\log n}{n}.
 \end{equation*}
 Therefore when $a$ is large enough, it holds with probability at least $1-\frac{1}{n^2}$ that for any $\widetilde{x}\in \bar{N}_{\epsilon_n}$, 
 \begin{equation*}
     \frac{1}{|I_2|} \sum_{i\in I_2}\bold{1}_{ \mb B_{\epsilon_n}(\widetilde{x})}(X_i)\geq \frac{1}{n}.
 \end{equation*}
 So  
 \begin{equation*}
     \underset{x\in \mathcal{M}^{\ast} \cap K_{1/4}}{\sup} \underset{i\in I_2}{\min} \|x-X_i\|_2 \leq  \epsilon_n+ \underset{x\in \bar{N}_{\epsilon_n}}{\sup}  \underset{i\in I_2}{\min} \|x-X_i\|_2  \leq 2\,\epsilon_n\leq 2\, \wt{\epsilon}_n.
 \end{equation*}
 Proof is completed.
 
 \subsubsection{Proof of Lemma~\ref{lemmaunbounded2}}
 Firstly by Lemma~\ref{lemmaunbounded1}, it holds that 
 \begin{equation*}
     \{ y=\wh{Q}(x)\,:\,x\in \mathcal{M}^{\ast} \cap K_{1/4}; \wh{G}(\wh{Q}(x))\in K_0\}\subset \{ y=\wh{Q}(x)\,:\,x\in \wh{U}; \wh{G}(\wh{Q}(x))\in K_0\}.
 \end{equation*}
 For the inverse side, Recall~\eqref{diffp_0}, it holds with probability at least $1-n^{-3}$ that for any $x\in \wh{U}$ such that $\wh{G}(\wh{Q}(x))\in K_0$, there exists  $i^{\ast}\in I_2$ such that $X_{i^{\ast}}\in K_{1/4}$, $\|x-X_{i^{\ast}}\|_2\leq C\, (\frac{\log n}{n})^{\frac{1}{2d}}$ (recall $\mb P_{\mu^*}(K_0)\geq \frac{1}{2}\sqrt{\frac{\log n}{n}}$) and $\|\wh{Q}(x)-\wh{Q}(X_{i^{\ast}})\|_2\leq C_1\, (\frac{\log n}{n})^{\frac{1}{2d}}$. Moreover, since $\wh{G}(\wh{Q}(x))\in K_0$,  by equation~\eqref{eqnunbounded1}, it holds with probability at least $1-n^{-2}$ that $\wh{G}(\wh{Q}(X_{i^{\ast}})), X_{i^{\ast}} \in K_{c(\frac{\log n}{n})^{\frac{1}{2d}}}$ for a constant $c$.\\
 \quad\\
 Then fix $y\in \mathbb{R}^d$ and $i\in [n]$ where $\|y-\wh{Q}(X_i)\|_2\leq C_1 (\frac{\log n}{n})^{\frac{1}{2d}}$ and $X_i\in  K_{c(\frac{\log n}{n})^{\frac{1}{2d}}}$. Define $z^{(0)}=Q^{\ast}(X_i)$ and recursively define $z^{(k)}=z^{(k-1)}+\bold{J}_{\wh{l}}(z^{(k-1)})^{-1} (y-\wh{l}(z^{(k-1)}))$ (recall $\wh l=\wh Q\circ G^\ast$). Since $ \beta>1$ and for any $z\in Q^{\ast}(\mathcal{M}^{\ast} \cap K_{1/2})$, it holds that $ |{\rm det}(\bold{J}_{\wh{l}}(z))|\geq c_2>0$, we can obtain that when $n$ is large enough, there exists $\bar{z}$ such that $\lim_{k\to +\infty} z^{(k)}=\bar{z}$ and $\|\bar{z}-z^{(0)}\|\leq C_2\, (\frac{\log n}{n})^{\frac{1}{2d}}$. So $y=\wh{Q}(\bar{x})$ where $\bar{x}=G^{\ast}(\bar{z})$.
 Moreover, since $G^\ast(z^{(0)})=X_i\in K_{c(\frac{\log n}{n})^{\frac{1}{2d}}}$, we have $\bar{x}=G^\ast(\bar{z})\in K_{\frac{1}{4}}$; also since $z^{(0)}\in \Omega^\ast$ with $G^\ast(z^{(0)})\in K_1$, we have $\bar{z}\in {\bigcup}_{z\in\Omega^\ast, \, G^\ast(z)\in K_1} B_{1/L}(z)\subset \Omega^\ast$ and thus $\bar{x}\in G^\ast(\Omega^\ast)=\m M^\ast$, which together leads to $\bar{x}\in \m M^\ast\cap K_{\frac{1}{4}}$. Therefore we can get it holds with probability  at least $1-\frac{2}{n^2}$ that $ \{ y=\wh{Q}(x)\,:\,x\in \mathcal{M}^{\ast} \cap K_{1/4}; \wh{G}(\wh{Q}(x))\in K_0\}= \{ y=\wh{Q}(x)\,:\,x\in \wh{U}; \wh{G}(\wh{Q}(x))\in K_0\}$.
 \subsubsection{Proof of Lemma~\ref{lemmaunbounded3}}
 Firstly by equation~\eqref{eqnunbounded1},  there exists a positive constant $c$ such that it holds with probability at least $1-\frac{1}{n^2}$ that for any $\widetilde{y} \in \wh{\Omega}$, there exists $\widetilde{z}\in \Omega^{\ast}$ such that $\widetilde{y}=\wh{Q}(G^{\ast}(\widetilde{z}))=\wh{l}(\wt{z})$ and $G^{\ast}(\widetilde{z})\in K_{c(\frac{\log n}{n})^{\frac{\beta}{2d}}}$ (recall $\wt n\geq \frac{1}{2}\sqrt{n\log n}$). Therefore, fix  $\widetilde{y} \in \wh{\Omega}$ and  $y \in \mb B_{\sqrt{d}h}(\widetilde{y})$, define $z^{(0)}=\widetilde{z}$ and recursively define $z^{(k)}=z^{(k-1)}+\bold{J}_{\wh{l}}(z^{(k-1)})^{-1}(y-\wh{l}(z^{(k-1)}))$.
 Since $ \beta>1$ and for any $z\in Q^{\ast}(\mathcal{M}^{\ast} \cap K_{1/2})$,  it holds that $ |{\rm det}(\bold{J}_{\wh{l}}(z))|\geq c_2>0$, when  $n$ is large enough, we can get there exists $\bar{z}$ such that $\lim_{k\to +\infty} z^{(k)}=\bar{z}$, $\wh{l}(\bar{z})=y$ and $\|\bar{z}-z^{(0)}\|\leq C\, h\leq C_1\, \big(\frac{\log n}{n}\big)^{\frac{1}{2\alpha+d}}$. So when $n$ is large enough, $\bar{z}\in \Omega^*$ and $G^*(\bar{z})\in K_{\frac{1}{4}}$, which further leads to  $\{y\in \mb B_{\sqrt{d}h}(\widetilde{y})\,:\, \widetilde{y}\in \wh{\Omega}\} \subset \wh{Q}(\mathcal{M}^{\ast} \cap K_{1/4})$.  Therefore, it holds with probability at least $1-\frac{c}{n^2}$ that for any $\widetilde{y}\in \wh{\Omega}$,
 \begin{equation*}
     \begin{aligned}
         &\left|\mathbb{E}_{\mu^{\ast}} \left[\frac{1}{h^d}\Big(\prod_{j=1}^d\bar{k}\Big(\frac{\wh{Q}_j(X)-\widetilde{y}_j}{h}\Big)\Big)\cdot\bold{1}_{ K_{1/4}}(X)\right]-\mathbb{P}_{\mu^{\ast}}(X\in \widetilde{K})\cdot{\nu^{\ast}(\wh{l}^{-1}(\widetilde{y}))}\cdot{|{\rm det}(\bold{J}_{\wh{l}^{-1}}(\widetilde{y}))|}\right|\\
         &\overset{(i)}{=}\mathbb{P}_{\mu^{\ast}}(X\in \widetilde{K})\left|\int\frac{1}{h^d}  \Big(\prod_{j=1}^d \bar{k}\big(\frac{z_j-\widetilde{y}_j}{h}\big)\Big)\cdot {\nu^{\ast}(\wh{l}^{-1}(z))}\cdot{|{\rm det}(\bold{J}_{\wh{l}^{-1}}(z))|}\,\dd z-{\nu^{\ast}(\wh{l}^{-1}(\widetilde{y}))}\cdot{|{\rm det}(\bold{J}_{\wh{l}^{-1}}(\widetilde{y}))|}\right|\\
         &\overset{(ii)}{=} \mathbb{P}_{\mu^{\ast}}(X\in \widetilde{K})\Big| \int \Big(\prod_{j=1}^d \bar{k}(t_j) \Big)\cdot \big(\nu^{\circ}(ht+\widetilde y)-v^{\diamond}(\widetilde y)\big) \,\dd t\Big| \\
         &\overset{(iii)}{\leq }\mathbb{P}_{\mu^{\ast}}(X\in \widetilde{K})\bigg| \int \Big(\prod_{j=1}^d \bar{k}(t_j)\Big)\cdot  \sum_{\eta\in \mb N_0^d\atop 1\leq |\eta|\leq \lfloor \alpha\rfloor} (\nu^{\circ})^{(\eta)}(\widetilde y)\cdot(ht)^{\eta} \,\dd t\bigg|+C\, h^{\alpha}\\
         &\overset{(iiii)}{=}C\,  h^{ \alpha},
     \end{aligned}
 \end{equation*}
 where  $(i)$ uses $\{y\in \mb B_{\sqrt{d}h}(\widetilde{y})\,:\, \widetilde{y}\in \wh{\Omega}\} \subset \wh{Q}(\mathcal{M}^{\ast} \cap K_{1/4})$ and $\mu^\ast|_{\wt{K}}=G^\ast_{\#}\nu^\ast$;   $(ii)$ let $t=(t_1,t_2,\cdots,t_d)$ with $t_j=\frac{z_j-\widetilde{y}_j}{h}$,  $\nu^{\circ}(z)={\nu^{\ast}(\wh{l}^{-1}(z))}\cdot{|{\rm det}(\bold{J}_{\wh{l}^{-1}}(z))|}$ and uses the fact $\int \bar{k}(t) \,\dd t=1$; $(iii)$ uses $\nu^{\circ}$ is $\alpha$-smooth and  $\bar{k}$ is compactly supported; $(iiii)$ uses  $\int x^j k(x)=0$ for $j\in [\lceil \alpha\rceil]$. 
 
\noindent Moreover, for any $\widetilde y\in \wh{\Omega}$,
 \begin{equation*}
 \begin{aligned}
     &\frac{1}{h^{2d}} \int \Big(\prod_{j=1}^d \bar{k}^2\Big(\frac{\wh{Q}_j(X)-\widetilde y_j}{h}\Big)\Big)\cdot \bold{1}_{ K_{1/4}} (X)\, \dd \mu^{\ast}\\
     &=\mathbb{P}_{\mu^{\ast}}(X\in \widetilde{K})\int_{z\in \mb B_{\sqrt{d}h}(\widetilde y)}\frac{1}{h^{2d}} \Big(\prod_{j=1}^d \bar{k}^2\big(\frac{z_j-\widetilde{y}_j}{h}\big)\Big)\cdot {\nu^{\ast}(\wh{l}^{-1}(z))}\cdot{|{\rm det}(\bold{J}_{\wh{l}^{-1}}(z))|}\,\dd z\\
     &\leq C\, \frac{1}{h^d},
     \end{aligned}
 \end{equation*}
 and 
 \begin{equation*}
     \frac{1}{h^d}\cdot\Big(\prod_{j=1}^d \bar{k}\big(\frac{\wh{Q}_j(X)-\widetilde{y}_j}{h}\big) \Big)\cdot \bold{1}_{ K_{1/4}} (X)\leq C\, \frac{1}{h^d}.
 \end{equation*}
Now let $N_{\frac{1}{n^2}}\subset\wh{\Omega}$ be a $\frac{1}{n^2}$-covering set $\wh{\Omega}$, where $|N_{\frac{1}{n^2}}|\leq C\, n^{2d}$, then by a similar union bound argument plus Bernstein's inequality as the proof of~\eqref{eqn:union+Bernstein}, it holds with probability at least $1-\frac{1}{n^2}$ that for any $\widetilde{y} \in N_{\frac{1}{n^2}}$, it satisfies that 
 \begin{equation*}
     \left|\mathbb{E}_{\mu^{\ast}} \left[\frac{1}{h^d}\Big(\prod_{j=1}^d  \bar{k}\Big(\frac{\wh{Q}_j(X)-\widetilde{y}_j}{h}\Big) \Big)\cdot \bold{1}_{ K_{1/4}} (X)\right]-\bar{\nu}_{\wh{Q}}(\widetilde{y})\right|\leq  \sqrt{\frac{\log n}{n}} h^{-\frac{d}{2}}+\frac{\log n}{n} h^{-d}.
 \end{equation*}
 Then by the uniformly Lipschitzness of $\bar{k}(x)$ and $h=\big(\frac{\log n}{n}\big)^{\frac{1}{2 \alpha+d}}$, it holds with probability at least $1-\frac{1}{n^2}$ that  for any $y\in \wh{\Omega}$,
 \begin{equation*}
     \left|\mathbb{E}_{\mu^{\ast}} \left[\frac{1}{h^d}\Big(\prod_{j=1}^d  \bar{k}\Big(\frac{\wh{Q}_j(X)- {y}_j}{h}\Big) \Big)\cdot \bold{1}_{ K_{1/4}} (X)\right]-\bar{\nu}_{\wh{Q}}({y})\right|\leq  C\,\big(\frac{\log n}{n}\big)^{\frac{ \alpha}{2 \alpha+d}}.
 \end{equation*}
 Then by combining all pieces, we have
 \begin{equation*}
    \underset{y\in \wh{\Omega}}{\sup} \left|\mathbb{P}_{\mu^{\ast}}(X\in \widetilde{K})\cdot{\nu^{\ast}(\wh{l}^{-1}(y))}\cdot{|{\rm det}(\bold{J}_{\wh{l}^{-1}}(y))|}-\bar{\nu}_{\wh{Q}}(y)\right|\leq C\, \big(\frac{\log n}{n}\big)^{\frac{ \alpha}{2 \alpha+d}}.
 \end{equation*}

\subsection{Noisy case}\label{app:noisy case}
\begin{corollary}\label{noisecorollary}
Suppose $\mu^{\ast}\in \mathcal{S}^{\ast}$, $X_{1:n}$ and $\epsilon_{1:n}$ are $n$ i.i.d. samples from $\mu^{\ast}$ and $\mu_{\epsilon}$ respectively, where $\mathbb{P}_{\mu_{\epsilon}}(\|\epsilon\|_2\leq n^{-\frac{1}{2}-\frac{\beta}{d}})=1$. Let $Y_i=X_i+\epsilon_i$ for any $i\in [n]$. If $D>d$, $\gamma>0$, $\alpha\geq 0$ and $\beta> 1$,
Use $\wh{\mu}^{\diamond}$ to denote the estimator $\wh{\mu}$ defined in Section~\ref{sec:optimal_proce} with $X_{1:n}$ being replaced by $Y_{1:n}$, then there exist  positive constants $c_1$ and  $n_0$ such that when $n\geq n_0$ it holds that
\begin{equation*}
\mathbb{E} [d_\gamma(\wh{\mu}^{\diamond},\mu^{\ast})]\leq C\, \Big(\big(\frac{\log n}{n}\big)^{\frac{\gamma\beta}{d}}\vee \big(\frac{\log n}{n}\big)^{\frac{\alpha+\gamma}{2\alpha+d}}\vee \big(\frac{\log n}{n}\big)^{\frac{1}{2}}\Big).
\end{equation*}
\end{corollary}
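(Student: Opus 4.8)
The plan is to reduce the noisy problem back to the noiseless analysis of Theorem~\ref{upperboundgenerative} by controlling the extra error contributed by the perturbations $\epsilon_{1:n}$ at every place where the empirical samples enter the construction of $\wh{\m J}(f)$. The starting point is again the basic inequality~\eqref{eqn:basic_ineq}: since $\m S^\ast\subset\m S^{\rm ap}$ and $\mu^\ast\in\m S^\ast\subset\m S$, it suffices to show that $\sup_{f\in C_1^\gamma(\mb R^D)}\big|\mb E_{\mu^\ast}[f(X)]-\wh{\m J}^{\diamond}(f)\big|$ is bounded by the claimed rate with high probability, where $\wh{\m J}^{\diamond}(f)=\sum_m\big[\wh{\m J}_{m,l}^{\diamond}(f)+\wh{\m J}_{m,h}^{\diamond}(f)+\wh{\m J}_{m,s}^{\diamond}(f)\big]$ is the surrogate from Section~\ref{sec:optimal_proce} with $X_i$ replaced by $Y_i=X_i+\epsilon_i$. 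First I would add and subtract the noiseless surrogate $\wh{\m J}(f)$ evaluated at the hypothetical clean data $X_{1:n}$: by the triangle inequality it is enough to bound $\sup_f|\mb E_{\mu^\ast}[f]-\wh{\m J}(f)|$, which is exactly Theorem~\ref{upperboundgenerative}, plus the perturbation term $\sup_f|\wh{\m J}^{\diamond}(f)-\wh{\m J}(f)|$.

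For the perturbation term I would go through the three pieces separately, using throughout the deterministic bound $\|\epsilon_i\|_2\le \delta_n:=n^{-1/2-\beta/d}$ (which is $\le n^{-1/2}\wedge n^{-\beta/d}$, the two noiseless error scales). (i) \emph{Submanifold estimation.} The estimators $(\wh G_{[m]}^{\diamond},\wh Q_{[m]}^{\diamond})$ now fit the noisy $Y_i$'s; since perturbing each target point by at most $\delta_n$ changes the squared reconstruction loss by $O(\delta_n^2+\delta_n\|X_i-G\circ Q(X_i)\|)$ uniformly over the $C_L^\beta$ classes, the pointwise error bound of Lemma~\ref{lemma:pointwise_error} and hence Lemma~\ref{lemma:mani_est} survive with the extra additive term $\delta_n^\eta\lesssim(\log n/n)^{\eta\beta/d}$, which is absorbed. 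Concretely, $\mb E_{\mu^\ast}\big[\|X-\wh G_{[m]}^{\diamond}\circ\wh Q_{[m]}^{\diamond}(X)\|_2^\eta\rho_m(X)\big]\lesssim(\log n/n)\vee(\log n/n)^{\eta\beta/d}$ still holds. (ii) \emph{Low-frequency and smoothness-correction terms.} These are empirical averages of $f$, $\phi_k$, $\psi_{ljk}$, $f^{(j)}$ evaluated at $Y_i$ (or $\wh G^{\diamond}\circ\wh Q^{\diamond}(Y_i)$); replacing $Y_i$ by $X_i$ changes each summand by $O(\delta_n\cdot 2^{Dj/2}\|f\|_{C^\gamma})$ for the wavelet pieces after a Lipschitz estimate (the $2^{Dj/2}$ factor is harmless since $\delta_n 2^{DJ/2}\lesssim\delta_n(n/\log n)^{D/(2d)}$ is dominated once we use the H\"older, not Lipschitz, modulus: $|f(Y_i)-f(X_i)|\le\|\epsilon_i\|^\gamma\le\delta_n^\gamma$, and $\delta_n^\gamma=n^{-\gamma/2-\gamma\beta/d}\le(\log n/n)^{\gamma\beta/d}$), so the extra contribution is $\lesssim(\log n/n)^{\gamma\beta/d}$, again absorbed. (iii) \emph{High-frequency term.} Here $\wh{\m J}_{m,h}^{\diamond}(f)=\mb E_{\widetilde\nu_{[m],\wh Q_{[m]}^{\diamond}}}[(\Pi_J^\perp f)\circ\wh G_{[m]}^{\diamond}]$ where $\widetilde\nu$ is the wavelet-truncated estimator built from $Y_i$'s; the relevant perturbation of the sample wavelet coefficients is again $O(\delta_n^{\alpha\wedge1})$ per coefficient, and since $\|\Pi_J^\perp f\|_\infty\lesssim(\log n/n)^{\gamma/(2\alpha+d)}$ the total perturbation is $\lesssim(\log n/n)^{(\alpha+\gamma)/(2\alpha+d)}$. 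Summing over the finitely many $m\in[M]$ gives $\sup_f|\wh{\m J}^{\diamond}(f)-\wh{\m J}(f)|\lesssim(n/\log n)^{-1/2}\vee(n/\log n)^{-(\alpha+\gamma)/(2\alpha+d)}\vee(n/\log n)^{-\gamma\beta/d}$.

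Combining the two parts, $\sup_f|\mb E_{\mu^\ast}[f]-\wh{\m J}^{\diamond}(f)|$ obeys the target rate with probability $1-n^{-c}$; converting the high-probability bound to a bound on $\mb E[d_\gamma(\wh\mu^{\diamond},\mu^\ast)]$ uses that $d_\gamma$ is uniformly bounded on distributions supported in $\mb B_L^D$ (the logarithmic factors in the rate are precisely what make the $n^{-c}$-probability complement negligible after taking expectation), exactly as in the passage from~\eqref{Eqn:EP_newbound} to the expectation bound in Theorem~\ref{upperboundgenerative}. The main obstacle I anticipate is item (iii): one must verify that the noise does not degrade the regularity analysis of $\nu^\ast_{[m],\wh Q^{\diamond}_{[m]}}$ in Lemma~\ref{lemma:density_regularity} — in particular that $\wh l_{[m]}^{\diamond}=\wh Q_{[m]}^{\diamond}\circ G^\ast_{[m]}$ remains invertible with uniformly controlled Jacobian on $\Omega_m$ — since that lemma underlies the bound on the growth of the wavelet coefficients; this goes through because the perturbation bounds in~\eqref{eqndensitysmooth} pick up only an extra $\delta_n^{\beta-1}\lesssim(\log n/n)^{(\beta-1)\beta/d}$ term, still $o(1)$, so the contradiction argument for global invertibility is unaffected for $n$ large. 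A secondary subtlety is that the condition $\|\epsilon\|_2\le n^{-1/2-\beta/d}$ is stated as holding almost surely for $\mu_\epsilon$, so no truncation or tail argument for the noise is needed; the whole perturbation analysis is deterministic given the event on which the noiseless bounds hold.
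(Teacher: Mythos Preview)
Your execution is largely on the right track and parallels the paper's structure, but the top-level decomposition you state contains a genuine gap. You propose to add and subtract the fully noiseless surrogate $\wh{\m J}(f)$, built from $(\wh G_{[m]},\wh Q_{[m]})$ and $X_{1:n}$, and then invoke Theorem~\ref{upperboundgenerative} as a black box for the first piece. The difficulty is that the perturbation $\wh{\m J}^{\diamond}(f)-\wh{\m J}(f)$ involves two \emph{different} manifold estimators, and your steps (i)--(iii) never compare $(\wh G^{\diamond}_{[m]},\wh Q^{\diamond}_{[m]})$ to $(\wh G_{[m]},\wh Q_{[m]})$; you only show each separately satisfies the Lemma~\ref{lemma:mani_est} bound. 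The low-frequency perturbation therefore contains a term like $\big|\Pi_Jf(\wh G^{\diamond}_{[m]}\!\circ\wh Q^{\diamond}_{[m]}(X_i))-\Pi_Jf(\wh G_{[m]}\!\circ\wh Q_{[m]}(X_i))\big|$, whose argument differs by $O\big((\log n/n)^{\beta/d}\big)$ --- the manifold-estimation error, not the tiny $\delta_n$. For $\gamma>1$ this yields a contribution of order $(\log n/n)^{\beta/d}\log n$, which in regimes such as $d>2\beta$ and $\gamma>\beta$ (e.g.\ $d=4,\ \beta=1.5,\ \gamma=2,\ \alpha=0.5$) is strictly larger than every term in the target rate and cannot be absorbed.

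The paper avoids this by never comparing $\wh G^{\diamond}$ to $\wh G$. It inserts instead the \emph{semi-noisy} surrogate --- built with $(\wh G^{\diamond}_{[m]},\wh Q^{\diamond}_{[m]})$ but with empirical sums over the clean $X_i$ --- and then re-runs the \emph{proofs} (not the statements) of Lemmas~\ref{lemma:low_freq}--\ref{lemma:smooth_corr} with $(\wh G^{\diamond},\wh Q^{\diamond})$ in place of $(\wh G,\wh Q)$; this is legitimate precisely because of the manifold bound you establish in (i). What remains is only the $Y_i\to X_i$ perturbation at fixed $(\wh G^{\diamond},\wh Q^{\diamond})$, which is in fact what your (ii)--(iii) address, so your execution is closer to the correct decomposition than your framing. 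One technical refinement there: the H\"older bound $|f(Y_i)-f(X_i)|\le\delta_n^\gamma$ applies to $f$, not to $\Pi_Jf$, and the projection does not preserve the unit H\"older ball. The paper handles this by choosing the wavelet basis to be $\gamma\vee\alpha$-smooth, which gives $\Pi_Jf,\Pi_J^\perp f\in C^\gamma_{c\log n}(\mb R^D)$ and hence a clean perturbation bound $c\log n\cdot\delta_n^{\gamma\wedge1}$; your parenthetical about ``the $2^{Dj/2}$ factor is harmless once we use the H\"older modulus'' conflates the modulus of $f$ with that of $\Pi_Jf$, and the claim ``$O(\delta_n^{\alpha\wedge1})$ per coefficient'' in (iii) has no obvious source.
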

\begin{proof}
Recall $S_m=\mb B_{r_m}(a_m)\subset \mb B_{r_m+0.5/L}(a_m)=S_m^{\dagger}\subset \mb B_{r_m+1/L}(a_m)\subset \widetilde{S}_m$, then when $n$ is large enough,
  $\mathbb{P}_{\mu^{\ast}}(X\in S_m)\leq \mathbb{P}_{\mu^{\ast}\ast \mu_{\epsilon}}(Y\in S_m^{\dagger})\leq \mathbb{P}_{\mu^{\ast}}(X\in \widetilde{S}_{m})$. Similar as the proof for Theorem~\ref{upperboundgenerative} in Appendix~\ref{Sec:more_proofs_upperbound}, we fix an arbitrary $m\in[M]$ where $\mathbb{P}_{\mu^{\ast}}(X\in S^{\dagger}_m)\geq \frac{1}{2}\sqrt{\frac{\log n}{n}}$ and bound $ {\sup}_{f\in C^{\gamma}_1(\mathbb{R}^D)}\big(\int f(X) \rho_m(X) \, \dd \mu^{\ast}-\wh{\mathcal{J}}_m(f,Y_{1:n})\big)$ in the following proof. 
Use $(\wh{G}^{\diamond}_{[m]},\wh{Q}^{\diamond}_{[m]}, \widetilde{\nu}^{\diamond}_{[m],\wh{Q}^{\diamond}_{[m]}})$ to denote estimators $(\wh{G}_{[m]},\wh{Q}_{[m]},\widetilde{\nu}_{[m],\wh{Q}_{[m]}})$ with $X_{1:n}$ being replaced by $Y_{1:n}$, and here we consider $\widetilde{\nu}_{[m],\wh{Q}_{[m]}}$ constructed by wavelet expansion with $\gamma\vee \alpha$-smooth basis. Since there exists a constant $c_2$ such that
\begin{equation*}
    \begin{aligned}
    \frac{1}{|I_1|}\sum_{i\in I_1} \|Y_i-\wh{G}^{\diamond}_{[m]}(\wh{Q}^{\diamond}_{[m]}(Y_i))\|_2^2\cdot\bold{1}_{S_m^{\dagger}}(Y_i)&\leq \frac{1}{|I_1|}\sum_{i\in I_1} \|Y_i-G^{\ast}_{[m]}(Q^{\ast}_{[m]}(Y_i))\|_2^2\cdot\bold{1}_{S_m^{\dagger}}(Y_i)\\
    &\leq c\,\Big( \frac{1}{|I_1|}\sum_{i\in I_1} \|X_i-G^{\ast}_{[m]}(Q^{\ast}_{[m]}(X_i))\|_2^2\cdot\bold{1}_{\widetilde{S}_m }(X_i)+\|\epsilon_i\|_2^2\Big)\\
    &\leq c_2 \,n^{-1-\frac{2\beta}{d}};
    \end{aligned}
\end{equation*}
 and when $r$ is small enough, for any $\widetilde{z}\in \{z\in \mb B_1^d\,:\, G^{\ast}(z)\in \mb B_{r_m+0.25/L}(a_m)\}$,
\begin{equation*}
 \begin{aligned}
& \frac{1}{|I_1|}\sum_{i\in I_1} \|X_i-\wh{G}^{\diamond}_{[m]} (\wh{Q}^{\diamond}_{[m]} (X_i))\|_2^2\cdot\bold{1}_{\mb B_{r}(\widetilde{z})} (Q^{\ast}(X_i))\cdot\bold{1}_{S_m^{\dagger}}(X_i)\\
 &\leq  \frac{1}{|I_1|}\sum_{i\in I_1} \|X_i-\wh{G}^{\diamond}_{[m]} (\wh{Q}^{\diamond}_{[m]}(X_i))\|_2^2\cdot \bold{1}_{S_m^{\dagger}}(Y_i)
 \leq c_2 n^{-1-\frac{2\beta}{d}}.
 \end{aligned}
    \end{equation*}
 Then same as the proof of Lemma~\ref{lemma:mani_est} and~\ref{lemma:density_regularity}, we can get  it holds with probability at least $1-n^{-c}$ that the density of $\nu^{\ast}_{[m],\wh{Q}^{\diamond}_{[m]}}$ belongs to $C^{\alpha}_{c_3}(\mathbb{R}^d)$; and for any $\eta\in [4\lceil\gamma\rceil]$, $\mathbb{E}_{\mu^{\ast}}\big[\|X-\wh{G}^{\diamond}_{[m]}(\wh{Q}^{\diamond}_{[m]}(X))\|_2^{\eta}\cdot\rho_m(X)\big] \leq c_3\,\big((\frac{\log n}{n})^{\frac{\eta\beta}{d}}\vee \frac{\log n}{n}\big)$; So same as the proof Lemma~\ref{lemma:smooth_corr}, it holds with probability at least $1-n^{-c}$ that 
 \begin{equation*}
 \begin{aligned}
& \underset{f\in C_1^{\gamma}(\mathbb{R}^D)}{\sup} \Big(\int f(X)\rho_m(X) \, \dd \mu^{\ast}-\int f(\wh{G}^{\diamond}_{[m]} (\wh{Q}^{\diamond}_{[m]}(X)))\rho_m(X) \, \dd \mu^{\ast}\\
&+ \frac{1}{|I_2|}\sum_{i\in I_2} \sum_{j\in \mathbb{N}_0^d\atop 1\leq |j|\leq \lfloor\gamma\rfloor}\frac{1}{j!}f^{(j)}(Y_i)(\wh{G}^{\diamond}_{[m]} (\wh{Q}^{\diamond}_{[m]}(Y_i))-Y_i)^{j}\rho_m(Y_i)\Big)\\
&\leq C\,\underset{f\in C_1^{\gamma}(\mathbb{R}^D)}{\sup} \Big(\int f(X) \rho_m(X)\, \dd \mu^{\ast}-\int f(\wh{G}^{\diamond}_{[m]}(\wh{Q}^{\diamond}_{[m]}(X))) \rho_m(X)\, \dd \mu^{\ast}\\
&+ \frac{1}{|I_2|}\sum_{i\in I_2} \sum_{j\in \mathbb{N}_0^d\atop 1\leq |j|\leq \lfloor\gamma\rfloor}\frac{1}{j!}f^{(j)}(X_i)(\wh{G}^{\diamond}_{[m]} (\wh{Q}^{\diamond}_{[m]} (X_i))-X_i)^{j}\rho_m(X_i)\Big)\\
&+\underset{f\in C_1^{\gamma}(\mathbb{R}^D)}{\sup} \bigg(\frac{1}{|I_2|}\sum_{i\in I_2} \sum_{j\in \mathbb{N}_0^d\atop 1\leq |j|\leq \lfloor\gamma\rfloor}\frac{1}{j!}\Big(f^{(j)}(Y_i)(\wh{G}^{\diamond}_{[m]} (\wh{Q}^{\diamond}_{[m]} (Y_i))-Y_i)^{j}\rho_m(Y_i)\\
&- f^{(j)}(X_i)(\wh{G}^{\diamond}_{[m]}(\wh{Q}^{\diamond}_{[m]}(X_i))-X_i)^{j}\rho_m(X_i)\Big)\bigg)
\\
&\leq C_1 \,\big(\frac{\log n}{n}\big)^{\frac{1}{2}}\vee \big(\frac{\log n}{n}\big)^{\frac{\alpha+\gamma}{2\alpha+d}}\vee \big(\frac{\log n}{n}\big)^{\frac{\gamma\beta}{d}}.
\end{aligned}
 \end{equation*}
Now we bound 
\begin{equation*}
\begin{aligned}
&\underset{f\in C_1^{\gamma}(\mathbb{R}^D)}{\sup} \Big( \frac{1}{|I_2|}\sum_{i\in I_2} \Pi_J f(\wh{G}^{\diamond}_{[m]}(\wh{Q}^{\diamond}_{[m]}(Y_i)))\rho_m(Y_i)+\int \Pi_J^{\perp}f(\wh{G}^{\diamond}_{[m]}(z))\widetilde{\nu}^\diamond_{\wh{Q}^{\diamond}_{[m]}}(z) \,\dd z\\
&-\int f(\wh{G}^{\diamond}_{[m]}(\wh{Q}^{\diamond}_{[m]}(X)))\rho_m(X)\, \dd \mu^{\ast}\Big). 
\end{aligned}
\end{equation*}
By Lemma~\ref{lemma:high_freq} and~\ref{lemma:low_freq}, we only need to bound
\begin{equation*}
\begin{aligned}
&\underset{f\in C_1^{\gamma}(\mathbb{R}^D)}{\sup} \Big( \frac{1}{|I_2|}\sum_{i\in I_2} \Pi_J f(\wh{G}^{\diamond}_{[m]}(\wh{Q}^{\diamond}_{[m]}(Y_i)))\rho_m(Y_i)\\
&-\frac{1}{|I_2|}\sum_{i\in I_2} \Pi_J f(\wh{G}^{\diamond}_{[m]}(\wh{Q}^{\diamond}_{[m]}(X_i)))\rho_m(X_i)\Big)\\
&+\underset{f\in C_1^{\gamma}(\mathbb{R}^D)}{\sup} \Big(\int \Pi_J^{\perp}f(\wh{G}^{\diamond}_{[m]}(z))\widetilde{\nu}^{\diamond}_{\wh{Q}^{\diamond}_{[m]}}(z) \,\dd z-\int \Pi_J^{\perp}f(\wh{G}^{\diamond}_{[m]}(z))\widetilde{\nu}_{\wh{Q}^{\diamond}_{[m]}}(z) \,\dd z\Big).
\end{aligned}
\end{equation*}
Recall
\begin{equation*}
\Pi_J f(x)=\sum_{k\in \mathbb{Z}^D}b_k\phi_k(x)+\sum_{l=1}^{2^D-1}\sum_{j=0}^{J} \sum_{k\in\mathbb{Z}^D} f_{ljk} \psi_{ljk}(x),
\end{equation*}
where $2^{dJ}=C\,n^{\frac{d}{2\alpha+d}}$. By the  fact that the basis $\phi_k$ and $\psi_{ljk}$
are $\gamma$-smooth,  there exists a constant $c$ such that for any $f\in C^{\gamma}_1(\mathbb{R}^D)$, it holds that $\Pi_J f\in C^{\gamma}_{c\log n}(\mathbb{R}^D)$ and $\Pi_J^{\perp} f\in C^{\gamma}_{c\log n}(\mathbb{R}^D)$. So it holds that 
\begin{equation*}
\begin{aligned}
&\underset{f\in C_1^{\gamma}(\mathbb{R}^D)}{\sup} \Big( \frac{1}{|I_2|}\sum_{i\in I_2} \Pi_J f(\wh{G}^{\diamond}_{[m]}(\wh{Q}^{\diamond}_{[m]}(Y_i)))\rho_m(Y_i)\\
&-\frac{1}{|I_2|}\sum_{i\in I_2} \Pi_J f(\wh{G}^{\diamond}_{[m]}(\wh{Q}^{\diamond}_{[m]}(X_i)))\rho_m(X_i)\Big)\leq C\, {n}^{-(\frac{1}{2}+\frac{\beta}{d})\cdot(\gamma\wedge 1)}\cdot\log n;
 \end{aligned}
\end{equation*}
\begin{equation*}
\begin{aligned}
&\underset{f\in C_1^{\gamma}(\mathbb{R}^D)}{\sup} \Big(\int \Pi_J^{\perp}f(\wh{G}^{\diamond}_{[m]}(z))\widetilde{\nu}^{\diamond}_{\wh{Q}^{\diamond}_{[m]}}(z) \,\dd z-\int \Pi_J^{\perp}f(\wh{G}^{\diamond}_{[m]}(z))\widetilde{\nu}_{\wh{Q}^{\diamond}_{[m]}}(z) \,\dd z\Big)\\
&\leq C\, \log n \cdot \underset{f\in C_1^{\gamma\wedge \beta}(\mathbb{R}^d)}{\sup} \Big(\int  f(z)\widetilde{\nu}^{\diamond}_{\wh{Q}^{\diamond}_{[m]}}(z) \,\dd z-\int  f(z)\widetilde{\nu}_{\wh{Q}^{\diamond}_{[m]}}(z) \,\dd z\Big)\\
&\leq C_1\,\log n\cdot\underset{|f_{ljk}|\leq (2^{-dj})^{\frac{\gamma\wedge \beta}{d}+\frac{1}{2}}\atop |b_k|\leq 1}{\sup} \int \Big(\sum_{k \in \mathbb{Z}^d} b_k \phi_k(z) +\sum_{l=1}^{2^{d}-1}\sum_{j=0}^{+\infty}\sum_{k\in \mathbb{Z}^d}f_{ljk} \psi_{ljk}(z)\Big) \Big(\widetilde{\nu}^{\diamond}_{\wh{Q}^{\diamond}_{[m]}}(z)-\widetilde{\nu}_{\wh{Q}^{\diamond}_{[m]}}(z)\Big)\,\dd z\\
&\leq C_1\, \log n \cdot\underset{|b_k|\leq 1}{\sup}\,\bigg[ \frac{1}{|I_2|} \sum_{i\in I_2} \sum_{k\in \mathbb{Z}^d} b_k \Big(\phi_k\big(\wh{Q}^{\diamond}_{[m]}(Y_i)\big)\rho_m(Y_i)-\phi_k\big(\wh{Q}^{\diamond}_{[m]}(X_i)\big) \rho_m(X_i)\Big)\bigg]\\
&+ C_1\,\log n \cdot\underset{|f_{ljk}|\leq (2^{-dj})^{\frac{\gamma\wedge\beta}{d}+\frac{1}{2}}}{\sup}\bigg[ \frac{1}{|I_2|}\sum_{i\in I_2} \sum_{l=1}^{2^d-1}\sum_{j=1}^{J}\sum_{k\in\mb{Z}^d} f_{ljk} \Big(\psi_k\big(\wh{Q}^{\diamond}_{[m]}(Y_i)\big)\rho_m(Y_i)\\
&\qquad\qquad\qquad\qquad-\psi_k\big(\wh{Q}^{\diamond}_{[m]}(X_i)\big)\rho_m(X_i)\Big)\bigg]\leq C\,(\log n)^2\, \big(n^{-\frac{1}{2}-\frac{\beta}{d}}\big)^{\gamma\wedge 1},\\
\end{aligned}
\end{equation*}
where the last inequality uses the fact that $\phi_k(\cdot)$ and $\psi_{ljk}(\cdot)$ are $\gamma$-smooth. Then similar as the proof of Theorem~\ref{upperboundgenerative}, we can get the desired  conclusion.

\end{proof}

 \end{document}